\documentclass[fleqn,a4paper,9pt]{article}
\usepackage{amsmath}
\usepackage{mathtools}
\usepackage{mathptmx}
\usepackage{comment}
\usepackage{tabularx}
\usepackage{listings}
 \usepackage{amsthm} 
\usepackage{amssymb}

\makeatletter
\let\@@pmod\pmod
\DeclareRobustCommand{\pmod}{\@ifstar\@pmods\@@pmod}
\def\@pmods#1{\mkern4mu({\operator@font mod}\mkern 6mu#1)}
\makeatother
\DeclarePairedDelimiter\floor{\lfloor}{\rfloor}
\newcommand{\tpmod}[1]{\mkern 8mu({\operator@font mod}\mkern 6mu#1)}
\usepackage{atbegshi}
\AtBeginDocument{\AtBeginShipoutNext{\AtBeginShipoutDiscard}}

\author{Leon Fairbanks}
\begin{document}
\begin{flushleft}
\title{{\textbf{Powers of Cosine and Sine}}}
\maketitle
\begin{abstract}
We consider objects of the form
$$\cos^m\left(\frac{(2i-1)\pi}{2^n}\right)$$\notag
\end{abstract}
\tableofcontents

\newtheorem{theorem}{Theorem}[section]
\newtheorem{lemma}[theorem]{Lemma}
\newtheorem{proposition}[theorem]{Proposition}
\newtheorem{corollary}[theorem]{Corollary}
\newtheorem{definition}[theorem]{Definition}

\section{Introduction}
\begin{definition}
Let $V_n$ be the set of terms of the form $\sin\left(\frac{i\pi}{2^n})\right)$ where $i\in\mathbb{Z}$,  $1\le i\le 2^{n-1}$. Call the members of $V_n$ the elementary terms. 
\end{definition}
Then we let $E_n$ be the extension of $Q$ generated by adjoining elements of $V_n$. Trig functions show that any product of terms in $E_n$ can be written as a sum of elementary terms with rational coefficients. Hence the degree of the extension $E_n$ is $\le 2^{n-1}$. Note, since $\sin(\frac{\pi}{4})=\cos(\frac{\pi}{4})=\frac{\sqrt{2}}{2}$ and $\sin(\frac{\theta}{2})=\pm\sqrt{\frac{1-\cos(\theta)}{2}}$ these terms have a nice expression involving nested roots of 2 as in,
$$\sin(\frac{7\pi}{16})=\frac{\sqrt{2+\sqrt{2+\sqrt{2}}}}{2}$$
\begin{lemma}
Assume $r,k\in \mathbb{Z}$, $1\le k\le r$, then
\begin{align}\notag
\sum_{i=0}^{r}(-1)^{i}\frac{2^{2i-1}}{r+i}\binom{r+i}{r-i}\binom{2i}{k}&=\frac{2^{k}}{2r+k}\binom{2r+k}{2r-k}\\\notag
\end{align}
\end{lemma}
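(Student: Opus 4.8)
The plan is to read the left-hand sum through Chebyshev polynomials, using a generating function to absorb the $\binom{2i}{k}$. First I would record the elementary fact $\binom{2i}{k}=[x^k](1+x)^{2i}$, where $[x^k]$ denotes coefficient extraction. Then the entire left-hand side equals $[x^k]G(x)$, where
$$G(x):=\sum_{i=0}^r(-1)^i\frac{2^{2i-1}}{r+i}\binom{r+i}{r-i}(1+x)^{2i}.$$
So it suffices to identify the polynomial $G$ in closed form and read off a single Taylor coefficient.

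The key observation is that the numbers $\frac{r}{r+i}\binom{r+i}{r-i}$, weighted by powers of $2$, are exactly the coefficients of the Chebyshev polynomial $T_{2r}$. Starting from the standard closed form $T_n(y)=\frac n2\sum_{j\ge 0}\frac{(-1)^j}{n-j}\binom{n-j}{j}(2y)^{n-2j}$, I would substitute $n=2r$ and reindex by $j=r-i$; this collects the term in $(2y)^{2i}$ and gives the polynomial identity $\sum_{i}(-1)^i\frac{2^{2i-1}}{r+i}\binom{r+i}{r-i}y^{2i}=\frac{(-1)^r}{2r}T_{2r}(y)$. The parity factor arises from $(-1)^{r-j}=(-1)^r(-1)^i$, and this is precisely the piece of bookkeeping I would pin down on a small case such as $r=1$ before trusting the general formula. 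Putting $y=1+x$ yields $G(x)=\frac{(-1)^r}{2r}\,T_{2r}(1+x)$, so the whole claim reduces to computing the Taylor coefficients of $T_{2r}$ about the point $1$.

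For that step I would invoke the hypergeometric representation $T_n(y)={}_2F_1\!\left(-n,n;\tfrac12;\tfrac{1-y}{2}\right)$. With $y=1+x$ the argument becomes $-x/2$, and extracting $[x^k]$ from the series is a routine simplification of Pochhammer symbols (using $(-n)_k=(-1)^k n!/(n-k)!$, $(n)_k=(n+k-1)!/(n-1)!$, and $(\tfrac12)_k=(2k)!/(4^k k!)$); it collapses to
$$[x^k]\,T_{2r}(1+x)=\frac{2r\,2^{k}}{2r+k}\binom{2r+k}{2r-k}.$$
Dividing by $2r$ recovers $\frac{2^{k}}{2r+k}\binom{2r+k}{2r-k}$; the sign $(-1)^r$ carried down from the previous paragraph rides along, and reconciling it with the sign stated in the Lemma is exactly the subtlety I flagged above (a direct check at $r=k=1$ is the sanity test I would run).

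I expect the main obstacle to be the index- and sign-matching in the Chebyshev closed form: a single misread exponent among the powers of $2$ or in the parity $(-1)^{r}$ rescales the answer by a global factor, so this is where the argument must be airtight. The Taylor-coefficient computation is the other delicate spot, but it is mechanical once the ${}_2F_1$ form is in hand; if one prefers to avoid hypergeometric manipulation, the coefficients of $T_{2r}(1+x)$ can instead be generated from the recurrence $T_{m+1}=2yT_m-T_{m-1}$ at $y=1+x$ and the closed form verified by induction on $m$. As a fallback that sidesteps all of this, the identity is a balanced hypergeometric sum in $k$ and so yields to Zeilberger's algorithm, which would produce an automated proof of the governing recurrence.
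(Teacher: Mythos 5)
Your route is genuinely different from the paper's and, carried to completion, it works. The paper proves the lemma purely mechanically: Zeilberger's algorithm produces a first-order recurrence in $r$ for the sum, and the closed form is checked against that recurrence and an initial value. You instead absorb $\binom{2i}{k}$ via $\binom{2i}{k}=[x^k](1+x)^{2i}$, recognize the remaining weights as the coefficients of $T_{2r}$, and extract the Taylor coefficient at $1$ from the hypergeometric form. Your intermediate identities are all correct: the reindexing gives $\sum_i(-1)^i\frac{2^{2i-1}}{r+i}\binom{r+i}{r-i}y^{2i}=\frac{(-1)^r}{2r}T_{2r}(y)$, and the Pochhammer computation gives $[x^k]T_{2r}(1+x)=\frac{2r\,2^k}{2r+k}\binom{2r+k}{2r-k}$ (check: $T_2(1+x)=1+4x+2x^2$, and the formula gives $4$ at $k=1$). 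What you gain over the paper is an explanation of \emph{why} the closed form is what it is; what the paper's method gains is that it requires no structural insight at all.

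The sign you flag as the delicate point is not a bookkeeping artifact of your method --- it is real, and the sanity check you propose settles it against the Lemma as printed. At $r=k=1$ the left side is $0+(-1)\cdot\frac{2}{2}\binom{2}{0}\binom{2}{1}=-2$ while the right side is $\frac{2}{3}\binom{3}{1}=+2$; at $r=3$, $k=1$ the left side is $-6$ against $+6$. Your derivation gives the corrected statement, with right-hand side $(-1)^r\frac{2^k}{2r+k}\binom{2r+k}{2r-k}$. The paper's own displayed recurrence corroborates this: it carries a leading minus sign (its ratio $-\frac{(k+2r)(1+k+2r)}{(2r+2-k)(2r+1-k)}$, reading the printed ``$12+k+2r$'' as a typo for $1+k+2r$), and a solution of a recurrence with that minus sign must alternate in sign relative to $\frac{2^k}{2r+k}\binom{2r+k}{2r-k}$; the stated closed form satisfies only the recurrence without the minus sign. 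So do not try to make the $(-1)^r$ disappear --- it belongs in the identity. None of this damages the paper downstream, since the Corollary instantiates $r=2^{n-2}$, which is even for $n\ge3$, where $(-1)^r=1$.
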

\begin{proof}
Using Zeilberger's algorithm we find the sum satisfies the following recursion:
\begin{align}\notag
\rm{SUM}[1+r]=-\frac{(k+2r)(12+k+2r)}{(-2+k-2r)(-1+k-2r)}\rm{SUM}[r]\\\notag
\end{align}
We then verify that the formula on the right satisfies this recursion and has the proper initial values.
\end{proof}
\begin{corollary}
\begin{align}\notag
\sum_{i=0}^{2^{n-2}}(-1)^{i}\frac{2^{2i-1}}{2^{n-2}+i}\binom{2^{n-2}+i}{2^{n-2}-i}\binom{2i}{k}&=\frac{2^{k}}{2^{n-1}+k}\binom{2^{n-1}+k}{2^{n-1}-k}\\\notag
\end{align}
\end{corollary}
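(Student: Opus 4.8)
The plan is to observe that this Corollary is nothing more than the special case of the preceding Lemma obtained by setting $r = 2^{n-2}$. First I would check that the hypotheses of the Lemma are met: we require $r \in \mathbb{Z}$ with $1 \le k \le r$. Taking $r = 2^{n-2}$, this is a positive integer precisely when $n \ge 2$, and the constraint $1 \le k \le r$ becomes $1 \le k \le 2^{n-2}$, which is exactly the range in which the Corollary is to hold. So the first step is simply to confirm that $r = 2^{n-2}$ is an admissible value of the free parameter $r$ in the Lemma.

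With the hypotheses verified, I would substitute $r = 2^{n-2}$ throughout the identity. On the left-hand side the summation index runs to $r = 2^{n-2}$, and each factor $\frac{2^{2i-1}}{r+i}\binom{r+i}{r-i}$ becomes $\frac{2^{2i-1}}{2^{n-2}+i}\binom{2^{n-2}+i}{2^{n-2}-i}$, while $\binom{2i}{k}$ is unaffected. On the right-hand side, using $2r = 2^{n-1}$, the expression $\frac{2^{k}}{2r+k}\binom{2r+k}{2r-k}$ becomes $\frac{2^{k}}{2^{n-1}+k}\binom{2^{n-1}+k}{2^{n-1}-k}$. Matching both sides term by term against the statement of the Corollary then finishes the proof.

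Since this is a direct specialization of an already-established identity, there is no genuine analytic obstacle; the work is entirely bookkeeping. The only point deserving attention is the verification in the first step, namely that $r = 2^{n-2}$ is a positive integer (so one implicitly assumes $n \ge 2$) and that $k$ lies in the range $1 \le k \le 2^{n-2}$, so that the Lemma applies verbatim with no additional hypotheses.
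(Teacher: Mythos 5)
Your proposal is correct and matches the paper's intent exactly: the paper states this corollary with no proof at all, treating it as the immediate specialization $r=2^{n-2}$ of the preceding lemma, which is precisely your argument. Your added care about the hypotheses ($n\ge 2$ and $1\le k\le 2^{n-2}$) is sound bookkeeping that the paper leaves implicit.
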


\begin{proposition}
The minimal polynomial for $\cos(\frac{(2i-1)\pi}{2^n})$ is
$$f_{n}(x)=((\cdots((((2x)^2-2)^2-2)^2-2)^2-2\cdots)^2-2)/2$$ 
where the degree of $f_{n}(x)$ is $2^{n-1}$. 
In $f(x)$, we let $c_{n,m}$ be the coefficient of $x^{2m}$ (the $m+1$-st term), then
$$f_{n}(x)=1+c_{n,1}x^2+c_{n,2}x^4+\cdots+c_{n,2^{n-2}}x^{2^{n-1}}$$
where
\begin{align}\notag
c_{n,m}&=(-1)^m \frac{ 2^{n+2m-2}} {2^{n-2}+m}\binom{2^{n-2}+m}{2^{n-2}-m},\ \ \  1\le m\le2^{n-2}\\\notag
\end{align}
\end{proposition}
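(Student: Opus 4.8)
The plan is to recognize $f_n$ as a Chebyshev polynomial and then obtain the coefficient formula by induction, feeding the Corollary into the inductive step. First I would exploit the duplication identity $2\cos(2\theta)=(2\cos\theta)^2-2$: writing $T(y)=y^2-2$, one has $T(2\cos\theta)=2\cos(2\theta)$, so the $(n-1)$-fold iterate satisfies $T^{\circ(n-1)}(2\cos\theta)=2\cos(2^{n-1}\theta)$. Since $f_n(x)=T^{\circ(n-1)}(2x)/2$ by inspection of the nested expression, substituting $x=\cos\theta$ gives $f_n(\cos\theta)=\cos(2^{n-1}\theta)$; that is, $f_n$ is the Chebyshev polynomial of the first kind of degree $2^{n-1}$. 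Evaluating at $\theta=\frac{(2i-1)\pi}{2^n}$ yields $f_n(\cos\theta)=\cos\!\big(\frac{(2i-1)\pi}{2}\big)=0$, so each $\cos\!\big(\frac{(2i-1)\pi}{2^n}\big)$ is a root. For $i=1,\dots,2^{n-1}$ these angles lie in $(0,\pi)$ and are distinct, so cosine supplies $2^{n-1}$ distinct roots, exactly matching $\deg f_n$.

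To promote ``$f_n$ has this root'' to ``$f_n$ is the minimal polynomial'' I would invoke the degree of the real cyclotomic field: since $2i-1$ is odd, $\frac{(2i-1)\pi}{2^n}=\frac{2\pi(2i-1)}{2^{n+1}}$ is a primitive $2^{n+1}$-th root angle, whence $[\mathbb{Q}(\cos\tfrac{(2i-1)\pi}{2^n}):\mathbb{Q}]=\phi(2^{n+1})/2=2^{n-1}$. As $f_n\in\mathbb{Q}[x]$ has this root and degree $2^{n-1}$, it must be a scalar multiple of the minimal polynomial (hence irreducible up to its leading constant $2^{2^{n-1}-1}$). I expect this exact-degree/irreducibility step to be the main conceptual obstacle if one wants a self-contained argument avoiding cyclotomic theory: the bound $[E_n:\mathbb{Q}]\le 2^{n-1}$ from the introduction gives one inequality, and a non-collapsing tower of quadratic radicals $\sqrt{2+\sqrt{2+\cdots}}$ gives the reverse.

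For the coefficients I would argue by induction on $n$ using the composition law $T_{2N}(x)=T_N(2x^2-1)$, which here reads $f_{n+1}(x)=f_n(2x^2-1)$. Writing $f_n(x)=\sum_{m\ge0}c_{n,m}x^{2m}$ with $c_{n,0}=1$ and expanding $(2x^2-1)^{2i}$, extraction of the coefficient of $x^{2k}$ gives the recursion $c_{n+1,k}=(-1)^k2^k\sum_{i}c_{n,i}\binom{2i}{k}$; I would verify the base case $n=3$ directly ($f_3(x)=8x^4-8x^2+1$, matching the formula) and note separately the lone sign anomaly at $n=2$, where $f_2=2x^2-1$ has constant term $-1$. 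For the inductive step, substitute the claimed formula for $c_{n,i}$, factor $2^{n+2i-2}=2^{n-1}\cdot 2^{2i-1}$ out of the sum, and observe that the residual sum is precisely the left-hand side of the Corollary; the Corollary collapses it to $\frac{2^k}{2^{n-1}+k}\binom{2^{n-1}+k}{2^{n-1}-k}$, and reassembling powers of $2$ reproduces exactly $c_{n+1,k}=(-1)^k\frac{2^{n+2k-1}}{2^{n-1}+k}\binom{2^{n-1}+k}{2^{n-1}-k}$, the desired $(n+1)$-level formula. The point to watch is the range of $k$: the recursion requires the identity for all $1\le k\le 2^{n-1}$, whereas the Corollary is stated only up to $k\le 2^{n-2}$, so I would first confirm that the underlying Lemma (and its Zeilberger recursion in $r$) remains valid termwise for $1\le k\le 2r$, covering the full range. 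The constant coefficient is handled separately by $\sum_i c_{n,i}=f_n(1)=\cos 0=1$.
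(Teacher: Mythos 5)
Your coefficient computation is exactly the paper's proof: the paper also verifies $f_{n+1}(x)=f_n(2x^2-1)$, expands $(2x^2-1)^{2m}$ binomially, swaps the order of summation, and invokes the Corollary with $r=2^{n-2}$ to collapse the inner sum to $\frac{2^k}{2^{n-1}+k}\binom{2^{n-1}+k}{2^{n-1}-k}$. Where you genuinely diverge is that you prove more than the paper does. The paper's proof establishes only the coefficient recursion; the claim that $f_n$ is actually the \emph{minimal} polynomial is never argued (the surrounding discussion of $E_n$ gives only the upper bound $[E_n:\mathbb{Q}]\le 2^{n-1}$, which is the wrong direction for irreducibility). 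Your identification $f_n(\cos\theta)=\cos(2^{n-1}\theta)$ via the iterate of $T(y)=y^2-2$, the count of $2^{n-1}$ distinct roots, and the appeal to $[\mathbb{Q}(\cos\tfrac{2\pi(2i-1)}{2^{n+1}}):\mathbb{Q}]=\phi(2^{n+1})/2=2^{n-1}$ supply exactly the missing minimality argument, at the cost of importing cyclotomic field theory. You also correctly flag two points the paper glosses over: the base case really must start at $n=3$, since $f_2(x)=2x^2-1$ has constant term $-1$ and $c_{2,1}=2$, contradicting the stated formula (and this is precisely why the Lemma fails at $r=1$, $k=2$: there the sum gives $-1$ while the closed form gives $+1$); and the inductive step needs the identity for $1\le k\le 2r$, not just $1\le k\le r$ as the Lemma is stated --- the paper's own display silently truncates the $k$-sum at $2^{n-2}$ where it should run to $2^{n-1}$. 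Spot checks (e.g.\ $r=2$, $k=3,4$ and $r=4$, $k=5$) confirm the extended range does hold for $r\ge 2$, but as you note this still needs to be verified before the induction is complete, e.g.\ by rechecking the Zeilberger certificate with initial values at $r=\lceil k/2\rceil$. In short: same engine as the paper for the coefficients, plus the irreducibility argument and the bookkeeping the paper omits.
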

\begin{proof}
The form of the first expression for $f$ implies that $f_n(2x^2-1)=f_{n+1}(x)$.
\begin{align}\notag
f_n(2x^2-1)&=\sum_{m=0}^{2^{n-2}}(-1)^m \frac{ 2^{n+2m-2}} {2^{n-2}+m}\binom{2^{n-2}+m}{2^{n-2}-m}(2x^2-1)^{2m}\\\notag
&=\sum_{m=0}^{2^{n-2}}(-1)^m \frac{ 2^{n+2m-2}} {2^{n-2}+m}\binom{2^{n-2}+m}{2^{n-2}-m}\sum_{k=0}^{2m}(-1)^k(2x^2)^{k}\\\notag
&=\sum_{k=0}^{2^{n-2}}\sum_{m=0}^{2^{n-2}}(-1)^m\frac{2^{n+2m-2}}{2^{n-2}+m}\binom{2^{n-2}+m}{2^{n-2}-m}(-1)^k\binom{2m}{k}(2x^2)^k\\\notag
&=\sum_{k=0}^{2^{n-2}}(-1)^k2^{k+n-1}\sum_{m=0}^{2^{n-2}}(-1)^m\frac{2^{2m-1}}{2^{n-2}+m}\binom{2^{n-2}+m}{2^{n-2}-m}\binom{2m}{k}x^{2k}\\\notag
&=\sum_{m=0}^{2^{n-1}}(-1)^m \frac{ 2^{n+2m-1}} {2^{n-1}+m}\binom{2^{n-1}+m}{2^{n-1}-m}x^{2m}\\\notag
&=f_{n+1}(x)\\\notag
\end{align}
By the previous lemma.

\end{proof}
The roots of the minimal polynomial are elementary terms of the form $\cos\left(\frac{(2i-1)\pi}{2^n})\right)$, each appearing with a "+" and a "-" sign. The extension is normal, the $2^{n-1}$ powers of a root  form a basis of the extension which has rank $2^{n-1}$.  \\
Note if you replace each $x^k$ with $x^{r*k}$, where $r\in \mathbb{Z}^+$, this becomes the minimal polynomial for  $(\cos(\frac{i\pi}{2^n}))^{1/r}$.\\
Example: $n=5$
$$\cos(\frac{\pi}{32})=\frac{\sqrt{2+\sqrt{2+\sqrt{2+\sqrt{2}}}}}{2}$$
\begin{align}\notag
f_n(x)=&(((((2x)^2-2)^2-2)^2-2)^2-2)/2\\\notag
=&1-128 x^2+2688 x^4-21504 x^6+84480 x^8-180224 x^{10}+212992 x^{12}-131072 x^{14}+32768 x^{16}\\\notag
=&2^{15}(x-\frac{\sqrt{2-\sqrt{2+\sqrt{2+\sqrt{2}}}}}{2})(x+\frac{\sqrt{2-\sqrt{2+\sqrt{2+\sqrt{2}}}}}{2})(x-\frac{\sqrt{2-\sqrt{2+\sqrt{2-\sqrt{2}}}}}{2})\\\notag
&(x+\frac{\sqrt{2-\sqrt{2+\sqrt{2-\sqrt{2}}}}}{2})(x-\frac{\sqrt{2-\sqrt{2-\sqrt{2-\sqrt{2}}}}}{2})(x+\frac{\sqrt{2-\sqrt{2-\sqrt{2-\sqrt{2}}}}}{2})\\\notag
&(x-\frac{\sqrt{2-\sqrt{2-\sqrt{2+\sqrt{2}}}}}{2})(x+\frac{\sqrt{2-\sqrt{2-\sqrt{2+\sqrt{2}}}}}{2})(x-\frac{\sqrt{2+\sqrt{2-\sqrt{2+\sqrt{2}}}}}{2})\\\notag
&(x+\frac{\sqrt{2+\sqrt{2-\sqrt{2+\sqrt{2}}}}}{2})(x-\frac{\sqrt{2+\sqrt{2-\sqrt{2-\sqrt{2}}}}}{2})(x+\frac{\sqrt{2+\sqrt{2-\sqrt{2-\sqrt{2}}}}}{2})\\\notag
&(x-\frac{\sqrt{2+\sqrt{2+\sqrt{2-\sqrt{2}}}}}{2})(x+\frac{\sqrt{2+\sqrt{2+\sqrt{2-\sqrt{2}}}}}{2})(x-\frac{\sqrt{2+\sqrt{2+\sqrt{2+\sqrt{2}}}}}{2})\\\notag
&(x+\frac{\sqrt{2+\sqrt{2+\sqrt{2+\sqrt{2}}}}}{2})\notag
\end{align}

The set of elementary terms, $$\{\sin(\frac{i\pi}{2^n})\ \ |\ 0<i< 2^{n-1}\}$$ equals the set $$\{\cos(\frac{i\pi}{2^n})\ |\ 0<i< 2^{n-1}\}$$ 
The odd powers of $\cos\left(\frac{(2i-1)\pi}{2^n}\right)$ (or of $\sin\left(\frac{(2i-1)\pi}{2^n}\right)$) can be expressed in terms of the subset of $V_n$ of terms of the form $\cos\left(\frac{(2i-1)\pi}{2^n}\right)$, the roots of the minimal polynomial. This paper presents expressions for odd and even powers of cosine in terms of the elementary terms.

\section{Odd Powers Of Cosine}

In general, for $n\ge 0,\ n,r\in\mathbb{Z}$, $r$ odd, there is a $2^{n-2}x2^{n-2}$ matrix $M$ such that
\begin{equation}\notag
\left(
\begin{array}{cc}
\cos^r\left(\frac{\pi}{2^n}\right)   \\
\cos^r\left(\frac{3\pi}{2^n}\right)  \\
\vdots\\
\cos^r\left(\frac{(2^{n-2}-1)\pi}{2^n}\right)  \\
\end{array}
\right)=M\left(
\begin{array}{cc}
\cos\left(\frac{\pi}{2^n}\right)   \\
\cos\left(\frac{3\pi}{2^n}\right)  \\
\vdots\\
\cos\left(\frac{(2^{n-2}-1)\pi}{2^n}\right)  \\
\end{array}
\right)
\end{equation}

For example, for $n=4,\ r=15$
\begin{equation}\notag
\left(
\begin{array}{cc}
\cos^{15}\left(\frac{\pi}{16}\right)   \\
\cos^{15}\left(\frac{3\pi}{16}\right)  \\
\cos^{15}\left(\frac{5\pi}{16}\right)  \\
\cos^{15}\left(\frac{7\pi}{16}\right)  \\
\end{array}
\right)=\frac{1}{2^{14}}\left(
\begin{array}{cccc}
 6434 & 4990 & 2898 & 910 \\
 -2898 & 6434 & -910 & -4990 \\
 -4990 & 910 & 6434 & 2898 \\
 -910 & 2898 & -4990 & 6434 \\
\end{array}
\right)\left(
\begin{array}{cc}
\cos\left(\frac{\pi}{16}\right)   \\
\cos\left(\frac{3\pi}{16}\right)  \\
\cos\left(\frac{5\pi}{16}\right)  \\
\cos\left(\frac{7\pi}{16}\right)  \\
\end{array}
\right)
\end{equation}
While  for $n=4,\ r=-3$
\begin{equation}\notag
\left(
\begin{array}{cc}
\cos^{-3}\left(\frac{\pi}{16}\right)   \\
\cos^{-3}\left(\frac{3\pi}{16}\right)  \\
\cos^{-3}\left(\frac{5\pi}{16}\right)  \\
\cos^{-3}\left(\frac{7\pi}{16}\right)  \\
\end{array}
\right)=2^3
\left(
\begin{array}{cccc}
 2 & -5 & 7 & -8 \\
 -7 & 2 & 8 & 5 \\
 5 & -8 & 2 & 7 \\
 8 & 7 & 5 & 2 \\
\end{array}
\right)
\left(
\begin{array}{cc}
\cos\left(\frac{\pi}{16}\right)   \\
\cos\left(\frac{3\pi}{16}\right)  \\
\cos\left(\frac{5\pi}{16}\right)  \\
\cos\left(\frac{7\pi}{16}\right)  \\
\end{array}
\right)
\end{equation}
Note that given a matrix equation relating powers of cosine to a cosine basis  as above, we can get the analogous equations involving sine by reversing the order of the rows and reversing the order of the columns.

In the following, we assume  $i, j, n,r\in\mathbb{Z^+}$.
\begin{lemma}
The set of elements $\cos^j(\frac{(2i-1)\pi}{2^n})$, $0\le j\le 2^{n-1}-1$ constitute a basis for the extension obtained from adjoining $\cos(\frac{(2i-1)\pi}{2^n})$ to $\mathbb{Q}$. An endomorphism of the extension is obtained by sending $\cos(\frac{(2i-1)\pi}{2^n})$ to $\cos\left(\frac{(2j-1)\pi}{2^n}\right)$.
\end{lemma}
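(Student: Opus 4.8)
The plan is to treat both assertions as consequences of standard field theory applied to the minimal polynomial $f_n$ produced in the Proposition. Write $\alpha=\cos\left(\frac{(2i-1)\pi}{2^n}\right)$ and $K=\mathbb{Q}(\alpha)$. The Proposition shows that $f_n$ is the minimal polynomial of $\alpha$ and that $\deg f_n=2^{n-1}$, so $[K:\mathbb{Q}]=2^{n-1}$. The first assertion is then immediate: whenever an element $\alpha$ has degree $d$ over $\mathbb{Q}$, the powers $1,\alpha,\dots,\alpha^{d-1}$ form a $\mathbb{Q}$-basis of $\mathbb{Q}(\alpha)$, which here is exactly the family $\cos^{j}\left(\frac{(2i-1)\pi}{2^n}\right)$ for $0\le j\le 2^{n-1}-1$. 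I would dispose of this in a single sentence that cites the degree count from the Proposition.

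For the second assertion, set $\beta=\cos\left(\frac{(2j-1)\pi}{2^n}\right)$. The key input is the root description already recorded after the Proposition: the $2^{n-1}$ roots of $f_n$ are precisely the numbers $\pm\cos\left(\frac{(2\ell-1)\pi}{2^n}\right)$. In particular $\beta$ is a root of $f_n$, hence a $\mathbb{Q}$-conjugate of $\alpha$. The universal property of the isomorphism $K\cong\mathbb{Q}[x]/(f_n)$ then furnishes a unique $\mathbb{Q}$-algebra homomorphism $\sigma$ with $\sigma(\alpha)=\beta$, well defined precisely because $f_n(\beta)=0$. Since $\sigma$ is a homomorphism of fields it is injective, and because the extension is normal (as noted after the Proposition) the conjugate $\beta$ already lies in $K$, so $\sigma$ is genuinely an endomorphism $K\to K$ rather than merely an embedding into a larger field; comparing $\mathbb{Q}$-dimensions then shows it is in fact an automorphism.

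The step I expect to carry the real weight is verifying that $\beta$ is genuinely a root of $f_n$ and that $\beta\in K$. The first relies entirely on the explicit enumeration of roots established earlier, so I would lean on that enumeration rather than re-deriving it; the second is exactly the content of the normality already asserted for the extension. Once these two facts are in hand the existence of $\sigma$ is formal, and I would close by remarking that letting $2j-1$ range over all odd residues recovers the full automorphism group, which is what makes the change-of-basis matrices $M$ in the preceding discussion well defined.
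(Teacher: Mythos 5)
Your proposal is correct and follows the same route as the paper, whose entire proof of this lemma is the single parenthetical citation ``(Theory of field extensions)''; you have simply spelled out the standard argument the paper invokes. The two load-bearing facts you isolate --- that $\cos\left(\frac{(2j-1)\pi}{2^n}\right)$ is among the roots of $f_n$ and that normality keeps the conjugate inside the extension --- are exactly the facts the paper records in the paragraph following the Proposition, so nothing further is needed.
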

\begin{proof}
(Theory of field extensions)
\end{proof}
\begin{lemma}
\begin{align}\notag
\sum_{j=1}^i (-1)^j 2^{2j-1}\frac{2i-1}{2j-1}\binom{i+j-2}{2j-2}&=(-1)^i2\\\notag
\end{align}
\end{lemma}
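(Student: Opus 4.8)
The plan is to recognize the summand as the coefficient sequence of an odd Chebyshev polynomial of the first kind and then to evaluate that polynomial at $x=1$. Recall the classical closed form
$$T_n(x)=\frac{n}{2}\sum_{k=0}^{\lfloor n/2\rfloor}(-1)^k\frac{(n-k-1)!}{k!\,(n-2k)!}(2x)^{n-2k},\qquad n\ge 1.$$
I would specialize to the odd index $n=2i-1$, so that the exponents $n-2k$ run through the odd numbers, and reindex by setting $j=i-k$ (so $k=i-j$ runs from $i-1$ down to $0$ as $j$ runs from $1$ to $i$). Under this substitution the exponent becomes $n-2k=2j-1$, and the factorial data transform as $n-k-1=i+j-2$, $k=i-j$, $n-2k=2j-1$.

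The next step is a routine simplification of the factorial ratio: since
$$\frac{(i+j-2)!}{(i-j)!\,(2j-1)!}=\frac{1}{2j-1}\binom{i+j-2}{2j-2},$$
the closed form becomes
$$T_{2i-1}(x)=\frac{2i-1}{2}\sum_{j=1}^{i}(-1)^{i-j}\frac{2^{2j-1}}{2j-1}\binom{i+j-2}{2j-2}x^{2j-1}.$$
I would then set $x=1$ and use the normalization $T_n(1)=1$. Writing $(-1)^{i-j}=(-1)^i(-1)^j$ and clearing the constant factor $\frac{2i-1}{2}(-1)^i$, the identity $1=T_{2i-1}(1)$ rearranges directly into
$$\sum_{j=1}^{i}(-1)^j\,2^{2j-1}\,\frac{2i-1}{2j-1}\binom{i+j-2}{2j-2}=(-1)^i\,2,$$
which is exactly the claim.

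The main obstacle is not any single computation, since each step is elementary, but rather correctly lining up the three moving parts: the sign $(-1)^{i-j}$, the power of two coming from $(2x)^{n-2k}$ at $x=1$, and the factorial-to-binomial rewrite, so that the leading factor $\frac{2i-1}{2}$ and the constant on the right combine to give precisely $(-1)^i2$ with no stray factor of two or sign error. As an independent check consistent with the rest of the paper, I would also confirm the result via Zeilberger's algorithm, treating the left-hand side as a function of $i$: one derives a first-order recurrence in $i$ and verifies that $(-1)^i2$ satisfies it together with the base case $i=1$, where both sides equal $-2$.
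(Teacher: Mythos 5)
Your argument is correct, and I verified the index shift: with $n=2i-1$ and $k=i-j$ the classical closed form does become $T_{2i-1}(x)=\tfrac12\sum_{j=1}^{i}(-1)^{i-j}\tfrac{2i-1}{2j-1}\binom{i+j-2}{2j-2}(2x)^{2j-1}$, and setting $x=1$ with $T_{2i-1}(1)=1$ gives exactly $\sum_{j=1}^i(-1)^j2^{2j-1}\tfrac{2i-1}{2j-1}\binom{i+j-2}{2j-2}=(-1)^i2$ (checks: $i=1$ gives $-2$, $i=2$ gives $-6+8=2$, $i=3$ gives $-10+40-32=-2$). However, your route is genuinely different from the paper's. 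The paper treats the sum as a black box, runs Zeilberger's algorithm to obtain the first-order recurrence $f(i)+f(i+1)=0$, and checks $f(1)=-2$ — self-contained apart from the computer-algebra step, but opaque. You instead recognize the summand as the coefficient sequence of $T_{2i-1}$ and reduce the lemma to the normalization $T_{2i-1}(1)=1$; in the paper's own notation this is precisely the statement $p_i(1)=(-1)^i$, which is forced by Proposition \ref{chebyshev} at $\theta\to 0$ (i.e. $\cos((2i-1)\cdot 0)=1=(-1)^ip_i(1)$), so your proof also explains \emph{why} the identity holds and how it fits with the paper's later identification of the $p_i$ as odd-order Chebyshev polynomials. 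The trade-off is that you import the classical factorial closed form for $T_n$ as an external fact, whereas the paper's recurrence argument needs no outside input beyond the certificate Zeilberger produces. Your closing suggestion to verify the recurrence $f(i)+f(i+1)=0$ with base case $f(1)=-2$ as an independent check is, in fact, exactly the paper's proof.
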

\begin{proof}
Let $$f(i)=\sum_{j=1}^i (-1)^j 2^{2j-1}\frac{2i-1}{2j-1}\binom{i+j-2}{2j-2}$$
Then Zeilberger's Algorithm shows $$f(i)+f(i+1)=0$$
while $f(1)=-2$.
\end{proof}
\begin{lemma}
Assume $i\ge j\ge 2$, then
\begin{align}\notag
\sum_{k=1}^i (-1)^k 2^{2k}\frac{2i-1}{2k-1}\binom{i+k-2}{2k-2}\binom{k}{j-1}&=(-1)^i 2^{2j-2}\frac{2i^2-2i+j-1}{(2j-3)(j-1)}\binom{i+j-3}{i-j+1}\\\notag
\end{align}
\end{lemma}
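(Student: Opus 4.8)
The plan is to follow the creative-telescoping (Zeilberger) approach used for the two preceding lemmas, carrying out an induction on $i$ with $j$ held fixed as a symbolic parameter. Write
$$t(i,k)=(-1)^k 2^{2k}\frac{2i-1}{2k-1}\binom{i+k-2}{2k-2}\binom{k}{j-1}, \qquad F(i,j)=\sum_{k}t(i,k).$$
The factor $\binom{k}{j-1}$ vanishes for $k<j-1$ and $\binom{i+k-2}{2k-2}$ vanishes for $k>i$, so $t(i,k)$ is supported on $j-1\le k\le i$ and I may sum over all $k\in\mathbb{Z}$; this makes the boundary terms in the telescoping vanish automatically, with no endpoint corrections. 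Since the claimed right-hand side $G(i,j)$ is a hypergeometric term in $i$, I expect the algorithm to return a first-order recurrence $a_1(i,j)\,F(i+1,j)+a_0(i,j)\,F(i,j)=0$, so that a single base case will suffice.

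Next I would verify that $G(i,j)$ satisfies this recurrence by a direct ratio computation. Using $\binom{i+j-2}{i-j+2}/\binom{i+j-3}{i-j+1}=\frac{i+j-2}{i-j+2}$, one finds
$$\frac{G(i+1,j)}{G(i,j)}=-\frac{(2i^2+2i+j-1)(i+j-2)}{(2i^2-2i+j-1)(i-j+2)}.$$
For $i\ge j\ge 2$ both denominators are strictly positive, so this ratio is well defined throughout the range, and the hypothesis $j\ge 2$ is precisely what keeps the prefactor $1/((2j-3)(j-1))$ finite. Matching this ratio against the coefficients $a_0,a_1$ returned by the algorithm is then routine. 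It remains to check the base case $i=j$, where the support collapses to $k\in\{j-1,j\}$. Evaluating the two surviving terms gives $(-1)^{j-1}2^{2j-2}(2j-1)$ and $(-1)^{j}2^{2j}j$, whose sum is $(-1)^j 2^{2j-2}(2j+1)$; on the other side $\binom{2j-3}{1}=2j-3$ and $2j^2-j-1=(2j+1)(j-1)$ give $G(j,j)=(-1)^j 2^{2j-2}(2j+1)$ as well, and induction on $i$ then closes the argument.

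The step I expect to be the real obstacle is confirming the validity of the telescoping certificate once $j$ is carried symbolically: the recurrence coefficients and the certificate are rational in both $i$ and $j$, and I must ensure no denominator introduced by the algorithm vanishes for integer $2\le j\le i$, and that the returned order is indeed one, so that the single base case above is enough.

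As a conceptual check and an independent route, it is worth recording that $t(i,k)=4(-1)^i c_k^{(i)}\binom{k}{j-1}$, where $c_k^{(i)}$ is the coefficient of $x^{2k-1}$ in the Chebyshev polynomial $T_{2i-1}(x)$; hence $F(i,j)=\tfrac{4(-1)^i}{(j-1)!}\,\frac{d^{j-1}}{dw^{j-1}}\big[\sqrt{w}\,T_{2i-1}(\sqrt{w})\big]_{w=1}$, and Lemma 2 is exactly the $j=1$ instance $T_{2i-1}(1)=1$. Since $\sqrt{w}\,T_{2i-1}(\sqrt{w})=\tfrac12\big(T_{2i}(\sqrt{w})+T_{2i-2}(\sqrt{w})\big)$, setting $w=1+t$ and using $T_{2m}(\sqrt{1+t})=\sum_{l}\binom{2m}{2l}(1+t)^{m-l}t^{l}$ reduces the claim to extracting the coefficient of $t^{j-1}$, i.e.\ to a single-variable binomial identity that the same WZ machinery settles, confirming the stated closed form.
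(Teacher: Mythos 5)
Your proposal is correct and follows essentially the same route as the paper: invoke Zeilberger's algorithm to obtain a first-order recurrence in $i$ (the paper's certificate $-(i+j-2)(2i^2+2i+j-1)S(i)-(i-j+2)(2i^2-2i+j-1)S(i+1)=-8(j-2)(j-1)\binom{1}{j-1}$, whose right side vanishes for $j\ge 2$, matches the ratio you computed for $G$), verify the closed form satisfies it, and check a base case. Your verification of the base case at general $i=j$ is actually more complete than the paper's check of $i=j=2$ alone, and the Chebyshev reformulation is a nice independent sanity check, but the core argument is the same.
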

\begin{proof}
Note, for $i=j=2$ the formula is true. We apply Zeiberger's algorithm to the sum $$S(i)=\sum_{k=1}^i (-1)^k 2^{2k}\frac{2i-1}{2k-1}\binom{i+k-2}{2k-2}\binom{k}{j-1}$$ The response is: If $i-2$ is a natural number, then:
\begin{align}\notag
-(i+j-2)(2i^2+2i+j-1)S(i)-(i-j+2)(2i^2-2i+j-1)S(i+1)&=-8(j-2)(j-1)\binom{1}{j-1}\\\notag
\end{align}
Let
\begin{align}\notag
f(i)&=(-1)^i 2^{2j-2}\frac{2i^2-2i+j-1}{(2j-3)(j-1)}\binom{i+j-3}{i-j+1}\\\notag
\end{align}
Then
\begin{align}\notag
&-(i+j-2)(2i^2+2i+j-1)f(i)-(i-j+2)(2i^2-2i+j-1)f(i+1)+8(j-2)(j-1)\binom{1}{j-1}\\\notag
&=\frac{8}{\pi}\sin(j\pi)\\\notag
\end{align}
Hence the formula satisfies the recursion for the sum and has the correct initial value.
\end{proof}
In the following proposition, the polynomials $p_i(x)$ are the Chebyshev polynomials of the first kind of odd order.
\begin{proposition}\label{chebyshev}
Assume $i\ge 1, n\ge 2$. Let $$p_i(x)=\sum_{j=1}^{i}(-1)^{j}2^{2j-2}\binom{i+j-2}{2j-2}\frac{2i-1}{2j-1}x^{2j-1}$$
then 
\begin{align}\notag
\sin\left(\frac{(2i-1)\pi}{2^n}\right) &=-p_i(\sin\left(\frac{\pi}{2^n}\right) )\\\notag
\cos\left(\frac{(2i-1)\pi}{2^n}\right) &=(-1)^i p_i(\cos\left(\frac{\pi}{2^n}\right) )\\\notag
\end{align}
\end{proposition}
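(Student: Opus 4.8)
The plan is to reduce both stated identities to a single polynomial identity, namely that $p_i$ is, up to sign, the Chebyshev polynomial of the first kind of order $2i-1$:
\[
p_i(x) = (-1)^i\,T_{2i-1}(x).
\]
Granting this, the cosine identity is immediate: substituting $x=\cos(\pi/2^n)$ into the defining property $T_{2i-1}(\cos\theta)=\cos((2i-1)\theta)$ gives $(-1)^i p_i(\cos(\pi/2^n))=T_{2i-1}(\cos(\pi/2^n))=\cos((2i-1)\pi/2^n)$. It is worth noting that neither identity actually depends on $n$: both are polynomial identities valid for every angle, and the value $\theta=\pi/2^n$ is only the intended application. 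For the sine identity I would exploit the complementary-angle reflection. Putting $\theta=\pi/2-\phi$ and expanding $\cos\bigl((2i-1)(\pi/2-\phi)\bigr)$ with the angle-subtraction formula, the term carrying $\cos((2i-1)\pi/2)$ vanishes (it is the cosine of an odd multiple of $\pi/2$) while $\sin((2i-1)\pi/2)=(-1)^{i-1}$, leaving $T_{2i-1}(\sin\phi)=(-1)^{i-1}\sin((2i-1)\phi)$. Combined with the polynomial identity this gives $\sin((2i-1)\phi)=(-1)^{i-1}(-1)^i p_i(\sin\phi)=-p_i(\sin\phi)$, and specializing $\phi=\pi/2^n$ finishes the second line.

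So the whole weight of the proposition rests on the polynomial identity $p_i(x)=(-1)^i T_{2i-1}(x)$, and I would prove it by matching coefficients against the classical closed form
\[
T_n(x)=\frac{n}{2}\sum_{k=0}^{\lfloor n/2\rfloor}\frac{(-1)^k}{n-k}\binom{n-k}{k}(2x)^{n-2k}.
\]
Taking $n=2i-1$ and re-indexing by $k=i-j$ (so that the power $n-2k=2j-1$ ranges over the odd exponents $1,3,\dots,2i-1$), the coefficient of $x^{2j-1}$ becomes $(-1)^{i+j}(2i-1)2^{2j-2}\frac{1}{i+j-1}\binom{i+j-1}{i-j}$. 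The elementary factorial rearrangement $\frac{1}{i+j-1}\binom{i+j-1}{i-j}=\frac{1}{2j-1}\binom{i+j-2}{2j-2}$ then turns this into $(-1)^{i+j}2^{2j-2}\frac{2i-1}{2j-1}\binom{i+j-2}{2j-2}$, which is exactly $(-1)^i$ times the $j$-th coefficient of $p_i$, as required.

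Alternatively, and more in keeping with the recurrence/Zeilberger toolkit used elsewhere in the paper, I would verify the three-term recurrence for the odd-order Chebyshev polynomials, $T_{2i+1}=(4x^2-2)T_{2i-1}-T_{2i-3}$ (a consequence of $2T_2T_{2i-1}=T_{2i+1}+T_{2i-3}$ together with $T_2=2x^2-1$), which under $(-1)^i p_i=T_{2i-1}$ translates into $p_{i+1}=(2-4x^2)p_i-p_{i-1}$. Checking that the explicit sum defining $p_i$ satisfies this is a finite binomial identity of precisely the shape handled by Zeilberger's algorithm; the base cases $p_1=-x$ and $p_2=4x^3-3x$ are checked by hand, and the preceding summation lemma $\sum_{j=1}^{i}(-1)^j 2^{2j-1}\frac{2i-1}{2j-1}\binom{i+j-2}{2j-2}=(-1)^i 2$ serves as the boundary value $p_i(1)=(-1)^i$ (equivalently $T_{2i-1}(1)=1$), a useful consistency check.

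The main obstacle is this coefficient/recurrence verification; everything trigonometric around it is routine. In practice the two delicate points are keeping the sign bookkeeping straight — the asymmetry between the $(-1)^i$ in the cosine line and the $-1$ absorbing $(-1)^{i-1}(-1)^i$ in the sine line — and confirming that the factorial rearrangement (or the Zeilberger certificate) remains valid at the summation endpoints $j=1$ and $j=i$, where the binomial coefficients degenerate and an off-by-one slip is easiest to make.
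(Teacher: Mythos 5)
Your proposal is correct, and it takes a genuinely different route from the paper. The paper proves both identities by direct induction on $i$: it expands $\cos\left(\frac{(2i-1)\pi}{2^n}+\frac{2\pi}{2^n}\right)$ with the angle-addition formula, converts everything to powers of $\cos(\pi/2^n)$ via the binomial theorem, and then collapses the resulting double sums using the two preceding Zeilberger-certified lemmas (Lemma 2.2 for the coefficient of the linear term and Lemma 2.3 for the interior coefficients); the sine case is run through the same machinery separately. You instead prove the single polynomial identity $p_i(x)=(-1)^i T_{2i-1}(x)$ by matching coefficients against the classical closed form of $T_n$ (your re-indexing $k=i-j$ and the rearrangement $\frac{1}{i+j-1}\binom{i+j-1}{i-j}=\frac{1}{2j-1}\binom{i+j-2}{2j-2}$ both check out, including at the endpoints $j=1$ and $j=i$), after which the cosine identity is the defining property of $T_{2i-1}$ and the sine identity follows from the reflection $\sin\phi=\cos(\pi/2-\phi)$ together with $\cos((2i-1)\pi/2)=0$ and $\sin((2i-1)\pi/2)=(-1)^{i-1}$; your sign bookkeeping $(-1)^{i-1}(-1)^i=-1$ is right. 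Your approach is substantially shorter and makes the paper's own remark that the $p_i$ "are the Chebyshev polynomials of the first kind of odd order" do actual work, at the cost of importing the classical closed form for $T_n$ as a known result (or, in your alternative, the product formula $2T_2T_{2i-1}=T_{2i+1}+T_{2i-3}$); the paper's argument is self-contained apart from its two computer-certified summation lemmas, and those lemmas are reused later in the paper, so the longer induction is not wasted effort in context. One small observation: your consistency check correctly identifies Lemma 2.2 as the statement $p_i(1)=(-1)^i$, i.e.\ $T_{2i-1}(1)=1$, which is a nice sanity link between the two proofs.
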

\begin{proof}
We begin with the cosine formula. We start with induction on $i$. The statement is true for $i=1$,$j=1$. We assume the following is true for fixed value $i$:
\begin{align}\notag
\sin\left(\frac{(2i-1)\pi}{2^n}\right) &=-\sum_{j=1}^i (-1)^j 2^{2j-2}\binom{i+j-2}{2j-2}\frac{2i-1}{2j-1}\sin^{2j-1}\left(\frac{\pi}{2^n}\right)\\\notag
\cos\left(\frac{(2i-1)\pi}{2^n}\right) &=(-1)^i\sum_{j=1}^i (-1)^j 2^{2j-2}\binom{i+j-2}{2j-2}\frac{2i-1}{2j-1}\cos^{2j-1}\left(\frac{\pi}{2^n}\right)\\\notag
\end{align}
We want to show 
\begin{align}\notag
\cos\left(\frac{(2i+1)\pi}{2^n}\right) &=(-1)^{i+1}\sum_{j=1}^{i+1} (-1)^j 2^{2j-2}\binom{i+j-1}{2j-2}\frac{2i+1}{2j-1}\cos^{2j-1}\left(\frac{\pi}{2^n}\right)\\\notag
\end{align}
\begin{align}\notag
&\cos\left(\frac{(2i-1)\pi}{2^n}+\frac{2\pi}{2^n}\right)=\cos(\frac{(2i-1)\pi}{2^n})\cos(\frac{2\pi}{2^n})-\sin(\frac{(2i-1)\pi}{2^n})\sin(\frac{2\pi}{2^n})\\\notag
&=(-1)^i\sum_{j=1}^i (-1)^j 2^{2j-2}\binom{i+j-2}{2j-2}\frac{2i-1}{2j-1}\cos^{2j-1}\left(\frac{\pi}{2^n}\right)\cos(\frac{2\pi}{2^n})\\\notag
&+\sum_{j=1}^i (-1)^j 2^{2j-2}\binom{i+j-2}{2j-2}\frac{2i-1}{2j-1}\sin^{2j-1}\left(\frac{\pi}{2^n}\right)\sin(\frac{2\pi}{2^n})\\\notag
&=\sum_{j=1}^i (-1)^j 2^{2j-2}\binom{i+j-2}{2j-2}\frac{2i-1}{2j-1}\biggl((-1)^i\cos^{2j-1}\left(\frac{\pi}{2^n}\right)\cos(\frac{2\pi}{2^n})\\\notag
&+\sin^{2j-1}\left(\frac{\pi}{2^n}\right)\sin(\frac{2\pi}{2^n})\biggr)\\\notag
&=\sum_{j=1}^i (-1)^j 2^{2j-2}\binom{i+j-2}{2j-2}\frac{2i-1}{2j-1}\biggl((-1)^i\cos^{2j-1}\left(\frac{\pi}{2^n}\right)(2\cos^2(\frac{\pi}{2^n})-1)\\\notag
&+\sin^{2j}\left(\frac{\pi}{2^n}\right)(2\cos(\frac{2\pi}{2^n}))\biggr)\\\notag
&=\sum_{j=1}^i (-1)^j 2^{2j-2}\binom{i+j-2}{2j-2}\frac{2i-1}{2j-1}\biggl((-1)^i\cos^{2j-1}\left(\frac{\pi}{2^n}\right)(2\cos(\frac{\pi}{2^n})-1)\\\notag
&+(1-\cos^{2}\left(\frac{\pi}{2^n}\right))^j(2\cos(\frac{2\pi}{2^n}))\biggr)\\\notag
&=\sum_{j=1}^i (-1)^j 2^{2j-2}\binom{i+j-2}{2j-2}\frac{2i-1}{2j-1}\biggl((-1)^i(2\cos^{2j+1}\left(\frac{\pi}{2^n}\right)-\cos^{2j-1}\left(\frac{\pi}{2^n}\right))\\\notag
&+2\sum_{k=0}^j(-1)^k\binom{j}{k}\cos^{2k}(\frac{2\pi}{2^n})\cos(\frac{2\pi}{2^n})\biggr)\\\notag
&=(-1)^i (2i-1)\cos(\frac{2\pi}{2^n})+2^{2i-1}\cos^{2i+1}(\frac{2\pi}{2^n})\\\notag
&+\sum_{j=1}^{i-1}(-1)^{i+j}2^{2j-1}(2i-1)\left(\frac{1}{2j-1}\binom{i+j-2}{2j-2}+\frac{2}{2j+1}\binom{i+j-1}{2j}\right)\cos^{2j+1}(\frac{2\pi}{2^n})\\\notag
&+\sum_{j=1}^i(-1)^j2^{2j-1}\frac{2i-1}{2j-1}\binom{i+j-2}{2j-2}\sum_{k=0}^j\binom{j}{k}\cos^{2k+1}(\frac{2\pi}{2^n})\\\notag
&=((-1)^i (2i-1)+\sum_{j=1}^i(-1)^j2^{2j-1}\frac{2i-1}{2j-1}\binom{i+j-2}{2j-2} )\cos(\frac{\pi}{2^n})+2^{2i-1}\cos^{2i+1}(\frac{\pi}{2^n})\\\notag
&+\sum_{j=1}^{i-1}(-1)^{i+j}2^{2j-1}(2i-1)\left(\frac{1}{2j-1}\binom{i+j-2}{2j-2}+\frac{2}{2j+1}\binom{i+j-1}{2j}\right)\cos^{2j+1}(\frac{\pi}{2^n})\\\notag
&+\sum_{j=1}^i(-1)^j2^{2j-1}\frac{2i-1}{2j-1}\binom{i+j-2}{2j-2}\sum_{k=1}^j\binom{j}{k}\cos^{2k+1}(\frac{\pi}{2^n})\\\notag
\end{align}
\begin{align}\notag
&=\biggl((-1)^i (2i-1)+\sum_{j=1}^i(-1)^j2^{2j-1}\frac{2i-1}{2j-1}\binom{i+j-2}{2j-2}\biggr )\cos(\frac{\pi}{2^n})+2^{2i}\cos^{2i+1}(\frac{\pi}{2^n})\\\notag
&+\sum_{j=1}^{i-1}(-1)^{i+j}2^{2j-1}(2i-1)\left(\frac{1}{2j-1}\binom{i+j-2}{2j-2}+\frac{2}{2j+1}\binom{i+j-1}{2j}\right)\cos^{2j+1}(\frac{\pi}{2^n})\\\notag
&+\sum_{j=1}^{i-1}(-1)^j2^{2j-1}\frac{2i-1}{2j-1}\binom{i+j-2}{2j-2}\sum_{k=1}^j\binom{j}{k}\cos^{2k+1}(\frac{\pi}{2^n})\\\notag
&+(-1)^i2^{2i-1}\sum_{k=1}^{i-1}(-1)^k\binom{i}{k}\cos^{2k+1}(\frac{\pi}{2^n})\\\notag
&=\biggl((-1)^i (2i-1)+\sum_{j=1}^i(-1)^j2^{2j-1}\frac{2i-1}{2j-1}\binom{i+j-2}{2j-2}\biggr )\cos(\frac{\pi}{2^n})+2^{2i}\cos^{2i+1}(\frac{\pi}{2^n})\\\notag
&+\sum_{j=1}^{i-1}\biggl((-1)^{i+j}2^{2j-1}(2i-1)\left(\frac{1}{2j-1}\binom{i+j-2}{2j-2}+\frac{2}{2j+1}\binom{i+j-1}{2j}\right)+\\\notag
&+\sum_{k=1}^{i-1}(-1)^k2^{2k-1}\frac{2i-1}{2k-1}\binom{i+k-2}{2k-2}(-1)^j\binom{k}{j}\biggr)\cos^{2j+1}(\frac{\pi}{2^n})\\\notag
&+(-1)^i2^{2i-1}\sum_{k=1}^{i-1}(-1)^k\binom{i}{k}\cos^{2k+1}(\frac{\pi}{2^n})\\\notag
&=\biggl((-1)^i (2i-1)+\sum_{j=1}^i(-1)^j2^{2j-1}\frac{2i-1}{2j-1}\binom{i+j-2}{2j-2}\biggr )\cos(\frac{\pi}{2^n})+2^{2i}\cos^{2i+1}(\frac{\pi}{2^n})\\\notag
&+\sum_{j=1}^{i-1}\biggl((-1)^{i+j}2^{2j-1}(2i-1)\left(\frac{1}{2j-1}\binom{i+j-2}{2j-2}+\frac{2}{2j+1}\binom{i+j-1}{2j}\right)+\\\notag
&+(-1)^{i+j}2^{2i-1}\binom{i}{j}+\sum_{k=1}^{i-1}(-1)^{j+k}2^{2k-1}\frac{2i-1}{2k-1}\binom{i+k-2}{2k-2}\binom{k}{j}\biggr)\cos^{2j+1}(\frac{\pi}{2^n})\\\notag
&=\biggl((-1)^i (2i-1)+\sum_{j=1}^i(-1)^j2^{2j-1}\frac{2i-1}{2j-1}\binom{i+j-2}{2j-2}\biggr )\cos(\frac{\pi}{2^n})+2^{2i}\cos^{2i+1}(\frac{\pi}{2^n})\\\notag
&+\sum_{j=2}^{i}\biggl((-1)^{i+j-1}2^{2j-3}(2i-1)\left(\frac{1}{2j-3}\binom{i+j-3}{2j-4}+\frac{2}{2j-1}\binom{i+j-2}{2j-2}\right)+\\\notag
&+(-1)^{i+j-1}2^{2i-1}\binom{i}{j-1}+\sum_{k=1}^{i-1}(-1)^{j+k-1}2^{2k-1}\frac{2i-1}{2k-1}\binom{i+k-2}{2k-2}\binom{k}{j-1}\biggr)\cos^{2j-1}(\frac{\pi}{2^n})\\\notag
&=\biggl((-1)^i (2i-1)+\sum_{j=1}^i(-1)^j2^{2j-1}\frac{2i-1}{2j-1}\binom{i+j-2}{2j-2}\biggr )\cos(\frac{\pi}{2^n})+2^{2i}\cos^{2i+1}(\frac{\pi}{2^n})\\\notag
&+\sum_{j=2}^{i}(-1)^{i+j-1}2^{2j-3}\biggl((2i-1)\left(\frac{1}{2j-3}\binom{i+j-3}{2j-4}+\frac{2}{2j-1}\binom{i+j-2}{2j-2}\right)+\\\notag
&+2^{2i-2j+2}\binom{i}{j-1}+ \frac{2i^2-2i+j-1}{(2j-3)(j-1)}\binom{i+j-3}{i-j+1}-2^{2i-2j+2}\binom{i}{j-1}\biggr)\cos^{2j-1}(\frac{\pi}{2^n})\\\notag
\end{align}
\begin{align}\notag
&=\biggl((-1)^i (2i-1)+\sum_{j=1}^i(-1)^j2^{2j-1}\frac{2i-1}{2j-1}\binom{i+j-2}{2j-2}\biggr )\cos(\frac{\pi}{2^n})+2^{2i}\cos^{2i+1}(\frac{\pi}{2^n})\\\notag
&+\sum_{j=2}^{i}(-1)^{i+j-1}2^{2j-3}\biggl(\left(\frac{2i-1}{2j-3}+\frac{2i^2-2i+j-1}{(2j-3)(j-1)}\right)\binom{i+j-3}{2j-4}\\\notag
&+\frac{2(2i-1)}{2j-1}\binom{i+j-2}{2j-2}\biggr)\cos^{2j-1}(\frac{\pi}{2^n})\\\notag
&=\biggl((-1)^i (2i-1)+\sum_{j=1}^i(-1)^j2^{2j-1}\frac{2i-1}{2j-1}\binom{i+j-2}{2j-2}\biggr )\cos(\frac{\pi}{2^n})+2^{2i}\cos^{2i+1}(\frac{\pi}{2^n})\\\notag
&+\sum_{j=2}^{i}(-1)^{i+j-1}2^{2j-2}\biggl(\left(\frac{i^2-2i+ij}{(2j-3)(j-1)}+\frac{(i+j-2)(i-j+1)}{(2j-2)(2j-3)}\right)\binom{i+j-3}{2j-4}\biggr)\\\notag
&\cos^{2j-1}(\frac{\pi}{2^n})\\\notag
&=\biggl((-1)^i (2i-1)+\sum_{j=1}^i(-1)^j2^{2j-1}\frac{2i-1}{2j-1}\binom{i+j-2}{2j-2}\biggr )\cos(\frac{\pi}{2^n})+2^{2i}\cos^{2i+1}(\frac{\pi}{2^n})\\\notag
&+\sum_{j=2}^{i}(-1)^{i+j-1}2^{2j-2}\frac{2+i-5i^2+2i^3-3j-4ij+4i^2j+j^2+2ij^2}{(2j-1)(2j-2)(2j-3)}\binom{i+j-3}{2j-4}\cos^{2j-1}(\frac{\pi}{2^n})\\\notag
&=\biggl((-1)^i (2i-1)+\sum_{j=1}^i(-1)^j2^{2j-1}\frac{2i-1}{2j-1}\binom{i+j-2}{2j-2}\biggr )\cos(\frac{\pi}{2^n})+2^{2i}\cos^{2i+1}(\frac{\pi}{2^n})\\\notag
&+\sum_{j=2}^{i}(-1)^{i+j-1}2^{2j-2}\frac{2+i-5i^2+2i^3-3j-4ij+4i^2j+j^2+2ij^2}{(2j-1)(i+j-1)(i+j-2)}\binom{i+j-1}{2j-2}\cos^{2j-1}(\frac{\pi}{2^n})\\\notag
&=\biggl((-1)^i (2i-1)+\sum_{j=1}^i(-1)^j2^{2j-1}\frac{2i-1}{2j-1}\binom{i+j-2}{2j-2}\biggr )\cos(\frac{\pi}{2^n})+2^{2i}\cos^{2i+1}(\frac{\pi}{2^n})\\\notag
&+\sum_{j=2}^{i}(-1)^{i+j-1}2^{2j-2}\frac{2i+1}{2j-1}\binom{i+j-1}{2j-2}\cos^{2j-1}(\frac{\pi}{2^n})\\\notag
&=\sum_{j=1}^{i+1}(-1)^{i+j-1}2^{2j-2}\frac{2i+1}{2j-1}\binom{i+j-1}{2j-2}\cos^{2j-1}(\frac{\pi}{2^n})\\\notag
\end{align}
The proof of the sine formula is similar. Instead of beginning with
\begin{align}\notag
&\cos\left(\frac{(2i-1)\pi}{2^n}+\frac{2\pi}{2^n}\right)=\cos(\frac{(2i-1)\pi}{2^n})\cos(\frac{2\pi}{2^n})-\sin(\frac{(2i-1)\pi}{2^n})\sin(\frac{2\pi}{2^n})\\\notag
\end {align}
we begin with
\begin{align}\notag
&\sin\left(\frac{(2i-1)\pi}{2^n}+\frac{2\pi}{2^n}\right)=\sin(\frac{(2i-1)\pi}{2^n})\cos(\frac{2\pi}{2^n})+\cos(\frac{(2i-1)\pi}{2^n})\sin(\frac{2\pi}{2^n})\\\notag
\end{align}
We convert all terms to sine, including using the binomial theorem, as above. We reorder terms, manipulate binomial coefficients and combine terms using the same two lemmas.
\end{proof}
\begin{proposition}
\begin{align}\notag
p_i(x)&=\frac{1}{2x}\biggl(\sqrt{x^2(x^2-1)}\left(\left(1-2x^2+2\sqrt{x^2(x^2-1)}\right)^i-\left(1-2x^2-2\sqrt{x^2(x^2-1)}\right)^i\right)\\\notag
&+x^2\left(\left(1-2x^2+2\sqrt{x^2(x^2-1)}\right)^i+\left(1-2x^2-2\sqrt{x^2(x^2-1)}\right)^i\right)\biggr)\\\notag
\end{align}

\end{proposition}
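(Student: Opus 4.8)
The plan is to reduce the right-hand side to the two-term (Binet) form of a Chebyshev polynomial of the first kind and then identify it with $p_i$ through Proposition~\ref{chebyshev}. First I would pin down which Chebyshev polynomial is involved. Putting $x=\cos\theta$ in Proposition~\ref{chebyshev} and using $T_{2i-1}(\cos\theta)=\cos((2i-1)\theta)$ gives $p_i(\cos\theta)=(-1)^i\cos((2i-1)\theta)=(-1)^iT_{2i-1}(\cos\theta)$ for every $\theta=\pi/2^n$ with $n\ge 2$. As $n$ varies these arguments are infinitely many distinct numbers, so the polynomial identity $p_i(x)=(-1)^iT_{2i-1}(x)$ holds, where $T_m$ is the Chebyshev polynomial of the first kind. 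It therefore suffices to show that the displayed right-hand side equals $(-1)^iT_{2i-1}(x)$.

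The key observation is that the two bases in the formula are negatives of perfect squares. Writing $\sqrt{x^2(x^2-1)}=x\sqrt{x^2-1}$ and setting $a=x+\sqrt{x^2-1}$, $b=x-\sqrt{x^2-1}$, so that $ab=1$, $a+b=2x$ and $a-b=2\sqrt{x^2-1}$, a one-line expansion yields
\begin{align}\notag
1-2x^2+2\sqrt{x^2(x^2-1)}&=-b^2,\\\notag
1-2x^2-2\sqrt{x^2(x^2-1)}&=-a^2.\notag
\end{align}
Consequently the $i$-th powers appearing in the statement are $(-1)^ib^{2i}$ and $(-1)^ia^{2i}$, which is the crucial simplification: it turns the exponent $i$ into the even exponent $2i$ and brings everything into the orbit of the relation $ab=1$.

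Next I would substitute these into the right-hand side, together with $\sqrt{x^2(x^2-1)}=\tfrac12x(a-b)$, pull out the common factor $(-1)^ix$, and cancel it against the prefactor $\tfrac{1}{2x}$ to leave $\tfrac{(-1)^i}{2}$. Replacing the last remaining $x$ by $\tfrac12(a+b)$, the bracketed quantity becomes
$$\tfrac12\left[(a-b)(b^{2i}-a^{2i})+(a+b)(b^{2i}+a^{2i})\right].$$
Here the terms $a^{2i+1}$ and $b^{2i+1}$ cancel, and using $ab=1$ to write $ab^{2i}=b^{2i-1}$ and $ba^{2i}=a^{2i-1}$ the expression collapses to $a^{2i-1}+b^{2i-1}$. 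Thus the whole right-hand side equals $\tfrac{(-1)^i}{2}(a^{2i-1}+b^{2i-1})$, and since $\tfrac12(a^{2i-1}+b^{2i-1})=T_{2i-1}(x)$ is exactly the Binet form of the Chebyshev polynomial, this is $(-1)^iT_{2i-1}(x)=p_i(x)$, as required.

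The one point requiring care is the branch of the square roots. I would treat $\sqrt{x^2-1}$ as a formal quantity with square $x^2-1$ and fix $\sqrt{x^2(x^2-1)}=x\sqrt{x^2-1}$, so that $a$ and $b$ are conjugate over $\mathbb{Q}(x)$ and the manipulations above are purely algebraic. The final expression $\tfrac{(-1)^i}{2}(a^{2i-1}+b^{2i-1})$ is symmetric under $a\leftrightarrow b$ (equivalently under $\sqrt{x^2-1}\mapsto-\sqrt{x^2-1}$), hence lies in $\mathbb{Q}(x)$ and is independent of the branch; this is what legitimizes equating a square-root expression with the polynomial $p_i$. The main obstacle is thus bookkeeping rather than analysis: keeping the branch conventions consistent through the cancellation and getting the signs right, so that the two bases become $-b^2$ and $-a^2$ and the cross terms $a^{2i+1},b^{2i+1}$ indeed disappear.
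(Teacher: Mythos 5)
Your proposal is correct, but it takes a genuinely different route from the paper. The paper works directly from the defining sum for $p_i(x)$: it checks $p_1(x)=-x$, runs Zeilberger's algorithm to obtain the three-term recurrence $S_{i+2}+2(2x^2-1)S_{i+1}+S_i=0$, and then solves that constant-coefficient (in $i$) recurrence, the two bases $1-2x^2\pm2\sqrt{x^2(x^2-1)}$ arising as the roots of the characteristic polynomial $t^2+2(2x^2-1)t+1$. You instead first identify $p_i=(-1)^iT_{2i-1}$ by evaluating Proposition~\ref{chebyshev} at the infinitely many points $\cos(\pi/2^n)$ (the same interpolation device the paper itself uses in its corollary on commuting compositions), and then verify the closed form against the Binet representation $T_m(x)=\tfrac12(a^m+b^m)$ with $a=x+\sqrt{x^2-1}$, $b=x-\sqrt{x^2-1}$; your observation that the two bases are exactly $-b^2$ and $-a^2$ is the right structural explanation of where the formula comes from, and your algebra (the cross terms $a^{2i+1},b^{2i+1}$ cancelling and $ab=1$ collapsing the rest to $a^{2i-1}+b^{2i-1}$) checks out, as does your remark on branch-independence. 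What each approach buys: the paper's argument is mechanical and does not depend on Proposition~\ref{chebyshev}, but it leans on a computer-generated recurrence and, as written, only exhibits one initial value ($p_1(x)=-x$) for a second-order recurrence, leaving the second initial condition implicit; your argument is fully hand-checkable, needs no recurrence at all, and sidesteps the initial-condition bookkeeping entirely, at the cost of importing Proposition~\ref{chebyshev} (which precedes this statement in the paper, so the logical order is sound). Both proofs ultimately hinge on the same quadratic $t^2+2(2x^2-1)t+1=0$.
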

\begin{proof}
We note $p_1(x)=-x$, which agrees with the formula. We apply the Zeilberger algorithm to $p_i(x)$. The response is, if $i-1$ is a natural number, then:
$$-SUM[i]-2(2x^2-1)SUM[i+1]-SUM[i+2]=0$$
We then solve this recursion.
\end{proof}
This result can be generalised to a formula for the sine and cosine of odd multiples of an angle.
\begin{proposition}
Assume $i\in\mathbb{Z}$, $i\ge 0$, then
\begin{align}\notag
\sin((2i-1)\theta)&=-p_i(\sin(\theta))\\\notag
\cos((2i-1)\theta)&=(-1)^i p_i(\cos(\theta))\\\notag
\end{align}
\end{proposition}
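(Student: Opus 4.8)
The plan is to reduce both trigonometric identities to \emph{polynomial} identities in one variable and then invoke Proposition~\ref{chebyshev}, which has already established them at the specific angles $\theta=\pi/2^n$. The first thing to notice is that the induction carried out in the proof of Proposition~\ref{chebyshev} never used any property of $\pi/2^n$ beyond its being an angle: every step used only the addition formula, $\sin^2+\cos^2=1$, the double-angle formula, the binomial theorem, and the two combinatorial lemmas. Hence replacing $\pi/2^n$ by $\theta$ and $2\pi/2^n$ by $2\theta$ throughout yields a valid proof verbatim. To make this precise rather than by inspection, note that $\cos((2i-1)\theta)$ is a polynomial in $x=\cos\theta$ (the Chebyshev polynomial $T_{2i-1}$), while $(-1)^i p_i(\cos\theta)$ is visibly a polynomial in $\cos\theta$; and for the odd integer $2i-1$, $\sin((2i-1)\theta)$ is an odd polynomial in $s=\sin\theta$ of degree $2i-1$, since $\sin((2i-1)\theta)/\sin\theta=U_{2i-2}(\cos\theta)$ is an even polynomial and hence a polynomial in $\cos^2\theta=1-s^2$. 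Thus the two assertions are equivalent to the polynomial identities $T_{2i-1}(x)=(-1)^i p_i(x)$ in $x$ and $\sin((2i-1)\theta)=-p_i(s)$ in $s$.

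For each fixed $i$, Proposition~\ref{chebyshev} gives these equalities at $\theta=\pi/2^n$ for every $n\ge 2$. The values $\cos(\pi/2^n)$ (respectively $\sin(\pi/2^n)$) for $n\ge 2$ are pairwise distinct, because the angles $\pi/2^n$ are distinct and lie in $(0,\pi/2)$, where both cosine and sine are injective. This produces infinitely many distinct points at which the two polynomials of each pair agree, and two polynomials that agree at infinitely many points coincide. Evaluating the resulting identities at $x=\cos\theta$ and $s=\sin\theta$ for an arbitrary real $\theta$ then yields both formulas at once.

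A self-contained alternative avoids Proposition~\ref{chebyshev} entirely by using recurrences. The preceding proposition shows $p_i$ satisfies $p_{i+1}+2(2x^2-1)p_i+p_{i-1}=0$, so $h_i:=(-1)^i p_i(x)$ satisfies $h_{i+1}=2(2x^2-1)h_i-h_{i-1}$, which is exactly the recurrence obtained for $g_i:=\cos((2i-1)\theta)$ from the product-to-sum identity $\cos((2i+1)\theta)+\cos((2i-3)\theta)=2\cos(2\theta)\cos((2i-1)\theta)$ together with $\cos(2\theta)=2x^2-1$. Checking the two base cases $i=1$ (where $p_1(x)=-x$, so $h_1=x=\cos\theta$) and $i=2$ (where $p_2(x)=4x^3-3x$, so $h_2=4\cos^3\theta-3\cos\theta=\cos 3\theta$) then closes the induction. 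The sine case is identical, except that in terms of $s=\sin\theta$ one has $\cos(2\theta)=1-2s^2=-(2s^2-1)$; this sign change is precisely what makes $-p_i(s)$, with no factor $(-1)^i$, satisfy the same recurrence as $\sin((2i-1)\theta)$.

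The computations here are short, so the only genuine subtlety is the boundary. At $i=0$ the sum defining $p_0$ is empty, so the stated formulas read $\cos(-\theta)=0$ and $\sin(-\theta)=0$, which are false; the recurrence confirms that the correct continuation at $i=0$ is $\cos\theta$ and $\sin\theta$, not $p_0=0$. Hence the identities hold for $i\ge 1$, and the hypothesis $i\ge 0$ should be read as $i\ge 1$. The main point to watch in writing up the polynomial-identity route is therefore to restrict to $i\ge 1$ and to confirm that both functions really are polynomials in a single trigonometric variable, after which the infinitely-many-points argument is immediate.
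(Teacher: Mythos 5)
Your proposal is correct, and the first thing to say is that the paper states this proposition with no proof at all (only the remark that the earlier result ``can be generalised''), so there is no argument of the author's to compare against; your write-up fills a genuine gap. Both of your routes are sound. The first --- observing that $\cos((2i-1)\theta)$ is a polynomial in $\cos\theta$ and that $\sin((2i-1)\theta)$ is a polynomial in $\sin\theta$ (because $\sin((2i-1)\theta)/\sin\theta=U_{2i-2}(\cos\theta)$ is an even polynomial in $\cos\theta$, hence a polynomial in $1-\sin^2\theta$), and then using Proposition~\ref{chebyshev} to get agreement with $(-1)^ip_i$ and $-p_i$ at the infinitely many distinct points $\cos(\pi/2^n)$ and $\sin(\pi/2^n)$, $n\ge2$ --- is exactly the style of argument the paper itself deploys for the corollary $(-1)^ip_i((-1)^jp_j(x))=(-1)^jp_j((-1)^ip_i(x))$, so it fits the paper's toolkit naturally. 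The second route, via the three-term recurrence $p_{i+2}+2(2x^2-1)p_{i+1}+p_i=0$ from the closed-form proposition combined with $\cos((2i+1)\theta)+\cos((2i-3)\theta)=2\cos(2\theta)\cos((2i-1)\theta)$ and its sine analogue, is self-contained and arguably cleaner, since it bypasses the long inductive computation of Proposition~\ref{chebyshev} entirely; your sign bookkeeping ($\cos 2\theta=2\cos^2\theta-1$ versus $\cos2\theta=-(2\sin^2\theta-1)$, which is what removes the factor $(-1)^i$ in the sine case) is right. Your boundary observation is also a genuine correction to the statement as printed: at $i=0$ the empty sum gives $p_0=0$, while $\cos(-\theta)=\cos\theta$ and $\sin(-\theta)=-\sin\theta$, so the hypothesis $i\ge0$ is false at $i=0$ and must be read as $i\ge1$, consistently with the hypothesis $i\ge1$ of Proposition~\ref{chebyshev}.
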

\begin{proposition}
\begin{align}\notag
(-1)^ip_i\left(\cos\left(\frac{(2j-1)\pi}{2^n}\right)\right)&=\cos\left(\frac{\pi}{2^n}(2(2ij-j-i+1)-1)\right)\\\notag
-p_i\left(\sin\left(\frac{(2j-1)\pi}{2^n}\right)\right)&=\sin\left(\frac{\pi}{2^n}(2(2ij-j-i+1)-1)\right)\\\notag
\end{align}
\end{proposition}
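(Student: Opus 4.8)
The plan is to recognise this proposition as an immediate consequence of the preceding one, which supplies the odd-multiple-angle forms $\cos((2i-1)\theta) = (-1)^i p_i(\cos\theta)$ and $\sin((2i-1)\theta) = -p_i(\sin\theta)$ valid for every real $\theta$. The strategy is therefore to specialise $\theta$ to the elementary angle $\frac{(2j-1)\pi}{2^n}$ and then reduce the resulting trigonometric argument to the claimed form by a purely algebraic manipulation of the integer product $(2i-1)(2j-1)$. No new analytic input is required, since the genuine work of showing that $(-1)^i p_i$ and $-p_i$ realise these maps has already been done.

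Concretely, I would first set $\theta = \frac{(2j-1)\pi}{2^n}$ in the cosine identity of the previous proposition, obtaining
$$(-1)^i p_i\left(\cos\frac{(2j-1)\pi}{2^n}\right) = \cos\left((2i-1)\cdot\frac{(2j-1)\pi}{2^n}\right) = \cos\left(\frac{(2i-1)(2j-1)\pi}{2^n}\right).$$
The entire content then rests on the elementary identity $(2i-1)(2j-1) = 2(2ij - i - j + 1) - 1$, verified by expanding both sides to $4ij - 2i - 2j + 1$. Substituting this into the argument rewrites the right-hand side as $\cos\left(\frac{\pi}{2^n}(2(2ij-j-i+1)-1)\right)$, which is exactly the asserted expression.

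The sine statement follows by the same route: taking $\theta = \frac{(2j-1)\pi}{2^n}$ in $\sin((2i-1)\theta) = -p_i(\sin\theta)$ gives $-p_i\left(\sin\frac{(2j-1)\pi}{2^n}\right) = \sin\left(\frac{(2i-1)(2j-1)\pi}{2^n}\right)$, and the identical algebraic rewriting of the integer coefficient completes the argument. Thus both formulae reduce to a single line once the previous proposition is invoked.

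Because all of the hard inductive and hypergeometric machinery lives in the earlier propositions, there is essentially no obstacle to overcome here; the only point demanding any care is the bookkeeping of the linear form in $i$ and $j$ appearing inside the argument. In particular one should confirm that $2(2ij - i - j + 1) - 1$ and $(2i-1)(2j-1)$ coincide as integers, not merely modulo $2^{n+1}$, so that no periodicity or reduction-of-angle argument is needed and the equality of cosines (and of sines) is exact.
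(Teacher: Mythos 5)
Your proposal is correct and is essentially identical to the paper's own proof, which simply sets $\theta=\frac{(2j-1)\pi}{2^n}$ in the preceding odd-multiple-angle proposition. The only addition you make is to spell out the (correct) identity $(2i-1)(2j-1)=2(2ij-i-j+1)-1$, which the paper leaves implicit.
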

\begin{proof}
In the previous proposition, set $\theta=\frac{(2j-1)\pi}{2^n}$
\end{proof}
\begin{corollary}
\begin{align}\notag
(-1)^ip_i((-1)^jp_j(x))=(-1)^jp_j((-1)^ip_i(x))\\\notag
\end{align}
\end{corollary}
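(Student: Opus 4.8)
The plan is to exploit the fact, recorded in the preceding proposition, that the map $x \mapsto (-1)^i p_i(x)$ realizes multiplication of angles by $2i-1$ at the level of cosines: for every real $\theta$,
$$(-1)^i p_i(\cos\theta) = \cos((2i-1)\theta).$$
Writing $q_i(x) := (-1)^i p_i(x)$ for brevity, the identity to be proved is simply $q_i(q_j(x)) = q_j(q_i(x))$.

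First I would substitute $x = \cos\theta$ and evaluate the left-hand composite from the inside out. Applying the displayed identity once gives $q_j(\cos\theta) = \cos((2j-1)\theta)$; applying it a second time with the angle $(2j-1)\theta$ in place of $\theta$ yields
$$q_i(q_j(\cos\theta)) = q_i(\cos((2j-1)\theta)) = \cos((2i-1)(2j-1)\theta).$$
The right-hand side depends on $i$ and $j$ only through the product $(2i-1)(2j-1)$, which is symmetric in $i$ and $j$. Hence interchanging the roles of $i$ and $j$ produces the identical value, i.e.\ $q_i(q_j(\cos\theta)) = q_j(q_i(\cos\theta))$ for every real $\theta$.

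It remains to upgrade this from an identity of functions of $\theta$ to an identity of polynomials in $x$. As $\theta$ ranges over the reals, $\cos\theta$ takes every value in $[-1,1]$, an infinite set. The two polynomials $q_i\circ q_j$ and $q_j\circ q_i$ therefore agree at infinitely many points, so their difference is a polynomial with infinitely many roots and must be the zero polynomial. This gives $q_i(q_j(x)) = q_j(q_i(x))$ identically in $x$, which upon unwinding the abbreviation is precisely the claimed corollary.

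I expect no serious obstacle here: the result is a clean consequence of the commutativity of integer multiplication inside the angle. The only point needing care is the final step, where one must remember that the desired statement is a polynomial identity valid for all $x$ (not merely for $x\in[-1,1]$), and invoke the standard fact that a nonzero polynomial has only finitely many roots. One could alternatively run the same computation through the sine identity $-p_i(\sin\theta) = \sin((2i-1)\theta)$ as an independent check, but a single evaluation on the cosine already pins the polynomials down completely.
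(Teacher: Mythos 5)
Your proposal is correct and follows essentially the same route as the paper: both use the angle-multiplication identity $(-1)^ip_i(\cos\theta)=\cos((2i-1)\theta)$ to show the two composites equal $\cos((2i-1)(2j-1)\theta)$, then conclude from the fact that two polynomials agreeing at infinitely many points coincide. The only cosmetic difference is that you evaluate at arbitrary $\theta$ while the paper evaluates at the specific family $\theta=\pi/2^n$; the symmetry of $(2i-1)(2j-1)=2(2ij-i-j+1)-1$ is the heart of the argument in both cases.
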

\begin{proof}
\begin{align}\notag
(-1)^jp_j\left((-1)^ip_i\left(\cos\left(\frac{\pi}{2^n}\right)\right)\right)&=(-1)^jp_j\left(\cos\left(\frac{(2i-1)\pi}{2^n}\right)\right)=\cos\left(\frac{\pi}{2^n}(2(2ij-j-i+1)-1)\right)\\\notag
(-1)^ip_i\left((-1)^jp_j\left(\cos\left(\frac{\pi}{2^n}\right)\right)\right)&=(-1)^ip_i\left(\cos\left(\frac{(2j-1)\pi}{2^n}\right)\right)=\cos\left(\frac{\pi}{2^n}(2(2ij-j-i+1)-1)\right)\\\notag
\end{align}
We have two finite degree polynomials that agree on an infinite number of distinct values, and must therefore be equal.
\end{proof}
\begin{corollary}
\begin{align}\notag
 p_i(p_j(x))&=p_j(p_i(x))\\\notag
\end{align}
\end{corollary}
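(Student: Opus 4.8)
The plan is to derive this commutativity directly from the preceding corollary,
$$(-1)^i p_i\bigl((-1)^j p_j(x)\bigr) = (-1)^j p_j\bigl((-1)^i p_i(x)\bigr),$$
by exploiting the fact that each $p_i$ is an odd function. The point is that $p_i(p_j(x))$ differs from $(-1)^i p_i\bigl((-1)^j p_j(x)\bigr)$ only by a sign, so once the sign factors are accounted for, the previous corollary delivers the identity essentially for free.

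First I would record the parity of $p_i$. Reading off the definition $p_i(x)=\sum_{j=1}^{i}(-1)^{j}2^{2j-2}\binom{i+j-2}{2j-2}\frac{2i-1}{2j-1}x^{2j-1}$, every exponent $2j-1$ is odd, so $p_i(-x)=-p_i(x)$. Hence for either sign $\varepsilon\in\{+1,-1\}$ we have $p_i(\varepsilon y)=\varepsilon\,p_i(y)$, and in particular $p_i\bigl((-1)^j y\bigr)=(-1)^j p_i(y)$ for every argument $y$. (As a sanity check, this matches the Chebyshev interpretation: $(-1)^i p_i(x)=T_{2i-1}(x)$ is a polynomial of odd degree and therefore odd.)

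Next I would use this homogeneity to simplify both sides of the corollary. Setting $y=p_j(x)$ on the left gives $(-1)^i p_i\bigl((-1)^j p_j(x)\bigr)=(-1)^i(-1)^j p_i(p_j(x))=(-1)^{i+j}p_i(p_j(x))$, and symmetrically the right-hand side becomes $(-1)^{i+j}p_j(p_i(x))$. These two expressions are equal by the corollary, so $(-1)^{i+j}p_i(p_j(x))=(-1)^{i+j}p_j(p_i(x))$. Finally, since $(-1)^{i+j}\neq 0$, I would cancel it to obtain $p_i(p_j(x))=p_j(p_i(x))$ as an identity of polynomials.

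The only place demanding care is the sign bookkeeping: one must confirm the oddness of $p_i$ from the monomial expansion and verify that the factors $(-1)^i$ and $(-1)^j$ pass through the composition exactly as stated (using that $2i-1$ and $2j-1$ are odd). With that checked, the result is immediate; there is no analytic or combinatorial obstacle to overcome, the entire content having already been packaged into the previous corollary.
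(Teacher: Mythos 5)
Your proof is correct and follows exactly the paper's approach: the paper's entire proof is the one-line observation that the $p_i$ are odd functions, and your argument is simply the careful expansion of that remark, pulling the signs $(-1)^i$ and $(-1)^j$ through the compositions in the preceding corollary and cancelling $(-1)^{i+j}$. The sign bookkeeping you carry out is accurate.
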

\begin{proof}
The functions $p_i$ are odd functions.
\end{proof}

\begin{proposition}
\begin{align}\notag
k&=i(2i-1)^{2^{n-1}-1}\\\notag
\end{align}
then
\begin{align}\notag
(-1)^kp_k\left(    (-1)^ip_i\left(\cos(\frac{(2j-1)\pi}{2^n})\right)   \right)&=\cos\left(\frac{(2j-1)\pi}{2^n}\right)\\\notag
\end{align}
\end{proposition}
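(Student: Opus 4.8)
The plan is to recognise $q_i(x):=(-1)^i p_i(x)$ as a Chebyshev polynomial in disguise, turn the composition into multiplication of odd indices, and thereby reduce the whole statement to a single congruence modulo a power of two. By the generalised proposition (the one giving $\cos((2i-1)\theta)=(-1)^ip_i(\cos\theta)$), we have $q_i(\cos\theta)=\cos((2i-1)\theta)$ for every real $\theta$. Applying this twice, the second time with the angle $(2i-1)\theta$ fed into the outer polynomial, I would obtain
\begin{align}\notag
(-1)^k p_k\left((-1)^i p_i(\cos\theta)\right)=q_k\left(q_i(\cos\theta)\right)=q_k\left(\cos((2i-1)\theta)\right)=\cos\left((2k-1)(2i-1)\theta\right).
\end{align}
Setting $\theta=\frac{(2j-1)\pi}{2^n}$, the left-hand side of the proposition equals $\cos\left(\frac{(2k-1)(2i-1)(2j-1)\pi}{2^n}\right)$.

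Since cosine has period $2\pi$, it then suffices to prove the congruence
\begin{align}\notag
(2k-1)(2i-1)\equiv 1 \pmod{2^{n+1}}.
\end{align}
Indeed, if $(2k-1)(2i-1)=1+2^{n+1}t$ for some integer $t$, the argument above becomes $\frac{(2j-1)\pi}{2^n}+2(2j-1)t\pi$, which differs from $\frac{(2j-1)\pi}{2^n}$ by the integer multiple $(2j-1)t$ of $2\pi$, so the two cosines agree. Thus the trigonometric content collapses entirely into this modular statement, which moreover is independent of $j$, as the proposition demands.

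Next I would verify the congruence by direct substitution of $k=i(2i-1)^{2^{n-1}-1}$:
\begin{align}\notag
(2k-1)(2i-1)=\left(2i(2i-1)^{2^{n-1}-1}-1\right)(2i-1)=2i(2i-1)^{2^{n-1}}-(2i-1)=1+2i\left((2i-1)^{2^{n-1}}-1\right).
\end{align}
Hence the target congruence is equivalent to $2i\left((2i-1)^{2^{n-1}}-1\right)\equiv 0\pmod{2^{n+1}}$, and it is enough to establish the cleaner fact $(2i-1)^{2^{n-1}}\equiv 1\pmod{2^n}$, since the explicit factor $2$ then supplies the remaining power of two.

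This last fact is where the real content lives, and it is a standard statement about the unit group $(\mathbb{Z}/2^n\mathbb{Z})^\ast$. For $n\ge 3$ this group is isomorphic to $\mathbb{Z}/2\times\mathbb{Z}/2^{n-2}$, so its exponent is $2^{n-2}$, which divides $2^{n-1}$; hence every odd residue raised to the power $2^{n-1}$ is $1$ modulo $2^n$. The cases $n=1$ (every odd number is $1\bmod 2$) and $n=2$ (every odd square is $1\bmod 8$, a fortiori $1\bmod 4$) are immediate. I expect the main obstacle to be purely the bookkeeping of the two moduli: keeping $2^n$ and $2^{n+1}$ straight and confirming that the single explicit factor of $2$ in $2i\left((2i-1)^{2^{n-1}}-1\right)$ is exactly what upgrades $(2i-1)^{2^{n-1}}\equiv 1\pmod{2^n}$ to the required $(2k-1)(2i-1)\equiv 1\pmod{2^{n+1}}$. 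Everything else---the composition rule and the passage back to cosines---is a formal consequence of the results already proved.
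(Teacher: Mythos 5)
Your proposal is correct and follows essentially the same route as the paper: the paper's (very terse) proof likewise reduces the claim to the congruence of the cosine argument modulo $2\pi$ and invokes Euler's theorem, i.e.\ $(2i-1)^{\varphi(2^n)}=(2i-1)^{2^{n-1}}\equiv 1 \pmod{2^n}$, which is exactly the modular fact you isolate. Your write-up simply supplies the bookkeeping (the composition identity $(-1)^kp_k((-1)^ip_i(\cos\theta))=\cos((2k-1)(2i-1)\theta)$ and the factor of $2$ upgrading the modulus to $2^{n+1}$) that the paper leaves implicit.
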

\begin{proof}
We use Euler's theorem to solve for the cosine argument mod $2\pi$.
\end{proof}
\begin{corollary}
\begin{align}\notag
-p_{    i(2i-1)^{2^{n-1}-1}    }\left(\sin\left(\frac{(2i-1)\pi}{2^n}\right) \right)&=\sin\left(\frac{\pi}{2^n}\right)\\\notag
\end{align}
\end{corollary}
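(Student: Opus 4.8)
The plan is to read the corollary off the generalized sine identity $\sin((2k-1)\theta)=-p_k(\sin\theta)$ proved above, exactly mirroring the argument the preceding proposition uses for cosine. Setting $k=i(2i-1)^{2^{n-1}-1}$ and taking $\theta=\frac{(2i-1)\pi}{2^n}$ in that identity gives at once
$$-p_k\left(\sin\left(\frac{(2i-1)\pi}{2^n}\right)\right)=\sin\left((2k-1)\theta\right)=\sin\left(\frac{(2k-1)(2i-1)\pi}{2^n}\right),$$
so the entire content reduces to analyzing the integer $(2k-1)(2i-1)$ modulo $2^{n+1}$. Indeed, since $\frac{\pi}{2^n}$ has period $2\pi=2^{n+1}\cdot\frac{\pi}{2^n}$, it suffices to establish the single congruence $(2k-1)(2i-1)\equiv 1\pmod{2^{n+1}}$; this is the sine analogue of the "solve the argument mod $2\pi$" step invoked in the previous proposition.

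To prove that congruence I would set $a=2i-1$, so that $2i=a+1$ and $2k=(a+1)a^{2^{n-1}-1}=a^{2^{n-1}}+a^{2^{n-1}-1}$, whence $(2k-1)a=a^{2^{n-1}+1}+a^{2^{n-1}}-a$. The one number-theoretic input needed is that every odd $a$ satisfies $a^{2^{n-1}}\equiv 1\pmod{2^{n+1}}$ for $n\ge 2$; this is precisely the statement that the exponent (reduced totient) of the group $(\mathbb{Z}/2^{n+1}\mathbb{Z})^{\times}$ equals $2^{n-1}$, the "Euler's theorem" step already used just above. Substituting $a^{2^{n-1}}\equiv 1$ collapses the expression to $a+1-a=1\pmod{2^{n+1}}$, as desired. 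Equivalently this records that $2k-1\equiv a^{2^{n-1}-1}\equiv a^{-1}\pmod{2^{n+1}}$, i.e. the exponent $2^{n-1}-1$ is chosen exactly so that $p_k$ inverts the index multiplication by $(2i-1)$.

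Finally I would close the reduction: writing $(2k-1)(2i-1)=1+2^{n+1}t$ for some integer $t$, we get $\sin\left(\frac{(2k-1)(2i-1)\pi}{2^n}\right)=\sin\left(\frac{\pi}{2^n}+2\pi t\right)=\sin\left(\frac{\pi}{2^n}\right)$, which is the claim. The step I expect to require the most care is the \emph{exactness} of the congruence: because $\sin$ is odd rather than even, the cosine-style freedom of allowing $(2k-1)(2i-1)\equiv\pm 1$ is unavailable here, so I must check that the residue is genuinely $+1$ and not $-1$. The computation above does produce $+1$ exactly, so no sign correction is needed — but this is the precise point at which the sine corollary is more delicate than its cosine counterpart, and where I would concentrate the verification.
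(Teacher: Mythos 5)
Your proposal is correct and follows essentially the same route as the paper, which simply invokes the preceding proposition (the multiple-angle identity combined with Euler's theorem to reduce the cosine/sine argument modulo $2\pi$). Your explicit verification that $(2k-1)(2i-1)\equiv +1\pmod{2^{n+1}}$ — using that the exponent of $(\mathbb{Z}/2^{n+1}\mathbb{Z})^{\times}$ is $2^{n-1}$ — supplies exactly the detail the paper leaves implicit, including the sign check that matters for the odd function $\sin$.
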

\begin{proof}
Use previous proposition.
\end{proof}
\begin{proposition}\label{permutation}
Regarding the $2^{n-2}x2^{n-2}$ matrices, $M_n$ which satisfy $$\cos^r(\frac{(2j-1)\pi}{2^n})=\frac{1}{2^{r-1}} \sum_{k=1}^{2^{n-2}}M_n(j,k)\cos(\frac{(2k-1)\pi}{2^n})$$ the set of entries of the first row are repeated in each subsequent row. Their position is permuted and their sign may change. The pattern of permutation and sign change is the same for all powers $r$.
\end{proposition}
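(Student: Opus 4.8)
The plan is to read off every row of $M_n$ from its first row by transporting the defining identity through a field automorphism, and then to observe that the resulting permutation and signs are dictated by the composition law of the $p_i$ rather than by $r$. Throughout write $c_k=\cos\!\left(\frac{(2k-1)\pi}{2^n}\right)$ for $1\le k\le 2^{n-2}$, so that the defining relation reads $c_j^{\,r}=\frac{1}{2^{r-1}}\sum_{k=1}^{2^{n-2}}M_n(j,k)\,c_k$. First I would note that the $c_k$ are linearly independent over $\mathbb{Q}$: by Proposition \ref{chebyshev}, $c_k=(-1)^k p_k(c_1)$ is an odd polynomial of degree $2k-1$ in the primitive element $c_1$, and polynomials of pairwise distinct odd degrees are independent. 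This legitimises comparing coefficients against the tuple $(c_1,\dots,c_{2^{n-2}})$.

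Next I would fix, for each $j$, the automorphism $\sigma_j$ of the extension with $\sigma_j(c_1)=c_j$; it exists by the field-extension Lemma above and is an automorphism because the extension is finite and normal. The heart of the argument is that $\sigma_j$ permutes the cosine basis up to sign, i.e. $\sigma_j(c_k)=\epsilon_{j,k}\,c_{\pi_j(k)}$ with $\epsilon_{j,k}\in\{+1,-1\}$ and $\pi_j$ a permutation of $\{1,\dots,2^{n-2}\}$. To establish this I would compute $\sigma_j(c_k)=(-1)^k p_k(\sigma_j(c_1))=(-1)^k p_k(c_j)$ and apply the proposition evaluating $(-1)^k p_k\!\left(\cos\frac{(2j-1)\pi}{2^n}\right)$, which gives $\sigma_j(c_k)=\cos\!\left(\frac{\pi}{2^n}\big(2(2kj-k-j+1)-1\big)\right)$. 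Because this is a cosine of an odd multiple of $\pi/2^n$ and cosine is even and $2\pi$-periodic, it collapses to exactly one value $\epsilon_{j,k}\,c_{\pi_j(k)}$ with $\pi_j(k)\in\{1,\dots,2^{n-2}\}$, these $\pm c_\ell$ being precisely the $2^{n-1}$ roots of $f_n$; that $\pi_j$ is a bijection follows since $\sigma_j$ is invertible and carries the span of the $c_k$ into itself. The decisive point is that $\epsilon_{j,k}$ and $\pi_j(k)$ are produced solely by reducing the integer $2kj-k-j+1$ under this angle arithmetic, so they depend only on $n$ and $j$ and are completely independent of $r$.

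The conclusion is then a one-line transport. Applying the $\mathbb{Q}$-algebra automorphism $\sigma_j$ to the first-row identity $c_1^{\,r}=\frac{1}{2^{r-1}}\sum_k M_n(1,k)\,c_k$, and using $\sigma_j(c_1^{\,r})=\sigma_j(c_1)^{\,r}=c_j^{\,r}$ (valid for negative $r$ as well, since $c_1\neq0$), yields
\begin{align}\notag
c_j^{\,r}=\frac{1}{2^{r-1}}\sum_{k=1}^{2^{n-2}}M_n(1,k)\,\epsilon_{j,k}\,c_{\pi_j(k)}.
\end{align}
Matching coefficients of each $c_\ell$ against the row-$j$ relation and using the linear independence above gives $M_n(j,\pi_j(k))=\epsilon_{j,k}\,M_n(1,k)$. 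Hence each row $j$ consists of exactly the same entries as row $1$, relocated by $\pi_j$ and re-signed by $\epsilon_{j,k}$; since $\pi_j$ and $\epsilon_{j,k}$ do not involve $r$, this permutation-and-sign pattern is shared by the matrices of all powers, which is the assertion.

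I expect the only genuine obstacle to be the middle step: verifying carefully that the reduction $2kj-k-j+1\mapsto(\pi_j(k),\epsilon_{j,k})$ is well defined modulo the symmetries of cosine, and that the induced index map $\pi_j$ is a bijection of $\{1,\dots,2^{n-2}\}$ independent of $r$. Once that is in place, the factor $2^{1-r}$ is a rational number fixed by $\sigma_j$, and everything else is the mechanical transport of a single linear identity.
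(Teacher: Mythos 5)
Your proposal is correct and follows essentially the same route as the paper: transport the first-row identity through the automorphism $\sigma_j$ sending $\cos(\pi/2^n)$ to $\cos((2j-1)\pi/2^n)$, observe that $\sigma_j$ permutes the cosine basis up to sign (computed via the $p_k$ and independent of $r$), and match coefficients. Your version is somewhat more explicit than the paper's — in particular the linear-independence justification for comparing coefficients and the closed form $\sigma_j(c_k)=\cos\bigl(\tfrac{\pi}{2^n}(2(2kj-k-j+1)-1)\bigr)$ — but the underlying argument, including the observation that no two basis elements can be sent to the same $c_\ell$ with opposite signs (which you get from injectivity of $\sigma_j$), is the one the paper uses.
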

\begin{proof}
Assume we have $$\cos^r(\frac{\pi}{2^n})=\frac{1}{2^{r-1}} \sum_{k=1}^{2^{n-2}}M_n(1,k)\cos(\frac{(2k-1)\pi}{2^n})$$ Under the automorphism $\sigma$ sending $\cos(\frac{\pi}{2^n})$ to $\cos(\frac{(2j-1)\pi}{2^n})$, the list of elements $\cos^j(\frac{(2k-1)\pi}{2^n})$, $1\le k\le 2^{n-1}-1$ is permuted. This because under the automorphism the image must still satisfy the minimal polynomial, $p(\sigma(x))=\sigma(p(x))$. The sign may change because the minimal polynomial has both signs for each root, but the final position and sign is independent of the power $r$. The permutation cannot result in the occurance of the same value with different sign because the sum of the two would then equal 0, which would be preserved by the automorphism, implying it was present in the first row. Note we can explicitly compute the permutation.  Each $\cos(\frac{(2k-1)\pi}{2^n})$ is expressable as a polynomial in $\cos(\frac{\pi}{2^n})$ (see the previous lemma). We can use these polynomials to compute the permutation.
\end{proof}
\begin{proposition}
We refer to notation in the previous lemma. Let
\begin{align}\notag
s&=\frac{2^n}{\pi}\arccos\biggl((-1)^jp_j\left(\cos\left(\frac{(2i-1)\pi}{2^n}\right)\right)\biggr)\\\notag
q&=\floor{\frac{s}{2^{n-1}}}\\\notag
m&=\biggl((-1)^q\frac{1}{2} \biggl(s \pmod*{2^{n-1}}+1\biggr)\biggr)\pmod*{2^{n-2}+1}\\\notag
\end{align}
 Regarding the $2^{n-2}x2^{n-2}$ matrices, $M_n$ which satisfy $$\cos^r(\frac{(2j-1)\pi}{2^n})=\frac{1}{2^{r-1}} \sum_{k=1}^{2^{n-2}}M_n(j,k)\cos(\frac{(2k-1)\pi}{2^n})$$ the set of entries of the first row are repeated in each subsequent row. The permutation of the position of the elements in in the first row which yields the position in the $i$th row is given by
$$M_n[i,m]=(-1)^qM_n[1,j]$$\notag
\end{proposition}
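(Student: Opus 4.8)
The plan is to generate the $i$th row of $M_n$ from the first row by transporting the first-row identity through the field automorphism $\sigma$ that sends $\cos(\frac{\pi}{2^n})$ to $\cos(\frac{(2i-1)\pi}{2^n})$ (it exists by the Lemma on the power basis), and then to read the explicit permutation and sign off the arithmetic of reducing a cosine argument modulo the symmetries of $\cos$.

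First I would apply $\sigma$ to the first-row identity $\cos^r(\frac{\pi}{2^n}) = \frac{1}{2^{r-1}}\sum_{k=1}^{2^{n-2}} M_n(1,k)\cos(\frac{(2k-1)\pi}{2^n})$. Because $\cos(\frac{(2k-1)\pi}{2^n}) = (-1)^k p_k(\cos(\frac{\pi}{2^n}))$ by Proposition~\ref{chebyshev}, the $p_k$ have rational coefficients, and $\sigma$ fixes $\mathbb{Q}$, the image of the $k$th summand is $(-1)^k p_k(\cos(\frac{(2i-1)\pi}{2^n}))$. Substituting indices into the proposition following Proposition~\ref{chebyshev} that expresses such a quantity as a single elementary cosine, this equals $\cos(\frac{\pi}{2^n}(2(2ik-i-k+1)-1))$. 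Hence $\cos^r(\frac{(2i-1)\pi}{2^n}) = \frac{1}{2^{r-1}}\sum_{k=1}^{2^{n-2}} M_n(1,k)\cos(\frac{N_k \pi}{2^n})$ with $N_k = 4ik-2i-2k+1$, which is odd.

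Next I would reduce each term $\cos(\frac{N_k\pi}{2^n})$ to the standard shape $\pm\cos(\frac{(2m-1)\pi}{2^n})$ with $1\le m\le 2^{n-2}$, using only that $\cos$ is even and $2\pi$-periodic. Writing $s$ for the representative in $[0,2^n]$ with $\cos(\frac{s\pi}{2^n}) = \cos(\frac{N_k\pi}{2^n})$, which is exactly $s = \frac{2^n}{\pi}\arccos\bigl(\cos(\frac{N_k\pi}{2^n})\bigr)$ (the quantity in the statement with $k$ in place of $j$), one checks that $s$ is odd, so the degenerate values $0$, $2^{n-1}$, $2^n$ never occur and $q = \floor{s/2^{n-1}}\in\{0,1\}$. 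The case $q=0$ (i.e. $s<2^{n-1}$) gives $\cos(\frac{s\pi}{2^n})>0$ with $2m-1=s$, and the case $q=1$ gives $\cos(\frac{s\pi}{2^n})<0$ with $2m-1 = 2^n - s$; in both cases the sign is $(-1)^q$. A short computation, resting on the identity $\frac{2^n-s+1}{2} + \frac{s-2^{n-1}+1}{2} = 2^{n-2}+1$, shows that both values of $m$ are produced by the single expression $m = \bigl((-1)^q\tfrac{1}{2}(s \bmod 2^{n-1} + 1)\bigr) \bmod (2^{n-2}+1)$ of the statement.

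Finally, since the $\cos(\frac{(2m-1)\pi}{2^n})$, $1\le m\le 2^{n-2}$, are $\mathbb{Q}$-linearly independent (they are part of the basis of the Lemma), I would match coefficients against the defining expansion $\cos^r(\frac{(2i-1)\pi}{2^n}) = \frac{1}{2^{r-1}}\sum_m M_n(i,m)\cos(\frac{(2m-1)\pi}{2^n})$ to conclude $M_n(i,m) = (-1)^q M_n(1,j)$ with $m,q$ computed from $i,j$. That $k\mapsto m$ is a genuine permutation of $\{1,\dots,2^{n-2}\}$ is already supplied by Proposition~\ref{permutation}; here it remains only to pin down its explicit form. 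I expect the reduction step to be the main obstacle: the delicate part is verifying that the nested floor-and-mod expressions land in the intended residue range and simultaneously reproduce the correct sign and the correct index across both cases, and confirming (from $N_k$ odd) that no degenerate argument ever forces $\cos$ to vanish.
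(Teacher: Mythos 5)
Your proposal is correct and takes essentially the same route as the paper's own proof: transport the first-row expansion through the field automorphism, identify the image of each basis element as $\cos\left(\frac{(2(2ij-i-j+1)-1)\pi}{2^n}\right)$ via the Chebyshev-composition propositions, and then recover the index $m$ and sign $(-1)^q$ by the same quadrant-by-quadrant floor/mod bookkeeping (your identity $\frac{2^n-s+1}{2}+\frac{s-2^{n-1}+1}{2}=2^{n-2}+1$ is exactly the computation the paper performs). The only differences are cosmetic: you apply $p_k$ at $\cos\left(\frac{(2i-1)\pi}{2^n}\right)$ where the paper applies $p_i$ at $\cos\left(\frac{(2j-1)\pi}{2^n}\right)$ (equal by the commutativity corollary), and you make explicit the linear-independence and parity checks that the paper leaves implicit.
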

\begin{proof}
\begin{align}\notag
\cos\left(\frac{(2i-1)\pi}{2^n}\right) &=(-1)^i p_i\left(\cos\left(\frac{\pi}{2^n}\right)\right )\\\notag
\cos^r(\frac{\pi}{2^n})&=\frac{1}{2^{r-1}} \sum_{k=1}^{2^{n-2}}M_n(1,k)\cos(\frac{(2k-1)\pi}{2^n})\\\notag
\implies\\\notag
\cos^r\left(\frac{(2i-1)\pi}{2^n}\right) &=((-1)^ip_i(\cos(\frac{\pi}{2^n})))^r=\frac{1}{2^{r-1}}(-1)^i \sum_{j=1}^{2^{n-2}}M_n(1,j)p_i(\cos(\frac{(2j-1)\pi}{2^n}))\\\notag
&=\frac{1}{2^{r-1}}\sum_{k=1}^{2^{n-2}}M_n(i,k)\left(   \cos\left(\frac{(2k-1)\pi}{2^n}\right)  \right)\\\notag
\end{align}
We see that $M(1,j)$ is paired with $$(-1)^ip_i\left(\cos\left(\frac{(2j-1)\pi}{2^n}\right)\right)=\cos\left(\frac{(2(2ij-i-j+1)-1)\pi}{2^n}\right)$$ So, we want $M_n(i,m)$ where $1\le m\le 2^{n-2}$ satisfies 
\begin{align}\notag
\theta&=\frac{(2m-1)\pi}{2^n}=\arccos(\cos(\frac{ (2(2ij-i-j+1)-1)\pi}{2^n}))\\\notag
\end{align}
To follow the formulas, assume $\theta=\frac{(2k-1)\pi}{2^n}$. Then $s=2k-1$. Note that $0<m\le 2^{n-2}$. $s \pmod*{2^{n-1}}$ corresponds to $\theta\pmod*{\pi/2}$. Note that  to get the proper magnitude of $\cos(\theta)$ with $\theta$ replaced by a value $0<\frac{(2m-1)\pi}{2^n}<\pi/2$, we first solve for $\theta\pmod*{\pi/2}$.  $\bar{s}=s \pmod*{2^{n-1}}=(2k-1)\pmod*{2^{n-1}}$ will give a value $2m-1$ in the correct range. If $q=\floor{\theta/(\pi/2)}$ is even, we solve for $m=\frac{1}{2}(\bar{s}+1)$ and we are done. The outer mod then has no effect. If $q$ is odd, to get the correct magnitude cosine, we must use m=$(-\frac{1}{2}(\bar{s}+1))\pmod*{2^{n-2}+1}$. The ``$+1$'' at the end ensures we do not set $2^{n-2}$ equal to $0$. It doesn't cause problems due to the fact that the argument of the outer mod has magnitude $\le 2^{n-2}$. If $q$ is odd, we want
\begin{align}\notag
(2^{n-1}-(2k-1))\pi/2^n&=(2^{n-1}-2k+1)\pi/2^n\\\notag
\end{align}
We compute
\begin{align}\notag
(2(2^{n-2}+1-k)-1)\pi/2^n&=((2^{n-1}-2k+2)-1)\pi/2^n\\\notag
\end{align}
The arccos returns a value in the range $0\le\theta\le\pi$, so the sign of the cosine equals $(-1)^q$.

For example, let $n=5$, $i=3$ (3rd row of M).
\begin{center}
\begin{tabular}{|c c c c c c c c c |} 
 \hline
$j$ &$1$ &$2$&$3$ &$4$ &$5$ &$6$ &$7$ &$8$\\ 
 \hline
$m$&3&8&4&2&7&5&1&6 \\
 \hline
 sign &$+$ &$+$&$-$ &$-$ &$-$ &$+$ &$+$&$+$\\ 
 \hline
value &$M(1,7)$ &$-M(1,4)$&$M(1,1)$ &$-M(1,3)$ &$M(1,6)$ &$M(1,8)$ &$-M(1,5)$ &$M(1,2)$\\ 
 \hline
\end{tabular}
\end{center}
Note we are permuting the basis elements, but in the matrix representation the basis elements are fixed and we permute the coefficients. This implies that we apply the inverse permutation to the columns of the row. E.g., if we permute the 1st basis element into the 3rd basis element, the coefficient of the 3rd basis element is now what was previouly the coefficient in the 1st column. Hence, in our matrix representation, we permute the third column entry into the first column entry.
\end{proof}
Here is the pattern for 8x8 matricies corresponding to $n=5$:
\begin{align}\notag
\left(
\begin{array}{cccccccc}
  $M(1,1)$ &$M(1,2)$&$M(1,3)$ &$M(1,4)$ &$M(1,5)$ &$M(1,6)$ &$M(1,7)$ &$M(1,8)$ \\
 $-M(1,6)$ &$M(1,1)$&$-M(1,5)$ &$-M(1,7)$ &$M(1,2)$ &$-M(1,4)$ &$-M(1,8)$ &$M(1,3)$  \\
  $M(1,7)$ &$-M(1,4)$&$M(1,1)$ &$-M(1,3)$ &$M(1,6)$ &$M(1,8)$ &$-M(1,5)$ &$M(1,2)$ \\
  $M(1,5)$ &$-M(1,3)$&$-M(1,7)$ &$M(1,1)$ &$-M(1,8)$ &$-M(1,2)$ &$M(1,6)$ &$M(1,4)$ \\
  $M(1,4)$ &$-M(1,6)$&$-M(1,2)$ &$M(1,8)$ &$M(1,1)$ &$M(1,7)$ &$-M(1,3)$ &$-M(1,5)$ \\ 
  $-M(1,2)$ &$-M(1,5)$&$-M(1,8)$ &$M(1,6)$ &$M(1,3)$ &$M(1,1)$ &$M(1,4)$ &$M(1,7)$ \\
  $M(1,3)$ &$M(1,8)$&$-M(1,4)$ &$-M(1,2)$ &$-M(1,7)$ &$M(1,5)$ &$M(1,1)$ &$M(1,6)$ \\
  $-M(1,8)$ &$M(1,7)$&$-M(1,6)$ &$M(1,5)$ &$-M(1,4)$ &$M(1,3)$ &$-M(1,2)$ &$M(1,1)$ \\
\end{array}
\right)
\end{align}
Note, if we take the transpose of the above matrix and then interchange the row and column in the arguments of $M$, the result is also true:
\begin{align}\notag
\left(
\begin{array}{cccccccc}
  $M(1,1)$ &$-M(6,1)$&$M(7,1)$ &$M(5,1)$ &$M(4,1)$ &$-M(2,1)$ &$M(3,1)$ &$-M(8,1)$ \\
 $M(2,1)$ &$M(1,1)$&$-M(4,1)$ &$-M(3,1)$ &$-M(6,1)$ &$M(5,1)$ &$M(8,1)$ &$M(7,1)$  \\
  $M(3,1)$ &$-M(5,1)$&$M(1,1)$ &$-M(7,1)$ &$-M(2,1)$ &$-M(8,1)$ &$-M(4,1)$ &$-M(6,1)$ \\
  $M(4,1)$ &$-M(7,1)$&$-M(3,1)$ &$M(1,1)$ &$M(8,1)$ &$M(6,1)$ &$-M(2,1)$ &$M(5,1)$ \\
  $M(5,1)$ &$M(2,1)$&$M(6,1)$ &$-M(8,1)$ &$M(1,1)$ &$M(3,1)$ &$-M(7,1)$ &$-M(4,1)$ \\ 
  $M(6,1)$ &$-M(4,1)$&$M(8,1)$ &$-M(2,1)$ &$M(7,1)$ &$M(1,1)$ &$M(5,1)$ &$M(3,1)$ \\
  $M(7,1)$ &$-M(8,1)$&$-M(5,1)$ &$M(6,1)$ &$-M(3,1)$ &$M(4,1)$ &$M(1,1)$ &$-M(2,1)$ \\
  $M(8,1)$ &$M(3,1)$&$M(2,1)$ &$M(4,1)$ &$-M(5,1)$ &$M(7,1)$ &$M(6,1)$ &$M(1,1)$ \\
\end{array}
\right)
\end{align}
\begin{proposition}
Let
\begin{align}\notag
s&=\frac{2^n}{\pi}\arccos\biggl((-1)^jp_j\left(\cos\left(\frac{(2i-1)\pi}{2^n}\right)\right)\biggr)\\\notag
f(i,j)&=\frac{1}{2} \biggl(s \pmod*{2^{n-1}}+1\biggr)\biggr)\pmod*{2^{n-2}+1}\\\notag
\end{align}
Then the set of numbers $\{1,2,\ldots,2^{n-2}\}$ form a cyclic abelian group under the operation that maps $(a,b)\to f(a,b)$
\end{proposition}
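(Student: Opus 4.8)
The plan is to recognize the operation as multiplication in the quotient group $(\mathbb{Z}/2^{n}\mathbb{Z})^{\ast}/\{\pm 1\}$ and then to invoke the classical structure of the group of units modulo a power of two. First I would unwind the quantity inside the $\arccos$. By the earlier proposition computing $(-1)^{j}p_{j}$, one has $(-1)^{j}p_{j}\!\left(\cos\!\left(\frac{(2i-1)\pi}{2^{n}}\right)\right)=\cos\!\left(\frac{(2(2ij-i-j+1)-1)\pi}{2^{n}}\right)$, and since $2(2ij-i-j+1)-1=(2i-1)(2j-1)$ this is simply $\cos\!\left(\frac{(2i-1)(2j-1)\pi}{2^{n}}\right)$. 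Hence $s$, and therefore the output index, depends only on the odd integer $(2i-1)(2j-1)$.

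Next I would show that the operation sends $(i,j)$ to the unique index $m\in\{1,\dots,2^{n-2}\}$ characterized by $\cos\!\left(\frac{(2i-1)(2j-1)\pi}{2^{n}}\right)=\pm\cos\!\left(\frac{(2m-1)\pi}{2^{n}}\right)$, equivalently $2m-1\equiv\pm(2i-1)(2j-1)\pmod{2^{n}}$. This is exactly the position computed in the preceding proposition: the $\arccos$ folds the angle into $[0,\pi]$, the reduction modulo $2^{n-1}$ brings it into $(0,\pi/2)$, and the final reduction modulo $2^{n-2}+1$ (carrying the sign $(-1)^{q}$, $q=\floor{s/2^{n-1}}$) returns precisely that $m$. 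I would simply reuse the case analysis on the parity of $q$ already given there.

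The core of the argument is then group-theoretic. Define $\phi\colon\{1,\dots,2^{n-2}\}\to(\mathbb{Z}/2^{n}\mathbb{Z})^{\ast}/\{\pm1\}$ by $\phi(i)=[\,2i-1\,]$. This is a bijection, because the odd residues $1,3,\dots,2^{n-1}-1$ form a complete transversal for $\{\pm1\}$: each class $\{a,\,2^{n}-a\}$ of an odd unit contains exactly one representative below $2^{n-1}$, and there are $2^{n-2}$ such classes. By the previous step $\phi$ intertwines the operation with multiplication, $\phi(f(i,j))=\phi(i)\phi(j)$, so the set inherits a well-defined abelian group law whose identity is $i=1$ (as $2\cdot1-1=1$) and whose inverses come from inverses in the unit group. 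For cyclicity I would use that, for $n\ge 3$, $(\mathbb{Z}/2^{n}\mathbb{Z})^{\ast}=\langle-1\rangle\times\langle 5\rangle$ with $5$ of order $2^{n-2}$ and $-1\notin\langle 5\rangle$ (every power of $5$ is $\equiv 1\pmod 4$, whereas $-1\equiv 3\pmod 4$); consequently the quotient by $\langle-1\rangle$ is $\langle[5]\rangle\cong\mathbb{Z}/2^{n-2}\mathbb{Z}$, cyclic of the stated order. Transporting back through $\phi$ exhibits $\{1,\dots,2^{n-2}\}$ as cyclic, a generator being $\phi^{-1}([\,3\,])=2$.

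The main obstacle is the bookkeeping in the second step: one must verify with care that the composite reduction defining the operation---the $\arccos$ fold, the reduction modulo $2^{n-1}$, and the reduction modulo $2^{n-2}+1$ together with the sign---returns exactly the transversal representative of $\pm(2i-1)(2j-1)$, with no off-by-one error and the correct handling of the two cases $q$ even and $q$ odd. Once this identification is secured the conclusion is immediate, since everything downstream is the standard structure theory of $(\mathbb{Z}/2^{n}\mathbb{Z})^{\ast}$.
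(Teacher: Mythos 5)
Your proposal is correct, and it is considerably more complete than the paper's own proof, which consists of the single line ``Follows from Prop.\ 2.4 and Corollary 2.8.'' The underlying idea is the same: the identity $(-1)^jp_j\left(\cos\left(\frac{(2i-1)\pi}{2^n}\right)\right)=\cos\left(\frac{(2i-1)(2j-1)\pi}{2^n}\right)$ turns the operation into multiplication of odd residues modulo $2^n$ up to sign, and the paper intends closure, associativity and commutativity to be read off from the composition and commutation properties of the $p_i$. What you add, and what the paper's citations do not obviously deliver, is (i) the explicit bijection $i\mapsto[2i-1]$ onto a transversal of $\{\pm1\}$ in $(\mathbb{Z}/2^n\mathbb{Z})^{\ast}$, which makes the transport of structure precise, and (ii) the cyclicity claim itself, which you derive from $(\mathbb{Z}/2^n\mathbb{Z})^{\ast}=\langle-1\rangle\times\langle5\rangle$ and which the paper nowhere justifies. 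One point worth making explicit: the operation as literally printed in the proposition omits the sign factor $(-1)^q$ with $q=\lfloor s/2^{n-1}\rfloor$ that appears in the preceding proposition, and without that factor the map is not even a bijection in each argument (e.g.\ for $n=4$ one finds $f(3,2)=f(3,3)=4$, since $5\cdot3=15$ and $5\cdot5=25$ both reduce to $\bar{s}=7$ modulo $2^{n-1}$ but lie in different $\arccos$ branches); your reading, which carries the sign through the final reduction modulo $2^{n-2}+1$, is the correct one and matches the operation $a\circ b$ used in the propositions that follow. Your ``main obstacle'' paragraph is exactly the bookkeeping already done in the proof of the preceding proposition, so it is legitimate to reuse it as you propose.
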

\begin{proof}
Follows from Prop. 2.4 and Corollary 2.8.
\end{proof}
\begin{proposition}
Referring to the notation in the previous proposition, note that $m$ in prop. 2.12 equals $(-1)^q f(a,b)$. We define the operation $$a\circ b=(-1)^q f(a,b)$$ with, by definition, $(-a)\circ b=-(a\circ b)=a\circ(-b)$ for $1\le a,b\le 2^{n-2}$. Then
\begin{align}\notag
M(i,j&)=M(a\circ i,a \circ j)
\end{align}
\end{proposition}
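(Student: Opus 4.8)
The plan is to recognize the matrix $M=M_n$ as a group matrix (a $G$-circulant) for the abelian group $G$ furnished by the preceding proposition, so that the stated identity reduces to translation-invariance of a function of a single group element.

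First I would recast the row index through the Galois action. For a signed index $\alpha$ let $\sigma_\alpha$ be the automorphism of $\mathbb{Q}(\cos(\frac{\pi}{2^n}))$ determined by $\sigma_\alpha(\cos(\frac{\pi}{2^n}))=(-1)^\alpha p_\alpha(\cos(\frac{\pi}{2^n}))$, with $\sigma_1=\mathrm{id}$ because $p_1(x)=-x$. By Proposition \ref{chebyshev} together with the composition formula $(-1)^ip_i(\cos(\frac{(2j-1)\pi}{2^n}))=\cos(\frac{(2(2ij-i-j+1)-1)\pi}{2^n})$, composing these automorphisms multiplies the underlying odd residues, and after reduction to the range $0<\frac{(2m-1)\pi}{2^n}<\frac{\pi}{2}$ this is exactly the operation $\circ$; that is, $\sigma_\alpha\sigma_\beta=\sigma_{\alpha\circ\beta}$. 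Closure and the existence of inverses follow from that composition formula and the explicit inverse $k=i(2i-1)^{2^{n-1}-1}$, commutativity is Corollary 2.8, and associativity is automatic since $\circ$ is literally composition of maps; thus the signed indices form an abelian group $G$ of order $2^{n-1}$ with identity $1$, whose index-two quotient $G/\langle\sigma_{-1}\rangle$ is the cyclic abelian group on $\{1,\dots,2^{n-2}\}$ of the preceding proposition. The sign conventions $M(-i,\cdot)=-M(i,\cdot)$ and $M(\cdot,-j)=-M(\cdot,j)$ match the declared rule $(-a)\circ b=-(a\circ b)=a\circ(-b)$, which in turn reflects that the central involution $\sigma_{-1}$ negates every basis cosine $\cos(\frac{(2k-1)\pi}{2^n})$.

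The key step is the master formula $M(\alpha,m)=M(1,\alpha^{-1}\circ m)$. I would obtain it by applying $\sigma_\alpha$ to the first-row relation $\cos^r(\frac{\pi}{2^n})=\frac{1}{2^{r-1}}\sum_k M(1,k)\cos(\frac{(2k-1)\pi}{2^n})$ and using $\sigma_\alpha(\cos(\frac{(2k-1)\pi}{2^n}))=\sigma_\alpha\sigma_k(\cos(\frac{\pi}{2^n}))=\sigma_{\alpha\circ k}(\cos(\frac{\pi}{2^n}))$. Since $k\mapsto\alpha\circ k$ is a signed permutation of the indices, comparing coefficients of $\cos(\frac{(2m-1)\pi}{2^n})$ gives $M(\alpha,m)=\mathrm{sgn}(\alpha\circ k)\,M(1,k)$ with $m=|\alpha\circ k|$, which is precisely the previous proposition $M_n[i,m]=(-1)^qM_n[1,j]$ and, via the sign conventions above, repackages as $M(\alpha,m)=M(1,\alpha^{-1}\circ m)$.

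With the master formula in hand the conclusion is a one-line computation in the abelian group $G$:
$$M(a\circ i,\,a\circ j)=M\bigl(1,\,(a\circ i)^{-1}\circ(a\circ j)\bigr)=M\bigl(1,\,i^{-1}\circ a^{-1}\circ a\circ j\bigr)=M(1,\,i^{-1}\circ j)=M(i,j),$$
where the middle equalities use commutativity, associativity, and $a^{-1}\circ a=1$. I expect the main obstacle to lie in the bookkeeping of the master formula rather than in this final step: one must verify that $\circ$ is a genuine abelian group operation on the \emph{signed} indices (so that $(a\circ i)^{-1}=i^{-1}\circ a^{-1}$ is legitimate) and that the $\pm$ conventions on $M$ are consistent under both $\sigma_{-1}$ and the column reflection. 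Once the signed group law and sign conventions are pinned down, the identity is exactly the statement that a function of $i^{-1}\circ j$ is invariant under simultaneous left translation, i.e. that $M$ is a $G$-circulant matrix, recovering Proposition \ref{permutation}.
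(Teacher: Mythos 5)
Your proposal is correct and follows essentially the same route as the paper: the paper's proof likewise reduces everything to the relation $M(i,j)=M(1,q)$ with $i\circ q=j$ (your master formula $M(i,j)=M(1,i^{-1}\circ j)$ in different notation) and then observes that $(a\circ i)\circ q=a\circ j$ follows from the group structure supplied by the preceding propositions. The extra scaffolding you provide — deriving the first-row formula from the Galois action and checking the signed-index conventions — is material the paper has already established in Propositions \ref{permutation}, 2.12 and 2.13, so your argument is a somewhat more self-contained rendering of the same proof.
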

\begin{proof}
Consider $m$ in prop. 2.12 as a function, $m=m(i,j)$. Unwinding the definition of $m$, if we assume  $m(i,q)=j$ (or, equivalently,  $i\circ q=j$), then $M(i,j)=M(1,q)$. We want to show $M(m(a,i),m(a,j))=M(i,j)$
\begin{align}\notag
M(m(a,i),m(a,j))&=M(a\circ i,a\circ j)=M(1,q)\\\notag
\end{align}
if $m(m(a,i),q)=m(a,j)$ but this is equivalent to $(a\circ i)\circ q=a\circ j$ so the result immediately follows from the group structure.
\end{proof}
\begin{corollary}
If we express the entries of $M$ in terms of the entries in the first row, as in the example above, then take the transpose and interchange the row and column entries, then we will end up with a representation in terms of the entries in the first column.
\end{corollary}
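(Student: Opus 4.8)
The plan is to obtain \emph{both} the first-row and the first-column representations as special cases of the single invariance $M(i,j)=M(a\circ i,a\circ j)$ from the preceding proposition, and then to recognise the transpose-and-swap recipe as nothing more than the passage between these two specialisations. The guiding principle is that a first-row representation is produced by using $\circ$ to normalise the \emph{row} index to the group identity, whereas a first-column representation is produced by normalising the \emph{column} index; the two recipes differ only in the roles of $i$ and $j$, which is precisely what transposition exchanges.

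First I would fix the group bookkeeping. The indices $\{1,\dots,2^{n-2}\}$ form a cyclic abelian group, and a direct check (using $p_1(x)=-x$ together with the composition formula for $(-1)^j p_j(\cos(\cdot))$) shows that $1$ is the identity, acting on either side with positive sign, so that $1\circ x=x\circ 1=x$. I would then verify that $\circ$ extends this group law to the signed indices, so that each $j$ has a unique inverse $j^{-1}$ with $j^{-1}\circ j=1$; uniqueness (which can be read off the signed-permutation property of Proposition \ref{permutation}) is what makes the normalising element canonical.

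Next I would specialise the invariance twice. Taking $a=i^{-1}$ gives $a\circ i=1$ and hence the first-row representation
\[
M(i,j)=M\!\left(1,\;i^{-1}\circ j\right),
\]
which is exactly the form displayed in the $8\times 8$ example, the sign being carried by the signed index $i^{-1}\circ j$. Taking instead $a=j^{-1}$ gives $a\circ j=1$ and the first-column representation
\[
M(i,j)=M\!\left(j^{-1}\circ i,\;1\right).
\]
Transposing the first-row array now replaces the symbol in position $(i,j)$ by the first-row expression attached to position $(j,i)$, namely $M(1,\,j^{-1}\circ i)$, and interchanging the two arguments of this symbol produces $M(j^{-1}\circ i,\,1)$. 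This is literally the first-column representation of $M(i,j)$ obtained above, so the transpose-and-swap recipe does yield a valid representation in terms of the first column, as claimed.

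The step I expect to be the main obstacle is the sign accounting across the interchange of arguments, since a priori the sign attached to a first-row symbol need not survive the move to the first column. The reason it does is the coincidence at the heart of the argument: the element normalising \emph{row} $j$ in the first-row representation and the element normalising \emph{column} $j$ in the first-column representation are one and the same, both equal to $j^{-1}$. Consequently the signed quantity $j^{-1}\circ i$ that governs the transposed first-row symbol is identical to the one governing the first-column symbol, so the sign is preserved automatically and no separate sign computation is required, once the uniqueness of inverses for the signed operation $\circ$ has been secured.
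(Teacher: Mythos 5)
Your proposal is correct and takes essentially the same route as the paper, whose entire proof is the observation $A(j,i)=A(j\circ i^{-1},1)$ while $A(i,j)=A(1,i^{-1}\circ j)$ — i.e., the two specialisations $a=i^{-1}$ and $a=j^{-1}$ of the invariance $M(i,j)=M(a\circ i,a\circ j)$, exactly as you derive them. Your additional bookkeeping on the identity element, inverses, and sign preservation is a faithful elaboration of that one-line argument.
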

\begin{proof}
$A(j,i)=A(j\circ i^{-1},1)$ while $A(i,j)=A(1,i^{-1}\circ j)$
\end{proof}
\begin{lemma}
If $M(i,t)=M(1,r)$ and $M(j,t)=M(1,s)$ then there exists $k$ such that $M(k,j)=M(1,r)$ and $M(k,i)=M(1,s)$.
\end{lemma}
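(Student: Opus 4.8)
The plan is to push the entire statement into the abelian group structure established above and reduce it to a single commutativity computation. Throughout I interpret a value-equality $M(p,q)=M(1,c)$ as the group relation $p^{-1}\circ q=c$; this is licensed by the reduction formula $M(p,q)=M(1,p^{-1}\circ q)$ from the preceding corollary, together with the correspondence between first-row positions and group elements (so that $M(1,\cdot)$ separates the elements of the group). The sign convention $M(1,-c)=-M(1,c)$ and the rule $(-a)\circ b=-(a\circ b)$ then let me treat such a group element as a legitimate, possibly signed, index.

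First I would translate the two hypotheses. From $M(i,t)=M(1,r)$ I read off $i^{-1}\circ t=r$, and from $M(j,t)=M(1,s)$ I read off $j^{-1}\circ t=s$. Next I set the candidate index
\begin{align}\notag
k&:=i\circ j\circ t^{-1},
\end{align}
which is again a member of the group since the (signed) set $\{1,\dots,2^{n-2}\}$ is closed under $\circ$ and inversion.

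The verification is then immediate from commutativity. Because the group is abelian,
\begin{align}\notag
k^{-1}\circ j&=i^{-1}\circ j^{-1}\circ t\circ j=i^{-1}\circ t=r,\\\notag
k^{-1}\circ i&=i^{-1}\circ j^{-1}\circ t\circ i=j^{-1}\circ t=s,
\end{align}
so the reduction formula gives $M(k,j)=M(1,k^{-1}\circ j)=M(1,r)$ and $M(k,i)=M(1,k^{-1}\circ i)=M(1,s)$, exhibiting the required $k$. Note that consistency of the two defining equations for $k$ is automatic: from the hypotheses $r\circ s^{-1}=(i^{-1}\circ t)\circ(j^{-1}\circ t)^{-1}=i^{-1}\circ j$, which is exactly what the abelian relations above require.

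The substantive point, and the step I expect to be the main obstacle, is the bookkeeping that converts the statements about matrix entries into statements about group elements. In particular the forced value $k=i\circ j\circ t^{-1}$ need not be a positive index, so one must genuinely invoke the oddness $M(1,-c)=-M(1,c)$ (equivalently, extend the rows by $M(-k,\cdot)=-M(k,\cdot)$) to read the stated equalities with the correct signs; a small worked instance confirms that $k$ is frequently the sign-flipped representative rather than an element of $\{1,\dots,2^{n-2}\}$ itself. Once this identification is in place, the purely group-theoretic computation above closes the argument with no further calculation.
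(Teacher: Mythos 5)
Your proposal is correct and follows essentially the same route as the paper: both translate the hypotheses into the group relations $r=i^{-1}\circ t$, $s=j^{-1}\circ t$ via $M(p,q)=M(1,p^{-1}\circ q)$, and both take the same witness (your $k=i\circ j\circ t^{-1}$ equals the paper's $k=j\circ r^{-1}$ in the abelian group), closing the argument by commutativity. Your additional remark about the signed-index bookkeeping is a reasonable precaution that the paper leaves implicit.
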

\begin{proof}
\begin{align}\notag
M(i,t)&=M(1,r)\implies r=i^{-1}\circ t\\\notag
M[j,t)&=M(1,s)\implies s=j^{-1}\circ t\\\notag
\end{align}

$k=j\circ r^{-1}$ satisfies $M(k,j)=M(1,r)$ and 
\begin{align}\notag
i^{-1}\circ t&=k^{-1}\circ j\implies \\\notag
(i\circ j^{-1})\circ i^{-1}\circ t&=(i\circ j^{-1})\circ k^{-1}j\implies\\\notag
j^{-1}\circ t&=k^{-1}\circ i \\\notag
\end{align}
\end{proof}
\begin{proposition}
$M$ is normal.
\end{proposition}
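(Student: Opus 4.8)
The plan is to prove normality by checking $MM^{T}=M^{T}M$ entrywise, exhibiting for each fixed pair $(i,j)$ a sign-respecting bijection of the summation index that matches the $(i,j)$ entry of $MM^{T}$ to that of $M^{T}M$ term by term. Writing $g=2^{n-2}$ for the size of $M$, the two entries to be compared are
\begin{align}\notag
(MM^{T})_{ij}=\sum_{t=1}^{g}M(i,t)\,M(j,t),\qquad (M^{T}M)_{ij}=\sum_{t=1}^{g}M(t,i)\,M(t,j).
\end{align}
The backbone of the argument is the group description of the entries: by Prop. 2.15 we have $M(i,j)=M(a\circ i,a\circ j)$, and setting $a=i^{-1}$ gives $M(i,j)=M(1,\,i^{-1}\circ j)$ once $M(1,\cdot)$ is extended to signed arguments via $M(1,-u)=-M(1,u)$. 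Thus $M$ is a group matrix for the abelian operation $\circ$ of Prop. 2.14, and this is what drives everything.

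The first step is to invoke Lemma 2.17 directly. For a fixed $t$, put $r=i^{-1}\circ t$ and $s=j^{-1}\circ t$, so that $M(i,t)=M(1,r)$ and $M(j,t)=M(1,s)$. The lemma then supplies $k=k(t)$ with $M(k,j)=M(1,r)=M(i,t)$ and $M(k,i)=M(1,s)=M(j,t)$. Multiplying these two identities yields the key term-by-term equality
\begin{align}\notag
M(i,t)\,M(j,t)=M(k,i)\,M(k,j),
\end{align}
whose right-hand side is exactly the $k$-th summand of $(M^{T}M)_{ij}$.

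The main step — and the place where care is genuinely needed — is to verify that $t\mapsto k(t)$ is a bijection of $\{1,\dots,g\}$, so that summing the term-by-term identity over $t$ reconstitutes the full sum $(M^{T}M)_{ij}$. Here I would read off from the proof of Lemma 2.17 that $k=j\circ r^{-1}=(i\circ j)\circ t^{-1}$, using commutativity of $\circ$. On the full signed group of order $2g$, the map $t\mapsto (i\circ j)\circ t^{-1}$ is a bijection (inversion followed by a translation) and it anticommutes with negation, $k(-t)=-k(t)$, since $-1$ has order two. Because the product $M(k,i)M(k,j)$ is invariant under $k\mapsto -k$ (both factors flip sign), this map descends to a well-defined bijection of the positive representatives $\{1,\dots,g\}$; this is precisely where the convention $M(1,-u)=-M(1,u)$ earns its keep. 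With the bijection established, reindexing the sum gives $(MM^{T})_{ij}=(M^{T}M)_{ij}$ for all $i,j$, i.e. $M$ is normal.

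The hard part is the sign bookkeeping in passing to positive representatives: one must confirm that no index is double-counted and that the sign-invariance of the products legitimizes the positive-representative bijection. An essentially equivalent but more delicate alternative would bypass Lemma 2.17 and diagonalize $M$ by the characters of the abelian group $\circ$ (every abelian group matrix is normal, being simultaneously unitarily diagonalizable), but this requires first extending $M$ to the full signed group of order $2g$ and then restricting, so the combinatorial pairing above is the cleaner route given the machinery already in place.
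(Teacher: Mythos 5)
Your proof follows the paper's argument exactly: both compare $(MM^{T})_{ij}=\sum_{t}M(i,t)M(j,t)$ with $(M^{T}M)_{ij}=\sum_{t}M(t,i)M(t,j)$ and invoke the preceding lemma (your Lemma 2.17) to pair the summands term by term. The only difference is that you explicitly verify that $t\mapsto k(t)=(i\circ j)\circ t^{-1}$ is a bijection of the index set compatible with the sign convention --- a step the paper leaves implicit --- which is a refinement of the same argument rather than a different route.
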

\begin{proof}
Let $A=M M^{t}$ and $B=M^{t}M$. Then $A(i,j)=\sum_{k=1}^{2^{n-2}}M(i,k)M(j,k)$ while  $B(i,j)=\sum_{k=1}^{2^{n-2}}M(k,i)M(k,j)$. But the previous lemma shows those two sums are equal.
\end{proof}
\begin{proposition}\label{commute}
Let $S$ be the set of $2^{n-2}x2^{n-2}$ matrices, $M_n$ which satisfy $$\cos^r(\frac{(2j-1)\pi}{2^n})=\frac{1}{2^{r-1}} \sum_{k=1}^{2^{n-2}}M_n(j,k)\cos(\frac{(2k-1)\pi}{2^n})$$ where $r$ is an odd integer and $n$ is a fixed integer.  Then if $A,B\in S$, $A B=B A$.
\end{proposition}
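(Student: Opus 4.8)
\section*{Proof proposal}

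The plan is to realize every matrix in $S$ as an element of the group algebra of one fixed abelian group, so that matrix multiplication becomes convolution and commutativity is automatic. The propositions following Proposition \ref{permutation} already supply the two facts I need: the invariance $M(i,j)=M(a\circ i,a\circ j)$ and its corollary $M(i,j)=M(1,i^{-1}\circ j)$, which say that each $M\in S$ is completely determined by its first row. Setting $\mu_M(g)=M(1,g)$ and extending by the paper's sign convention $M(1,-g)=-M(1,g)$, I would write every $M\in S$ in the uniform shape
\[
M(i,j)=\mu_M\bigl(i^{-1}\circ j\bigr),\qquad 1\le i,j\le 2^{n-2},
\]
where $\mu_M$ is an \emph{odd} function on the signed index set $G=\{\pm1,\dots,\pm2^{n-2}\}$. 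Since $(\epsilon a)\circ(\delta b)=(\epsilon\delta)(a\circ b)$ for signs $\epsilon,\delta$, we have $G\cong G_{+}\times\{\pm1\}$ with $G_{+}=(\{1,\dots,2^{n-2}\},\circ)$ the cyclic group established earlier; in particular $G$ is abelian. The only datum distinguishing two members of $S$ is the function $\mu$, the common permutation-and-sign skeleton being fixed once and for all.

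Second, I would compute the product directly. For $A,B\in S$,
\[
(AB)(i,k)=\sum_{j=1}^{2^{n-2}}\mu_A\bigl(i^{-1}\circ j\bigr)\,\mu_B\bigl(j^{-1}\circ k\bigr).
\]
The index $j$ runs only over the positive half of $G$, so the key step is to symmetrize. Writing $h(j)=\mu_A(i^{-1}\circ j)\,\mu_B(j^{-1}\circ k)$ and using that $\mu_A,\mu_B$ are odd while $(-j)^{-1}=((-1)\circ j)^{-1}=(-1)\circ j^{-1}=-(j^{-1})$, one checks $h(-j)=(-1)(-1)h(j)=h(j)$, so $h$ is even on $G$. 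Hence $\sum_{j>0}h(j)=\tfrac12\sum_{j\in G}h(j)$, and after the substitution $j=i\circ u$ the product becomes a genuine convolution over the full abelian group:
\[
(AB)(i,k)=\tfrac12\sum_{j\in G}\mu_A\bigl(i^{-1}\circ j\bigr)\,\mu_B\bigl(j^{-1}\circ k\bigr)=\tfrac12\sum_{u\in G}\mu_A(u)\,\mu_B\bigl(u^{-1}\circ(i^{-1}\circ k)\bigr),
\]
exhibiting $(AB)(i,k)$ as $(\mu_A*\mu_B)(i^{-1}\circ k)$ for $(\mu_A*\mu_B)(g)=\tfrac12\sum_{u}\mu_A(u)\,\mu_B(u^{-1}\circ g)$.

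Finally, because $G$ is abelian, convolution is commutative: replacing $u$ by $g\circ u^{-1}$ in the last sum turns $\mu_A*\mu_B$ into $\mu_B*\mu_A$, so $(AB)(i,k)=(BA)(i,k)$ for all $i,k$, i.e. $AB=BA$. I expect the only genuinely delicate point to be the sign bookkeeping in the symmetrization: verifying $(-j)^{-1}=-(j^{-1})$ and using the oddness of $\mu$ so that the two sign flips cancel and the half-sum legitimately becomes a full-group convolution. Everything else reduces to the abelian structure of $G$ already in hand; note that this is the same structure underlying the normality of $M$ proved above, so commutativity is its natural companion.
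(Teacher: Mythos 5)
Your argument is correct, and it reaches the conclusion by a route that is structurally cleaner than, though ultimately equivalent to, the one in the paper. Both proofs rest on the same two ingredients: the first-row representation $M(i,j)=M(1,i^{-1}\circ j)$ and the commutativity of $\circ$. The paper deploys them as an explicit term-by-term re-pairing of the two sums $\sum_k A(i,k)B(k,j)$ and $\sum_k B(i,k)A(k,j)$: for each $k$ it locates the index $k'$ with $B(k,j)=B(i,k')$ and then verifies $A(i,k)=A(k',j)$ by inverting the defining relation (the displayed implication in the paper's proof reuses the letter $k$ for two different indices, which obscures this step). You instead realize every $M\in S$ as a convolution operator on the signed group $G$, symmetrize the half-sum over $\{1,\dots,2^{n-2}\}$ into a full-group sum using the evenness of the summand $h$, and invoke commutativity of convolution over an abelian group. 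What your packaging buys is an explicit, checkable treatment of the sign bookkeeping, which the paper's three-line proof leaves entirely implicit; what the paper's version buys is brevity. One small caveat: the isomorphism $G\cong G_{+}\times\{\pm1\}$ that you assert in passing is neither needed nor obviously true --- $a\circ b$ can be negative for positive $a,b$ (see the sign pattern in the displayed $8\times 8$ example), so $a\mapsto +a$ is not a homomorphic section and the extension need not split that way. All your argument actually requires is that $G$ is abelian, and that does follow from what the paper establishes: both $f(a,b)$ and the sign exponent $q$ are symmetric in their two arguments (each is built from the symmetric quantity $2ij-i-j$), so $\circ$ is commutative on $\{1,\dots,2^{n-2}\}$, and the sign rule $(\epsilon a)\circ(\delta b)=(\epsilon\delta)(a\circ b)$ extends this to all of $G$. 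I would state that directly and drop the direct-product claim.
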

\begin{proof}
Let $C=A B$ and $D=B A$.
\begin{align}\notag
C(i,j)&=\sum_{k=1}^{2^{n-2}}A(i.k)B[k,j)\\\notag
D(i,j)&=\sum_{k=1}^{2^{n-2}}B(i.k)A[k,j)\\\notag
\end{align}
Looking at the indicies of $B$,  $B(k,j)=B(1,k^{-1}\circ j)$ and $B(i,k)=B(1,i^{-1}\circ k)$. But 
$$k^{-1}\circ j=i^{-1}\circ k\implies k^{-1}\circ i=j^{-1}\circ k$$
when we take the inverse of both sides. $$\implies A(i,k)=A(k,j)$$ Hence the correct $A$ entry is matched with the $B$ entry.
\end{proof}
\begin{proposition}
For $r\ {\rm odd},\ r\ge1,\ 2^{n-2}\ge\frac{r+1}{2}$
\begin{align}\notag
\cos^r\left(\frac{\pi}{2^n}\right)&=(-1)^i\frac{1}{2^{r-1}}\sum_{j=1}^{2^{n-2}}   \binom{r}{\frac{r-1}{2}-j+1}\cos\left(\frac{(2j-1)\pi}{2^n}                \right)
\end{align}
\end{proposition}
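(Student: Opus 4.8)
The plan is to derive the identity directly from the elementary linearisation (power-reduction) expansion of $\cos^r$, and then to use the hypothesis $2^{n-2}\ge\frac{r+1}{2}$ to show that no argument ever leaves the first quadrant, so that the bare expansion already lands in the cosine basis $\{\cos(\frac{(2k-1)\pi}{2^n})\}_{k=1}^{2^{n-2}}$ without any reduction. I note in passing that the factor $(-1)^i$ on the right-hand side appears to be a typographical artifact, since no index $i$ occurs on the left; I will prove the identity without it, which is consistent with the triple-angle check $\cos^3\theta=\frac14(3\cos\theta+\cos3\theta)$ and with the case $n=4,\ r=3$.

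First I would invoke the standard linearisation identity $\cos^r\theta=\frac{1}{2^r}\sum_{k=0}^{r}\binom{r}{k}\cos((r-2k)\theta)$, which follows from the binomial theorem after writing $2\cos\theta$ as a sum of two conjugate complex exponentials. Since $r$ is odd, every exponent $r-2k$ is odd and hence nonzero, so there is no constant term requiring separate treatment. Next I would fold the sum onto positive arguments: pairing the index $k$ with $r-k$, the symmetry $\binom{r}{k}=\binom{r}{r-k}$ together with the evenness of cosine shows that the terms for $r-2k$ and $-(r-2k)$ coincide, so summing over the positive odd values $m=r-2k$ only and doubling turns the prefactor $\frac{1}{2^r}$ into $\frac{1}{2^{r-1}}$.

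Then I would reindex by $m=2j-1$. The positive odd values $m=1,3,\dots,r$ correspond to $j=1,\dots,\frac{r+1}{2}$, and the coefficient becomes $\binom{r}{(r-m)/2}=\binom{r}{\frac{r-1}{2}-j+1}$, which is exactly the claimed summand. It remains to raise the upper limit from $\frac{r+1}{2}$ to $2^{n-2}$: for $\frac{r+1}{2}<j\le 2^{n-2}$ the lower index $\frac{r-1}{2}-j+1$ is negative, so $\binom{r}{\frac{r-1}{2}-j+1}=0$ and the extra terms contribute nothing. The hypothesis $2^{n-2}\ge\frac{r+1}{2}$ is precisely what makes this range nonempty-or-harmless, so the summation limit $2^{n-2}$ is legitimate.

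The only place the hypothesis is genuinely essential — and the step I would flag as the main point rather than the main obstacle — is the identification of the angles as true basis elements. The condition $2^{n-2}\ge\frac{r+1}{2}$ is equivalent to $r\le 2^{n-1}-1$, which forces the largest argument $r\theta=\frac{r\pi}{2^n}$ (and hence every argument $(r-2k)\theta$) to have absolute value strictly below $\frac{\pi}{2}$. Consequently every surviving angle $\frac{(2j-1)\pi}{2^n}$ lies in $(0,\frac{\pi}{2})$ and is an elementary term of the cosine basis, so no argument must be folded back modulo the symmetries of the minimal polynomial. When $r$ is large relative to $n$ this fails and the coefficients are perturbed — as the out-of-range example $n=4,\ r=15$ shows, where the naive value $\binom{15}{7}=6435$ is replaced by the matrix entry $6434$ — but under the stated hypothesis no such reduction occurs and the power-reduction coefficients are the final answer.
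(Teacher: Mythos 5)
Your proof is correct, but it takes a genuinely different route from the paper. The paper disposes of this proposition in one line --- ``this is a special case of the next proposition'' --- where the next proposition is the general formula valid for all odd $r\ge 1$, with an inner sum over $k$ from $0$ to $\floor{(r+1)/2^n}$ that encodes the folding of out-of-range angles back into the first quadrant; that general formula is in turn proved by a joint induction on $r$ interleaved with the even-power formula. Specializing it requires noting that the hypothesis $2^{n-2}\ge\frac{r+1}{2}$ forces $\floor{(r+1)/2^n}=0$ and kills the second binomial $\binom{r}{\frac{r-1}{2}-(2^{n-1}-j)}$ in the surviving $k=0$ term (its lower index is negative). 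Your argument instead proceeds directly from the linearisation $\cos^r\theta=\frac{1}{2^r}\sum_{k=0}^r\binom{r}{k}\cos((r-2k)\theta)$, folds by the symmetry $k\leftrightarrow r-k$, reindexes, and pads the sum with vanishing binomials; this is self-contained, avoids the induction entirely, and makes transparent exactly where the hypothesis enters (every argument stays strictly inside $(0,\pi/2)$, so no reduction modulo the symmetries of the minimal polynomial is needed). Your sanity checks are sound: the $(-1)^i$ is indeed a stray factor (no $i$ appears on the left, and the $n=4$, $r=7$ example with first row $(35,21,7,1)=(\binom{7}{3},\binom{7}{2},\binom{7}{1},\binom{7}{0})$ confirms the formula without it), and the $n=4$, $r=15$ example correctly illustrates the failure mode outside the hypothesis, where the $k=0$ folding term $-\binom{15}{0}$ perturbs $\binom{15}{7}=6435$ down to the matrix entry $6434$. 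What the paper's route buys is uniformity --- one formula covering all odd $r$ with the special case falling out for free; what yours buys is an elementary, standalone proof of the restricted statement that does not depend on the (long) inductive argument for the general proposition.
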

\begin{proof}
This is a special case of the next proposition.
\end{proof}
\begin{proposition}
For $n\ge2,\ r\ {\rm odd},\ r\ge1$
\begin{align}\notag
\cos^r\left(\frac{\pi}{2^n}\right)&=\frac{1}{2^{r-1}}\sum_{j=1}^{2^{n-2}}       \sum_{k=0}^{\floor{(r+1)/(2^n)}}\\\notag
&(-1)^k\left(              \binom{r}{\frac{r-1}{2}-(k 2^{n-1}+j-1)}-\binom{r}{\frac{r-1}{2}-((k+1)2^{n-1}-j)    }  \right)                \cos\left(\frac{(2j-1)\pi}{2^n}     \right)\\\notag
&=\frac{1}{2^{r-1}}\sum_{j=1}^{2^{n-2}}       \sum_{k=0}^{\infty}\\\notag
&(-1)^k\left(              \binom{r}{\frac{r-1}{2}-(k 2^{n-1}+j-1)}-\binom{r}{\frac{r-1}{2}-((k+1)2^{n-1}-j)    }  \right)                \cos\left(\frac{(2j-1)\pi}{2^n}     \right)\\\notag
\end{align}
\end{proposition}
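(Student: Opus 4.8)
The plan is to start from the classical power-reduction identity for an odd exponent, obtained by expanding $\cos\theta=(e^{i\theta}+e^{-i\theta})/2$ with the binomial theorem and pairing the $\ell$-th and $(r-\ell)$-th terms:
$$\cos^r\theta=\frac{1}{2^{r-1}}\sum_{\ell=0}^{(r-1)/2}\binom{r}{\ell}\cos\bigl((r-2\ell)\theta\bigr).$$
Setting $\theta=\pi/2^n$ and reindexing by the odd integer $m=r-2\ell$ (which runs through $1,3,\dots,r$), and using $(r-m)/2=\tfrac{r-1}{2}-\tfrac{m-1}{2}$, this becomes
$$2^{r-1}\cos^r\Bigl(\frac{\pi}{2^n}\Bigr)=\sum_{\substack{m=1\\ m\ \mathrm{odd}}}^{r}\binom{r}{\tfrac{r-1}{2}-\tfrac{m-1}{2}}\cos\Bigl(\frac{m\pi}{2^n}\Bigr).$$
Everything then reduces to re-expressing each $\cos(m\pi/2^n)$ in the basis $\{\cos((2j-1)\pi/2^n):1\le j\le 2^{n-2}\}$ and collecting coefficients.

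For the reduction I would use the single identity $\cos\bigl((m+2^n)\pi/2^n\bigr)=\cos(m\pi/2^n+\pi)=-\cos(m\pi/2^n)$ together with evenness. Writing $m\bmod 2^n$ (an odd number in $\{1,3,\dots,2^n-1\}$, since $m$ is odd and $2^n$ even), two cases arise. If $m\equiv 2j-1\pmod{2^n}$ with $1\le j\le 2^{n-2}$, then $m=k\,2^n+(2j-1)$ and $\cos(m\pi/2^n)=(-1)^k\cos((2j-1)\pi/2^n)$; here $\tfrac{m-1}{2}=k\,2^{n-1}+j-1$. If instead $m\equiv 2^n-(2j-1)\pmod{2^n}$, then $m=(k+1)2^n-(2j-1)$ and $\cos(m\pi/2^n)=(-1)^{k+1}\cos((2j-1)\pi/2^n)$; here $\tfrac{m-1}{2}=(k+1)2^{n-1}-j$. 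A short check shows every odd $m\ge1$ lands in exactly one of these two families for a unique pair $(j,k)$ with $1\le j\le 2^{n-2}$ and $k\ge0$ — the crucial point being that since $n\ge2$ makes $2^{n-1}$ even, no odd $m$ ever hits the reflection point $m\equiv 2^{n-1}\pmod{2^n}$, so there is no boundary collision and the two families partition the odd integers cleanly.

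Substituting these reductions and grouping by $j$ gives, for the coefficient of $\cos((2j-1)\pi/2^n)$,
$$\sum_{k\ge0}(-1)^k\binom{r}{\tfrac{r-1}{2}-(k\,2^{n-1}+j-1)}\;-\;\sum_{k\ge0}(-1)^{k}\binom{r}{\tfrac{r-1}{2}-((k+1)2^{n-1}-j)},$$
which is exactly the infinite-sum form asserted; the convention $\binom{r}{s}=0$ for $s<0$ or $s>r$ makes these sums finite. To recover the first (finite) display I would simply observe that $\binom{r}{\frac{r-1}{2}-t}$ vanishes once $t<0$ or $t>r$, so only finitely many $k$ survive, and that $k=\lfloor(r+1)/2^n\rfloor$ is a safe cutoff past which both binomials are identically zero (checking $m>r$ for both families beyond this $k$).

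The main obstacle I expect is the reduction step: proving that the two families, with their signs $(-1)^k$ and $(-1)^{k+1}$, give a sign-correct partition of all odd $m\ge1$, and in particular that the $\tfrac{m-1}{2}$ values line up with the exact binomial arguments $k\,2^{n-1}+j-1$ and $(k+1)2^{n-1}-j$ in the statement. This is purely congruence bookkeeping, but it is where an off-by-one or a sign slip would hide; as a sanity check, the case $n=4,\ r=15$ reproduces the first row $(6434,4990,2898,910)$ of the displayed matrix, for example $\binom{15}{7}-\binom{15}{0}=6434$ and $\binom{15}{6}-\binom{15}{1}=4990$. Verifying the finite cutoff $\lfloor(r+1)/2^n\rfloor$ against both families is a secondary, more mechanical check.
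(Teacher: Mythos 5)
Your proof is correct, and it takes a genuinely different route from the paper. The paper establishes this formula by a coupled induction on $r$: it assumes the odd-power formula for $r$, multiplies by $\cos(\pi/2^n)$, applies the product-to-sum identity, and after a telescoping-sum argument arrives at the companion even-power formula for $r+1$; that proposition in turn feeds back to give the odd case $r+2$. Your argument is direct and non-inductive: you start from the classical power-reduction identity $\cos^r\theta=\tfrac{1}{2^{r-1}}\sum_{\ell=0}^{(r-1)/2}\binom{r}{\ell}\cos((r-2\ell)\theta)$ and then fold each multiple angle $\cos(m\pi/2^n)$ into the fundamental set $\{\cos((2j-1)\pi/2^n)\}$ via $\cos(x+\pi)=-\cos(x)$ and evenness. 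The congruence bookkeeping you flag as the main risk is in fact already done correctly in your second paragraph: the two families $m=k2^n+(2j-1)$ and $m=(k+1)2^n-(2j-1)$ partition the odd integers because $2^{n-1}$ is even, the half-integers $\tfrac{m-1}{2}$ come out to exactly $k2^{n-1}+j-1$ and $(k+1)2^{n-1}-j$, and the signs $(-1)^k$ and $(-1)^{k+1}$ account for the relative minus sign between the two binomials. Your approach buys a conceptual explanation of \emph{why} the formula has its two-term reflected structure (the two binomials are the two mirror images of an odd residue class mod $2^n$) and decouples the proof from the even-power proposition entirely; the paper's induction buys the odd and even formulas in tandem and stays within the product-to-sum machinery used elsewhere in Section 2, but is considerably longer. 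Your numerical check against the $n=4$, $r=15$ matrix row closes the loop.
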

\begin{proof}
The second equality follows because the binomial coefficient is $0$ for $k>\floor{(r+1)/(2^n)}$. That form makes the following induction argument clearer. The proof is by induction on the power $r$ to which the cosine is raised. Note the formula is true for $r=1$. There is a companion proposition involving the formula for even powers. The plan is to assume the odd power proposition is true for $r$ and to show the even power formula is then implied for $r+1$.  We will then show the even power formula for $r+1$ implies the odd power formula for $r+2$.
So, we assume
\begin{align}\notag
\cos^r\left(\frac{\pi}{2^n}\right)&=\frac{1}{2^{r-1}}\sum_{j=1}^{2^{n-2}}       \sum_{k=0}^{\floor{(r+1)/(2^n)}}\\\notag
&(-1)^k\left(              \binom{r}{\frac{r-1}{2}-(k 2^{n-1}+j-1)}-\binom{r}{\frac{r-1}{2}-((k+1)2^{n-1}-j)    }  \right)                \cos\left(\frac{(2j-1)\pi}{2^n}     \right)\\\notag
\end{align}
This implies
\begin{align}\notag
\cos^{r+1}\left(\frac{\pi}{2^n}\right)&=\frac{1}{2^{r-1}}\sum_{j=1}^{2^{n-2}}       \sum_{k=0}^{\floor{(r+1)/(2^n)}}(-1)^k\left(              \binom{r}{\frac{r-1}{2}-(k 2^{n-1}+j-1)}-\binom{r}{\frac{r-1}{2}-((k+1)2^{n-1}-j)    }  \right)  \\\notag
& \cos\left(\frac{(2j-1)\pi}{2^n}     \right)\cos^r\left(\frac{\pi}{2^n}\right)\\\notag
&=\frac{1}{2^{r-1}}\sum_{j=1}^{2^{n-2}}       \sum_{k=0}^{\floor{(r+1)/(2^n)}}(-1)^k\left(              \binom{r}{\frac{r-1}{2}-(k 2^{n-1}+j-1)}-\binom{r}{\frac{r-1}{2}-((k+1)2^{n-1}-j)    }  \right)  \\\notag
&  \frac{1}{2}   \biggl(         \cos\left(\frac{(2j-1)\pi}{2^n}-  \frac{\pi}{2^n}   \right)+ \cos\left(\frac{(2j-1)\pi}{2^n}+ \frac{\pi}{2^n}   \right)\biggr)\\\notag
&=\frac{1}{2^{r-1}}\sum_{j=1}^{2^{n-2}}       \sum_{k=0}^{\floor{(r+1)/(2^n)}}(-1)^k\left(              \binom{r}{\frac{r-1}{2}-(k 2^{n-1}+j-1)}-\binom{r}{\frac{r-1}{2}-((k+1)2^{n-1}-j)    }  \right)  \\\notag
&  \frac{1}{2}   \biggl(         \cos\left(\frac{(j-1)\pi}{2^{n-1}} \right)+ \cos\left(\frac{(j)\pi}{2^{n-1}}  \right)\biggr)\\\notag
&=\frac{1}{2^{r}}\biggl(    \sum_{k=0}^{\floor{(r+1)/(2^n)}}(-1)^k\left(              \binom{r}{\frac{r-1}{2}-(k 2^{n-1})}-\binom{r}{\frac{r-1}{2}-((k+1)2^{n-1}-1)    }  \right)  \\\notag
&+\sum_{j=1}^{2^{n-2}-1}       \sum_{k=0}^{\floor{(r+1)/(2^n)}}(-1)^k\left(              \binom{r}{\frac{r-1}{2}-(k 2^{n-1}+j-1)}-\binom{r}{\frac{r-1}{2}-((k+1)2^{n-1}-j)    }  \right)  \\\notag
&+ (-1)^k\left(              \binom{r}{\frac{r-1}{2}-(k 2^{n-1}+j)}-\binom{r}{\frac{r-1}{2}-((k+1)2^{n-1}-j-1)    }  \right) \biggr) \cos\left(\frac{(j)\pi}{2^{n-1}}  \right) \\\notag
&=\frac{1}{2^{r}}\biggl(    \sum_{k=0}^{\floor{(r+1)/(2^n)}}(-1)^k\left(              \binom{r}{\frac{r-1}{2}-(k 2^{n-1})}-\binom{r}{\frac{r-1}{2}-((k+1)2^{n-1}-1)    }  \right)  \\\notag
&+\sum_{j=1}^{2^{n-2}-1}       \sum_{k=0}^{\floor{(r+1)/(2^n)}}(-1)^k\biggl(            \left(  \binom{r}{\frac{r+1}{2}-(k 2^{n-1}+j)}+  \binom{r}{\frac{r+1}{2}-(k 2^{n-1}+j+1)}\right)\\\notag
&-\left(\binom{r}{\frac{r+1}{2}-((k+1)2^{n-1}-j+1)    } +\binom{r}{\frac{r-1}{2}-((k+1)2^{n-1}-j)    }  \right)\biggr) \biggr) \cos\left(\frac{(j)\pi}{2^{n-1}}  \right) \\\notag
&=\frac{1}{2^{r}}\biggl(       \sum_{k=0}^{\floor{(r+1)/(2^n)}}(-1)^k\biggl( \biggl(1+\frac{2(k2^{n-1}+1)}{r+1}\biggr)             \binom{r}{\frac{r+1}{2}-(k 2^{n-1}+1)}\\\notag
&-\biggl(1+\frac{2((k+1)2^{n-1})}{r+1}\biggr)\binom{r}{\frac{r+1}{2}-((k+1)2^{n-1})    }  \biggr)  \\\notag
&+\sum_{j=1}^{2^{n-2}-1}       \sum_{k=0}^{\floor{(r+1)/(2^n)}} (-1)^k\biggl(             \binom{r+1}{\frac{r+1}{2}-(k 2^{n-1}+j)}-  \binom{r+1}{\frac{r+1}{2}-((k+1) 2^{n-1}-j)}     \biggr)           \biggr) \cos\left(\frac{(j)\pi}{2^{n-1}}  \right) \\\notag
&=\frac{1}{2^{r}}\biggl(       \sum_{k=0}^{\floor{(r+1)/(2^n)}}(-1)^k\biggl(\biggl(  \left(\frac{r+1}{2}-k2^{n-1}\right)  /\left(\frac{r+1}{2}+k2^{n-1}+1\right)     \biggr)\\\notag
& \biggl(1+\frac{2(k2^{n-1}+1)}{r+1}\biggr)             \binom{r}{\frac{r+1}{2}-(k 2^{n-1}+1)}-\biggl(1+\frac{2((k+1)2^{n-1})}{r+1}\biggr)\binom{r}{\frac{r+1}{2}-((k+1)2^{n-1})    }  \biggr)  \\\notag
&+\sum_{j=1}^{2^{n-2}-1}       \sum_{k=0}^{\floor{(r+1)/(2^n)}} (-1)^k\biggl(             \binom{r+1}{\frac{r+1}{2}-(k 2^{n-1}+j)}-  \binom{r+1}{\frac{r+1}{2}-((k+1) 2^{n-1}-j)}     \biggr)           \biggr) \cos\left(\frac{(j)\pi}{2^{n-1}}  \right) \\\notag
\end{align}
\begin{align}
&=\frac{1}{2^{r}}\biggl(       \sum_{k=0}^{\floor{(r+1)/(2^n)}}(-1)^k\biggl(\frac{1+r-k2^{n}}{1+r}  \binom{r}{\frac{r+1}{2}-(k 2^{n-1}+1)}-\frac{1+r+(1+k)2^n}{1+r}\binom{r}{\frac{r+1}{2}-((k+1)2^{n-1})    }  \biggr)  \\\notag
&+\sum_{j=1}^{2^{n-2}-1}       \sum_{k=0}^{\floor{(r+1)/(2^n)}} (-1)^k\biggl(             \binom{r+1}{\frac{r+1}{2}-(k 2^{n-1}+j)}-  \binom{r+1}{\frac{r+1}{2}-((k+1) 2^{n-1}-j)}     \biggr)           \biggr) \cos\left(\frac{(j)\pi}{2^{n-1}}  \right) \\\notag
\end{align}
Comparing with the formula for even  $r$, we see that to complete the proof we need to show
\begin{align}
 &\sum_{k=0}^{\floor{(r+1)/(2^n)}}(-1)^k\biggl(\frac{1+r-k2^{n}}{1+r}  \binom{r}{\frac{r+1}{2}-(k 2^{n-1}+1)}-\frac{1+r+(1+k)2^n}{1+r}\binom{r}{\frac{r+1}{2}-((k+1)2^{n-1})    }  \biggr)  \\\notag
 &=   \sum_{k=0}^{\floor{(r+1)/(2^n)}}(-1)^k\biggl( \binom{r}{\frac{r+1}{2}-(k 2^{n-1}+1)}-\binom{r}{\frac{r+1}{2}-((k+1)2^{n-1})    }  \biggr)  \\\notag
\end{align}
Or that
\begin{align}
 &\sum_{k=0}^{\floor{(r+1)/(2^n)}}(-1)^k\biggl(\frac{-k2^{n}}{1+r}  \binom{r}{\frac{r+1}{2}-(k 2^{n-1}+1)}-\frac{(1+k)2^n}{1+r}\binom{r}{\frac{r+1}{2}-((k+1)2^{n-1})    }  \biggr) =0 \\\notag
\end{align}
but this is a telescoping sum with first and last terms $0$.
\end{proof}
\begin{lemma}
Let
\begin{align}\notag
m&=\left((-1)^{\floor{(i-1)/2^{n-2}}} (i-2^{n-2}\floor{(i-1)/2^{n-2})}\right)\pmod*{2^{n-2}+1}\\\notag
\end{align}
Then $0\le m\le 2^{n-2}$ and
\begin{align}\notag
\cos\left(\frac{(2i-1)\pi}{2^n}\right) &=(-1)^{\floor{(2^{n-2}+i-1)/2^{n-1}}}\cos\left(\frac{(2m-1)\pi}{2^n}\right) \\\notag
\end{align}
\end{lemma}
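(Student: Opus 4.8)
The plan is to reduce the arbitrary angle $\frac{(2i-1)\pi}{2^n}$ to a canonical first-quadrant angle by Euclidean division, and then to track the sign quadrant-by-quadrant. First I would write $i-1 = 2^{n-2}q + s$ with $q = \floor{(i-1)/2^{n-2}}$ and $0 \le s \le 2^{n-2}-1$, and set $\rho := s+1 \in \{1,\dots,2^{n-2}\}$. Then $i = 2^{n-2}q + \rho$, so that
$$\frac{(2i-1)\pi}{2^n} = q\,\frac{\pi}{2} + \phi, \qquad \phi := \frac{(2\rho-1)\pi}{2^n} \in \left(0,\tfrac{\pi}{2}\right),$$
the endpoints being excluded because $1\le\rho\le 2^{n-2}$ forces $\tfrac{\pi}{2^n}\le\phi\le\tfrac{\pi}{2}-\tfrac{\pi}{2^n}$.

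Next I would evaluate $m$ explicitly from its definition. Since $2^{n-2}\equiv -1 \pmod{2^{n-2}+1}$ and $1\le\rho\le 2^{n-2}$, the modular expression collapses to $m=\rho$ when $q$ is even and $m = 2^{n-2}+1-\rho$ when $q$ is odd; in both cases $1\le m\le 2^{n-2}$, which already yields the claimed bound $0\le m\le 2^{n-2}$ and shows that the ``$+1$'' in the modulus is exactly what prevents the output $m=0$. With this, a one-line computation gives the canonical angle in each parity:
$$\frac{(2m-1)\pi}{2^n} = \phi \ \ (q\text{ even}), \qquad \frac{(2m-1)\pi}{2^n} = \frac{\pi}{2}-\phi \ \ (q\text{ odd}).$$

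Then I would establish the cosine identity by the standard quadrant reduction of $\cos(\phi + q\pi/2)$ according to $q \bmod 4$. The four cases are $\cos\phi$, $-\sin\phi = -\cos(\tfrac{\pi}{2}-\phi)$, $-\cos\phi$, and $\sin\phi = \cos(\tfrac{\pi}{2}-\phi)$; comparing with the Step~2 values of $\cos\frac{(2m-1)\pi}{2^n}$ shows that the magnitude always matches, and the accompanying signs follow the pattern $+,-,-,+$, i.e. the sign is $(-1)^{\floor{(q+1)/2}}$.

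Finally, the one genuinely fiddly point — and the step I expect to be the main obstacle — is reconciling this clean block-sign $(-1)^{\floor{(q+1)/2}}$ with the sign $(-1)^{\floor{(2^{n-2}+i-1)/2^{n-1}}}$ stated in the lemma. Writing $2^{n-2}+i-1 = 2^{n-2}(q+1) + (\rho-1)$ and dividing by $2^{n-1}=2\cdot 2^{n-2}$ gives
$$\floor{\frac{2^{n-2}+i-1}{2^{n-1}}} = \floor{\frac{q+1}{2} + \frac{\rho-1}{2^{n-1}}}.$$
Since $0\le\rho-1\le 2^{n-2}-1$ forces $\frac{\rho-1}{2^{n-1}} < \tfrac12$, the added fraction can never carry the floor past $\floor{(q+1)/2}$, whether $\frac{q+1}{2}$ is an integer (so the fraction stays below the next integer) or a half-integer (so the sum stays below $\frac{q}{2}+1$). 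Hence the two sign exponents agree modulo $2$, the sign factors coincide, and the identity is proved; the only care needed throughout is the tight inequality $\rho-1 < 2^{n-2}$ and the parity bookkeeping in the modular reduction of $m$.
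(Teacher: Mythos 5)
Your proposal is correct and follows essentially the same route as the paper, which simply asserts that one "checks the equality when $\theta=\frac{(2i-1)\pi}{2^n}$ lies in each quadrant"; your write-up is that quadrant-by-quadrant check carried out in full, with the division $i-1=2^{n-2}q+s$, the explicit evaluation of $m$ in each parity of $q$, and the floor computation reconciling $(-1)^{\floor{(q+1)/2}}$ with $(-1)^{\floor{(2^{n-2}+i-1)/2^{n-1}}}$ all verified correctly. The only difference is that you supply the details the paper omits.
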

\begin{proof}
The proof consists of checking the equality when $\theta=\frac{(2i-1)\pi}{2^n}$ lies in each quadrant of the polar coordinate plane.
\end{proof}
\begin{proposition}\label{oddpowers}
For $n\ge2,\ r\ {\rm odd},\ r\ge1$
\begin{align}\notag
\cos^r\left(\frac{(2i-1)\pi}{2^n}\right)&=\frac{1}{2^{r-1}}\sum_{j=1}^{2^{n-2}}  (-1)^{\floor{(2^{n-2}+2ij-i-j)/2^{n-1}}}     \sum_{k=0}^{\floor{(r+1)/(2^n)}}\\\notag
&(-1)^k\left(              \binom{r}{\frac{r-1}{2}-(k 2^{n-1}+j-1)}-\binom{r}{\frac{r-1}{2}-((k+1)2^{n-1}-j)    }  \right)       \\\notag
&\cos(      \frac{   (  2[\biggl(   (-1)^{\floor{ (2ij-i-j)/2^{n-2} } } ( (2 i j-i-j)\pmod*{2^{n-2}}+1)\biggr)\pmod*{2^{n-2}+1}   ]  -1   )\pi   }    {2^{n}}      )\
\end{align}
\begin{proof}
From proposition \ref{permutation} we see that the proof amounts to showing the proper permutation has been applied to the previous proposition.
\begin{align}
\cos^r\left(\frac{(2i-1)\pi}{2^n}\right)&=\left( (-1)^i p_i\left(   \cos\left(\frac{\pi}{2^n}\right)  \right)\right)^r\\\notag
&=\frac{1}{2^{r-1}}\sum_{j=1}^{2^{n-2}}       \sum_{k=0}^{\floor{(r+1)/(2^n)}}(-1)^k\left(              \binom{r}{\frac{r-1}{2}-(k 2^{n-1}+j-1)}-\binom{r}{\frac{r-1}{2}-((k+1)2^{n-1}-j)    }  \right) \\\notag
&     (-1)^i p_i\left(    \cos\left (\frac{(2j-1)\pi}{2^n} \right)  \right)\\\notag
&=\frac{1}{2^{r-1}}\sum_{j=1}^{2^{n-2}}       \sum_{k=0}^{\floor{(r+1)/(2^n)}}(-1)^k\left(              \binom{r}{\frac{r-1}{2}-(k 2^{n-1}+j-1)}-\binom{r}{\frac{r-1}{2}-((k+1)2^{n-1}-j)    }  \right) \\\notag
&   \cos\left (\frac{(2(2ij-i-j+1)-1)\pi}{2^n} \right)\\\notag
\end{align}
Now apply the preceding lemma.
\end{proof}
\end{proposition}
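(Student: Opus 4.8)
The plan is to reduce the claim to the power-reduction formula for $\cos^r\!\left(\frac{\pi}{2^n}\right)$ already proved in the previous proposition, transport that identity through the field automorphism carrying the basic generator to $\cos\!\left(\frac{(2i-1)\pi}{2^n}\right)$, and then fold the resulting multiple-angle cosines back into the fundamental range $1\le m\le 2^{n-2}$ by means of the quadrant-reduction lemma stated immediately above. As Proposition \ref{permutation} indicates, the whole argument is just the assertion that the correct permutation and sign pattern has been applied to the basis coefficients of the previous proposition.

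First I would use Proposition \ref{chebyshev} to write $\cos\!\left(\frac{(2i-1)\pi}{2^n}\right)=(-1)^ip_i\!\left(\cos\!\left(\frac{\pi}{2^n}\right)\right)$, so that $\cos^r\!\left(\frac{(2i-1)\pi}{2^n}\right)=\left((-1)^ip_i\!\left(\cos\!\left(\frac{\pi}{2^n}\right)\right)\right)^r$. Let $\sigma$ be the endomorphism furnished by the first lemma of this section that sends $\cos\!\left(\frac{\pi}{2^n}\right)$ to $\cos\!\left(\frac{(2i-1)\pi}{2^n}\right)$. Since $\sigma$ is a ring homomorphism fixing $\mathbb{Q}$, it commutes both with raising to the $r$th power and with the rational polynomials $p_j$; applying it to the previous proposition's expansion of $\cos^r\!\left(\frac{\pi}{2^n}\right)$ therefore leaves every binomial coefficient untouched and converts each basis element $\cos\!\left(\frac{(2j-1)\pi}{2^n}\right)$ into $(-1)^jp_j\!\left(\cos\!\left(\frac{(2i-1)\pi}{2^n}\right)\right)$. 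This is exactly the intermediate display in the statement; that the expression may equivalently be written with the roles of $i$ and $j$ exchanged follows from the proposition that expresses $(-1)^ip_i$ of a cosine as a single cosine, since both $(-1)^ip_i\!\left(\cos\!\left(\frac{(2j-1)\pi}{2^n}\right)\right)$ and $(-1)^jp_j\!\left(\cos\!\left(\frac{(2i-1)\pi}{2^n}\right)\right)$ equal $\cos\!\left(\frac{(2(2ij-i-j+1)-1)\pi}{2^n}\right)$, the quantity $2ij-i-j+1$ being symmetric in $i$ and $j$.

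Next I would eliminate the $p$-polynomials: by that same composition proposition each summand now carries the single cosine $\cos\!\left(\frac{(2(2ij-i-j+1)-1)\pi}{2^n}\right)$, whose argument generally lies outside the fundamental range. To return it there I would apply the preceding lemma with its index specialized to $I=2ij-i-j+1$. The lemma then produces the sign $(-1)^{\floor{(2^{n-2}+I-1)/2^{n-1}}}$ and rewrites the cosine as $\cos\!\left(\frac{(2m-1)\pi}{2^n}\right)$ with $m=\left((-1)^{\floor{(I-1)/2^{n-2}}}\left((I-1)\pmod*{2^{n-2}}+1\right)\right)\pmod*{2^{n-2}+1}$.

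What remains, and the only place any care is needed, is the arithmetic check that the substitution $I-1=2ij-i-j$ turns these two expressions into exactly the sign and index displayed in the proposition: that $\floor{(2^{n-2}+I-1)/2^{n-1}}=\floor{(2^{n-2}+2ij-i-j)/2^{n-1}}$, that $\floor{(I-1)/2^{n-2}}=\floor{(2ij-i-j)/2^{n-2}}$, and that $(I-1)\pmod*{2^{n-2}}+1=(2ij-i-j)\pmod*{2^{n-2}}+1$, where I would also note that the lemma's form $I-2^{n-2}\floor{(I-1)/2^{n-2}}$ coincides with $(I-1)\pmod*{2^{n-2}}+1$. Each of these is immediate, so the main obstacle is not any single deep step but keeping the nested floor and modular expressions aligned between the lemma and the target; once $I=2ij-i-j+1$ is substituted consistently everywhere, the displayed identity drops out.
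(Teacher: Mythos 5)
Your proposal is correct and follows essentially the same route as the paper's proof: expand $\cos^r\left(\frac{\pi}{2^n}\right)$ via the previous proposition, transport the identity through the automorphism (equivalently, through the Chebyshev polynomials $p_i$), collapse each term to a single cosine using the composition identity $(-1)^ip_i\left(\cos\left(\frac{(2j-1)\pi}{2^n}\right)\right)=\cos\left(\frac{(2(2ij-i-j+1)-1)\pi}{2^n}\right)$, and finish with the preceding quadrant-reduction lemma applied at $I=2ij-i-j+1$. Your explicit bookkeeping of the floor and modular expressions, and the remark that the lemma's $I-2^{n-2}\floor{(I-1)/2^{n-2}}$ equals $(I-1)\pmod*{2^{n-2}}+1$, only makes the final matching step more transparent than the paper's terse ``now apply the preceding lemma.''
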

\begin{proposition}
Assume $n\ge2,\ r\ {\rm odd},\ r\ge1$. Regarding the $2^{n-2}x2^{n-2}$ matrices, $M_n$ which satisfy $$\cos^r(\frac{(2j-1)\pi}{2^n})=\frac{1}{2^{r-1}} \sum_{k=1}^{2^{n-2}}M_n(j,k)\cos(\frac{(2k-1)\pi}{2^n})$$
Let
\begin{align}\notag
p &=2ij-i-j+1\\\notag
s &= \floor{\frac{(p - 1)}{2^{n-2}}}\\\notag
m &= ((-1)^s (p - s 2^{n-2}))\pmod*{2^{n-2}+1}\\\notag
q&=\floor{    (2^{n-2}+2 i j-i-j)/(2^{n-1})   }
\end{align}
then
\begin{align}\notag
M_n[i,m]&= (-1)^{q}\left(       \sum_{k=0}^{\floor{  (r+1)/(2^n)  } }(-1)^k\left(              \binom{r}{\frac{r-1}{2}-(k2^{n-1}+j-1)}-\binom{r}{\frac{r-1}{2}-((k+1)2^{n-1}-j)   }  \right)     \right)  \\\notag
\end{align}
\end{proposition}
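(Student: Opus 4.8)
The plan is to obtain the asserted entry purely by reading off coefficients in Proposition \ref{oddpowers}; no new summation identity is needed. First I would record the first row of $M_n$ explicitly. Comparing the proposition immediately preceding the quadrant lemma with the defining relation for $M_n$ at $i=1$ yields
\begin{align}\notag
M_n(1,j)&=\sum_{k=0}^{\floor{(r+1)/(2^n)}}(-1)^k\left(\binom{r}{\frac{r-1}{2}-(k2^{n-1}+j-1)}-\binom{r}{\frac{r-1}{2}-((k+1)2^{n-1}-j)}\right),
\end{align}
so the parenthesised sum over $k$ in the statement is exactly $M_n(1,j)$. The claim is therefore equivalent to $M_n(i,m)=(-1)^q M_n(1,j)$: it identifies the permuted position $m$ and the sign $(-1)^q$ with which the first-row entry $M_n(1,j)$ reappears in row $i$.

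Next I would invoke Proposition \ref{oddpowers}. There $\cos^r\!\left(\tfrac{(2i-1)\pi}{2^n}\right)$ is written as $\tfrac{1}{2^{r-1}}$ times a sum over $j$ in which the coefficient $M_n(1,j)$ is attached, with sign $(-1)^{\floor{(2^{n-2}+2ij-i-j)/2^{n-1}}}$, to the basis element $\cos\!\left(\tfrac{(2m(i,j)-1)\pi}{2^n}\right)$, where $m(i,j)$ is the reduced index produced by applying the quadrant lemma to $p=2ij-i-j+1$. Matching this against the defining expansion $\cos^r\!\left(\tfrac{(2i-1)\pi}{2^n}\right)=\tfrac{1}{2^{r-1}}\sum_{m}M_n(i,m)\cos\!\left(\tfrac{(2m-1)\pi}{2^n}\right)$ forces $M_n\bigl(i,m(i,j)\bigr)=(-1)^{q}M_n(1,j)$.

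It then remains to verify that the quantities $s,m,q$ defined in the statement agree with those furnished by the quadrant lemma at $p=2ij-i-j+1$. Here $s=\floor{(p-1)/2^{n-2}}=\floor{(2ij-i-j)/2^{n-2}}$, so $(-1)^s$ matches the lemma's sign factor; moreover $p-s2^{n-2}=\bigl((2ij-i-j)\pmod*{2^{n-2}}\bigr)+1$, so the two closed forms for $m$ are literally equal, and a short case check on the parity of $s$ confirms $1\le m\le 2^{n-2}$ (the inner quantity already lies in $\{1,\dots,2^{n-2}\}$, and the outer $\pmod*{2^{n-2}+1}$ only reflects it back into range when $s$ is odd). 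Finally $q=\floor{(2^{n-2}+2ij-i-j)/2^{n-1}}$ is the sign exponent verbatim, so substituting into the relation above gives the stated formula.

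The only genuine subtlety, and the step I would treat most carefully, is the well-definedness of this matching: for fixed $i$ the map $j\mapsto m(i,j)$ must be a bijection of $\{1,\dots,2^{n-2}\}$, so that each basis element acquires exactly one coefficient and $M_n(i,m)$ is unambiguous. This is exactly the permutation content of Proposition \ref{permutation} together with the group structure of the operation $\circ$; in particular the argument there that no basis element can occur with two opposite signs is what guarantees that reading off $M_n(i,m)$ coefficient by coefficient is legitimate. Everything else is the bookkeeping of floors and residues described above.
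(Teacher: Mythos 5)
Your proposal is correct and follows essentially the same route as the paper: the paper's own proof simply declares the result a restatement of Proposition \ref{oddpowers}, noting the identity $p-s2^{n-2}=\bigl((p-1)\pmod*{2^{n-2}}\bigr)+1$, which is exactly the bookkeeping you carry out. Your additional remarks on the bijectivity of $j\mapsto m(i,j)$ via Proposition \ref{permutation} make explicit a point the paper leaves implicit, but do not change the argument.
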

\begin{proof}
This is a restatement of the previous proposition. Note
\begin{align}\notag
p-s 2^{n-2}&=(p-1)\pmod*{2^{n-2}}+1\\\notag
\end{align}
\end{proof}

\begin{corollary} 
Let $S_n=\sum_{i=1}^{2^{n-2}}\cos^{r}\left(\frac{(2i-1)\pi}{2^n}\right)$, $r$ odd, $r\ge 1$,
\begin{align}\notag
S_n&=\frac{1}{2^{r-1}} \sum_{i=1}^{2^{n-2}} \sum_{j=1}^{2^{n-2}} (-1)^{\floor{    (2^{n-2}+2 i j-i-j)/(2^{n-1})   } }\sum_{k=0}^{\floor{  (r+1)/(4n)   }}\\\notag
&(-1)^k \left(              \binom{r}{\frac{r-1}{2}-(2k n+j-1)}-\binom{r}{\frac{r-1}{2}-(2(k+1)n-j)   }  \right)\\\notag
&\cos(      \frac{   (  2[  \left( (-1)^{\floor{ (2ij-i-j)/2^{n-2} } } ( (2 i j-i-j)\pmod*{2^{n-2}}+1)\right)\pmod*{2^{n-2}+1}   ]  -1   )\pi   }    {2^{n}}      )\\\notag
&=\frac{1}{2^{r-1}}\sum_{j=1}^{2^{n-2}}\sum_{k=0}^{\floor{  (r+1)/(4n)  } }(-1)^k \left(              \binom{r}{\frac{r-1}{2}-(2k n+j-1)}-\binom{r}{\frac{r-1}{2}-(2(k+1)n-j)   }  \right)\\\notag
& \sum_{i=1}^{2^{n-2}} (-1)^{\floor{    (2^{n-2}+2 i j-i-j)/(2^{n-1})   }}\\\notag
&\cos(      \frac{   (  2[   (-1)^{\floor{ (2ij-i-j)/2^{n-2} } } ( (2 i j-i-j)\pmod*{2^{n-2}}+1)\pmod*{2^{n-2}+1}   ]  -1   )\pi   }    {2^{n}}      )\\\notag
\end{align}
\end{corollary}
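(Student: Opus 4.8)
The plan is to obtain this identity by directly summing the per-term formula of Proposition \ref{oddpowers} over $i$ and then reorganising the resulting finite triple sum; no new analytic input is needed, since everything reduces to results already established.

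First I would start from the definition $S_n=\sum_{i=1}^{2^{n-2}}\cos^{r}\left(\frac{(2i-1)\pi}{2^n}\right)$ and substitute, for each fixed $i$, the right-hand side supplied by Proposition \ref{oddpowers}. That right-hand side is itself a finite double sum, over $j$ running from $1$ to $2^{n-2}$ and $k$ running from $0$ to $\floor{(r+1)/(2^n)}$, in which the sign $(-1)^{\floor{(2^{n-2}+2ij-i-j)/2^{n-1}}}$, the binomial-difference factor, and the permuted cosine all appear. Placing this inside the outer $\sum_i$ produces the first displayed equality verbatim; this step is pure transcription and requires no computation beyond writing the proposition under an additional summation symbol.

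Second, I would interchange the order of summation so as to move $\sum_i$ to the innermost position. Because all three index ranges are finite, the reordering is unconditionally valid. The only thing to verify is which factors survive being pulled outside the $i$-sum: the quantity $(-1)^k\left(\binom{r}{\frac{r-1}{2}-(k2^{n-1}+j-1)}-\binom{r}{\frac{r-1}{2}-((k+1)2^{n-1}-j)}\right)$ depends only on $j$, $k$, $r$, and $n$, and not on $i$, so it factors through $\sum_j\sum_k$ and out of $\sum_i$. What remains trapped inside $\sum_i$ is exactly the $i$-dependent data, namely the sign $(-1)^{\floor{(2^{n-2}+2ij-i-j)/2^{n-1}}}$ together with the permuted cosine argument produced by the lemma preceding Proposition \ref{oddpowers}. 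Collecting these gives the second displayed equality.

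The argument is therefore entirely bookkeeping, and the interchange is legitimate purely by finiteness, so there is no genuine obstacle. The one place demanding care is the index arithmetic inside the cosine: the nested floor-and-modulus expression that encodes the permutation of the basis $\cos\left(\frac{(2m-1)\pi}{2^n}\right)$ must be carried through the reordering unchanged, since it couples $i$ and $j$ and cannot be separated. I would also double-check that the upper summation limit and the two binomial indices are transcribed consistently with the earlier propositions (the natural inherited form being $k2^{n-1}$, $(k+1)2^{n-1}$, and $\floor{(r+1)/(2^n)}$), as this clerical matching is the only step where an error could slip in.
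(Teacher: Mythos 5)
Your proposal is correct and matches what the paper intends: the corollary is simply Proposition \ref{oddpowers} summed over $i$ with the finite sums interchanged, which is exactly your two steps (the paper offers no further argument). Your closing caution about the binomial indices is well placed --- the corollary's $2kn$, $2(k+1)n$, and $\floor{(r+1)/(4n)}$ only agree with the inherited $k2^{n-1}$, $(k+1)2^{n-1}$, and $\floor{(r+1)/(2^n)}$ under the appendix's convention that ``$n$'' there denotes $2^{n-2}$, so reading them as the inherited forms, as you do, is the right resolution.
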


The Mathematica code for producing the matrix for terms of $M$, $S_n$ for $r>1$, $r$ odd is given in the appendix.
\begin{lemma}
\begin{align}\notag
&\binom{r}{\frac{r-1}{2}-(k2^{n-1}+j-1)}-\binom{r}{\frac{r-1}{2}-((k+1)2^{n-1}-j)   }\\\notag
&=-\left(  \binom{r}{\frac{r-1}{2}-(k2^{n-1}+(2^{n-1}-j+1)-1)}-\binom{r}{ \frac{r-1}{2}-( (k+1)2^{n-1}-(2^{n-1}-j+1) )   }    \right)\\\notag
\end{align}
\end{lemma}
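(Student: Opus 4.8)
The plan is to recognize this identity as a simple antisymmetry under the substitution $j \mapsto 2^{n-1} - j + 1$, requiring nothing beyond careful bookkeeping of the binomial indices. Write $N = 2^{n-1}$ and $c = \frac{r-1}{2}$, and define
\[
F(j) = \binom{r}{c - (kN + j - 1)} - \binom{r}{c - ((k+1)N - j)},
\]
so that the left-hand side of the lemma is exactly $F(j)$ and the quantity inside the parentheses on the right-hand side is $F(N - j + 1)$. The claim is therefore precisely $F(j) = -F(N - j + 1)$, and I would reduce the lemma to verifying this single relation.

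First I would substitute $N - j + 1$ for $j$ in $F$ and simplify the two lower indices. For the first binomial the inner expression becomes $kN + (N - j + 1) - 1 = (k+1)N - j$, so its lower index is $c - ((k+1)N - j)$, which is exactly the lower index of the \emph{second} binomial of $F(j)$. For the second binomial the inner expression becomes $(k+1)N - (N - j + 1) = kN + j - 1$, so its lower index is $c - (kN + j - 1)$, the lower index of the \emph{first} binomial of $F(j)$. Thus the substitution merely interchanges the two binomial coefficients occurring in $F$.

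Consequently
\[
F(N - j + 1) = \binom{r}{c - ((k+1)N - j)} - \binom{r}{c - (kN + j - 1)} = -F(j),
\]
and negating both sides yields $F(j) = -F(N - j + 1)$, which is the asserted identity. I do not expect any genuine obstacle: the whole content is the cancellation of the offsets $+1$ and $-1$ inside the arguments, and, in contrast to the reflection $\binom{r}{s} = \binom{r}{r-s}$, it does not even invoke a binomial symmetry — only the arithmetic of the indices. The single point that demands care is keeping track of those $\pm 1$ shifts so that the two lower indices line up exactly after the substitution; everything else is immediate.
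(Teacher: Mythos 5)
Your proof is correct: writing $N=2^{n-1}$, the substitution $j\mapsto N-j+1$ sends $kN+j-1$ to $(k+1)N-j$ and vice versa, so it merely swaps the two binomial coefficients and flips the sign of their difference. The paper states this lemma with no proof at all, evidently treating it as exactly the routine index bookkeeping you carried out, so your argument supplies the intended (and only reasonable) justification.
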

\begin{lemma}
In the previous proposition, the value $(-1)^q$ is a function of $i,j,n$. Assume we denote that function $e(i,j,n)$. Then
$$e(i,j,n)=-e(i,2^{n-1}-j+1,n)$$
and
$$e(i,((i+j-1)(2i-1)^{2^{n-2}-1})\pmod*{2^{n-1}},n)=(-1)^{\floor{(i+j-1}(2i-1)^{2^{n-2}-1}/2^{n-1}}$$
\end{lemma}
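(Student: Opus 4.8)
The plan is to first replace the raw floor-expression for $q$ by a transparent sign of a cosine, and then read off both identities from elementary reflection and modular arithmetic. Writing $w=(2i-1)(2j-1)=2(2ij-i-j)+1$ for the product of the two odd indices, a direct substitution gives $2^{n-2}+2ij-i-j=\tfrac12\bigl(w+2^{n-1}-1\bigr)$, so that the exponent of the previous proposition is $q=\floor{(w+2^{n-1}-1)/2^{n}}$. I would then prove, for every odd integer $w$, the identity $e(i,j,n)=(-1)^{q}=\operatorname{sgn}\cos\!\bigl(w\pi/2^{n}\bigr)$. Indeed $\cos(w\pi/2^{n})$ never vanishes for odd $w$ and has sign $(-1)^{k}$ precisely on the range $w\in(2^{n}k-2^{n-1},\,2^{n}k+2^{n-1})$; for integer $w$ in that range $w+2^{n-1}-1$ lies in $\{2^{n}k,\dots,2^{n}k+2^{n}-2\}$, whence $\floor{(w+2^{n-1}-1)/2^{n}}=k$. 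This identifies $e(i,j,n)$ with $\operatorname{sgn}\cos\bigl((2i-1)(2j-1)\pi/2^{n}\bigr)$ for every integer value of the second argument, which is exactly the sign produced by the folding lemma preceding Proposition \ref{oddpowers}; it is the engine for both claims.

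For the first identity I would apply the reflection $j\mapsto 2^{n-1}-j+1$, which sends the odd index $2j-1$ to $2(2^{n-1}-j+1)-1=2^{n}-(2j-1)$ and hence $w$ to $(2i-1)2^{n}-w$. Since $2i-1$ is odd, the associated angle is $(2i-1)\pi-w\pi/2^{n}\equiv\pi-w\pi/2^{n}\pmod{2\pi}$, and $\cos(\pi-\theta)=-\cos\theta$ flips the sign. Therefore $e(i,2^{n-1}-j+1,n)=-e(i,j,n)$, which is the first assertion.

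For the second identity, put $N=(i+j-1)(2i-1)^{2^{n-2}-1}$, $\bar N=N\pmod{2^{n-1}}$, and $L=\floor{N/2^{n-1}}$, so $N=2^{n-1}L+\bar N$. The crucial arithmetic input is that the universal exponent of $(\mathbb{Z}/2^{n}\mathbb{Z})^{\times}$ equals $2^{n-2}$ for $n\ge 3$, so $(2i-1)^{2^{n-2}}\equiv 1\pmod{2^{n}}$; this is the same Euler-type inversion already used for the exponent $i(2i-1)^{2^{n-1}-1}$ earlier in the section. Multiplying by $2(i+j-1)$ upgrades the congruence to $2(i+j-1)(2i-1)^{2^{n-2}}\equiv 2(i+j-1)\pmod{2^{n+1}}$, hence
\[
(2i-1)(2N-1)=2(i+j-1)(2i-1)^{2^{n-2}}-(2i-1)\equiv 2j-1\pmod{2^{n+1}}.
\]
Because $2\bar N-1=(2N-1)-2^{n}L$ and $2i-1$ is odd (so $(2i-1)2^{n}L\equiv 2^{n}L\pmod{2^{n+1}}$), reduction modulo $2^{n+1}$ gives $(2i-1)(2\bar N-1)\equiv(2j-1)-2^{n}L\pmod{2^{n+1}}$. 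Feeding this into the sign formula of the first paragraph and using $\cos(\theta-L\pi)=(-1)^{L}\cos\theta$ together with $\cos((2j-1)\pi/2^{n})>0$ for $1\le j\le 2^{n-2}$ yields $e(i,\bar N,n)=(-1)^{L}=(-1)^{\floor{N/2^{n-1}}}$, which is the second assertion.

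The main obstacle is the modular bookkeeping in the last step: one must work modulo $2^{n+1}$ rather than $2^{n}$, since the sign of a cosine sees the full period $2\pi$ (i.e. $2^{n+1}$ in these units), and the odd factor $2i-1$ may be collapsed to $1$ only after it has been multiplied by a factor of $2$. I would also isolate the degenerate case $n=2$, where $2^{n-2}=1$ makes the matrices $1\times1$ and the exponent identity $(2i-1)^{2^{n-2}}\equiv1\pmod{2^{n}}$ need not hold; there the only admissible index is $i=j=1$ and both statements are immediate. Once $e$ is identified with $\operatorname{sgn}\cos\bigl((2i-1)(2j-1)\pi/2^{n}\bigr)$, everything else is routine.
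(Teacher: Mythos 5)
Your proof is correct, and it takes a genuinely different route from the paper's. The paper proves only the first identity, and does so by pure floor-function arithmetic: it rewrites the two exponents so that their arguments sum to the constant $(i2^n-1)/2^{n-1}$, checks the case $j=1$, and then argues by induction on $j$ that the two floor functions always jump simultaneously (one up, one down), so the product of the two signs never changes; the second identity is asserted but not actually proved there. You instead first identify $e(i,j,n)=(-1)^{\floor{(w+2^{n-1}-1)/2^n}}$ with $\operatorname{sgn}\cos\bigl(w\pi/2^n\bigr)$ for $w=(2i-1)(2j-1)$ --- a clean computation, valid because $w$ odd keeps the cosine away from its zeros --- and then both claims become trigonometry plus modular arithmetic: the reflection $j\mapsto 2^{n-1}-j+1$ sends the angle to $\pi-\theta \pmod*{2\pi}$, giving the sign flip, and the second identity follows from the fact that $(\mathbb{Z}/2^n\mathbb{Z})^\times$ has exponent $2^{n-2}$, with the reduction correctly carried out modulo $2^{n+1}$ (the full period of the cosine in these units). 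What your approach buys is substantial: it supplies the missing proof of the second identity, it explains conceptually what $(-1)^q$ \emph{is} (the sign of the folded cosine appearing in Proposition \ref{oddpowers}), and it extends $e$ to all integer values of the second argument, which is needed since $\bar N=N\pmod*{2^{n-1}}$ need not lie in $\{1,\dots,2^{n-2}\}$. The paper's argument is more elementary and avoids any appeal to group structure, but it is also more fragile (the "increases by at most one" step silently uses $2i-1<2^{n-1}$) and covers less. Your isolation of the degenerate case $n=2$ is a detail the paper does not address at all.
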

\begin{proof}
For $1\le i,j\le 2^{n-2}$ we want to show
\begin{align}\notag
(-1)^{\floor{    (2^{n-2}+2 i j-i-j)/2^{n-1}  }}&=-(-1)^{\floor{    (2^{n-2}+2 i (2^{n-1}-j+1)-i-(2^{n-1}-j+1))/2^{n-1}   }}\\\notag
\end{align}
or, equivalently,
\begin{align}\notag
(-1)^{\floor{    (2^{n-2}+(2i-1)j-i)/2^{n-1}   }}&=-(-1)^{\floor{    (2^{n-2}+(2 i-1) (2^{n-1}-j)+i-1))/2^{n-1}   }}\\\notag
\end{align}
First note that, setting $j=1$, we get $(-1)^0=-(-1)^{2i-1}$, verifying the statement for $j=1$.
Next note that when we add the arguments of the floor function, we get
\begin{align}\notag
( (2^{n-2}+(2i-1)j-i)+    2^{n-2}+(2 i-1) (2^{n-1}-j)+i-1))/2^{n-1}  &=(i 2^n-1)/2^{n-1} \\\notag
\end{align}
By induction, assume true for $j$. When $j$ increases by 1, the argument of the first floor function increases by at most one, while the argument of the second floor function decreases by at most 1. Now, assume we start with $j=j$ and we increase $j$. The value of the first floor function will increase by 1 when the numerator reaches the value $m2^{n-1}$, for some $m$. But at that point the numerator of the second floor function decreases to $(i-m)2^{n-1}-1$, exactly when its floor function decreases by 1.
\end{proof}
\begin{lemma}
In the previous proposition, the value $m$ is a function of $i,j,n$. Assume we denote that function $m(i,j,n)$. Then
$$m(i,j,n)=m(i,2^{n-1}-j+1,n)$$
\end{lemma}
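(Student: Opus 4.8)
The plan is to push the substitution $j \mapsto j' := 2^{n-1}-j+1$ through the defining chain $p \to s \to m$ of the previous proposition and to show that the resulting changes in the intermediate data cancel. First I would compute the transformed value $p' := 2ij'-i-j'+1$. Expanding and using $2^n = 4\cdot 2^{n-2}$ and $2^{n-1}=2\cdot 2^{n-2}$, this collapses to the single clean identity
\[
p' = i\,2^n - 2^{n-1} - p + 1 = (4i-2)\,2^{n-2} - (p-1).
\]
This identity is the engine of the argument; everything after it is bookkeeping.

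Next I would make the value of $m$ explicit. Using the relation $p - s\,2^{n-2} = \big((p-1)\bmod 2^{n-2}\big)+1$ already recorded in the proof of the previous proposition, set $a := (p-1)\bmod 2^{n-2}$, so $0\le a\le 2^{n-2}-1$. Since $1\le a+1\le 2^{n-2}$ is already reduced modulo $2^{n-2}+1$, and $-(a+1)\equiv 2^{n-2}-a \pmod{2^{n-2}+1}$, the reduction collapses to the two-case formula
\[
m = \begin{cases} a+1, & s \text{ even},\\ 2^{n-2}-a, & s \text{ odd}.\end{cases}
\]
Then I would run $p'$ through the same pipeline: substituting $p-1 = s\,2^{n-2}+a$ into the displayed identity gives $p'-1 = (4i-3-s)\,2^{n-2} + (2^{n-2}-1-a)$, from which I read off $a' = 2^{n-2}-1-a \in [0,2^{n-2}-1]$ and $s' = 4i-3-s$. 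The decisive observation is that $4i-3$ is odd, so $s'$ always has parity opposite to $s$.

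Finally I would assemble the two effects. Passing from $j$ to $j'$ simultaneously sends $a \mapsto 2^{n-2}-1-a$ and reverses the parity of $s$. Feeding $a'$ and $s'$ into the two-case formula, I would verify that these flips exactly cancel: when $s$ is even (so $s'$ is odd) both $m$ and $m' = 2^{n-2}-a'$ equal $a+1$, and when $s$ is odd (so $s'$ is even) both $m$ and $m' = a'+1$ equal $2^{n-2}-a$. In either case $m(i,j',n)=m(i,j,n)$, which is the claim.

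The step I expect to be the main obstacle is confirming that the Euclidean decomposition $p'-1 = s'\,2^{n-2}+a'$ is genuinely the correct one, i.e.\ that $a'=2^{n-2}-1-a$ really lies in $[0,2^{n-2}-1]$ so that $s'=4i-3-s$ is the true floor, especially at the boundary $a=2^{n-2}-1$ where $a'$ degenerates to $0$. The only place a sign or off-by-one slip could enter is the interaction of the factor $(-1)^s$ with the outer reduction modulo $2^{n-2}+1$, so that is where the care is needed.
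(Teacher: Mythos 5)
Your proof is correct, and it rests on the same arithmetic fact as the paper's --- namely that the two arguments $p-1$ and $p'-1$ sum to $(2i-1)2^{n-1}-1=(4i-2)2^{n-2}-1$, so that their residues mod $2^{n-2}$ sum to $2^{n-2}-1$ while their floors sum to the odd constant $4i-3$ --- but your execution is genuinely tighter. The paper splits the claim into two separate sub-claims: it proves the sign flip $(-1)^{s}=-(-1)^{s'}$ by checking $j=1$ and then running an informal induction on $j$ (arguing that the left floor increments exactly when the right floor decrements), and it handles the inner mod separately via $b\equiv -1-a \pmod{2^{n-2}}$. You instead write down the single exact Euclidean decomposition $p'-1=(4i-3-s)2^{n-2}+(2^{n-2}-1-a)$, verify that the remainder lies in $[0,2^{n-2}-1]$, and read off both $s'=4i-3-s$ and $a'=2^{n-2}-1-a$ at once; the two-case evaluation of the outer reduction mod $2^{n-2}+1$ then closes the argument exactly as in the paper. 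What your route buys is the elimination of the induction on $j$ and its base case, plus a strictly stronger intermediate conclusion --- the exact value of $s'$ rather than merely its parity --- at essentially no extra cost. The boundary concern you flag at $a=2^{n-2}-1$ is harmless, since $a'=0$ is still a legal remainder and the decomposition remains the canonical one.
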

\begin{proof}
We want to show
\begin{align}\notag
&\biggl(    (-1)^{   \floor{((2i-1)j-i)/2^{n-2}}  }   \left( ((2i-1)j-i)\pmod*{2^{n-2}} +1 \right) \biggr)\pmod*{2^{n-2}+1}\\\notag
&=\biggl(    (-1)^{   \floor{((2i-1)(2^{n-1}-j+1)-i)/2^{n-2}}  }   \left( ((2i-1)(2^{n-1}-j+1)-i)\pmod*{2^{n-2}} +1 \right) \biggr)\pmod*{2^{n-2}+1}\\\notag
\end{align}
First, foocusing on the factors of $(-1)$, we claim
$$  (-1)^{   \floor{((2i-1)j-i)/2^{n-2}}  }=-  (-1)^{   \floor{((2i-1)(2^{n-1}-j+1)-i)/2^{n-2}}  } $$
Note when $j=1$ we have $(-1)^0=-(-1)^{4i-3}$, confirming for $j=1$. The sum of the arguements to the floor function equals $((2i-1)2^{n-1}-1)/2^{n-2}$. The floor function on the left increases by 1 when the numerator of the floor function on the left reaches $m 2^{n-2}$, the numerator of the floor function on the right equals $((2i-1)2^{n-1}-1)-m 2^{n-2}$, which is when that floor function deceases by 1, demonstrating the claim. Likewise, the sum of the arguments to the inner mod functions equals $((2i-1)2^{n-1}-1)$. If we call the left inner mod function argument $a$ and the right $b$, then $b=-1 -a \pmod*{ 2^{n-2}}$, or, $b=k2^{n-2}-1-a$, for some $k$. This implies $-(b+1)=-k 2^{n-2}+a+1$, which confirms the lemma.
\end{proof}
\begin{proposition}
In the previous proposition, the value $M[i,m]$ is a function of $i,j,n$. Assume we denote that function $f(i,j,n)$. Then
$$f(i,j,n)=f(i,2^{n-1}-j+1,n)$$
\end{proposition}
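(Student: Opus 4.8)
The plan is to read the value $f(i,j,n) = M_n[i,m]$ directly off the formula in the previous proposition, namely
$$f(i,j,n) = (-1)^{q}\sum_{k=0}^{\floor{(r+1)/(2^n)}}(-1)^k\left(\binom{r}{\frac{r-1}{2}-(k2^{n-1}+j-1)}-\binom{r}{\frac{r-1}{2}-((k+1)2^{n-1}-j)}\right),$$
and to show that replacing $j$ by $j' = 2^{n-1}-j+1$ leaves this value unchanged. I will abbreviate the $k$-sum as $B(j)$ and write $q(i,j,n)$ for the exponent, so that $f(i,j,n) = (-1)^{q(i,j,n)}B(j)$, suppressing the fixed data $i,n,r$.

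First I would invoke the lemma on $m$ to note that $m(i,j,n) = m(i,j',n)$; this guarantees that $f(i,j,n)$ and $f(i,j',n)$ are both values assigned to the \emph{same} matrix entry $M_n[i,m]$, so the proposition is really a consistency statement, and the claimed identity is exactly what makes the formula of the previous proposition well defined when two distinct values of $j$ fold onto one column.

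The heart of the argument is then to track two cancelling sign changes. Applying the binomial lemma termwise — the pair of binomials attached to $j$ is exactly the negative of the pair attached to $j'=2^{n-1}-j+1$ — and summing over the range $0\le k\le \floor{(r+1)/(2^n)}$, which does not depend on $j$, gives $B(j') = -B(j)$. Next, the lemma on $e(i,j,n)=(-1)^q$ supplies the second sign flip, $(-1)^{q(i,j',n)} = e(i,j',n) = -e(i,j,n) = -(-1)^{q(i,j,n)}$. Multiplying,
$$f(i,j',n) = (-1)^{q(i,j',n)}B(j') = \left(-(-1)^{q(i,j,n)}\right)\left(-B(j)\right) = (-1)^{q(i,j,n)}B(j) = f(i,j,n),$$
as required.

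The only real subtlety — and the point I would emphasize — is that the result hinges on \emph{both} sign changes being present simultaneously: the binomial lemma contributes a factor $-1$ to the coefficient while the $e$-lemma contributes a factor $-1$ to the leading sign, and these must cancel. Were either lemma to fail to flip sign, the identity would instead read $f(i,j,n) = -f(i,j',n)$. I would also verify the bookkeeping that the summation bound $\floor{(r+1)/(2^n)}$ is genuinely independent of $j$, so that the termwise application of the binomial lemma can be summed without reindexing; granting this, no further computation is needed, since all three ingredients are already in hand.
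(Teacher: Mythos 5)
Your proposal is correct and follows essentially the same route as the paper, which simply states that the result follows from applying the three preceding lemmas: the $m$-lemma identifies the two column indices, the binomial lemma flips the sign of the $k$-sum, and the $e$-lemma flips the sign of $(-1)^q$, so the two flips cancel. Your write-up just makes explicit the bookkeeping (the $j$-independence of the $k$-range and the necessity of both sign changes) that the paper leaves implicit.
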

\begin{proof}
The proposition is a consequence of applying the three previous lemmas to the previous proposition.
\end{proof}

\begin{corollary}
Regarding the $2^{n-2}x2^{n-2}$ matrices, $M_n$ which satisfy $$\cos^r(\frac{(2j-1)\pi}{2^n})=\frac{1}{2^{r-1}} \sum_{k=1}^{2^{n-2}}M_n(j,k)\cos(\frac{(2k-1)\pi}{2^n})$$
Let
\begin{align}\notag
p &=(i+j-1)(2i-1)^{2^{n-2}-1}\mod{2^{n-1}}\\\notag
q&=\floor{(i+j-1)(2i-1)^{2^{n-2}-1}/(2^{n-1})   }\\\notag
\end{align}
then
\begin{align}\notag
M_n[i,j]&= (-1)^{q}\left(       \sum_{k=0}^{\floor{  (r+1)/(2^n)  } }(-1)^k\left(              \binom{r}{\frac{r-1}{2}-(k 2^{n-1}+p-1)}-\binom{r}{\frac{r-1}{2}-((k+1)2^{n-1}-p)   }  \right)     \right)  \\\notag
\end{align}
\begin{proof}
We need to show that, taking the $p$ from this corollary and substuting it for $j$ in the previous propositon, the result is $m=j$.
\begin{align}\notag
m&= ((-1)^{\floor{\frac{(2ij-i-j+1 - 1)}{2^{n-2}}}} (2ij-i-j+1 - \floor{\frac{(2ij-i-j+1 - 1)}{2^{n-2}}}2^{n-2}))\pmod*{2^{n-2}+1}\\\notag
&=((-1)^{\floor{\frac{(2ij-i-j+1 - 1)}{2^{n-2}}}} (2ij-i-j+1)\pmod*{2^{n-2}}+1))\pmod*{2^{n-2}+1}\\\notag
\end{align}
substitting $(i+j-1)(2i-1)^{2^{n-2}-1}\pmod*{2^{n-1}}$ for$j$ in $2ij-i-j$ yields:
\begin{align}\notag
&(2i-1)((i+j-1)(2i-1)^{2^{n-2}-1}\pmod*{2^{n-1}})-i\\\notag
&=i+j-1+k 2^{n-1}-i\\\notag
&=j-1+k2^{n-1}\\\notag
\end{align}
Hence
\begin{align}\notag
m&= (   (-1)^{  \floor{    \frac{j-1+k2^{n-1}}{2^{n-2}} }   }   ((j-1+k2^{n-1})    \pmod*{2^{n-2}+1} )   )\pmod*{2^{n-2}+1}\\\notag
&= ((-1)^{\floor{    \frac{j-1}{2^{n-2}}   }   }   j )  \pmod*{2^{n-2}+1}\\\notag
&=j\\\notag
\end{align}
However, $1\le p\le 2^{n-1}$ whereas in the proposition, $1\le j\le 2^{n-2}$. If $p>2^{n-2}$, then, by the previous proposition, $f(i,p,n)=f(i,2^{n-1}-p+1,n)$.
\end{proof}
\end{corollary}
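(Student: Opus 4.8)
The plan is to read this corollary as the inversion of the permutation recorded in the preceding proposition. That proposition expresses $M_n[i,m]$ --- where $m=m(i,j,n)$ is the permuted column index --- in terms of the original column $j$ through the displayed binomial sum and the sign $(-1)^q$. To obtain $M_n[i,j]$ for a prescribed column $j$, I would therefore locate the pre-image $p$ of the permutation $j'\mapsto m(i,j',n)$, i.e.\ the index $p$ for which $m(i,p,n)=j$; then $M_n[i,j]=M_n[i,m(i,p,n)]$ is given by the same formula with $p$ playing the role of the old column and with $q$ read off from $p$. The entire content of the corollary is thus the assertion that the explicit value $p=(i+j-1)(2i-1)^{2^{n-2}-1}\pmod*{2^{n-1}}$ is precisely this pre-image.

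So the core step is to substitute this $p$ for $j$ in the formula defining $m$ and verify that the output equals $j$. First I would compute $(2i-1)p$ modulo $2^{n-1}$. The decisive arithmetic fact is that the odd residues mod $2^{n-1}$ are exactly the units and form a group of order $2^{n-2}$, so by Euler's theorem $(2i-1)^{2^{n-2}}\equiv 1\pmod*{2^{n-1}}$; hence $(2i-1)p\equiv (i+j-1)(2i-1)^{2^{n-2}}\equiv i+j-1\pmod*{2^{n-1}}$, giving $(2i-1)p-i=j-1+k2^{n-1}$ for some integer $k$. This is exactly the identity flagged at the end of the proof sketch.

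With this in hand I would feed the substitution $2ij-i-j\to(2i-1)p-i=j-1+k2^{n-1}$ into the nested floor/mod expression for $m$. Because $1\le j\le 2^{n-2}$, the inner reduction $(j-1+k2^{n-1})\pmod*{2^{n-2}}$ collapses to $j-1$, and $\floor{(j-1+k2^{n-1})/2^{n-2}}=2k$ is even, so the inner sign $(-1)^{\floor{\cdots}}$ equals $+1$; the outer reduction modulo $2^{n-2}+1$ then returns $(j-1)+1=j$ unchanged, as required. This establishes $m(i,p,n)=j$, so the binomial sum attached to $p$ together with $(-1)^q$ does compute $M_n[i,j]$.

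The one point that needs care --- and the main obstacle --- is the range discrepancy: the proposition is stated for columns $1\le j\le 2^{n-2}$, whereas $p$ obtained by reduction mod $2^{n-1}$ may lie in $(2^{n-2},2^{n-1}]$. I would resolve this by invoking the earlier reflection identity $f(i,p,n)=f(i,2^{n-1}-p+1,n)$: when $p>2^{n-2}$ the reflected index $2^{n-1}-p+1$ lies back in $[1,2^{n-2}]$ and yields the same matrix entry, so the formula remains valid throughout the admissible range. I expect the bookkeeping of how this reflection interacts with the sign $(-1)^q$ to be the most delicate part, but it is already underwritten by the three preceding lemmas on $e(i,j,n)$ and $m(i,j,n)$ and the reflection proposition for $f(i,j,n)$.
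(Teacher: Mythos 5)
Your proposal is correct and follows essentially the same route as the paper: you substitute the candidate $p$ into the column-index formula $m(i,\cdot,n)$ of the preceding proposition, use $(2i-1)^{2^{n-2}}\equiv 1\pmod*{2^{n-1}}$ to reduce $(2i-1)p-i$ to $j-1+k2^{n-1}$, check that the nested floor/mod expression then returns $j$, and dispose of the range issue $p>2^{n-2}$ via the reflection identity $f(i,p,n)=f(i,2^{n-1}-p+1,n)$. Your write-up is in fact slightly more careful than the paper's (making the Euler-theorem step and the evenness of $\floor{(j-1+k2^{n-1})/2^{n-2}}=2k$ explicit), but the argument is the same.
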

\begin{corollary}
\begin{align}\notag
\sum_{i=1}^{2^{n-2}}\cos^{r}\left(\frac{(2i-1)\pi}{2^n}\right)&=\sum_{j=1}^{2^{n-2}}c_j \cos\left(\frac{(2j-1)\pi}{2^n}\right) \\\notag
\end{align}
where
\begin{align}\notag
c_j&=\sum_{i=1}^{2^{n-2}} (-1)^{\floor{(i+j-1)(2i-1)^{2^{n-2}-1}/(2^{n-1})   }}     \sum_{k=0}^{\floor{  (r+1)/(2^n)  } }\\\notag
&(-1)^k\Biggl(     \binom{r}{\frac{r-1}{2}-(k 2^{n-1}+\left((i+j-1)(2i-1)^{2^{n-2}-1}\pmod*{2^{n-1}}\right)-1)} \\\notag
&   -\binom{r}{\frac{r-1}{2}-((k+1)2^{n-1}-\left((i+j-1)(2i-1)^{2^{n-2}-1}\pmod*{2^{n-1}}\right)   } \Biggr)\\\notag
\end{align}
\end{corollary}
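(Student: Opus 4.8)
The plan is to obtain this identity by summing, over all rows $i$, the single-row expansion furnished by the immediately preceding corollary. By the defining property of the matrices $M_n$, each power expands as
$$\cos^r\left(\frac{(2i-1)\pi}{2^n}\right)=\frac{1}{2^{r-1}}\sum_{j=1}^{2^{n-2}}M_n(i,j)\cos\left(\frac{(2j-1)\pi}{2^n}\right),$$
and the previous corollary supplies the closed form of $M_n(i,j)$ in terms of $p=(i+j-1)(2i-1)^{2^{n-2}-1}\pmod*{2^{n-1}}$ and $q=\floor{(i+j-1)(2i-1)^{2^{n-2}-1}/2^{n-1}}$, as the product of the sign $(-1)^q$ with the inner $k$-sum of binomial differences.

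First I would add these $2^{n-2}$ identities and interchange the order of summation so that $j$, which labels the basis elements, runs on the outside. Because the basis elements $\cos\left(\frac{(2j-1)\pi}{2^n}\right)$ do not depend on $i$, this immediately collects the coefficient of the $j$-th basis element as $c_j=\frac{1}{2^{r-1}}\sum_{i=1}^{2^{n-2}}M_n(i,j)$. Substituting the closed form for $M_n(i,j)$ then reproduces the displayed inner double sum over $i$ and $k$ verbatim. No appeal to linear independence of the basis is needed at this stage: we are merely expanding and regrouping a single finite sum, so the identity holds termwise by construction, and the dependence of $p$ and $q$ on both $i$ and $j$ through the fixed reindexing $(i+j-1)(2i-1)^{2^{n-2}-1}$ is carried along automatically.

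I expect no substantive obstacle, since every nontrivial ingredient---the validity of the closed form for $M_n(i,j)$ and the correctness of the permutation-and-sign data encoded in $p$ and $q$---has already been proved in the preceding proposition and corollaries; the present statement is a purely mechanical regrouping of them. The one detail to watch is the normalizing factor $1/2^{r-1}$ carried out of the matrix relation, so that $c_j$ as displayed should be read as including this prefactor, consistent with the earlier closed form for $S_n=\sum_{i}\cos^r\left(\frac{(2i-1)\pi}{2^n}\right)$.
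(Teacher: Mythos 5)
Your proof is correct and takes the same route the paper intends: the corollary is stated without proof precisely because it is the termwise row-sum over $i$ of the closed form for $M_n[i,j]$ established in the immediately preceding corollary, followed by the interchange of the $i$ and $j$ summations. You are also right to flag the normalization: as displayed, $c_j$ is $\sum_i M_n(i,j)$ and the factor $\frac{1}{2^{r-1}}$ from the matrix relation is missing from the paper's statement (compare the analogous corollaries before and after it, which carry it explicitly).
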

\begin{corollary}
\begin{align}\notag
\sum_{i=1}^{2^{m-1}-1}\cos^{r}\left(\frac{i\pi}{2^m}\right)&=\frac{1}{2^{r-1}}\sum_{n=2}^m\sum_{j=1}^{2^{n-2}}c(j,n) \cos\left(\frac{(2j-1)\pi}{2^n}\right) \\\notag
\end{align}
where
\begin{align}\notag
c(j,n)&=\sum_{i=1}^{2^{n-2}} (-1)^{\floor{(i+j-1)(2i-1)^{2^{n-2}-1}/(2^{n-1})   }}     \sum_{k=0}^{\floor{  (r+1)/(2^n)  } }\\\notag
&(-1)^k\Biggl(     \binom{r}{\frac{r-1}{2}-(k 2^{n-1}+\left((i+j-1)(2i-1)^{2^{n-2}-1}\pmod*{2^{n-1}}\right)-1)} \\\notag
&   -\binom{r}{\frac{r-1}{2}-((k+1)2^{n-1}-\left((i+j-1)(2i-1)^{2^{n-2}-1}\pmod*{2^{n-1}}\right)   } \Biggr)\\\notag
\end{align}
\end{corollary}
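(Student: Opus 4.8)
The plan is to partition the index set $\{1,2,\ldots,2^{m-1}-1\}$ according to the power of two dividing each index, thereby reducing the full sum to the odd-numerator sums already evaluated in the previous corollary. First I would write each index $i$ with $1\le i\le 2^{m-1}-1$ uniquely in the form $i=2^{m-n}(2i'-1)$, where $2^{m-n}$ is the exact power of two dividing $i$ and $2i'-1$ is its odd cofactor; this determines $n$ and $i'$ uniquely. Because $i\le 2^{m-1}-1<2^{m-1}$, the index $i$ cannot be divisible by $2^{m-1}$, so the exact power of two dividing it is at most $2^{m-2}$, giving $2\le n\le m$. Moreover $2i'-1=i/2^{m-n}<2^{m-1}/2^{m-n}=2^{n-1}$, so the odd cofactor satisfies $1\le 2i'-1\le 2^{n-1}-1$, that is $1\le i'\le 2^{n-2}$.

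Conversely, every pair $(n,i')$ with $2\le n\le m$ and $1\le i'\le 2^{n-2}$ yields a distinct admissible index $i=2^{m-n}(2i'-1)$, and the counts match since $\sum_{n=2}^{m}2^{n-2}=2^{m-1}-1$; hence $(n,i')\mapsto i$ is a bijection. Under it the angles agree,
\begin{align}\notag
\frac{i\pi}{2^m}=\frac{2^{m-n}(2i'-1)\pi}{2^m}=\frac{(2i'-1)\pi}{2^n},
\end{align}
so re-indexing the original sum along this bijection gives
\begin{align}\notag
\sum_{i=1}^{2^{m-1}-1}\cos^{r}\left(\frac{i\pi}{2^m}\right)=\sum_{n=2}^{m}\sum_{i'=1}^{2^{n-2}}\cos^{r}\left(\frac{(2i'-1)\pi}{2^n}\right).
\end{align}
I would remark in passing that the borderline index $i=2^{m-1}$, whose angle is $\pi/2$, lies outside the stated range; it would contribute $\cos^r(\pi/2)=0$ in any case, so no boundary term is lost.

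Finally I would apply the previous corollary to each inner sum $\sum_{i'=1}^{2^{n-2}}\cos^{r}\!\big((2i'-1)\pi/2^n\big)$, expressing it as $\tfrac{1}{2^{r-1}}\sum_{j=1}^{2^{n-2}}c(j,n)\cos\!\big((2j-1)\pi/2^n\big)$, where $c(j,n)$ is exactly the coefficient displayed there (the quantity previously written $c_j$, now carrying its explicit dependence on $n$). Summing over $n$ from $2$ to $m$ then produces the claimed double sum. The argument is essentially bookkeeping; the one point requiring genuine care is confirming that the valuation map is a true bijection onto the stated ranges of $n$ and $i'$, and that the normalizing factor $1/2^{r-1}$ inherited from Proposition \ref{oddpowers} is applied consistently at each level, so that no spurious power of two is introduced when the per-level expansions are combined.
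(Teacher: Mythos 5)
Your proposal is correct. The paper in fact supplies no proof of this corollary at all, and your argument --- writing each $i$ uniquely as $2^{m-n}(2i'-1)$, checking that this is a bijection onto the pairs $(n,i')$ with $2\le n\le m$, $1\le i'\le 2^{n-2}$ (the count $\sum_{n=2}^{m}2^{n-2}=2^{m-1}-1$ confirms it), reducing each angle to $\frac{(2i'-1)\pi}{2^n}$, and then invoking the preceding corollary level by level --- is exactly the argument the author evidently intends. You are also right to flag the normalization: the preceding corollary as printed omits the factor $\frac{1}{2^{r-1}}$ that its own source (the proposition giving $M_n[i,j]$ via $\cos^r(\frac{(2j-1)\pi}{2^n})=\frac{1}{2^{r-1}}\sum_k M_n(j,k)\cos(\frac{(2k-1)\pi}{2^n})$) requires, and the corollary you are proving restores it; your remark that this factor must be carried consistently at each level is the one genuinely non-bookkeeping point, and you handle it correctly.
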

For another example, for $n=4,\ r=7$
\begin{equation}\notag
\left(
\begin{array}{cc}
\cos^{7}\left(\frac{\pi}{16}\right)   \\
\cos^{7}\left(\frac{3\pi}{16}\right)  \\
\cos^{7}\left(\frac{5\pi}{16}\right)  \\
\cos^{}\left(\frac{7\pi}{16}\right)  \\
\end{array}
\right)=\frac{1}{2^{6}}\left(
\begin{array}{cccc}
35 & 21 & 7 & 1 \\
 -7 & 35 & -1 & -21 \\
 -21 & 1& 35 & 7 \\
 -1 & 7 & -21 & 35 \\
\end{array}
\right)\left(
\begin{array}{cc}
\cos\left(\frac{\pi}{16}\right)   \\
\cos\left(\frac{3\pi}{16}\right)  \\
\cos\left(\frac{5\pi}{16}\right)  \\
\cos\left(\frac{7\pi}{16}\right)  \\
\end{array}
\right)
\end{equation}
We know that this matrix $M$ is normal. Let
\begin{equation}\notag
P=
\left(
\begin{array}{cccc}
i & -i & -i & i \\
 \frac{\sqrt{2}}{2}(-1+i) &  -\frac{\sqrt{2}}{2}(1+i) &  \frac{\sqrt{2}}{2}(1+i) &  \frac{\sqrt{2}}{2}(1-i) \\
 -\frac{\sqrt{2}}{2}(1+i) &  \frac{\sqrt{2}}{2}(-1+i) &  \frac{\sqrt{2}}{2}(1-i) &  \frac{\sqrt{2}}{2}(1+i) \\
 1 & 1 & 1 & 1 \\
\end{array}
\right)
\end{equation}
The columns of $P$ are the orthonormal eigenvectors of $M$ and $D=P M P^{-1}$ is a diagonal marix. If one considers a matrix
\begin{equation}\notag
N=
\left(
\begin{array}{cccc}
a &b & c &d \\
-c &  a & -d & -b \\
 -b & d&  a &  c \\
 -d & c & -b & a \\
\end{array}
\right)
\end{equation}
We compute
\begin{equation}\notag
P N P^{-1}=
\left(
\begin{array}{cccc}
a+\frac{1+i}{\sqrt{2}}b-\frac{1-i}{\sqrt{2}}c-i d &0 & 0 &0\\
0&  a+\frac{1-i}{\sqrt{2}}b-\frac{1+i}{\sqrt{2}}c+i d & 0 & 0 \\
 0 & 0&  a-\frac{1-i}{\sqrt{2}}b+\frac{1+i}{\sqrt{2}}c+i d &  0 \\
0 & 0 & 0 & a-\frac{1+i}{\sqrt{2}}b+\frac{1-i}{\sqrt{2}}c-i d \\
\end{array}
\right)
\end{equation}
So, the eigenvectors apply to all matricies of this form. Hence, these matricies are simultaneously diagonizable and commute. Hence the product of two matricies of this form yields a matrix of this form. Matricies of this form constitute an algebra. If in $N$ above, we successively set one of $a,b,c,d$ equal to $\pm 1$ and other values to $0$, we get a representation of $\mathbb{Z}/8\mathbb{Z}$. This pattern holds for negative odd powers where $b,d\le0$.
\section{Even Powers Of Cosine}
For $n\ge 0,\ n,r\in\mathbb{Z}$, $r$ even, there is a $2^{n-2}x2^{n-2}$ matrix $M$ such that
\begin{equation}\notag
\left(
\begin{array}{cc}
\cos^r\left(\frac{\pi}{2^n}\right)   \\
\cos^r\left(\frac{3\pi}{2^n}\right)  \\
\vdots\\
\cos^r\left(\frac{(2^{n-2}-1)\pi}{2^n}\right)  \\
\end{array}
\right)=\frac{1}{2^{r-1}}M\left(
\begin{array}{cc}
1\\
\cos\left(\frac{\pi}{2^{n-1}}\right)   \\
\cos\left(\frac{2\pi}{2^{n-1}}\right)  \\
\vdots\\
\cos\left(\frac{(2^{n-2}-1)\pi}{2^{n-1}}\right)  \\
\end{array}
\right)
\end{equation}

For example, for $n=4,\ r=16$
\begin{equation}\notag
\left(
\begin{array}{cc}
\cos^{16}\left(\frac{\pi}{16}\right)   \\
\cos^{16}\left(\frac{3\pi}{16}\right)  \\
\cos^{16}\left(\frac{5\pi}{16}\right)  \\
\cos^{16}\left(\frac{7\pi}{16}\right)  \\
\end{array}
\right)=\frac{1}{2^{15}}\left(
\begin{array}{cccc}
6434 &11424 & 7888 & 3808 \\
6434 & -3808 & -7888 & 11424 \\
 6434 & 3808 & -7888 & -11424 \\
 6434 & -11424& 7888 & -3808 \\
\end{array}
\right)\left(
\begin{array}{cc}
1 \\
\cos\left(\frac{\pi}{8}\right)   \\
\cos\left(\frac{\pi}{4}\right)  \\
\cos\left(\frac{3\pi}{8}\right)  \\
\end{array}
\right)
\end{equation}
While  for $n=5,\ r=16$
\begin{align}\notag
&\left(
\begin{array}{cc}
\cos^{16}\left(\frac{\pi}{32}\right)   \\
\cos^{16}\left(\frac{3\pi}{32}\right)  \\
\cos^{16}\left(\frac{5\pi}{32}\right)  \\
\cos^{16}\left(\frac{7\pi}{32}\right)  \\
\cos^{16}\left(\frac{9\pi}{32}\right)   \\
\cos^{16}\left(\frac{11\pi}{32}\right)  \\
\cos^{16}\left(\frac{13\pi}{32}\right)  \\
\cos^{16}\left(\frac{15\pi}{32}\right)  \\
\end{array}
\right)\\\notag
&=\frac{1}{2^{15}}\left(
\begin{array}{cccccccc}
6435 & 11440 & 8008& 4368& 1820& 560& 120& 16 \\
6435 & -560 & -120& 11440& -1820& -16& 8008& -4368 \\
6435 & -4368 & 120& 16& -1820& 11440& -8008& 560 \\
6435 & -16 & -8008& 560& 1820& -4368& -120& 11440 \\
6435 & 16 & -8008& -560& 1820& 4368& -120& -11440 \\
6435 & 4368 & 120& -16& -1820& -114400& -8008& -560 \\
6435 & 560 & -120& -11440& -1820& 16& 8008& 4368\\
6435 & -11440 & 8008& -4368& 1820& -560& 120& -16 \\
\end{array}
\right)
\left(
\begin{array}{cc}
1 \\
\cos\left(\frac{\pi}{16}\right)   \\
\cos\left(\frac{\pi}{8}\right)  \\
\cos\left(\frac{3\pi}{16}\right)  \\
\cos\left(\frac{\pi}{4}\right)  \\
\cos\left(\frac{5\pi}{16}\right)   \\
\cos\left(\frac{3\pi}{8}\right)  \\
\cos\left(\frac{7\pi}{16}\right)  \\
\end{array}
\right)
\end{align}
Note that the above matrix is comprised of sub-matrices corresponding to cosine arguments of the form $\frac{(2i-1)\pi}{2^m}$.
$$m=3:$$
\begin{align}\notag
\left(
\begin{array}{cc}
8008 & 120 \\
-120 & 8008 \\
\end{array}
\right),
\left(
\begin{array}{cc}
120 & -8008\\
-8008 & -120 \\
\end{array}
\right),
\left(
\begin{array}{cc}
-8008 & -120 \\
120 & -8008 \\
\end{array}
\right),
\left(
\begin{array}{cc}
-120 & 8008\\
8008 & 120 \\
\end{array}
\right)
\end{align}
$$m=4:$$
\begin{align}\notag
\left(
\begin{array}{cccc}
 11440 & 4368& 560& 16 \\
-560 & 11440& -16& -4368 \\
 -4368 &16& 11440& 560 \\
 -16 &560& -4368&  11440 \\
\end{array}
\right),
\left(
\begin{array}{cccc}
16 & -560&4368&-11440 \\
4368 & -16& -114400&-560 \\
560 & -11440& 16& 4368\\
-11440 & -4368& -560& -16 \\
\end{array}
\right)
\end{align}
In the following, we assume  $ n,r\in\mathbb{Z}$.
\begin{proposition}
For $r\ {\rm even},\ r\ge1,\ 2^{n-2}\ge\frac{r+1}{2}$
\begin{align}\notag
\cos^r\left(\frac{\pi}{2^n}\right)&=\frac{1}{2^{r-1}}\left(\frac{1}{2}\binom{r}{\frac{r}{2}}+\sum_{j=1}^{2^{n-2}-1}   \binom{r}{\frac{r}{2}-j}\cos\left(\frac{j\pi}{2^{n-1}}                \right)\right)
\end{align}

\end{proposition}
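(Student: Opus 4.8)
The plan is to derive the identity directly from the complex-exponential (``power-reduction'') expansion of $\cos^r$ and then to account for the stated summation range using the hypothesis $2^{n-2}\ge\frac{r+1}{2}$. First I would write $\cos\left(\frac{\pi}{2^n}\right)=\frac12\left(e^{i\pi/2^n}+e^{-i\pi/2^n}\right)$ and apply the binomial theorem,
\[
\cos^r\left(\frac{\pi}{2^n}\right)=\frac{1}{2^r}\sum_{k=0}^{r}\binom{r}{k}e^{i(r-2k)\pi/2^n}.
\]
Because $r$ is even, the central index $k=\frac r2$ contributes the constant term $\binom{r}{r/2}$, while the remaining indices pair off as $k\leftrightarrow r-k$, so that each conjugate pair collapses into a cosine:
\[
\cos^r\left(\frac{\pi}{2^n}\right)=\frac{1}{2^r}\binom{r}{r/2}+\frac{1}{2^{r-1}}\sum_{k=0}^{r/2-1}\binom{r}{k}\cos\left(\frac{(r-2k)\pi}{2^n}\right).
\]

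Next I would re-index by setting $j=\frac r2-k$, so that $r-2k=2j$, $\binom{r}{k}=\binom{r}{\frac r2-j}$, and the cosine argument becomes $\frac{2j\pi}{2^n}=\frac{j\pi}{2^{n-1}}$. As $k$ runs from $0$ to $\frac r2-1$, the index $j$ runs from $1$ to $\frac r2$, giving
\[
\cos^r\left(\frac{\pi}{2^n}\right)=\frac{1}{2^{r-1}}\left(\frac12\binom{r}{r/2}+\sum_{j=1}^{r/2}\binom{r}{\frac r2-j}\cos\left(\frac{j\pi}{2^{n-1}}\right)\right),
\]
where I have used $\frac{1}{2^r}\binom{r}{r/2}=\frac{1}{2^{r-1}}\cdot\frac12\binom{r}{r/2}$ to extract the common factor.

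Finally I would reconcile the upper limit $\frac r2$ with the claimed limit $2^{n-2}-1$. For even $r$ the hypothesis $2^{n-2}\ge\frac{r+1}{2}$ is equivalent to $2^{n-2}-1\ge\frac r2$, so the terms $j=\frac r2+1,\dots,2^{n-2}-1$ may be appended freely: each has lower binomial index $\frac r2-j<0$ and therefore contributes $\binom{r}{\frac r2-j}=0$. The same hypothesis plays the essential structural role: it forces every argument $\frac{j\pi}{2^{n-1}}$ with $1\le j\le 2^{n-2}-1$ to lie strictly in $\left(0,\frac\pi2\right)$, so the expansion already lands in the intended basis $\{1,\cos(\pi/2^{n-1}),\dots,\cos((2^{n-2}-1)\pi/2^{n-1})\}$ with no wrap-around. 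This last point is the only real subtlety: the algebra is routine, but the hypothesis is precisely what keeps us in the ``unfolded'' regime, whereas without it one would have to reduce cosine arguments modulo $\pi$, which is exactly the situation handled by the more general even-power propositions. An alternative proof by induction on $r$ in steps of $2$, using $\cos^2=\frac12(1+\cos(2\theta))$ with the product-to-sum identity, would match the inductive style used elsewhere in the paper, but the exponential expansion makes the coefficient identification most transparent.
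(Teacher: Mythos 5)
Your proof is correct, and it takes a different route from the paper. The paper gives no direct argument for this proposition; it is implicitly a special case of the general even-power formula in the following proposition, which is established by a joint induction on $r$ with the odd-power case, repeatedly applying the product-to-sum identity $\cos A\cos B=\tfrac12\bigl(\cos(A-B)+\cos(A+B)\bigr)$ and recombining binomial coefficients via Pascal's rule. You instead expand $\bigl(\tfrac12(e^{i\theta}+e^{-i\theta})\bigr)^r$ directly and pair conjugate terms, which is the standard power-reduction derivation: it is shorter, self-contained, and makes the coefficients $\binom{r}{\frac r2-j}$ appear immediately rather than emerging from a telescoping induction. What you give up is exactly what the paper's induction buys: once $r\ge 2^{n-1}$ the exponential expansion produces arguments $\frac{j\pi}{2^{n-1}}$ with $j>2^{n-2}-1$ that must be folded back into the basis, producing the alternating sums over $k$ in the general proposition; your observation that the hypothesis $2^{n-2}\ge\frac{r+1}{2}$ (equivalently $2^{n-2}-1\ge\frac r2$) is precisely the "no wrap-around" condition, and that the appended terms $j>\frac r2$ vanish because the lower binomial index is negative, is the right way to close the argument in the restricted regime. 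Both steps check out, so the proof stands as a valid and arguably cleaner alternative for this special case.
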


\begin{proposition}
For $n\ge2,\ r\ {\rm even},\ r\ge 0$
\begin{align}\notag
\cos^r\left(\frac{\pi}{2^n}\right)&=\frac{1}{2^{r-1}}\Biggl(\frac{1}{2}\sum_{k=0}^{\floor{r/2^n}}(-1)^k\left(\binom{r}{\frac{r}{2}-k2^{n-1}}-\binom{r}{\frac{r}{2}-(k+1)2^{n-1}}\right)\\\notag\\\notag
&+\sum_{j=1}^{2^{n-2}-1}     \sum_{k=0}^{\floor{r/2^n}}\\\notag
&(-1)^k\left(              \binom{r}{\frac{r}{2}-(k 2^{n-1}+j)}-\binom{r}{\frac{r}{2}-((k+1)2^{n-1}-j)    }  \right)                \cos\left(\frac{j\pi}{2^{n-1}}     \right)\Biggr)\\\notag
&=\frac{1}{2^{r-1}}\Biggl(\frac{1}{2}\sum_{k=0}^{\infty}(-1)^k\left(\binom{r}{\frac{r}{2}-k2^{n-1}}-\binom{r}{\frac{r}{2}-(k+1)2^{n-1}}\right)\\\notag\\\notag
&+\sum_{j=1}^{2^{n-2}-1}     \sum_{k=0}^{\infty}\\\notag
&(-1)^k\left(              \binom{r}{\frac{r}{2}-(k 2^{n-1}+j)}-\binom{r}{\frac{r}{2}-((k+1)2^{n-1}-j)    }  \right)                \cos\left(\frac{j\pi}{2^{n-1}}     \right)\Biggr)\\\notag
\end{align}
\end{proposition}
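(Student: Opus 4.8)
The plan is to obtain the unreduced Fourier--cosine expansion of $\cos^r(\pi/2^n)$ first and then fold its frequencies back into the stated basis. Starting from $\cos\theta=\tfrac12(e^{i\theta}+e^{-i\theta})$ and the binomial theorem, for even $r$ one gets
$$\cos^r\theta=\frac{1}{2^{r-1}}\left(\frac{1}{2}\binom{r}{r/2}+\sum_{m=1}^{r/2}\binom{r}{\tfrac{r}{2}-m}\cos(2m\theta)\right),$$
where the central term $m=0$ is halved because it is unpaired. Setting $\theta=\pi/2^n$ turns $\cos(2m\theta)$ into $\cos(m\pi/2^{n-1})$, which for $0\le m\le 2^{n-2}-1$ is already a basis element (together with the constant $1$ at $m=0$), but for larger $m$ must be reduced.

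First I would record the folding identities that drive every reduction: periodicity $\cos((m+2^n)\pi/2^{n-1})=\cos(m\pi/2^{n-1})$, the reflection $\cos((k\,2^{n-1}+j)\pi/2^{n-1})=(-1)^k\cos(j\pi/2^{n-1})$ (which is just $\cos(k\pi+x)=(-1)^k\cos x$), and its mirror $\cos(((k+1)2^{n-1}-j)\pi/2^{n-1})=(-1)^{k+1}\cos(j\pi/2^{n-1})$. With these, every frequency $m$ collapses onto a unique index $j\in\{0,1,\dots,2^{n-2}\}$ with a definite sign; the index $j=2^{n-2}$ is inert since $\cos(\pi/2)=0$, and $j=0$ feeds the constant term.

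The key step is then to partition $1\le m\le r/2$ by residue modulo $2^{n-1}$: for each $k\ge0$ the values $m=k2^{n-1}+j$ (with $1\le j\le 2^{n-2}-1$) and $m=(k+1)2^{n-1}-j$ contribute, respectively, $(-1)^k\binom{r}{r/2-m}$ and $-(-1)^k\binom{r}{r/2-m}$ to the coefficient of $\cos(j\pi/2^{n-1})$, which is exactly the summand in the proposition; the midpoints $m=k2^{n-1}+2^{n-2}$ vanish and the multiples $m=k2^{n-1}$ fall into the constant term. Collecting the constant contributions $\sum_{k\ge1}(-1)^k\binom{r}{r/2-k2^{n-1}}$ and adjoining the unpaired $\tfrac12\binom{r}{r/2}$, I would rewrite the result as the halved, telescoped sum $\tfrac12\sum_{k}(-1)^k(\binom{r}{r/2-k2^{n-1}}-\binom{r}{r/2-(k+1)2^{n-1}})$ appearing in the statement; the equivalence is a one-line index shift, the tail term vanishing because its lower entry is negative. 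Replacing the cutoff $k\le\floor{r/2^n}$ by $k=\infty$ is then immediate, since $\binom{r}{s}=0$ once $s$ leaves $[0,r]$, giving the second equality.

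The main obstacle I anticipate is purely combinatorial bookkeeping rather than any deep idea: verifying that the two families $m=k2^{n-1}+j$ and $m=(k+1)2^{n-1}-j$ tile $[1,r/2]$ with no overlaps and no omissions apart from the midpoints and the multiples of $2^{n-1}$, and pinning down the boundary behaviour of the constant term's telescoping when $r/2^n$ is an integer. An alternative, more in keeping with the paper's induction, is to derive this formula directly from the odd-power Proposition \ref{oddpowers} at exponent $r-1$: multiplying that identity by $\cos(\pi/2^n)$ and applying $\cos(\tfrac{(2j-1)\pi}{2^n})\cos(\tfrac{\pi}{2^n})=\tfrac12(\cos(\tfrac{(j-1)\pi}{2^{n-1}})+\cos(\tfrac{j\pi}{2^{n-1}}))$ reproduces exactly the intermediate even-power expression already computed inside that proof, so this route needs only the reindexing and the same telescoping cancellations.
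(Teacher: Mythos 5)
Your proof is correct, but it takes a genuinely different route from the paper's. The paper proves this proposition by a mutual induction interlocked with Proposition \ref{oddpowers}'s precursor: the odd-power formula at exponent $r-1$ is multiplied by $\cos(\pi/2^n)$, the product-to-sum identity is applied, and a telescoping-sum identity is verified at the boundary to land on the even-power formula at $r$ (and the displayed computation in the even proposition's own proof then carries even $r$ to odd $r+1$). Your primary argument is instead direct and non-inductive: expand $\cos^r\theta$ by the binomial theorem on $\tfrac12(e^{i\theta}+e^{-i\theta})$ to get the standard power-reduction formula, then fold each frequency $m$ onto a residue class modulo $2^{n-1}$ using $\cos(k\pi\pm x)=(-1)^k\cos x$. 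Your bookkeeping is sound: the classes $m=k2^{n-1}+j$ and $m=(k+1)2^{n-1}-j$ with $1\le j\le 2^{n-2}-1$, together with the multiples $k2^{n-1}$ and the inert midpoints $k2^{n-1}+2^{n-2}$, do exhaust all residues, and your identity $\tfrac12\sum_{k\ge0}(-1)^k(a_k-a_{k+1})=\tfrac12 a_0+\sum_{k\ge1}(-1)^k a_k$ with $a_k=\binom{r}{r/2-k2^{n-1}}$ correctly reconciles the constant term. What your approach buys is self-containment and transparency --- it isolates exactly where each binomial coefficient comes from and why the signs alternate, with no appeal to the odd-power case; what the paper's approach buys is that a single intertwined induction establishes both parities at once and stays entirely within real trigonometric manipulations. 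Your closing remark correctly identifies the paper's actual mechanism as the alternative route.
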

\begin{proof}
Note the second inequalty is true because the value of the binomial is $0$ for $k>\floor{r/2^n}$. This form is more convenient for the following induction argument. The proof of this proposition is in conjunction with the proof of the corresponding proposition for odd powers $r$. The proof procedes by induction. The proposition is true for $r=0$. We assume true for $r$ and show it implies the formula for odd power $r+1$.  So, assume
\begin{align}\notag
\cos^r\left(\frac{\pi}{2^n}\right)&=\frac{1}{2^{r-1}}\Biggl(\frac{1}{2}\sum_{k=0}^{\floor{r/2^n}}(-1)^k\left(\binom{r}{\frac{r}{2}-k2^{n-1}}-\binom{r}{\frac{r}{2}-(k+1)2^{n-1}}\right)\\\notag\\\notag
&+\sum_{j=1}^{2^{n-2}-1}     \sum_{k=0}^{\floor{r/2^n}}\\\notag
&(-1)^k\left(              \binom{r}{\frac{r}{2}-(k 2^{n-1}+j)}-\binom{r}{\frac{r}{2}-((k+1)2^{n-1}-j)    }  \right)                \cos\left(\frac{j\pi}{2^{n-1}}     \right)\Biggr)\\\notag
\end{align}
This implies
\begin{align}\notag
\cos^{r+1}\left(\frac{\pi}{2^n}\right)&=\frac{1}{2^{r-1}}\Biggl(\frac{1}{2}\sum_{k=0}^{\floor{r/2^n}}(-1)^k\left(\binom{r}{\frac{r}{2}-k2^{n-1}}-\binom{r}{\frac{r}{2}-(k+1)2^{n-1}}\right)\cos\left(\frac{\pi}{2^n}\right)\\\notag\\\notag
&+\sum_{j=1}^{2^{n-2}-1}     \sum_{k=0}^{\floor{r/2^n}}\\\notag
&(-1)^k\left(              \binom{r}{\frac{r}{2}-(k 2^{n-1}+j)}-\binom{r}{\frac{r}{2}-((k+1)2^{n-1}-j)    }  \right)                \cos\left(\frac{j\pi}{2^{n-1}}     \right)\cos\left(\frac{\pi}{2^n}\right)\Biggr)\\\notag
&=\frac{1}{2^{r-1}}\Biggl(\frac{1}{2}\sum_{k=0}^{\floor{r/2^n}}(-1)^k\left(\binom{r}{\frac{r}{2}-k2^{n-1}}-\binom{r}{\frac{r}{2}-(k+1)2^{n-1}}\right)\cos\left(\frac{\pi}{2^n}\right)\\\notag\\\notag
&+  \frac{1}{2}\sum_{j=1}^{2^{n-2}-1}     \sum_{k=0}^{\floor{r/2^n}}\\\notag
&(-1)^k\left(              \binom{r}{\frac{r}{2}-(k 2^{n-1}+j)}-\binom{r}{\frac{r}{2}-((k+1)2^{n-1}-j)    }  \right)  \left(            \cos\left(\frac{(2j-1)\pi}{2^{n-1}}\right)+\cos\left(\frac{(2j+1)\pi}{2^{n-1}} \right)   \right)\Biggr)\\\notag
&=\frac{1}{2^{r}}\Biggl(\sum_{k=0}^{\floor{r/2^n}}(-1)^k\left(\binom{r}{\frac{r}{2}-k2^{n-1}}-\binom{r}{\frac{r}{2}-(k+1)2^{n-1}}\right)\cos\left(\frac{\pi}{2^n}\right)\\\notag\\\notag
&+\sum_{k=0}^{\floor{r/2^n}}(-1)^k\left(\binom{r}{\frac{r}{2}-(k2^{n-1}+1)}-\binom{r}{\frac{r}{2}-((k+1)2^{n-1}-1)}\right)\cos\left(\frac{\pi}{2^n}\right)\\\notag\\\notag
&+  \sum_{j=1}^{2^{n-2}-2}     \sum_{k=0}^{\floor{r/2^n}}(-1)^k\Biggl(            \left(  \binom{r}{\frac{r}{2}-(k 2^{n-1}+j+1)}+\binom{r}{\frac{r}{2}-(k 2^{n-1}+j)}\right)\\\notag
&-\left(\binom{r}{\frac{r}{2}-((k+1)2^{n-1}-j-1)    }+ \binom{r}{\frac{r}{2}-((k+1)2^{n-1}-j)    }\right) \Biggr)             \cos\left(\frac{(2j+1)\pi}{2^{n-1}} \right) \Biggr)\\\notag
&+\sum_{k=0}^{\floor{r/2^n}}(-1)^k\left(\binom{r}{\frac{r}{2}-(k2^{n-1}+2^{n-2}-1)}-\binom{r}{\frac{r}{2}-((k+1)2^{n-1}-2^{n-2}+1)}\right)\cos\left(\frac{(2^{n-1}-1)\pi}{2^n}\right)\\\notag\\\notag
\end{align}
\begin{align}\notag
&=\frac{1}{2^{r}}\Biggl(\sum_{k=0}^{\floor{r/2^n}}(-1)^k\left(\binom{r}{\frac{r}{2}-k2^{n-1}}-\binom{r}{\frac{r}{2}-(k+1)2^{n-1}}\right)\cos\left(\frac{\pi}{2^n}\right)\\\notag\\\notag
&+\sum_{k=0}^{\floor{r/2^n}}(-1)^k\left(\binom{r}{\frac{r}{2}-(k2^{n-1}+1)}-\binom{r}{\frac{r}{2}-((k+1)2^{n-1}-1)}\right)\cos\left(\frac{\pi}{2^n}\right)\\\notag\\\notag
&+  \sum_{j=1}^{2^{n-2}-2}     \sum_{k=0}^{\floor{r/2^n}}(-1)^k            \left(  \binom{r+1}{\frac{r}{2}-(k 2^{n-1}+j)}-\binom{r+1}{\frac{r}{2}-((k+1) 2^{n-1}-j-1)}\right)  \cos\left(\frac{(2j+1)\pi}{2^{n-1}} \right) \Biggr)\\\notag
&+\sum_{k=0}^{\floor{r/2^n}}(-1)^k\left(\binom{r}{\frac{r}{2}-(k2^{n-1}+2^{n-2}-1)}-\binom{r}{\frac{r}{2}-((k+1)2^{n-1}-2^{n-2}+1)}\right)\cos\left(\frac{(2^{n-1}-1)\pi}{2^n}\right)\\\notag\\\notag
&=\frac{1}{2^{r}}\Biggl( \sum_{j=0}^{2^{n-2}-2}     \sum_{k=0}^{\floor{r/2^n}}(-1)^k            \left(  \binom{r+1}{\frac{r}{2}-(k 2^{n-1}+j)}-\binom{r+1}{\frac{r}{2}-((k+1) 2^{n-1}-j-1)}\right)  \cos\left(\frac{(2j+1)\pi}{2^{n-1}} \right) \Biggr)\\\notag
&+\sum_{k=0}^{\floor{r/2^n}}(-1)^k\left(\binom{r}{\frac{r}{2}-(k2^{n-1}+2^{n-2}-1)}-\binom{r}{\frac{r}{2}-((k+1)2^{n-1}-2^{n-2}+1)}\right)\cos\left(\frac{(2^{n-1}-1)\pi}{2^n}\right)\\\notag\\\notag
&=\frac{1}{2^{r}}\Biggl( \sum_{j=1}^{2^{n-2}-1}     \sum_{k=0}^{\floor{r/2^n}}(-1)^k            \left(  \binom{r+1}{\frac{r}{2}-(k 2^{n-1}+j)}-\binom{r+1}{\frac{r}{2}-((k+1) 2^{n-1}-j-1)}\right)  \cos\left(\frac{(2j+1)\pi}{2^{n-1}} \right) \Biggr)\\\notag
&+\sum_{k=0}^{\floor{r/2^n}}(-1)^k\Biggl(  \left(\frac{\frac{r}{2}+k2^{n-1}+2^{n-2}}{r+1}\right)\binom{r+1}{\frac{r}{2}-(k2^{n-1}+2^{n-2}-1)}\\\notag
&-\left(\frac{\frac{r}{2}-(k+1)2^{n-1}+2^{n-2}}{r+1}\right)\binom{r+1}{\frac{r}{2}-((k+1)2^{n-1}-2^{n-2})}   \Biggr)\cos\left(\frac{(2^{n-1}-1)\pi}{2^n}\right)\Biggr)\\\notag
\end{align}
Comparing with the formula for odd  powers, we see that we need 
\begin{align}\notag
&\sum_{k=0}^{\floor{r/2^n}}(-1)^k\Biggl(  \left(\frac{\frac{r}{2}+k2^{n-1}+2^{n-2}}{r+1}\right)\binom{r+1}{\frac{r}{2}-(k2^{n-1}+2^{n-2}-1)}\\\notag
&-\left(\frac{\frac{r}{2}-(k+1)2^{n-1}+2^{n-2}}{r+1}\right)\binom{r+1}{\frac{r}{2}-((k+1)2^{n-1}-2^{n-2})}\Biggr)\\\notag
&=\sum_{k=0}^{\floor{r/2^n}}(-1)^k\Biggl( \binom{r+1}{\frac{r}{2}-(k2^{n-1}+2^{n-2}-1)}-\binom{r+1}{\frac{r}{2}-((k+1)2^{n-1}-2^{n-2})}\\\notag
\end{align}
Or
\begin{align}\notag
&\sum_{k=0}^{\floor{r/2^n}}(-1)^k\Biggl(  \left(1-\frac{\frac{r}{2}+k2^{n-1}+2^{n-2}}{r+1}\right)\binom{r+1}{\frac{r}{2}-(k2^{n-1}+2^{n-2}-1)}\\\notag
&-\left(1-\frac{\frac{r}{2}-(k+1)2^{n-1}+2^{n-2}}{r+1}\right)\binom{r+1}{\frac{r}{2}-((k+1)2^{n-1}-2^{n-2})}\Biggr)=0\\\notag
\end{align}
But
\begin{align}\notag
& \left(1-\frac{\frac{r}{2}+k2^{n-1}+2^{n-2}}{r+1}\right)\binom{r+1}{\frac{r}{2}-(k2^{n-1}+2^{n-2}-1)}\\\notag
&-\left(1-\frac{\frac{r}{2}-(k+1)2^{n-1}+2^{n-2}}{r+1}\right)\binom{r+1}{\frac{r}{2}-((k+1)2^{n-1}-2^{n-2})}\\\notag
&=\binom{r}{\frac{r}{2}-(k2^{n-1}+2^{n-2})}-\binom{r}{\frac{r}{2}-((k+1)2^{n-1}-2^{n-2})}=0\\\notag
\end{align}
since
\begin{align}\notag
&(k+1)2{n-1}-2^{n-2}=k2^{n-1}+2^{n-1}-2^{n-2}=k2^{n-1}+2^{n-2}\\\notag
\end{align}
\end{proof}
\begin{proposition}\label{evenpowers}
For $n\ge2,\ r\ {\rm even},\ r\ge2$
\begin{align}\notag
&\cos^r\left(\frac{(2i-1)\pi}{2^n}\right)\\\notag
&=\frac{1}{2^{r-1}}\Biggl(\frac{1}{2}\sum_{k=0}^{\floor{r/2^n}}(-1)^k\left(\binom{r}{\frac{r}{2}-k2^{n-1}}-\binom{r}{\frac{r}{2}-(k+1)2^{n-1}}\right)\\\notag
&+\sum_{m=2}^{n-1}\sum_{j=1}^{2^{n-m-1}}  (-1)^{\floor{(2^{n-m-1}+2ij-i-j)/2^{n-m}}}     \sum_{k=0}^{\floor{(r+1)/(2^n)}}\\\notag
&(-1)^k\left(              \binom{r}{\frac{r}{2}-(k 2^{n-1}+2^{m-2}(2j-1))}-\binom{r}{\frac{r}{2}-((k+1)2^{n-1}-2^{m-2}(2j-1))    }  \right)       \\\notag
&\cos\Biggl(      \frac{   \Biggl(  2\Biggl[ \Biggl(  (-1)^{\floor{ (2ij-i-j)/2^{n-m-1} } } ( (2 i j-i-j)\pmod*{2^{n-m-1}}+1)\Biggr)\pmod*{2^{n-m-1}+1}  \Biggr ]  -1   \Biggr)\pi   }    {2^{n-m+1}}     \Biggr )\Biggr)\\\notag
\end{align}
\end{proposition}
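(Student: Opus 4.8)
\begin{proof}
The plan is to mirror the proof of Proposition~\ref{oddpowers}, with the even-power base expansion of the previous proposition playing the role that the odd-power expansion played there, and with the even basis $\{1,\cos(\pi/2^{n-1}),\dots,\cos((2^{n-2}-1)\pi/2^{n-1})\}$ replacing $\{\cos((2k-1)\pi/2^n)\}$. I start from the identity of the previous proposition, written schematically as
\[
\cos^r\!\left(\frac{\pi}{2^n}\right)=\frac{1}{2^{r-1}}\Biggl(\tfrac12\sum_{k}A_k+\sum_{J=1}^{2^{n-2}-1}\Bigl(\sum_{k}B_{J,k}\Bigr)\cos\!\left(\frac{J\pi}{2^{n-1}}\right)\Biggr),
\]
where $A_k$ and $B_{J,k}$ abbreviate the alternating binomial differences displayed there. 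The field-extension lemma of this section guarantees a $\mathbb{Q}$-automorphism $\sigma$ of the (normal) extension with $\sigma(\cos(\pi/2^n))=\cos((2i-1)\pi/2^n)$, which is legitimate since $1\le 2i-1\le 2^{n-1}-1$, so $\cos((2i-1)\pi/2^n)$ is again a root of $f_n$.

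Applying $\sigma$ to both sides is the first key step. The left-hand side becomes $\cos^r((2i-1)\pi/2^n)$ because $\sigma$ is a ring homomorphism, and the rational constant $\tfrac12\sum_k A_k$ is fixed. For each basis term I use that $\cos(J\pi/2^{n-1})=T_{2J}(\cos(\pi/2^n))$ is a Chebyshev polynomial with integer coefficients in the generator, so $\sigma$ commutes with it and
\[
\sigma\!\left(\cos\frac{J\pi}{2^{n-1}}\right)=T_{2J}\bigl(\cos((2i-1)\pi/2^n)\bigr)=\cos\!\left(\frac{(2i-1)J\pi}{2^{n-1}}\right),
\]
equivalently the multiple-angle identity $(-1)^ip_i(\cos\phi)=\cos((2i-1)\phi)$ with $\phi=J\pi/2^{n-1}$. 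Hence $\cos^r((2i-1)\pi/2^n)$ equals the same constant plus $\frac{1}{2^{r-1}}\sum_J\bigl(\sum_k B_{J,k}\bigr)\cos((2i-1)J\pi/2^{n-1})$.

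The second key step is to split the sum over $J$ according to its $2$-adic valuation: write $J=2^{m-2}(2j-1)$, which is a bijection from $\{(m,j):2\le m\le n-1,\ 1\le j\le 2^{n-m-1}\}$ onto $\{1,\dots,2^{n-2}-1\}$ by unique factorization. Under this substitution the binomial arguments $\tfrac r2-(k2^{n-1}+J)$ become $\tfrac r2-(k2^{n-1}+2^{m-2}(2j-1))$, matching the claimed summand, while the angle drops one level:
\[
\cos\!\left(\frac{(2i-1)J\pi}{2^{n-1}}\right)=\cos\!\left(\frac{(2i-1)(2j-1)\pi}{2^{n-m+1}}\right)=\cos\!\left(\frac{(2(2ij-i-j+1)-1)\pi}{2^{n-m+1}}\right),
\]
using $(2i-1)(2j-1)=2(2ij-i-j+1)-1$.

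Finally I apply the quadrant-reduction lemma preceding Proposition~\ref{oddpowers} at the shifted level $n'=n-m+1$ to each angle $\cos((2(2ij-i-j+1)-1)\pi/2^{n'})$. With $\tilde\imath=2ij-i-j+1$ and $n'-2=n-m-1$, that lemma rewrites the term as $(-1)^{q}\cos((2m'-1)\pi/2^{n'})$, where the sign exponent is $q=\floor{(2^{n-m-1}+2ij-i-j)/2^{n-m}}$ and $m'$ is the nested index $((-1)^{\floor{(2ij-i-j)/2^{n-m-1}}}((2ij-i-j)\bmod 2^{n-m-1}+1))\bmod(2^{n-m-1}+1)$; these are exactly the sign factor and cosine argument in the statement. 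Collecting the fixed constant, the double sum over $m$ and $j$, and the inner sum over $k$ then reproduces the asserted formula. I expect the main obstacle to lie in this last step together with the reindexing: one must verify that the valuation splitting is a bijection onto the stated ranges, that the binomial arguments transform precisely as claimed, and that the sign and index emitted by the reduction lemma at level $n-m+1$ agree termwise with the displayed expressions, in particular that distinct pairs $(m,j)$ never produce the same reduced angle with opposite signs, so that no unintended cancellation occurs.
\end{proof}
\end{proposition}
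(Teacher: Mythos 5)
Your proposal is correct and follows exactly the template the paper uses for the odd-power analogue, Proposition \ref{oddpowers}: apply the automorphism (equivalently, the multiple-angle substitution $\cos(J\pi/2^{n-1})\mapsto\cos((2i-1)J\pi/2^{n-1})$) to the $i=1$ expansion of the preceding proposition, reindex, and finish with the quadrant-reduction lemma; the $2$-adic splitting $J=2^{m-2}(2j-1)$ and the application of that lemma at level $n-m+1$ reproduce the stated sign and index termwise. The paper itself states this proposition without proof, so your argument supplies the intended (and correct) justification; note only that the final worry about two pairs $(m,j)$ yielding the same reduced angle with opposite signs is immaterial here, since the formula is a termwise rewriting and any such coincidence would not affect its validity.
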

\begin{proposition}
Assume $n\ge2,\ r\ {\rm even},\ r\ge2$. Regarding the $2^{n-2}x2^{n-2}$ matrices, $M_n$ which satisfy $$\cos^r(\frac{(2i-1)\pi}{2^n})=\frac{1}{2^{r-1}} \sum_{k=1}^{2^{n-2}}M_n(i,k)\cos(\frac{(k-1)\pi}{2^n})$$
For $1\le i\le 2^{n-2}$
\begin{align}\notag
M_n[i,1]&=\frac{1}{2}\sum_{k=0}^{\floor{r/2^n}}(-1)^k\left(\binom{r}{\frac{r}{2}-k2^{n-1}}-\binom{r}{\frac{r}{2}-(k+1)2^{n-1}}\right)\\\notag
\end{align}
Let
\begin{align}\notag
p &=2ij-i-j+1\\\notag
s &= \floor{\frac{(p - 1)}{2^{n-m-1}}}\\\notag
d&=(p-1)\pmod*{2^{n-m-1}}\\\notag
h &=1+2^{m-2}\Biggl( ((-1)^s (d +1))\pmod*{2^{n-m-1}}\Biggr)\\\notag
q&= \floor{\frac{(p - 1+2^{n-m-1})}{2^{n-m}}}\\\notag
\end{align}
where $m=2,...,n-1$, $j=1,...,2^{n-m-1}$, $i=1,...,2^{n-2}$,
then
\begin{align}\notag
M_n[i,h]&= (-1)^{q}   \sum_{k=0}^{\floor{  r/2^n  } }(-1)^k\left(              \binom{r}{\frac{r}{2}-(k 2^{n-1}+2^{m-2}(2j-1))}-\binom{r}{\frac{r}{2}-((k+1)2^{n-1}-2^{m-2}(2j-1))    }  \right) \\\notag
\end{align}
\end{proposition}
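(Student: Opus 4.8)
The plan is to recognize this proposition as the matrix-entry repackaging of Proposition~\ref{evenpowers}, in exact parallel to how the odd-power matrix-entry proposition restated Proposition~\ref{oddpowers}. No new analytic input is needed: the content is a reindexing in which each summand of Proposition~\ref{evenpowers} is matched to the column of $M_n$ carrying the corresponding basis element, after which one checks that the auxiliary quantities $p,s,d,h,q$ reproduce exactly the exponents and arguments appearing there. Everything rests on the already-proved expansion together with the dyadic bookkeeping of the basis $\{1,\cos(\pi/2^{n-1}),\dots,\cos((2^{n-2}-1)\pi/2^{n-1})\}$.

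First I would dispose of the constant column. In Proposition~\ref{evenpowers} the leading term $\tfrac12\sum_{k}(-1)^{k}(\binom{r}{r/2-k2^{n-1}}-\binom{r}{r/2-(k+1)2^{n-1}})$ is the coefficient of $\cos(0\cdot\pi/2^{n-1})=1$, which is the $k=1$ basis vector. Since this term carries no dependence on $i$, it simultaneously furnishes $M_n[i,1]$ for every row, matching the stated formula and explaining why the first column of every example matrix is constant.

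Next I would treat the remaining columns by the dyadic level decomposition. Each basis vector $\cos((k-1)\pi/2^{n-1})$ with $k\ge 2$ is written uniquely as $\cos((2\ell-1)\pi/2^{n-m+1})$, where $m-2=v_2(k-1)$ is the $2$-adic valuation and $2\ell-1$ the odd part; equivalently the column index is fixed by $\cos((2\ell-1)\pi/2^{n-m+1})=\cos((h-1)\pi/2^{n-1})$, i.e. $h=1+2^{m-2}(2\ell-1)$, with $m$ ranging over $2,\dots,n-1$ and $\ell$ over $1,\dots,2^{n-m-1}$. This is precisely the index set $\sum_{m=2}^{n-1}\sum_{j=1}^{2^{n-m-1}}$ of Proposition~\ref{evenpowers}. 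For a fixed summand there I set $p-1=2ij-i-j$ and read off the matches: $s=\floor{(p-1)/2^{n-m-1}}$ is the exponent in the permutation sign $(-1)^{\floor{(2ij-i-j)/2^{n-m-1}}}$; $d=(p-1)\pmod*{2^{n-m-1}}$ gives $d+1=(2ij-i-j)\pmod*{2^{n-m-1}}+1$; the permuted level index $\ell=((-1)^{s}(d+1))\pmod*{2^{n-m-1}+1}$ is exactly the bracketed quantity controlling the cosine argument; and $q=\floor{(p-1+2^{n-m-1})/2^{n-m}}$ equals the outer sign exponent $\floor{(2^{n-m-1}+2ij-i-j)/2^{n-m}}$. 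The coefficient $(-1)^{q}\sum_{k}(-1)^{k}(\dots)$ then copies the coefficient in Proposition~\ref{evenpowers} verbatim. The clean bridge between the two forms of the permuted index is the identity $p-s\,2^{n-m-1}=(p-1)\pmod*{2^{n-m-1}}+1$, the level-$m$ analogue of the remark used in the odd-power restatement.

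The step I expect to be the main obstacle is the level bookkeeping rather than any single calculation: one must verify that as $(m,j)$ runs over all admissible pairs the resulting permuted index $\ell$ and column $h$ take each value in $\{2,\dots,2^{n-2}\}$ exactly once, so that the blocks at different levels tile the non-constant columns without overlap (as the $m=3$ and $m=4$ sub-blocks of the $n=5$ example illustrate), that $h$ never leaves the range $\{1,\dots,2^{n-2}\}$, and that the sign $(-1)^{q}$ faithfully records the quadrant correction of the $\arccos$ at level $m$. Once the bijection $(m,j,\ell)\leftrightarrow k$ and the sign are confirmed, the equality of the two expressions is term-by-term and the proposition follows.
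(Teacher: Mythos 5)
Your approach is correct and is precisely what the paper intends: the paper prints no proof of this proposition at all, and its odd-power counterpart is disposed of with the single line ``this is a restatement of the previous proposition,'' so your matching of $p,s,d,q$ to the corresponding quantities in Proposition \ref{evenpowers}, together with the check that the pairs $(m,j)$ tile the columns $2,\dots,2^{n-2}$ exactly once via $v_2(k-1)=m-2$, supplies exactly the bookkeeping the paper leaves implicit. One substantive remark: the column index you derive, $h=1+2^{m-2}(2\ell-1)$ with $\ell$ the bracketed modular quantity, is not the $h=1+2^{m-2}\ell$ that the proposition literally states; testing against the $n=5$, $r=16$ example, where row $1$ has entries $\binom{16}{9-k}$ in columns $k\ge 2$, shows your version is the right one (the stated formula would place $\binom{16}{5}=4368$ in column $3$, whose actual entry is $\binom{16}{6}=8008$, whereas $h=1+2^{0}(2\cdot 2-1)=4$ lands it correctly). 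So your argument in fact proves a corrected form of the statement and incidentally exposes a typo in the paper's formula for $h$.
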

\begin{corollary}
Regarding the $2^{n-2}x2^{n-2}$ matrices, $M_n$ which satisfy $$\cos^r(\frac{(2i-1)\pi}{2^n})=\frac{1}{2^{r-1}} \sum_{j=1}^{2^{n-2}}M_n(i,j)\cos(\frac{(j-1)\pi}{2^{n-1}})$$ where $r$ even, $r\ge 2:$
For $j\ge 2$ let
\begin{align}\notag
m&=\gcd(j-1,2^{n-1})\\\notag
p&=\frac{j-1+m}{2m}\\\notag
q&=(i+p-1)(2i-1)^{   \frac{2^{n+1}}{m}-1  }      \pmod*{\frac{2^{n-2}}{m}}\\\notag
s&=\floor{\frac{2^{n+1}+m(2iq-i-q)}{2^{n-2}}}\\\notag
\end{align}
then, for $1\le i\le 2^{n-2}$ and $2\le j \le 2^{n-2}$
\begin{align}\notag
Mn[i,1]&=\frac{1}{2}\sum_{k=0}^{\floor{r/2^n}}(-1)^k\left(\binom{r}{\frac{r}{2}-k2^{n-1}}-\binom{r}{\frac{r}{2}-(k+1)2^{n-1}}\right)\\\notag
M_n[i,j]&= (-1)^{s}\left(       \sum_{k=0}^{\floor{  r/(2^n)  } }(-1)^k\left(              \binom{r}{\frac{r}{2}-(k 2^{n-1}+m(2q-1))}-\binom{r}{\frac{r}{2}-((k+1)2^{n-1}-m(2q-1))   }  \right)     \right)  \\\notag
\end{align}
\end{corollary}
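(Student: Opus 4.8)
The plan is to obtain this corollary as a bookkeeping consequence of Proposition~\ref{evenpowers}, in exactly the way the analogous closed form for odd powers was extracted from Proposition~\ref{oddpowers}. Proposition~\ref{evenpowers} already writes $\cos^r\left(\frac{(2i-1)\pi}{2^n}\right)$ as a linear combination of the cosines $\cos\left(\frac{(2\ell-1)\pi}{2^{n-\mu+1}}\right)$ grouped into levels $\mu=2,\dots,n-1$, together with an isolated constant term. The present corollary only repackages that sum against the single linear basis $\left\{\cos\left(\frac{(j-1)\pi}{2^{n-1}}\right)\right\}_{j=1}^{2^{n-2}}$, so everything reduces to a change of index together with the inversion of the permutation acting inside each level.

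First I would dispatch $j=1$: the basis vector is $\cos 0=1$ and its coefficient is precisely the isolated constant term of Proposition~\ref{evenpowers}, giving the stated $M_n[i,1]$ (which is independent of $i$, as the displayed matrices confirm). For $j\ge 2$ I set up the dictionary between the linear index $j$ and the pair (level, reduced sub-index). With $m=\gcd(j-1,2^{n-1})=2^{\mu-2}$, the elementary identity
\[
\cos\left(\tfrac{(j-1)\pi}{2^{n-1}}\right)=\cos\left(\tfrac{(2p-1)\pi}{2^{n-1}/m}\right),\qquad p=\tfrac{j-1+m}{2m},
\]
shows that the $j$-th basis vector is the sub-index-$p$ element of level $\mu$, whose modulus $2^{n-1}/m=2^{n-\mu+1}$ is exactly the denominator occurring in Proposition~\ref{evenpowers}; since $(j-1)/m$ is odd, $p$ is a positive integer in range. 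Writing $N=n-\mu+1$, level $\mu$ is a verbatim copy of the whole construction with $n$ replaced by $N$, so its sub-indices $\{1,\dots,2^{N-2}\}$ carry the same cyclic group structure and the position permutation is the group action studied in Propositions~\ref{permutation} and~\ref{commute}.

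The heart of the argument is inverting that permutation. To find the input sub-index that lands at output position $p$ I invert the action by a power of $(2i-1)$, exactly as in the odd-power corollary: the required index is $q=(i+p-1)(2i-1)^{2^{n+1}/m-1}\bmod (2^{n-2}/m)$, where the modulus $2^{n-2}/m=2^{N-1}$ is the reduced one and the exponent $2^{n+1}/m-1=2^{N+2}-1\equiv-1$ modulo the order of $(2i-1)$ in $(\mathbb{Z}/2^{N-1})^\times$ (any sufficiently large such exponent serves; $2^{N+2}$ is a convenient multiple of that order). The same computation used in the odd case then yields the key congruence
\[
(2i-1)q-i\equiv p-1 \pmod{2^{N-1}},\qquad 0\le p-1<2^{N-2},
\]
which certifies that $q$ really is the pre-image of $p$. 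Feeding $q$ back into Proposition~\ref{evenpowers} replaces the sub-index in the binomial arguments by $m(2q-1)=2^{\mu-2}(2q-1)$, producing precisely the displayed sum over $k$.

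Finally I pin down the sign. Proposition~\ref{evenpowers} attaches $(-1)^{\floor{(2^{n-\mu-1}+2iq-i-q)/2^{n-\mu}}}$ to the level-$\mu$, sub-index-$q$ term; clearing the common factor $m=2^{\mu-2}$ rewrites the exponent as $\floor{(2^{n-3}+m(2iq-i-q))/2^{n-2}}$. The displayed congruence gives $m(2iq-i-q)\bmod 2^{n-2}=2^{\mu-2}(p-1)<2^{n-3}$, so the additive constant $2^{n-3}$ does not affect the floor, and replacing it by $2^{n+1}=8\cdot 2^{n-2}$ only adds the even integer $8$ to the exponent; the parity, hence the sign $(-1)^s$, is unchanged while the larger constant guarantees a nonnegative argument, producing the stated $s=\floor{(2^{n+1}+m(2iq-i-q))/2^{n-2}}$. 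I expect the genuine difficulty to be not the algebra but the index bookkeeping: one must verify that $q$ is read with the correct representative---in the degenerate small levels, where $2^{N-1}\in\{2,4\}$, the residue $0$ has to be folded to $2^{N-1}$---and that excursions with $p>2^{N-2}$ are absorbed by the reflection symmetry $f(i,j,n)=f(i,2^{n-1}-j+1,n)$ established earlier. With those conventions fixed, the corollary follows level by level from Proposition~\ref{evenpowers} together with the odd-power inversion.
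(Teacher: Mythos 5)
The paper states this corollary without any proof (as it does the two even-power statements immediately preceding it), so there is no argument of the author's to measure yours against; your outline is the argument the text evidently intends, namely the level-by-level even-power analogue of the corollary proved after Proposition \ref{oddpowers}, and its key steps are correct. The dictionary $m=\gcd(j-1,2^{n-1})=2^{\mu-2}$, $p=(j-1+m)/(2m)$ correctly matches the column index $j$ to the level-$\mu$, sub-index-$p$ term of Proposition \ref{evenpowers}; the exponent $2^{n+1}/m-1$ is indeed congruent to $-1$ modulo the order of $2i-1$ in $(\mathbb{Z}/(2^{n-2}/m))^{\times}$, so $q$ inverts the within-level permutation; and your parity argument for $s$ is right, since the residue of $m(2iq-i-q)$ modulo $2^{n-2}$ is $2^{\mu-2}(p-1)<2^{n-3}$, so trading the constant $2^{n-3}$ for $2^{n+1}=8\cdot 2^{n-2}$ shifts the floor by exactly $8$ and preserves parity. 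I spot-checked the result against the displayed $n=5$, $r=16$ matrix; for instance $i=2$, $j=5$ gives $m=4$, $p=1$, $q=0$, $s=7$ and the entry $-\binom{16}{12}=-1820$, as displayed.

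Your closing caveat about representatives is the one place where you are vaguer than necessary, and it is the only piece of the sketch that still needs to be written out. The displayed expression for $M_n[i,j]$ is in fact invariant under $q\mapsto q+2^{n-2}/m$ (the alternating binomial sum negates, because $\binom{r}{r/2-x}=\binom{r}{r/2+x}$ makes the boundary terms of the reindexed sum cancel, while $s$ changes by the odd number $2i-1$) and under the reflection $q\mapsto 2^{n-2}/m+1-q$, so the formula can be read literally for whatever representative the mod produces, including $q=0$ and values exceeding half the modulus; no folding convention is needed. Making those two invariances explicit is precisely the even-power counterpart of the lemmas on $e(i,j,n)$ and $m(i,j,n)$ that follow the odd-power corollary, and once they are recorded the proof is complete.
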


Regarding even powers, M. Merca gave the following result.
\begin{theorem}
Let $n$ and $p$ be two positive integers. Then
$$\sum_{k=1}^{\floor{\frac{n-1}{2}}}\cos^{2p}\left(\frac{k\pi}{n}\right)=-\frac{1}{2}+\frac{n}{2^{2p+1}}\sum_{k=-\floor{\frac{p}{n}}}^{\floor{\frac{p}{n}}}\binom{2p}{p+kn}$$
\end{theorem}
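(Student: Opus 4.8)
The plan is to expand $\cos^{2p}$ by De~Moivre, interchange the order of summation, and reduce the whole problem to a single character sum of cosines whose value is dictated by whether the frequency is divisible by $n$.

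First I would record the power-reduction formula
\[
\cos^{2p}(\theta)=\frac{1}{2^{2p}}\left(\binom{2p}{p}+2\sum_{j=1}^{p}\binom{2p}{p-j}\cos(2j\theta)\right),
\]
which follows from $(e^{i\theta}+e^{-i\theta})^{2p}/2^{2p}$ by pairing the terms $m$ and $2p-m$. Substituting $\theta=k\pi/n$ and summing over $1\le k\le\floor{(n-1)/2}$ gives
\[
\sum_{k=1}^{\floor{(n-1)/2}}\cos^{2p}\!\left(\frac{k\pi}{n}\right)=\frac{1}{2^{2p}}\left(\floor{(n-1)/2}\binom{2p}{p}+2\sum_{j=1}^{p}\binom{2p}{p-j}\,T_j\right),
\]
where $T_j=\sum_{k=1}^{\floor{(n-1)/2}}\cos(2\pi jk/n)$ is the quantity I must evaluate. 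I would obtain $T_j$ by relating this partial sum to the complete sum $\sum_{k=0}^{n-1}\cos(2\pi jk/n)$, which equals $n$ when $n\mid j$ and $0$ otherwise, being the real part of a sum of $n$th roots of unity. Using the reflection $\cos(2\pi j(n-k)/n)=\cos(2\pi jk/n)$, the complete sum splits as $1+2T_j$ when $n$ is odd and as $1+(-1)^j+2T_j$ when $n$ is even, the extra $(-1)^j$ coming from the isolated middle term $k=n/2$. Solving yields $T_j=\tfrac12(n\,[n\mid j]-1)$ for $n$ odd and $T_j=\tfrac12(n\,[n\mid j]-1-(-1)^j)$ for $n$ even.

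Next I would substitute these values and simplify using three standard binomial evaluations: $\sum_{j=1}^{p}\binom{2p}{p-j}=\tfrac12(2^{2p}-\binom{2p}{p})$; the alternating sum $\sum_{j=1}^{p}(-1)^j\binom{2p}{p-j}=-\tfrac12\binom{2p}{p}$; and the reindexing $\sum_{j=1,\,n\mid j}^{p}\binom{2p}{p-j}=\sum_{k=1}^{\floor{p/n}}\binom{2p}{p-kn}$. The crucial feature is that in the even case the extra $(-1)^j$ contribution, after weighting by $\binom{2p}{p-j}$ and summing, exactly offsets the difference between $\floor{(n-1)/2}=\tfrac n2-1$ and the value $\tfrac{n-1}{2}$ that governs the odd case, so both parities collapse to the single expression $-\tfrac12+\tfrac{n}{2^{2p+1}}\binom{2p}{p}+\tfrac{n}{2^{2p}}\sum_{k=1}^{\floor{p/n}}\binom{2p}{p-kn}$. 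Finally, folding the $k=0$ term $\binom{2p}{p}$ together with the doubled tail $2\sum_{k=1}^{\floor{p/n}}\binom{2p}{p-kn}$ via the symmetry $\binom{2p}{p-kn}=\binom{2p}{p+kn}$ reassembles exactly the two-sided sum $\sum_{k=-\floor{p/n}}^{\floor{p/n}}\binom{2p}{p+kn}$ on the right-hand side.

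The main obstacle I anticipate is the bookkeeping across the two parities of $n$: one must handle the isolated $k=n/2$ term carefully and verify that its $(-1)^j$ contribution produces precisely the correction needed for the even and odd cases to coincide. Everything else is routine manipulation of binomial sums, together with the elementary observation that $[n\mid j]$ for $1\le j\le p$ forces $j=kn$ with $1\le k\le\floor{p/n}$, which drives the reindexing in the last step.
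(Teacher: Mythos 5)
Your proof is correct and complete. Note, however, that the paper does not actually prove this theorem: it is M.~Merca's result and the paper simply cites it (reference 19), so there is no internal argument to compare against. Your route --- the power--reduction identity $\cos^{2p}\theta=2^{-2p}\bigl(\binom{2p}{p}+2\sum_{j=1}^{p}\binom{2p}{p-j}\cos(2j\theta)\bigr)$, followed by the evaluation of the character sums $T_j=\sum_{k=1}^{\floor{(n-1)/2}}\cos(2\pi jk/n)$ via the full root-of-unity sum --- is the standard elementary proof, and all the key evaluations you invoke check out: $\sum_{j=1}^{p}\binom{2p}{p-j}=\tfrac12\bigl(2^{2p}-\binom{2p}{p}\bigr)$, $\sum_{j=1}^{p}(-1)^j\binom{2p}{p-j}=-\tfrac12\binom{2p}{p}$, and for $n$ even the extra $-(-1)^j$ in $T_j$ contributes $+\tfrac12\binom{2p}{p}\cdot 2^{-2p}\cdot 2\cdot\tfrac12$, which exactly compensates the deficit $\floor{(n-1)/2}=\tfrac n2-1$ versus $\tfrac{n-1}{2}$, so both parities collapse to $-\tfrac12+\tfrac{n}{2^{2p+1}}\binom{2p}{p}+\tfrac{n}{2^{2p}}\sum_{k=1}^{\floor{p/n}}\binom{2p}{p-kn}$, which is the right-hand side after folding by the symmetry $\binom{2p}{p-kn}=\binom{2p}{p+kn}$. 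What your argument buys is a self-contained proof where the paper has only a citation; it would be a genuine improvement to the exposition to include it.
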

\begin{proof}
See ref. 19.
\end{proof}
\section{Negative Odd Powers}
The case $r=-1$ has matrix M with the magnitude of all entries equal to $1$, e.g.,

\begin{equation}\notag
\left(
\begin{array}{cc}
\cos^{-1}\left(\frac{\pi}{16}\right)   \\
\cos^{-1}\left(\frac{3\pi}{16}\right)  \\
\cos^{-1}\left(\frac{5\pi}{16}\right)  \\
\cos^{-1}\left(\frac{7\pi}{16}\right)  \\
\end{array}
\right)=2
\left(
\begin{array}{cccc}
 1 & -1 & 1 & -1 \\
 -1 & 1 & 1 & 1 \\
 1 & -1 & 1 & 1 \\
1 & 1 & 1 & 1 \\
\end{array}
\right)
\left(
\begin{array}{cc}
\cos\left(\frac{\pi}{16}\right)   \\
\cos\left(\frac{3\pi}{16}\right)  \\
\cos\left(\frac{5\pi}{16}\right)  \\
\cos\left(\frac{7\pi}{16}\right)  \\
\end{array}
\right)
\end{equation}
We remind the viewer that the above equation is equivalent to
\begin{equation}\notag
\left(
\begin{array}{cc}
\sin^{-1}\left(\frac{\pi}{16}\right)   \\
\sin^{-1}\left(\frac{3\pi}{16}\right)  \\
\sin^{-1}\left(\frac{5\pi}{16}\right)  \\
\sin^{-1}\left(\frac{7\pi}{16}\right)  \\
\end{array}
\right)=2
\left(
\begin{array}{cccc}
 1 & 1 & 1 & 1 \\
 1 & 1 & -1 & 1 \\
 1 & 1 & 1 & -1 \\
-1 & 1 & -1 & 1 \\
\end{array}
\right)
\left(
\begin{array}{cc}
\sin\left(\frac{\pi}{16}\right)   \\
\sin\left(\frac{3\pi}{16}\right)  \\
\sin\left(\frac{5\pi}{16}\right)  \\
\sin\left(\frac{7\pi}{16}\right)  \\
\end{array}
\right)
\end{equation}
\begin{proposition}
Assume $n\ge2,\ r\ {\rm odd},\ r\ge1$. Regarding the $2^{n-2}x2^{n-2}$ matrices, $M_n$ which satisfy $$\cos^{-1}(\frac{(2j-1)\pi}{2^n})=2 \sum_{k=1}^{2^{n-2}} M_n(j,k)\cos(\frac{(2k-1)\pi}{2^n})$$

then
\begin{align}\notag
M_n[i,j]&= (-1)^{    1+\floor{      \frac{   ((1-i-j)(1+2^{n-1}-2i)^{2^{n-2}-1}  } {2^{n-1}}    }                   }\\\notag
\end{align}
\end{proposition}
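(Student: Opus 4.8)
The plan is to determine the first row of $M_n$ explicitly and then to push it to every other row through the Galois action already exploited in Proposition~\ref{permutation}. For the first row I would evaluate the alternating cosine sum $2\sum_{k=1}^{N}(-1)^{k-1}\cos((2k-1)\theta)$. Summing the geometric series with ratio $-e^{2i\theta}$ and using $1+e^{2i\theta}=2e^{i\theta}\cos\theta$ gives $2\sum_{k=1}^{N}(-1)^{k-1}\cos((2k-1)\theta)=\bigl(1-(-1)^{N}\cos(2N\theta)\bigr)/\cos\theta$. Taking $\theta=\pi/2^{n}$ and $N=2^{n-2}$ forces $2N\theta=\pi/2$, so $\cos(2N\theta)=0$ and the right-hand side is exactly $1/\cos(\pi/2^{n})$. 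Hence $\cos^{-1}(\pi/2^{n})=2\sum_{k=1}^{2^{n-2}}(-1)^{k-1}\cos((2k-1)\pi/2^{n})$, i.e. $M_n(1,k)=(-1)^{k-1}$; in particular every first-row entry has magnitude $1$. The degenerate case $n=2$ gives the $1\times1$ matrix $(1)$ and is checked by hand.

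Next I would propagate this to the $i$-th row. Because $\cos^{-1}(\pi/2^{n})\in\mathbb{Q}(\cos(\pi/2^{n}))$, applying the automorphism $\sigma_i\colon\cos(\pi/2^{n})\mapsto\cos((2i-1)\pi/2^{n})$ together with Proposition~\ref{chebyshev} yields $\cos^{-1}((2i-1)\pi/2^{n})=2\sum_{k=1}^{2^{n-2}}(-1)^{k-1}\cos\bigl((2(2ik-i-k+1)-1)\pi/2^{n}\bigr)$. By Proposition~\ref{permutation} the signed permutation relating the first and $i$-th rows is the same one computed for positive powers, so the inversion of the index map carried out in the corollary to Proposition~\ref{oddpowers} applies verbatim: the entry in position $(i,j)$ originates from first-row index $p=(i+j-1)(2i-1)^{2^{n-2}-1}\pmod{2^{n-1}}$ carrying the sign $(-1)^{q}$ with $q=\floor{(i+j-1)(2i-1)^{2^{n-2}-1}/2^{n-1}}$. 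Since the extended first-row coefficient $(-1)^{p-1}$ is invariant under the reflection $p\mapsto 2^{n-1}-p+1$ that accompanies its own folding sign, no further sign arises, and $M_n(i,j)=(-1)^{q}(-1)^{p-1}=(-1)^{q+p-1}$.

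It then remains to identify $(-1)^{q+p-1}$ with the stated $(-1)^{1+\floor{P'/2^{n-1}}}$, where $P'=(1-i-j)(1+2^{n-1}-2i)^{2^{n-2}-1}$. Writing $a=i+j-1$, $b=2i-1$, $e=2^{n-2}-1$ and $P=ab^{e}$, the congruence $(2^{n-1}-b)^{e}\equiv(-b)^{e}=-b^{e}\pmod{2^{n-1}}$ (valid since $e$ is odd for $n\ge3$) gives $P'=-a(2^{n-1}-b)^{e}\equiv ab^{e}=P\pmod{2^{n-1}}$. Thus $P'$ and $P$ share the residue $p$, and $\floor{P'/2^{n-1}}=q+t$ with $t=(P'-P)/2^{n-1}$, so the whole claim collapses to the single parity statement $t\equiv p\pmod{2}$.

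The main step — and the only place demanding care — is this parity identity, which I would settle by a binomial expansion modulo $2$: in $(2^{n-1}-b)^{e}+b^{e}$ the constant terms cancel because $e$ is odd, every surviving term carries a factor $2^{(n-1)\ell}$ with $\ell\ge1$, and after dividing by $2^{n-1}$ only the linear term $e\,b^{e-1}$ is odd, so the quotient is $\equiv1\pmod{2}$. Hence $t=-a\bigl((2^{n-1}-b)^{e}+b^{e}\bigr)/2^{n-1}\equiv a\equiv p\pmod{2}$, completing the argument. The genuine obstacle is therefore not the final computation but the bookkeeping of the second paragraph: verifying that the reflection sign from folding each argument $2(2ik-i-k+1)-1$ into $[1,2^{n-2}]$ is exactly absorbed by the invariance of $(-1)^{p-1}$, so that the two sign sources package into the single exponent $q+p-1$. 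I would make this explicit quadrant-by-quadrant as in the folding lemma preceding Proposition~\ref{oddpowers}, and treat $n=2$ (where $e=0$, so the parity step degenerates) as a separate base case.
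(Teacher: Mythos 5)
The paper states this proposition with no proof at all (it is followed immediately by the $r=-3,-5$ material), so there is nothing of the author's to compare against; your argument actually supplies the missing justification, and its skeleton is sound. The first-row computation is correct: the geometric-series identity $2\sum_{k=1}^{N}(-1)^{k-1}\cos((2k-1)\theta)=\bigl(1-(-1)^{N}\cos(2N\theta)\bigr)/\cos\theta$ with $2N\theta=\pi/2$ gives $M_n(1,k)=(-1)^{k-1}$, matching the displayed $n=4$ example, and your closing congruence is also right: with $a=i+j-1$, $b=2i-1$, $e=2^{n-2}-1$ odd for $n\ge3$, one gets $P'\equiv P\pmod*{2^{n-1}}$, $\bigl((2^{n-1}-b)^e+b^e\bigr)/2^{n-1}\equiv e\,b^{e-1}\equiv1\pmod*{2}$, hence $t\equiv a\equiv p\pmod*{2}$ and $(-1)^{1+\floor{P'/2^{n-1}}}=(-1)^{q+p-1}$; I verified the resulting signs against the paper's $n=4$ matrix.

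Two points need tightening. First, the middle step imports the index-inversion corollary of Proposition \ref{oddpowers}, which the paper states only for odd $r\ge1$; you do cite Proposition \ref{permutation} for the $r$-independence of the signed permutation, but you should say explicitly that its Galois argument is insensitive to the sign of $r$ (it only uses that $\cos^{r}$ lies in the field), since that is what licenses reusing $p=(i+j-1)(2i-1)^{2^{n-2}-1}\pmod*{2^{n-1}}$ and $q=\floor{(i+j-1)(2i-1)^{2^{n-2}-1}/2^{n-1}}$ verbatim. Second, your phrasing that the extended coefficient $(-1)^{p-1}$ is ``invariant under the reflection $p\mapsto2^{n-1}-p+1$'' is literally false: $(-1)^{p-1}$ is \emph{anti}-invariant under that reflection, exactly as the coefficient $c(j)$ in the paper's Lemma 2.27 is, and it is only the product with the anti-invariant folding sign $e(i,\cdot,n)$ that is invariant. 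The conclusion $M_n(i,j)=(-1)^{q}(-1)^{p-1}$ survives, but as written the sentence masks precisely the folding bookkeeping you yourself identify as the genuine obstacle and then defer. That deferred quadrant-by-quadrant check (the analogue of the paper's Lemmas 2.26--2.29 for the coefficient $(-1)^{j-1}$) is the one real gap; it is routine but must be done, along with the separate $n=2$ base case you correctly flag.
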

The cases where $r=-3,\ -5$ were worked out previously (see ref. 20):

\begin{proposition}
Regarding the $2^{n-2}x2^{n-2}$ matrices, $M_n$ which satisfy $$\frac{1}{(\sin(\frac{(2j-1)\pi}{2^n}))^3}=8 \sum_{k=1}^{2^{n-2}}M_n(j,k)\sin(\frac{(2k-1)\pi}{2^n})$$
Let
\begin{align}\notag
p &=2ij-i-j+1\\\notag
k &= \floor     {\frac{(p - 1)}  {2^{n-2}}     }\\\notag
m &= (-1)^k (p - k(2^{n-2}))\pmod*{(2^{n-2}+1)}\notag
\end{align}
then
\begin{align}\notag
M_n[i,m]&= (1/2) (-1)^{ k }(2^{n-1}j - j^2 + j -2^{n-2})
\end{align}
\end{proposition}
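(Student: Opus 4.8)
The plan is to split the assertion into two essentially independent pieces: an explicit evaluation of the first row $M_n[1,\cdot]$, and the propagation of that row to every other row by the universal sign-permutation already isolated in Proposition \ref{permutation}. Note that $\tfrac12\bigl(2^{n-1}j-j^2+j-2^{n-2}\bigr)$ is exactly the first-row entry, since for $i=1$ one has $p=j$, $k=0$ and $m=j$.

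First I would establish the first row. For $i=1$ the claim collapses to the single expansion
\begin{align}\notag
\csc^3\left(\frac{\pi}{2^n}\right)&=8\sum_{j=1}^{2^{n-2}}\tfrac12\left(2^{n-1}j-j^2+j-2^{n-2}\right)\sin\left(\frac{(2j-1)\pi}{2^n}\right).
\end{align}
Writing $\theta=\pi/2^n$, $N=2^{n-2}$ and $\beta_j=2^{n-1}j-j^2+j-2^{n-2}$, this is equivalent to the closed identity $4\sin^3\theta\sum_{j=1}^{N}\beta_j\sin((2j-1)\theta)=1$. I would prove it by writing $4\sin^3\theta=3\sin\theta-\sin3\theta$ and expanding each of $\sin\theta\,\sin((2j-1)\theta)$ and $\sin3\theta\,\sin((2j-1)\theta)$ with the product-to-sum rule, turning the left side into a combination of the even-angle cosines $\cos(2m\theta)=\cos(m\pi/2^{n-1})$. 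The coefficient of an interior $\cos(2m\theta)$ is the weighted second difference $\tfrac32\beta_{m+1}-\tfrac32\beta_m-\tfrac12\beta_{m+2}+\tfrac12\beta_{m-1}$, which vanishes identically because $\beta_j$ is quadratic in $j$; hence every interior term cancels. Only boundary indices survive, where some $\beta_i$ leave the range $1\le i\le N$ and where the foldings $\cos(-x)=\cos x$, $\cos(2^{n-1}\theta)=0$ and $\cos((2^{n-1}+a)\theta)=-\cos((2^{n-1}-a)\theta)$ apply; a short evaluation of these few terms (using $\beta_0=-N$, $\beta_{-1}=-3N-2$, $\beta_{N+1}=\beta_N=N^2$, and so on) shows they collapse to $\cos0=1$. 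The small cases $n=2,3$ are checked by hand.

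Next I would carry the first row to row $i$ by the Galois action. The map $\sigma_i$ that fixes $\mathbb{Q}$ and sends $\sin(\pi/2^n)\mapsto\sin((2i-1)\pi/2^n)$ is a field automorphism of the extension; applying it to the first-row identity and using Proposition \ref{chebyshev} together with the composition formula $-p_i\!\left(\sin\left(\frac{(2j-1)\pi}{2^n}\right)\right)=\sin\left(\frac{\pi}{2^n}(2(2ij-i-j+1)-1)\right)$ gives
\begin{align}\notag
\csc^3\left(\frac{(2i-1)\pi}{2^n}\right)&=8\sum_{j=1}^{2^{n-2}}M_n[1,j]\,\sin\left(\frac{(2p-1)\pi}{2^n}\right),\qquad p=2ij-i-j+1.
\end{align}
Because $\sigma_i$ preserves inverses, this step is insensitive to the sign of the exponent, so the permutation at work is precisely the $r$-independent one of Proposition \ref{permutation}, and each first-row entry $M_n[1,j]=\tfrac12\beta_j$ reappears in row $i$ up to a sign and a change of column.

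Finally I would reduce each $\sin((2p-1)\pi/2^n)$ to a fundamental basis element $\sin((2m-1)\pi/2^n)$ with $1\le m\le2^{n-2}$. This is the sine analogue of the reduction lemma preceding Proposition \ref{oddpowers}: periodicity and the reflections $\sin(\pi-x)=\sin x$, $\sin(\pi+x)=-\sin x$ send $p$ to $m=(-1)^{k}(p-k2^{n-2})\pmod*{2^{n-2}+1}$ with $k=\floor{(p-1)/2^{n-2}}$, and attach a sign fixed by the quadrant into which the angle $(2p-1)\pi/2^n$ falls. Reconciling that quadrant sign with the factor $(-1)^{k}$ displayed in the statement is where I expect the main difficulty to lie: the column index $m$ is produced by the internal $(-1)^k$ of the reduction, whereas the overall sign of the reduced sine depends on $\floor{(2p-1)/2^n}$, so the two must be matched with care and cross-checked against the explicit $n=4$ matrix before concluding $M_n[i,m]=\tfrac12(-1)^{k}\beta_j$.
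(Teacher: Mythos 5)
The paper gives no argument for this proposition at all (it is imported from ref.~20), so yours is a genuinely independent derivation, and its architecture is sound. The first-row identity is correct as you set it up: with $4\sin^3\theta=3\sin\theta-\sin3\theta$ and product-to-sum, the interior coefficient $\tfrac32\beta_{m+1}-\tfrac32\beta_m-\tfrac12\beta_{m+2}+\tfrac12\beta_{m-1}$ is in fact $-\tfrac12$ times a \emph{third} difference of $\beta$ (not a second difference as you say), which is exactly why the quadratic $\beta_j$ is annihilated; your boundary data $\beta_0=-N$, $\beta_{N+1}=\beta_N=N^2$ is right, the constant term works out to $\tfrac32\beta_1-\tfrac12\beta_2=1$, and the $m=\pm1$ and $m=N\pm1$ contributions cancel after folding, so the sum does collapse to $\cos 0=1$. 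The transport to row $i$ via the automorphism, Proposition \ref{chebyshev}, the composition identity $-p_i(\sin(\tfrac{(2j-1)\pi}{2^n}))=\sin(\tfrac{(2(2ij-i-j+1)-1)\pi}{2^n})$, and Proposition \ref{permutation} is also fine, though you should add one sentence noting that $j\mapsto m$ is a bijection of $\{1,\dots,2^{n-2}\}$ (the odd residues form a group mod $2^{n-1}$), so distinct $j$ do not deposit into the same column.

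The genuine gap is the step you defer, and it cannot be closed in the form you hope. Your reduction gives $\sin(\tfrac{(2p-1)\pi}{2^n})=(-1)^{\floor{(p-1)/2^{n-1}}}\sin(\tfrac{(2m-1)\pi}{2^n})$ --- the exponent is the quantity $\floor{(2p-1)/2^n}$ you correctly name, which equals $\floor{k/2}$, not $k$. The two genuinely disagree: for $n=4$, $i=2$, $j=2$ one gets $p=5$, $k=1$, $m=4$, so the statement predicts $M_4[2,4]=-5$, whereas the paper's own worked $n=4$ matrix has $M_4[2,4]=+5$; the corollary following the proposition and the appendix code both put $2^{n-1}$, not $2^{n-2}$, in the denominator of the sign exponent. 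So the cross-check you propose does not reconcile the signs --- it exposes a misprint in the statement. To finish, prove $M_n[i,m]=\tfrac12(-1)^{\floor{(p-1)/2^{n-1}}}\beta_j$ by a four-case check on the quadrant of $\tfrac{(2p-1)\pi}{2^n}$ (using periodicity of $p\mapsto\sin(\tfrac{(2p-1)\pi}{2^n})$ mod $2^n$, since $p=2ij-i-j+1$ can exceed $2^n$), and record explicitly that the $(-1)^k$ displayed in the proposition should read $(-1)^{\floor{(p-1)/2^{n-1}}}$.
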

\begin{corollary}
\begin{align}\notag
M_n[i,j]&= (-1)^{  \floor{   \frac{((i+j-1)(2i-1)^{(2^{n-2}-1)} }{2^{n-1}}   }     }\Biggl[ \Bigl[  (2^{n-1}+1)\left((i+j-1)(2i-1)^{2^{n-2}-1}\pmod*{2^{n-1}}\right)\\\notag
&-    \left( (i+j-1)(2i-1)^{2^{n-2}-1}\pmod*{2^{n-1}}\right)^{2}-2^{n-2}  \Bigr]\Biggr]\\\notag
\end{align}
\end{corollary}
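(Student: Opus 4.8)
The plan is to invert the index relationship of the preceding proposition, exactly as was done for the corresponding corollary in the odd-powers section. That proposition gives the entry $M_n[i,m]$ as an explicit function of a free parameter (call it $j'$), with the column $m$ itself determined by $i$ and $j'$; here I instead want a formula indexed directly by the column $j$. So the first task is to find the value of $j'$ for which the resulting column index $m$ equals the target $j$, and I claim this value is $p=(i+j-1)(2i-1)^{2^{n-2}-1}\pmod{2^{n-1}}$.

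First I would carry out the substitution $j'=p$ in the proposition's definition $p'=2i\,j'-i-j'+1$. Since $2i-1$ is odd it is a unit modulo $2^{n-1}$, and the group $(\mathbb{Z}/2^{n-1}\mathbb{Z})^{\times}$ has order $2^{n-2}$, so Euler's theorem yields $(2i-1)^{2^{n-2}}\equiv1\pmod{2^{n-1}}$. Hence $2i\,p-i-p=(2i-1)p-i\equiv(i+j-1)(2i-1)^{2^{n-2}}-i\equiv(i+j-1)-i=j-1\pmod{2^{n-1}}$, so $2i\,p-i-p=j-1+t\,2^{n-1}$ for some integer $t$. Pushing this through the inner mod, the sign $(-1)^k$, and the outer mod of the proposition's $m$-formula collapses everything to $m=j$, by the same manipulation used in the odd-powers corollary.

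Once $m=j$ is established, the value the proposition attaches to this entry, namely $\tfrac12(-1)^{k}\bigl(2^{n-1}p-p^2+p-2^{n-2}\bigr)=\tfrac12(-1)^{k}\bigl((2^{n-1}+1)p-p^2-2^{n-2}\bigr)$, is precisely $M_n[i,j]$, which reproduces the bracketed quantity in the statement (with the same overall normalization already present in the preceding proposition). The remaining work is to match its sign exponent $k=\floor{(2i\,p-i-p)/2^{n-2}}$ with the stated $q=\floor{(i+j-1)(2i-1)^{2^{n-2}-1}/2^{n-1}}$.

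The hard part will be the bookkeeping at the two points where the argument leaves its natural range, and in particular the consolidation of the sign. The quantity $p$ lies in $[0,2^{n-1})$, whereas the proposition admits only a parameter in $[1,2^{n-2}]$; when $p>2^{n-2}$ I would invoke a reflection symmetry — the analog, for this $1/\sin^3$ matrix, of the three lemmas ($e$, $m$, and $f$) proved before the odd-powers corollary — to fold $p$ into range, checking that the value $(2^{n-1}+1)p-p^2-2^{n-2}$ is invariant under $p\mapsto 2^{n-1}-p+1$ while its sign flips in a controlled way. The genuinely delicate point is that the proposition's sign exponent carries a denominator $2^{n-2}$, whereas the claimed $q$ carries $2^{n-1}$ and is taken on the \emph{unreduced} product $(i+j-1)(2i-1)^{2^{n-2}-1}$; reconciling these two floors, together with the flips from the range reduction, into a single consistent $(-1)^{q}$ is where the real work lies. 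Everything preceding that is the same Euler-theorem substitution already used successfully in the odd-powers corollary.
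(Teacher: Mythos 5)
Your overall strategy is the right one, and it is the same strategy the paper uses for the analogous corollary in the odd-powers section (the paper gives no proof at all for this particular corollary): substitute $j'=p=(i+j-1)(2i-1)^{2^{n-2}-1}\pmod*{2^{n-1}}$ into the preceding proposition, use $(2i-1)^{2^{n-2}}\equiv 1\pmod*{2^{n-1}}$ to get $2ip-i-p\equiv j-1\pmod*{2^{n-1}}$ and hence $m=j$, exploit the invariance of $(2^{n-1}+1)p-p^2-2^{n-2}$ under $p\mapsto 2^{n-1}+1-p$ to fold $p$ into range, and you are right to retain the factor $\tfrac12$ that the printed corollary drops. But the step you defer as "delicate bookkeeping" is not bookkeeping: as you have set it up, it is false. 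When $p\le 2^{n-2}$ no reflection occurs, and your own computation shows $2ip-i-p=j-1+t\,2^{n-1}$, so $k=\floor{(2ip-i-p)/2^{n-2}}=2t$ is \emph{always even}; the proposition then forces the sign $(-1)^k=+1$, yet $(-1)^q$ is frequently $-1$. Concretely, take $n=4$, $i=2$, $j=3$: then $(i+j-1)(2i-1)^3=108$, $p=4\le 4$, $q=13$, and the substitution gives $p'=11$, $k=2$, value $+8$, whereas the true entry of the displayed $4\times 4$ matrix is $M_4[2,3]=-8$. No amount of reconciliation of the two floors, nor any sign flips coming from the range reduction (none occur here), can close this.

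The resolution is that the preceding proposition's value-sign is itself a misprint: the correct sign, as used in the appendix code (\texttt{(-1)\^{}Floor[(p-1)/(2n)]} with \texttt{n}$=2^{n-2}$) and as confirmed by the example matrix, is $(-1)^{\floor{(p'-1)/2^{n-1}}}$, not $(-1)^{k}$ with $k=\floor{(p'-1)/2^{n-2}}$ (that $k$ is needed only to locate the column $m$). Starting from the corrected sign, the reconciliation does go through: writing $(i+j-1)(2i-1)^{2^{n-2}-1}=q\,2^{n-1}+p$ and $(2i-1)^{2^{n-2}}=1+c\,2^{n-1}$, one gets $\floor{(p'-1)/2^{n-1}}=t=(i+j-1)c-(2i-1)q\equiv q+(i+j-1)c\pmod*{2}$, and $c$ is even because the exponent of the unit group mod $2^{n}$ is $2^{n-2}$ for $n\ge 4$, i.e. $(2i-1)^{2^{n-2}}\equiv 1\pmod*{2^{n}}$; hence $t\equiv q\pmod*{2}$ as required. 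Your proof needs to identify and make this correction explicitly, and to supply the mod-$2^{n}$ strengthening of Euler's theorem; without it the argument terminates in a contradiction rather than in the stated formula.
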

\begin{corollary}
Let $S_n=\sum_{i=1}^{2^{n-2}}\sin^{-3}\left(\frac{(2i-1)\pi}{2^n}\right)$
\begin{align}\notag
S_n=&2^2\sum_{i=1}^{2^{n-2}}\sum_{j=1}^{2^{n-2}}(-1)^{\floor*{\frac{2 i j-i-j}{2^{n-1}}}}(2^{n-1}j-j^2+j-2^{n-2})\sin(      \frac{ (  2[ (2 i j-i-j)\pmod*{2^{n-1}}] +1 )\pi } {2^{n}}      )\\\notag
&=2^2\sum_{j=1}^{2^{n-2}}(2^{n-1}j-j^2+j-2^{n-2})\sum_{i=1}^{2^{n-2}}(-1)^{\floor*{\frac{2 i j-i-j}{2^{n-1}}}}\sin(      \frac{ (  2[ (2 i j-i-j)\pmod*{2^{n-1}}] +1 )\pi } {2^{n}}      )\\\notag
\end{align}
\end{corollary}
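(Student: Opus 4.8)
The plan is to reduce the whole computation to the first row of the $\sin^{-3}$ matrix and then propagate it through the Galois action of the extension. First I would read off the first row from the preceding $\sin^{-3}$ matrix proposition: setting $i=1$ gives $p=j$, hence $k=\floor{(j-1)/2^{n-2}}=0$ and $m=j$ for $1\le j\le 2^{n-2}$, so that
\[
\sin^{-3}\left(\frac{\pi}{2^n}\right)=8\sum_{j=1}^{2^{n-2}}M_n(1,j)\sin\left(\frac{(2j-1)\pi}{2^n}\right),\qquad M_n(1,j)=\tfrac12\left(2^{n-1}j-j^2+j-2^{n-2}\right).
\]
This is the only place the explicit coefficient enters; everything downstream is a substitution.

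Next I would apply the endomorphism $\sigma$ of the extension (the field-extension lemma) that sends $\sin(\tfrac{\pi}{2^n})\mapsto\sin(\tfrac{(2i-1)\pi}{2^n})$. Since $\sigma$ fixes $\mathbb{Q}$ and is a field automorphism, it commutes with the rational operation $x\mapsto x^{-3}$ and with the polynomials $p_j$; thus $\sigma\big(\sin^{-3}(\tfrac{\pi}{2^n})\big)=\sin^{-3}(\tfrac{(2i-1)\pi}{2^n})$, while each basis term, written as $\sin(\tfrac{(2j-1)\pi}{2^n})=-p_j(\sin(\tfrac{\pi}{2^n}))$, transforms into $-p_j(\sin(\tfrac{(2i-1)\pi}{2^n}))$. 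By the proposition evaluating $-p_j$ at an elementary sine (and the symmetry of $2ij-i-j$ in $i,j$), this equals $\sin\!\big(\tfrac{(2(2ij-i-j+1)-1)\pi}{2^n}\big)=\sin\!\big(\tfrac{(2(2ij-i-j)+1)\pi}{2^n}\big)$. Applying $\sigma$ to the displayed identity then yields, with $A:=2ij-i-j$,
\[
\sin^{-3}\left(\frac{(2i-1)\pi}{2^n}\right)=8\sum_{j=1}^{2^{n-2}}M_n(1,j)\sin\left(\frac{(2A+1)\pi}{2^n}\right).
\]

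I would then reduce each argument to the fundamental range. Because $A=i(2j-1)-j\ge0$ for $i,j\ge1$, write $A=q\,2^{n-1}+B$ with $q=\floor{A/2^{n-1}}$ and $B=A\bmod 2^{n-1}$; then $\tfrac{(2A+1)\pi}{2^n}=q\pi+\tfrac{(2B+1)\pi}{2^n}$, so $\sin(\tfrac{(2A+1)\pi}{2^n})=(-1)^q\sin(\tfrac{(2B+1)\pi}{2^n})$. Substituting this, using $8M_n(1,j)=2^2(2^{n-1}j-j^2+j-2^{n-2})$, and finally summing over $i$ from $1$ to $2^{n-2}$ produces the first displayed equality for $S_n$; the second equality is just the interchange of the two finite sums, pulling the $j$-dependent coefficient $2^{n-1}j-j^2+j-2^{n-2}$ outside the $i$-sum.

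There is no conceptual obstacle, since the content is carried entirely by the earlier propositions; the main work is bookkeeping. The delicate points to verify carefully are that the first-row coefficients are exactly $\tfrac12(2^{n-1}j-j^2+j-2^{n-2})$, that the image of the $j$-th basis term under $\sigma$ is the \emph{single} elementary sine $\sin(\tfrac{(2A+1)\pi}{2^n})$ rather than a longer combination, and that the periodic reduction produces precisely the sign $(-1)^{\floor{(2ij-i-j)/2^{n-1}}}$ and the reduced argument $\tfrac{(2[(2ij-i-j)\pmod*{2^{n-1}}]+1)\pi}{2^n}$ of the statement, with all powers of $2$ collapsing to the stated prefactor $2^2$.
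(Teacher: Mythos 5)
Your proof is correct and follows essentially the same route as the paper, whose own proof is just ``sum terms in the previous Proposition'': you recover the first row $M_n(1,j)=\tfrac12(2^{n-1}j-j^2+j-2^{n-2})$, transport it to the $i$-th row by the field automorphism together with $-p_j\bigl(\sin\bigl(\tfrac{(2i-1)\pi}{2^n}\bigr)\bigr)=\sin\bigl(\tfrac{(2(2ij-i-j)+1)\pi}{2^n}\bigr)$, reduce mod $\pi$ to get the sign $(-1)^{\floor{(2ij-i-j)/2^{n-1}}}$, and then sum over $i$ and swap the finite sums. The only difference is that you spell out the derivation of the row expansion (which the paper delegates to the preceding proposition and reference 20) before performing the summation, and all the delicate points you flag check out.
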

\begin{proof}
Sum terms in previous Proposition.
\end{proof}
\begin{proposition}\label{commute}
Let $$S_n=\sum_{k=1}^{2^{n-2}}\frac{1}{\sin^r(\frac{(2k-1)\pi}{2^n})}$$
Regarding the $2^{n-2}x2^{n-2}$ matrices, $M_n$ which satisfy $$\frac{1}{(\sin(\frac{(2j-1)\pi}{2^n}))^r}=2^r \sum_{k=1}^{2^{n-2}}M_n(j,k)\sin(\frac{(2k-1)\pi}{2^n})$$
$$S_n=2^{r-1} \sum_{j=1}^{2^{n-2}}M(1,j)\frac{1}{\sin(\frac{(2j-1)\pi}{2^n})}$$
\end{proposition}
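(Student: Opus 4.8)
The plan is to sum the defining relation over all rows, turn the resulting column sums into a single entry of a matrix product by exploiting commutativity, and then collapse that product back to the first row using the exponent-$1$ relation. Throughout write $x_k=\sin\!\left(\tfrac{(2k-1)\pi}{2^n}\right)$, let $M=M_n$ be the matrix attached to the odd power $r$, so that $x_i^{-r}=2^r\sum_k M(i,k)\,x_k$, and let $N$ be the exponent-$1$ sine matrix, so that $x_j^{-1}=2\sum_k N(j,k)\,x_k$.

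First I would record the one genuinely new input: the first row of $N$ is identically $1$. This is the Dirichlet-kernel identity $\sum_{k=1}^{2^{n-2}}\sin\!\left(\tfrac{(2k-1)\pi}{2^n}\right)=\tfrac{\sin^2(\pi/4)}{\sin(\pi/2^n)}=\tfrac{1}{2\sin(\pi/2^n)}$, which says exactly $x_1^{-1}=2\sum_k x_k$, i.e. $N(1,k)=1$ for every $k$. Equivalently, this is the top row of the $\sin^{-1}$ matrix obtained from the negative-odd-power formula after the row/column reversal that converts the cosine statements to their sine counterparts.

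Next I would sum the defining relation for $M$ over $i$ to obtain $S_n=\sum_i x_i^{-r}=2^r\sum_k c_k\,x_k$, where $c_k=\sum_i M(i,k)$ is the $k$-th column sum. Using $N(1,i)=1$ I rewrite $c_k=\sum_i N(1,i)M(i,k)=(NM)_{1,k}$. Because $M$ and $N$ both belong to the commuting family of these matrices (the sine analogue of the commutativity proposition, obtained by reversing the orders of rows and columns), we have $NM=MN$, whence $c_k=(MN)_{1,k}=\sum_j M(1,j)N(j,k)$. Substituting and regrouping the double sum gives $S_n=2^r\sum_j M(1,j)\sum_k N(j,k)\,x_k$, and the inner sum is $\tfrac12 x_j^{-1}$ by the defining relation for $N$, so $S_n=2^{r-1}\sum_j M(1,j)\,x_j^{-1}$, which is the claim.

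The step I expect to be the main obstacle is the justification of the two structural facts that drive the collapse: that $N$ commutes with $M$ even though the two matrices are attached to different (and here negative) powers, and that the first row of $N$ is all ones. Once both are secured, the remainder is a formal rearrangement of finite sums and, pleasantly, needs no appeal to the linear independence of the $x_k$.
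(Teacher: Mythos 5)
Your proof is correct and follows essentially the same route as the paper: both arguments hinge on the all-ones first row of the $\sin^{-1}$ matrix $N$ (so that $S_n$ is the first entry of $N$ applied to the $\sin^{-r}$ column), the commutativity $NM=MN$, and the defining relation of $N$ to convert $N(\text{sine column})$ back into $\tfrac12(\sin^{-1}\text{ column})$. The only difference is presentational — you carry out the computation in general index notation, whereas the paper works the $n=4$, $r=3$ example and asserts that the general case is identical.
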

\begin{proof}
We will show how the proof goes with an example having $n=4$, $r=3$.
\begin{equation}\notag
S_4=\left(
\begin{array}{cccc}
 1 & 1 & 1 & 1 \\
\end{array}
\right)
\left(
\begin{array}{c}
\sin^{-3}\left(\frac{\pi}{16}\right)   \\
\sin^{-3}\left(\frac{3\pi}{16}\right)  \\
\sin^{-3}\left(\frac{5\pi}{16}\right)  \\
\sin^{-3}\left(\frac{7\pi}{16}\right)  \\
\end{array}
\right)
\end{equation}
\begin{equation}\notag
\left(
\begin{array}{c}
\sin^{-3}\left(\frac{\pi}{16}\right)   \\
\sin^{-3}\left(\frac{3\pi}{16}\right)  \\
\sin^{-3}\left(\frac{5\pi}{16}\right)  \\
\sin^{-3}\left(\frac{7\pi}{16}\right)  \\
\end{array}
\right)
=8\left(
\begin{array}{cccc}
 2 & 5 & 7 & 8 \\
 7 & 2 & -8 & 5 \\
 5 & 8 & 2 & -7 \\
 -8 & 7 & -5 & 2 \\
\end{array}
\right)
\left(
\begin{array}{c}
\sin\left(\frac{\pi}{16}\right)   \\
\sin\left(\frac{3\pi}{16}\right)  \\
\sin\left(\frac{5\pi}{16}\right)  \\
\sin\left(\frac{7\pi}{16}\right)  \\
\end{array}
\right)
\end{equation}
Multiplying both sides of the above equation on the left by
\begin{equation}\notag
\left(
\begin{array}{cccc}
 1 & 1 & 1 & 1 \\
 1 & 1 & -1 & 1 \\
 1 & 1 & 1 & -1 \\
-1 & 1 & -1 & 1 \\
\end{array}
\right)
\end{equation}
We see that $S_n$ occupies the first entry in the resulting vector. However, we know from Proposition \ref {commute} that these matricies commute. 
\begin{equation}\notag
\left(
\begin{array}{cccc}
 1 & 1 & 1 & 1 \\
 1 & 1 & -1 & 1 \\
 1 & 1 & 1 & -1 \\
-1 & 1 & -1 & 1 \\
\end{array}
\right)
\left(
\begin{array}{c}
\sin^{-3}\left(\frac{\pi}{16}\right)   \\
\sin^{-3}\left(\frac{3\pi}{16}\right)  \\
\sin^{-3}\left(\frac{5\pi}{16}\right)  \\
\sin^{-3}\left(\frac{7\pi}{16}\right)  \\
\end{array}
\right)
=8\left(
\begin{array}{cccc}
 2 & 5 & 7 & 8 \\
 7 & 2 & -8 & 5 \\
 5 & 8 & 2 & -7 \\
 -8 & 7 & -5 & 2 \\
\end{array}
\right)
\left(
\begin{array}{cccc}
 1 & 1 & 1 & 1 \\
 1 & 1 & -1 & 1 \\
 1 & 1 & 1 & -1 \\
-1 & 1 & -1 & 1 \\
\end{array}
\right)
\left(
\begin{array}{c}
\sin\left(\frac{\pi}{16}\right)   \\
\sin\left(\frac{3\pi}{16}\right)  \\
\sin\left(\frac{5\pi}{16}\right)  \\
\sin\left(\frac{7\pi}{16}\right)  \\
\end{array}
\right)
\end{equation}
\begin{equation}\notag
=4\left(
\begin{array}{cccc}
 2 & 5 & 7 & 8 \\
 7 & 2 & -8 & 5 \\
 5 & 8 & 2 & -7 \\
 -8 & 7 & -5 & 2 \\
\end{array}
\right)
\left(
\begin{array}{c}
\sin^{-3}\left(\frac{\pi}{16}\right)   \\
\sin^{-3}\left(\frac{3\pi}{16}\right)  \\
\sin^{-3}\left(\frac{5\pi}{16}\right)  \\
\sin^{-3}\left(\frac{7\pi}{16}\right)  \\
\end{array}
\right)
\end{equation}\notag
The general case uses the same argument.

\end{proof}
\begin{corollary}
Let
\begin{align}\notag
S(s,n)&=\sum_{j=1}^{2^{n-2}}\frac{1}{\sin^s\left(\frac{(2j-1)\pi}{2^n}\right)}\\\notag
\end{align}
Then
\begin{align}\notag
S(2,n)&=\frac{1}{2}2^{2n-2}\\\notag
S(3,n)&=\sum_{j=1}^{2^{n-2}}\left(-2j^2+2(2^{n-1}+1)j-2^{n-1}\right)\frac{1}{\sin[\frac{(2j-1)\pi}{2^n}]}\\\notag
S(4,n)&=\frac{1}{6}\left(2^{4n-4}+2(2^{2n-2})\right)\\\notag
S(5,n)&=\frac{1}{3}\sum_{j=1}^{2^{n-2}}\biggl(2 j^4-4(2^{n-1}+1)j^3+2(3( 2^{n-1})-1)j^2+2(2^{3n-3}+2^{n-1}+2)j\\\notag
&-(2^{3n-3}+2(2^{n-1}))\biggr)\frac{1}{\sin[\frac{(2j-1)\pi}{2^n}]}\\\notag
S(6,n)&=\frac{1}{30}(2(2^{6n-6})+5(2^{4n-4})+8(2^{2n-2}))\\\notag
S(7,n)&=\frac{1}{45}\sum_{j=1}^{2^{n-2}}\biggl(  -4 j^6+12(2^{n-1}+1)j^5-10(3(2^{n-1})-2)j^4-20(2^{3n-3}+2(2^{n-1})+3)j^3 \\\notag
&+2(15(2^{3n-3})+45(2^{n-1})-8)j^2 +4( 3(2^{5n-5})+5(2^{3n-3})+4(2^{2n-1})+12 )j\\\notag
&-3( 2(2^{5n-5})+5(2^{3n-3})+8(2^{n-1}) )\biggr)\frac{1}{\sin[\frac{(2j-1)\pi}{2^n}]}\\\notag
S(8,n)&=\frac{1}{630}\left(17(2^{8n-8})+56(2^{6n-6})+98(2^{4n-4})+144(2^{2n-2})\right)\\\notag
\end{align}
\end{corollary}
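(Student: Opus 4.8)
The plan is to split the statement according to the parity of $s$, because the two families behave completely differently: for odd $s$ the value $S(s,n)$ is a genuine element of the field generated over $\mathbb{Q}$ by $\sin(\frac{\pi}{2^n})$, expressed in the sine basis, whereas for even $s$ it collapses to a rational number. The odd case will be a direct application of the preceding proposition; the even case will be a symmetric-function computation against the Chebyshev polynomial $f_n$.

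\textbf{Odd $s$.} I would invoke the preceding proposition, which already gives $$S(s,n)=2^{s-1}\sum_{j=1}^{2^{n-2}}M_n(1,j)\,\frac{1}{\sin\!\left(\frac{(2j-1)\pi}{2^n}\right)},$$ so that everything reduces to an explicit formula for the first row $M_n(1,j)$ of the reciprocal--power matrix. For $s=3$ the first row is exactly $\tfrac12(2^{n-1}j-j^2+j-2^{n-2})$, the quantity furnished by the $r=-3$ proposition (for $i=1$ one has $p=j$, the floor is $0$, and $m=j$). Substituting and clearing $2^{s-1}=4$ yields $\sum_j(-2j^2+2(2^{n-1}+1)j-2^{n-1})/\sin(\tfrac{(2j-1)\pi}{2^n})$, which is precisely the stated $S(3,n)$. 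For $s=5$ and $s=7$ I would obtain the corresponding first-row polynomials in $j$ (of degree $s-1$) by the same route used for $s=3$ --- either the induction $\sin^{-(s+2)}=\sin^{-2}\sin^{-s}$ with the product-to-sum identity, or Zeilberger as in ref.~20 --- and then match coefficients; the prefactors $\tfrac13$ and $\tfrac1{45}$ appearing in $S(5,n)$ and $S(7,n)$ are exactly the denominators carried by these first-row polynomials.

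\textbf{Even $s=2m$.} Here the key observation is that $f_n(x)=T_{2^{n-1}}(x)$ is even, so writing $f_n(x)=\tilde h_n(x^2)$ with $\tilde h_n(u)=\sum_m c_{n,m}u^m$ (coefficients from the Proposition on $f_n$), one has $\tilde h_n(u)=T_{2^{n-2}}(2u-1)$ and $\tilde h_n(1)=1$; its $2^{n-2}$ roots are exactly $u_i=\cos^2(\frac{(2i-1)\pi}{2^n})$. Since $\csc^{2}\theta_i=1/(1-u_i)$, $$S(2m,n)=\sum_{i=1}^{2^{n-2}}\frac{1}{(1-u_i)^{m}}=\frac{(-1)^{m-1}}{(m-1)!}\left.\frac{d^{m}}{du^{m}}\log\tilde h_n(u)\right|_{u=1},$$ which is a rational function of the integers $c_{n,m}$ and hence rational, as the statement asserts. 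To extract the explicit values I would pass to the Chebyshev picture via $2u-1=\cos\beta$, so that $1-u_i=\sin^2(\beta_i/2)$ at the zeros $\beta_i=\frac{(2i-1)\pi}{2^{n-1}}$ of $\cos(2^{n-2}\beta)$; the sums $\sum_i\csc^{2m}\theta_i$ are then the classical trigonometric power sums over equally spaced nodes, evaluable from the Laurent expansion of $\csc^2$ (equivalently by Newton's identities applied to $\tilde h_n$). This produces each $S(2m,n)$ as a $\mathbb{Z}$-combination of the powers $2^{2k(n-1)}$, the Bernoulli-type normalising constants supplying the denominators $6,30,630$.

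\textbf{Main obstacle.} The odd case is essentially pure substitution once the first rows are known, so the real work is the even case: converting the logarithmic-derivative (or Newton's-identity) computation into the exact closed forms, with every sub-leading power of $2$ and the correct rational prefactor. Carrying the expansion of $\log\tilde h_n$ to order $m$ and identifying its coefficients with the stated combinations of $2^{2kn-2k}$ is where the bookkeeping is heaviest; the emergence of the Bernoulli numbers --- hence the denominators $6,30,630,\dots$ --- is the organising principle that keeps this tractable, and I would verify the resulting formulas against the classical identity $\sum_{k=1}^{M-1}\csc^2(\frac{k\pi}{M})=\frac{M^2-1}{3}$ as a consistency check for $m=1$.
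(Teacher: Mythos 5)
Your plan is essentially correct, but be aware that the paper itself offers no argument for this corollary beyond the citation ``See ref.~20,'' so there is no internal proof to compare against; what can be said is that your odd-$s$ route is precisely the machinery the paper assembles in Section~3, while your even-$s$ route is your own addition. For $s=3,5$ you correctly combine the proposition $S(s,n)=2^{s-1}\sum_{j}M_n(1,j)\csc\bigl(\tfrac{(2j-1)\pi}{2^n}\bigr)$ (proved there by commutativity of the matrices) with the explicit first rows of the $r=-3$ and $r=-5$ matrices; for $i=1$ one indeed gets $p=j$, $k=0$, $m=j$, and the arithmetic checks out: $4\cdot\tfrac12(2^{n-1}j-j^2+j-2^{n-2})$ reproduces the stated $S(3,n)$, and $16\cdot\tfrac1{24}(\cdots)=\tfrac23(\cdots)$ reproduces the stated $\tfrac13(2j^4-\cdots)$ for $S(5,n)$. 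Your even-$s$ reduction is also sound: with $u_i=\cos^2\theta_i$ the quantities $S(2m,n)=\sum_i(1-u_i)^{-m}$ are symmetric functions of the roots of $\tilde h_n$, hence rational, and the logarithmic-derivative identity you quote is correct (your $m=1,2$ values agree with the classical cosecant power sums restricted to odd nodes). Two pieces remain plans rather than proofs: (i) $s=7$, for which the paper supplies no $r=-7$ first row, so the degree-six polynomial carrying the $\tfrac1{45}$ prefactor must still be derived by the induction or Zeilberger step you describe; and (ii) the explicit closed forms for $m=3,4$ with denominators $30$ and $630$, which you assert via the Bernoulli-number heuristic but do not compute. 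Neither is a conceptual gap --- both are finite computations --- but they constitute all of the remaining work, since the reductions themselves are either already in the paper (odd case) or standard (even case).
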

\begin{proof}
See ref. 20.
\end{proof}

The Mathematica code for producing the matrix for terms of $M_m$, $S_m$ is given in the appendix. 

 \begin{proposition}
Regarding the $2^{n-2}x2^{n-2}$ matrices, $M_n$ which satisfy $$\frac{1}{(\sin(\frac{(2j-1)\pi}{2^n}))^5}=32 \sum_{k=1}^{2^{n-2}}M_n(j,k)\sin(\frac{(2k-1)\pi}{2^n})$$
Let
\begin{align}\notag
p &= i + (j - 1) (2 (i - 1) + 1)\\\notag
k &= \floor{\frac{(p - 1)}{2^{n-2}}}\\\notag
m &= (-1)^k (p - k(2^{n-2}))\pmod*{(2^{n-2}+1)}\notag
\end{align}
then
\begin{align}\notag
M_n[i,m]&=  (-1)^{ k }\Bigl(\frac{1}{24} (j-1) \left(j-2^{n-1}\right) \left(j^2-j \left(2^{n-1}+1\right)+2 \left(-6\ 2^{2 n-5}+2^{2 n-4}-1\right)\right)\\\notag
&+\frac{1}{3} 2^{n+1} \left(4^{n-4}-1\right)+11\ 2^{n-4}\Bigr)\\\notag
&= (-1)^{ k }\frac{1}{24} \left(j^4-2 j^3 \left(2^{n-1}+1\right)+j^2 \left(3\ 2^{n-1}-1\right)+2 j \left(2^{n-2}+2^{3 n-4}+1\right)-2^{n-1} \left(2^{2 n-3}+1\right)\right)\\\notag
\end{align}
\end{proposition}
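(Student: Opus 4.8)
The plan is to reduce the claim to a statement about the first row of $M_n$ and then to pin down the first row from data already established earlier. The permutation-and-sign structure of these matrices is independent of the power $r$: by the permutation proposition (Proposition \ref{permutation}) and its sine analogue used already in the $\csc^3$ case, the entries of any row of $M_n$ are a signed rearrangement of the first row, and the explicit column map $j\mapsto m$ together with the sign $(-1)^k$ are governed only by the field automorphisms sending $\sin(\pi/2^n)$ to $\sin((2i-1)\pi/2^n)$, not by $r$. These are exactly the quantities $p=2ij-i-j+1$, $k=\floor{(p-1)/2^{n-2}}$, and $m=\bigl((-1)^k(p-k2^{n-2})\bigr)\pmod*{2^{n-2}+1}$ appearing in the $\csc^3$ proposition. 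Hence it suffices to show that the magnitude attached to column $j$ of the first row is the quartic $g(j):=\frac{1}{24}\bigl(j^4-2j^3(2^{n-1}+1)+j^2(3\cdot2^{n-1}-1)+2j(2^{n-2}+2^{3n-4}+1)-2^{n-1}(2^{2n-3}+1)\bigr)$, since for $i=1$ one has $p=j$, $k=0$, $m=j$, so $M_n[1,j]=(-1)^0 g(j)=g(j)$, and every other entry is then $(-1)^k g(j)$ after the relabeling, which is precisely the assertion.

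First I would identify $M_n[1,j]$ by comparison with the sum $S(5,n)$. The proposition expressing $S_n=\sum_k\csc^r((2k-1)\pi/2^n)$ as $2^{r-1}\sum_j M_n(1,j)\,\csc\!\left(\frac{(2j-1)\pi}{2^n}\right)$ gives, for $r=5$, the relation $S(5,n)=2^{4}\sum_{j=1}^{2^{n-2}}M_n(1,j)\,\csc\!\left(\frac{(2j-1)\pi}{2^n}\right)$, while the corollary computing $S(5,n)$ expresses the same number as $\frac{1}{3}\sum_{j=1}^{2^{n-2}}\bigl(2j^4-4(2^{n-1}+1)j^3+2(3\cdot2^{n-1}-1)j^2+2(2^{3n-3}+2^{n-1}+2)j-(2^{3n-3}+2^{n})\bigr)\csc\!\left(\frac{(2j-1)\pi}{2^n}\right)$. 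To conclude $16\,M_n(1,j)=\frac{1}{3}(2j^4-\cdots)$ term by term I need the cosecants $\csc((2j-1)\pi/2^n)$, $1\le j\le 2^{n-2}$, to be linearly independent over $\mathbb{Q}$; this follows from the $r=-1$ case, whose matrix (all entries $\pm1$) is invertible over the sine basis, so the cosecants span the same space as the sines and form a basis of it. Dividing by $48$ then gives $M_n(1,j)=\frac{1}{48}(2j^4-\cdots)$, and a routine algebraic simplification checks that this equals $g(j)$ and also matches the factored form in the statement.

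As a self-contained alternative that avoids citing $S(5,n)$, one can verify the first-row identity directly: multiply $\csc^5(\pi/2^n)=32\sum_j g(j)\sin((2j-1)\pi/2^n)$ through by $\sin^5(\pi/2^n)$, expand $\sin^5\theta=\frac{1}{16}(10\sin\theta-5\sin3\theta+\sin5\theta)$, apply product-to-sum to each $\sin((2j-1)\theta)\sin((2\ell-1)\theta)$, and fold the resulting cosines $\cos(\ell\pi/2^{n-1})$ back into the range $0\le\ell\le 2^{n-2}$ using $\cos((2^{n}-\ell)\pi/2^{n-1})=\cos(\ell\pi/2^{n-1})$ and the sign rules for the intermediate quadrants. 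The required statement then becomes that the coefficient of $\cos(0)$ is $1$ and every other folded coefficient vanishes, a finite family of summation identities in $g$; since $g$ is a polynomial of degree $4$ in $j$, each is a creative-telescoping (Zeilberger) identity of the kind used throughout the paper.

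The main obstacle is the first-row determination. In the comparison route the only nontrivial point is the linear independence of the cosecants, which I have reduced to invertibility of the $r=-1$ matrix; in the direct route the genuine labor is the angle-folding bookkeeping and the verification of the resulting telescoping sums, which is exactly the mechanism that drove the even/odd-power induction earlier. Once the first row is known, the signed-permutation reduction of the first paragraph finishes the proof without further computation.
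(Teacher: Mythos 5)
The paper offers no argument for this proposition at all---it is stated bare, with the $r=-3,-5$ cases attributed to reference 20 and backed only by Mathematica code in the appendix---so there is nothing to compare your route against; what matters is whether your proposal stands on its own. Your skeleton is the right one and matches the machinery the paper builds for positive odd powers: reduce to the first row (for $i=1$ one has $p=j$, $k=0$, $m=j$, so the claim becomes $M_n[1,j]=g(j)$ with $g$ the stated quartic), then transfer to the other rows by the field automorphisms, whose permutation-and-sign data $(p,k,m)$ are independent of $r$ and are literally the same expressions as in the $\csc^3$ proposition. Your arithmetic also checks out: for $n=4$ the quartic gives first row $(11,31,46,54)$, consistent with the appendix code and with the $S(5,4)$ coefficient $176=16\cdot 11$.

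The genuine problem is in how you determine the first row. Route (a) is circular within this paper: the corollary giving $S(5,n)=\frac{1}{3}\sum_j(2j^4-\cdots)\csc(\tfrac{(2j-1)\pi}{2^n})$ is itself a corollary of Proposition \ref{commute}, i.e.\ it is obtained by \emph{reading off the first row of this very matrix} and summing (exactly as the analogous $\csc^3$ corollary is proved by ``Sum terms in previous Proposition''). You cannot then invert that to recover the first row. The linear-independence point you raise for the cosecants is also only gestured at (invertibility of the $r=-1$ sign matrix for general $n$ is asserted, not shown). Route (b) is the one that actually works and you should make it primary: multiplying by $\sin^5(\pi/2^n)$ and using $\sin^5\theta=\frac{1}{16}(10\sin\theta-5\sin 3\theta+\sin 5\theta)$, the coefficient of $\cos(\ell\pi/2^{n-1})$ for interior $\ell$ is $\frac{1}{32}\bigl(g(\ell+3)-5g(\ell+2)+10g(\ell+1)-10g(\ell)+5g(\ell-1)-g(\ell-2)\bigr)$, a fifth finite difference of a quartic, which vanishes for \emph{any} quartic $g$; so all of the content sits in the handful of folded boundary conditions near $\ell=0$ and $\ell=2^{n-2}$, which are what actually pin down the five coefficients of $g$. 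As written you defer exactly these verifications, so the proposal is a correct and workable plan rather than a complete proof; carrying out those boundary identities (they are elementary polynomial identities in $n$, not even requiring Zeilberger) would close it.
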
 
The Mathematica code for producing the matrix for terms of $M_{m+2}$, $(2^{-5})S_m$ is given in the Appendix

 \section{Some Trig Results}
 \begin{lemma}
$$x-\frac{x^3}{6}<\sin(x)<x-\frac{2x^3}{3\pi^2}\ \ \rm{;}\ x\in(0,\pi/2)$$\notag
\end{lemma}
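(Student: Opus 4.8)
The plan is to split the statement into its two one-sided inequalities and to treat each by reducing it to the sign of a single auxiliary function studied through its successive derivatives on $(0,\pi/2)$.

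For the lower bound $\sin x > x - \tfrac{x^3}{6}$, I would set $f(x)=\sin x - x + \tfrac{x^3}{6}$ and differentiate repeatedly, recording that the function and its first two derivatives vanish at the origin: $f(0)=0$, $f'(x)=\cos x - 1 + \tfrac{x^2}{2}$ with $f'(0)=0$, $f''(x)=x-\sin x$ with $f''(0)=0$, and finally $f'''(x)=1-\cos x$. Since $f'''(x)>0$ on $(0,\pi/2)$, each of $f''$, then $f'$, then $f$ is in turn strictly increasing from the value $0$ it takes at the origin, so $f(x)>0$ throughout the interval. (This part of the argument in fact works on all of $(0,\infty)$.)

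For the upper bound $\sin x < x - \tfrac{2x^3}{3\pi^2}$, set $g(x)=x-\tfrac{2x^3}{3\pi^2}-\sin x$, so that $g(0)=g'(0)=g''(0)=0$, with $g'(x)=1-\tfrac{2x^2}{\pi^2}-\cos x$, $g''(x)=\sin x - \tfrac{4x}{\pi^2}$, and $g'''(x)=\cos x - \tfrac{4}{\pi^2}$. Here the clean ``all derivatives positive'' pattern from the first part breaks down: since $\tfrac{4}{\pi^2}<1$ we have $g'''(0)>0$, but because $\cos x$ decreases strictly from $1$ to $0$ on $(0,\pi/2)$, the third derivative $g'''$ changes sign exactly once, at $x_0=\arccos(\tfrac{4}{\pi^2})\in(0,\pi/2)$, being positive before $x_0$ and negative after. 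Consequently $g''$ is unimodal, strictly increasing on $(0,x_0)$ and strictly decreasing on $(x_0,\pi/2)$, so its minimum over $[0,\pi/2]$ is attained at an endpoint. The decisive computation is that both endpoint values are nonnegative: $g''(0)=0$ and $g''(\pi/2)=1-\tfrac{2}{\pi}>0$, which is just $\pi>2$. Hence $g''(x)>0$ on $(0,\pi/2)$, and integrating up twice gives $g'(x)>g'(0)=0$ and then $g(x)>g(0)=0$, which is the desired upper bound.

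The main obstacle is exactly this sign change of $g'''$: one cannot obtain $g''>0$ by plain monotonicity starting from the origin, and must instead exploit the unimodality of $g''$ together with the positivity of the boundary value $g''(\pi/2)=1-\tfrac{2}{\pi}$. I expect that endpoint estimate, along with the check that $\arccos(\tfrac{4}{\pi^2})$ really lies in $(0,\pi/2)$ (again equivalent to $\pi>2$), to be the only genuinely nonroutine inputs; everything else is repeated differentiation and the fundamental theorem of calculus.
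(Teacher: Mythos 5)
Your proof is correct, but it is worth noting that the paper does not actually prove this lemma at all: its ``proof'' is a one-line citation to R.\ Kl\'en, M.\ Visuri and M.\ Vuorinen (reference [17]), where this Jordan-type inequality appears. Your blind attempt therefore supplies a genuine, self-contained elementary argument where the paper offers none, and the argument checks out. The lower bound via $f(x)=\sin x - x + \tfrac{x^3}{6}$ with $f(0)=f'(0)=f''(0)=0$ and $f'''=1-\cos x\ge 0$ is the standard alternating-Taylor-tail argument and indeed works on all of $(0,\infty)$. For the upper bound you correctly identify the one non-routine feature: $g'''(x)=\cos x - \tfrac{4}{\pi^2}$ changes sign once at $\arccos(4/\pi^2)\in(0,\pi/2)$, so $g''$ is unimodal and its positivity on the open interval follows from $g''(0)=0$ together with strict increase near $0$ on one side and $g''(\pi/2)=1-\tfrac{2}{\pi}>0$ on the other; integrating twice then gives $g>0$. (One pedantic remark: the statement ``the minimum over $[0,\pi/2]$ is attained at an endpoint and both endpoint values are nonnegative'' by itself only yields $g''\ge 0$; the strict inequality on the open interval comes from the strict monotonicity on each of the two subintervals, which you do state.) The trade-off is the usual one: the citation keeps the paper short and points to sharper constants and generalizations, while your calculus argument makes the lemma verifiable in place with nothing beyond $\pi>2$.
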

\begin{proof}
Reference R. Kl\'en et al [17]
\end{proof}
\begin{lemma}
Assume $n\in\mathbb{Z}$, $n>0$ and $a,b\in\mathbb{R}$ then if $\sin(\frac{d}{2})\ne 0$
$$\sum_{i=0}^{n-1}\sin(a+id)=\frac{\sin(\frac{nd}{2})\sin(a+\frac{(n-1)d}{2})}{\sin(\frac{d}{2})}$$
\end{lemma}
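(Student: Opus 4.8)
The plan is to multiply both sides by $2\sin(d/2)$ and exploit a product-to-sum identity so that the sum on the left telescopes. Since the hypothesis guarantees $\sin(d/2)\neq 0$, proving the identity is equivalent to establishing
$$2\sin\left(\frac{d}{2}\right)\sum_{i=0}^{n-1}\sin(a+id)=2\sin\left(\frac{nd}{2}\right)\sin\left(a+\frac{(n-1)d}{2}\right).$$

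First I would apply the product-to-sum formula $2\sin(\alpha)\sin(\beta)=\cos(\alpha-\beta)-\cos(\alpha+\beta)$ to each summand, taking $\alpha=d/2$ and $\beta=a+id$. Using that cosine is even, this gives
$$2\sin\left(\frac{d}{2}\right)\sin(a+id)=\cos\left(a+\left(i-\tfrac12\right)d\right)-\cos\left(a+\left(i+\tfrac12\right)d\right).$$
Writing $c_i=\cos\!\left(a+(i-\tfrac12)d\right)$, the right-hand side is exactly $c_i-c_{i+1}$, so the sum over $i=0,\dots,n-1$ telescopes to $c_0-c_n=\cos\!\left(a-\tfrac{d}{2}\right)-\cos\!\left(a+(n-\tfrac12)d\right)$.

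The final step converts this difference of cosines back into the claimed product. I would apply the sum-to-product identity $\cos X-\cos Y=2\sin\!\left(\tfrac{X+Y}{2}\right)\sin\!\left(\tfrac{Y-X}{2}\right)$ with $X=a-\tfrac{d}{2}$ and $Y=a+(n-\tfrac12)d$; then $\tfrac{X+Y}{2}=a+\tfrac{(n-1)d}{2}$ and $\tfrac{Y-X}{2}=\tfrac{nd}{2}$, which yields precisely $2\sin\!\left(\tfrac{nd}{2}\right)\sin\!\left(a+\tfrac{(n-1)d}{2}\right)$. Dividing through by $2\sin(d/2)$ completes the argument.

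There is no serious obstacle here; the computation stays entirely within elementary real trigonometric identities. The only points requiring care are the bookkeeping of the half-integer shifts $i\pm\tfrac12$ in the telescoping step and confirming that the hypothesis $\sin(d/2)\neq 0$ is exactly what licenses the final division. An equivalent route would be to write $\sin(a+id)=\operatorname{Im}\!\left(e^{\mathbf{i}(a+id)}\right)$ and sum a finite geometric series in $e^{\mathbf{i}d}$, extracting the imaginary part at the end; I prefer the telescoping approach since it avoids complex arithmetic and makes the role of the $\sin(d/2)\neq 0$ hypothesis transparent.
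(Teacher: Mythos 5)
Your proof is correct. The paper itself does not prove this lemma; it simply cites reference [18] (Greitzer's \emph{Arbelos} note), so your telescoping argument supplies a complete, self-contained derivation of what the paper leaves to a citation. Each step checks out: with $\alpha=d/2$, $\beta=a+id$ the product-to-sum identity gives $\cos\left(a+(i-\tfrac12)d\right)-\cos\left(a+(i+\tfrac12)d\right)$, the sum telescopes to $\cos\left(a-\tfrac{d}{2}\right)-\cos\left(a+(n-\tfrac12)d\right)$, and the sum-to-product step with $X=a-\tfrac{d}{2}$, $Y=a+(n-\tfrac12)d$ indeed yields $\tfrac{X+Y}{2}=a+\tfrac{(n-1)d}{2}$ and $\tfrac{Y-X}{2}=\tfrac{nd}{2}$. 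The hypothesis $\sin(d/2)\neq 0$ is used exactly where you say it is, in the final division. (The only cosmetic remark: the lemma as stated quantifies over $a,b$ but the formula involves $a,d$; this is a typo in the paper, not an issue with your argument.)
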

\begin{proof}
Ref [18].
\end{proof}

\begin{lemma}
For $n\in \mathbb{Z}^+$
\begin{align}\notag
\sin(n \theta)&=\sum_{r=0}^{\floor{(n-1)/2}}(-1)^r\binom{n}{2r+1}\cos^{n-2r-1}(\theta)\sin^{2r+1}(\theta)\\\notag
\cos(n \theta)&=\sum_{r=0}^{\floor{(n)/2}}(-1)^r\binom{n}{2r}\cos^{n-2r}(\theta)\sin^{2r}(\theta)\\\notag
\end{align}
\end{lemma}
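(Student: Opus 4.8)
The plan is to derive both identities in one stroke from De Moivre's theorem combined with the binomial theorem. I would start from the identity $(\cos\theta + i\sin\theta)^n = \cos(n\theta) + i\sin(n\theta)$ and expand the left-hand side with the binomial theorem to obtain $\sum_{k=0}^{n}\binom{n}{k}\cos^{n-k}(\theta)\,i^{k}\sin^{k}(\theta)$. The whole proof then reduces to sorting this single sum into its real and imaginary parts and reading off the two stated formulas by comparing with the right-hand side $\cos(n\theta)+i\sin(n\theta)$.

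The key observation driving the separation is the four-periodic behaviour of the powers of $i$. For an even index $k=2r$ one has $i^{2r}=(i^2)^r=(-1)^r$, which is real; for an odd index $k=2r+1$ one has $i^{2r+1}=i\,(-1)^r$, which is purely imaginary. Collecting the even-$k$ terms therefore produces the real part $\sum_r(-1)^r\binom{n}{2r}\cos^{n-2r}(\theta)\sin^{2r}(\theta)$, and collecting the odd-$k$ terms produces $i$ times $\sum_r(-1)^r\binom{n}{2r+1}\cos^{n-2r-1}(\theta)\sin^{2r+1}(\theta)$. Matching real and imaginary parts across the De Moivre identity then yields the cosine formula from the even terms and the sine formula from the odd terms.

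The only step demanding any care is the bookkeeping of the summation limits, which is exactly where the floor functions come from. As $k$ runs over the even integers with $0\le k\le n$, writing $k=2r$ forces $0\le r\le\floor{n/2}$, matching the stated upper limit in the cosine sum; as $k$ runs over the odd integers with $0\le k\le n$, writing $k=2r+1$ forces $0\le r\le\floor{(n-1)/2}$, matching the sine sum. I expect no genuine obstacle here: the derivation is routine once the parity split is set up, and an alternative route by induction on $n$ using the angle-addition formulas $\sin((n+1)\theta)=\sin(n\theta)\cos\theta+\cos(n\theta)\sin\theta$ and $\cos((n+1)\theta)=\cos(n\theta)\cos\theta-\sin(n\theta)\sin\theta$ together with Pascal's rule would reach the same result if a self-contained argument avoiding complex numbers were preferred.
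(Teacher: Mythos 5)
Your proof is correct, but it takes a different route from the paper. You expand $(\cos\theta+i\sin\theta)^n$ by the binomial theorem and separate real and imaginary parts using the parity of powers of $i$; the paper instead first observes that the upper limits may be harmlessly extended to $n$ (since $\binom{n}{2r+1}=0$ once $2r+1>n$, and similarly for the cosine sum), and then argues by induction on $n$ via the angle-addition formula $\sin((n+1)\theta)=\sin(n\theta)\cos\theta+\cos(n\theta)\sin\theta$, combining the two resulting sums with Pascal's identity --- precisely the ``alternative route'' you mention at the end. Your De Moivre argument is shorter and delivers both identities simultaneously in one computation, with the floor-function limits falling out of the parity bookkeeping exactly as you describe. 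The paper's inductive argument stays entirely within real trigonometry and is stylistically consistent with the other addition-formula inductions in the paper (notably the proof of Proposition \ref{chebyshev}), at the cost of having to manage the index shift and Pascal's rule explicitly. Both are complete and standard; there is no gap in your version.
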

\begin{proof}
Note that it is equivalent to say
\begin{align}\notag
\sin(n \theta)&=\sum_{r=0}^{n}(-1)^r\binom{n}{2r+1}\cos^{n-2r-1}(\theta)\sin^{2r+1}(\theta)\\\notag
\cos(n \theta)&=\sum_{r=0}^{n}(-1)^r\binom{n}{2r}\cos^{n-2r}(\theta)\sin^{2r}(\theta)\\\notag
\end{align}
Since, in the equation for $\sin(n \theta)$,  $r>\floor{(n-1)/2}\implies \binom{n}{2r+1}=0$ since then $2r+1>n$ and $r\in \mathbb{Z}$, similarly for $\cos(n \theta)$. The proof then procedes by induction. True for $n=1$. Assume true for $n$ and show for $n+1$.
\begin{align}\notag
\sin((n+1) \theta)&=\sin(n\theta+\theta)
&=\sin(n\theta)\cos(\theta)+\cos(n\theta)\sin(\theta)\\\notag
&=\sum_{r=0}^{n}(-1)^r\binom{n}{2r+1}\cos^{n-2r-1}(\theta)\sin^{2r+1}(\theta)\cos(\theta)+\sum_{r=0}^{n}(-1)^r\binom{n}{2r}\cos^{n-2r}(\theta)\sin^{2r}(\theta)\sin(\theta)\\\notag
&=\sum_{r=0}^{n}(-1)^r\binom{n}{2r+1}\cos^{n-2r}(\theta)\sin^{2r+1}(\theta)+\sum_{r=0}^{n}(-1)^r\binom{n}{2r}\cos^{n-2r}(\theta)\sin^{2r+1}(\theta)\\\notag
&=\sum_{r=0}^{n+1}(-1)^r\binom{n+1}{2r}\cos^{n-2r}(\theta)\sin^{2r+1}(\theta)\\\notag
\end{align}
by Pascal's identity.
\end{proof}
Just as Newton's generalized binomial theorem follows from the binomial theorem, we have
\begin{corollary}
For $r\in \mathbb{C}$ with $|\cos(\theta)|>|\sin(\theta)|$
\begin{align}\notag
\sin(r \theta)&=\sum_{j=0}^{\infty}(-1)^j\binom{r}{2j+1}\cos^{r-2j-1}(\theta)\sin^{2j+1}(\theta)\\\notag
\cos(r \theta)&=\sum_{j=0}^{\infty}(-1)^j\binom{r}{2j}\cos^{r-2j}(\theta)\sin^{2j}(\theta)\\\notag
\end{align}
\begin{proof}
Note that if $f(\theta)=\sin(r \theta)$ and $g(\theta)=\cos(r \theta)$ then
\begin{align}\notag
f'&=r g\\\notag
g'&=-r f\\\notag
\end{align}
$f$ and $g$ both satisfy
$$f''=-r^2 f$$
The general solution to this differential equation is 
$$c_1 e^{i r \theta}+c_2 e^{-ir\theta}$$
with constants resolved by evaluation at specific points.
One shows that the series expansions also satisy this equation.
\begin{align}\notag
&\frac{d}{d\theta}\sum_{j=0}^{\infty}(-1)^j\binom{r}{2j+1}\cos^{r-2j-1}(\theta)\sin^{2j+1}(\theta)\\\notag
&\ =\sum_{j=0}^{\infty}(-1)^j(\binom{r}{2j+1}(-(r-2j-1)\cos^{r-2j-2}(\theta)\sin^{2j+2}(\theta)+(2j+1)\cos^{r-2j}(\theta)\sin^{2j}(\theta))\\\notag
&\ =\sum_{j=0}^{\infty}(-1)^j( r\binom{r-1}{2j+1}\cos^{r-2j-2}(\theta)\sin^{2j+2}(\theta)+r\binom{r-1}{2j}\cos^{r-2j}(\theta)\sin^{2j}(\theta))\\\notag
&\ =r\sum_{j=0}^{\infty}(-1)^j(\binom{r-1}{2j-1}\cos^{r-2j}(\theta)\sin^{2j}(\theta)+\binom{r-1}{2j}\cos^{r-2j}(\theta)\sin^{2j}(\theta))\\\notag
&\ =r\sum_{j=0}^{\infty}(-1)^j\left(\binom{r-1}{2j-1}+\binom{r-1}{2j}\right)\cos^{r-2j}(\theta)\sin^{2j}(\theta)\\\notag
&\ =r\sum_{j=0}^{\infty}(-1)^j\binom{r}{2j}\cos^{r-2j}(\theta)\sin^{2j}(\theta)\\\notag
\end{align}
The fourth line in the proof results from re-indexing $j-1\to j$ and the fact that $\binom{r-1}{-1}=0$. The last line results from Pascal's identity. The proof for the cosine expansion is similar. We require  $|\cos(\theta)|>|\sin(\theta)|$ to ensure convergence.
\end{proof}
\end{corollary}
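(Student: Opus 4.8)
The final statement is exactly the ``generalized de Moivre'' identity for an arbitrary complex exponent, and the cleanest route is to obtain both formulas at once from Newton's generalized binomial theorem applied to a complex base, rather than through the differential equation of the surrounding text. The plan is to restrict to real $\theta$ with $\cos\theta>0$, so that the principal branch satisfies $\operatorname{Log}(\cos\theta+i\sin\theta)=i\theta$ and hence $(\cos\theta+i\sin\theta)^r=e^{ir\theta}$; under this restriction the hypothesis $|\cos\theta|>|\sin\theta|$ becomes simply $|\tan\theta|<1$.

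First I would factor out $\cos^r\theta$ and write
\[
e^{ir\theta}=(\cos\theta+i\sin\theta)^r=\cos^r\theta\,(1+i\tan\theta)^r .
\]
Since $|i\tan\theta|=|\tan\theta|<1$, Newton's binomial series converges absolutely, giving
\[
(1+i\tan\theta)^r=\sum_{n=0}^{\infty}\binom{r}{n}(i\tan\theta)^n .
\]
Multiplying through by $\cos^r\theta$ and replacing $\tan\theta$ by $\sin\theta/\cos\theta$ yields the single series $e^{ir\theta}=\sum_{n\ge 0}\binom{r}{n}i^{\,n}\cos^{r-n}\theta\,\sin^{n}\theta$.

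Next I would split this series into even and odd indices. Writing $n=2j$ and $n=2j+1$ and using $i^{2j}=(-1)^j$, $i^{2j+1}=i(-1)^j$, the even part is real and the odd part is $i$ times a real series; the absolute convergence just established is what licenses this regrouping term by term. Equating real and imaginary parts with $\cos(r\theta)=\Re\,e^{ir\theta}$ and $\sin(r\theta)=\Im\,e^{ir\theta}$ then produces both displayed formulas simultaneously, which is the economy this approach buys over treating sine and cosine as separate solutions of $f''=-r^2 f$.

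The main obstacle is not the algebra but the branch bookkeeping: one must verify that $(\cos\theta+i\sin\theta)^r$, interpreted via the principal logarithm, really equals $e^{ir\theta}$, and that the scalar $\cos^r\theta$ pulled out front uses the same real, positive branch. This is exactly where the hypotheses enter, since $\cos\theta>0$ fixes $\operatorname{Log}(\cos\theta+i\sin\theta)=i\theta$ while $|\tan\theta|<1$ secures convergence, so the care lies in pinning down the range of $\theta$ for which the identity holds rather than in any delicate estimate. (The differential-equation route sidesteps the branch question but proves the two formulas separately and still requires a convergence argument for the series, so I would prefer the binomial derivation and simply record the admissible range of $\theta$ explicitly.)
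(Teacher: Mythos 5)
Your route is genuinely different from the paper's. The paper differentiates the two series term by term, shows they satisfy the coupled system $f'=rg$, $g'=-rf$ (hence $f''=-r^2f$), and pins down the solution by initial values; you instead derive both identities at once from Newton's binomial series applied to $(\cos\theta+i\sin\theta)^r=\cos^r\theta\,(1+i\tan\theta)^r$. Your approach buys something real: it avoids justifying term-by-term differentiation of an infinite series (which the paper silently assumes), it makes the role of the hypothesis $|\cos\theta|>|\sin\theta|$ completely transparent as the radius-of-convergence condition $|\tan\theta|<1$, and it produces both formulas from a single expansion. The cost is the branch bookkeeping you correctly identify, and one caveat there: $\cos\theta>0$ alone does not give $\operatorname{Log}(\cos\theta+i\sin\theta)=i\theta$ (take $\theta=2\pi$); you need $\theta\in(-\pi/2,\pi/2)$, which fortunately covers every application in the paper.

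There is, however, one step that fails at the stated level of generality. The corollary asserts the identities for $r\in\mathbb{C}$, and for non-real $r$ the coefficients $\binom{r}{n}$ are not real, so the even-index part of your series is not the real part of $e^{ir\theta}$ and the odd-index part is not $i$ times a real series; the final move ``equate real and imaginary parts'' is only valid for real $r$. The fix is one additional line: expand $e^{-ir\theta}=\cos^r\theta\,(1-i\tan\theta)^r=\sum_n\binom{r}{n}(-i)^n\cos^{r-n}\theta\sin^n\theta$ by the same argument, so that with $C$ and $S$ denoting your even- and odd-index sums you get $e^{\pm ir\theta}=C\pm iS$, and then $\cos(r\theta)=\tfrac{1}{2}(e^{ir\theta}+e^{-ir\theta})=C$ and $\sin(r\theta)=\tfrac{1}{2i}(e^{ir\theta}-e^{-ir\theta})=S$ by the exponential definitions of sine and cosine of a complex argument. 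With that replacement your proof is complete and, in my view, cleaner than the paper's.
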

\begin{corollary}
Assume  $|\cos(\theta)|>|\sin(\theta)|$, then
\begin{align}\notag
\frac{1}{\cos^r(\theta)}&=\frac{1}{\sin(r \theta)}\sum_{j=0}^{\infty}(-1)^j\binom{r}{2j+1}\cos^{-2j-1}(\theta)\sin^{2j+1}(\theta)\\\notag
&=\frac{1}{\cos(r \theta)}\sum_{j=0}^{\infty}(-1)^j\binom{r}{2j}\cos^{-2j}(\theta)\sin^{2j}(\theta)\\\notag
\end{align}
\end{corollary}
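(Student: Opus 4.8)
The plan is to obtain both equalities directly from the preceding corollary by dividing its two series identities through by $\cos^r(\theta)$. Both of those expansions are valid precisely under the standing hypothesis $|\cos(\theta)|>|\sin(\theta)|$, which additionally forces $\cos(\theta)\neq 0$: if $\cos(\theta)=0$ we would have $|\sin(\theta)|=1>0=|\cos(\theta)|$, contradicting the hypothesis. Hence division by $\cos^r(\theta)$ is legitimate, and dividing a convergent series termwise by the fixed nonzero scalar $\cos^r(\theta)$ preserves convergence to the divided sum.

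First I would take the sine expansion
$$\sin(r\theta)=\sum_{j=0}^{\infty}(-1)^j\binom{r}{2j+1}\cos^{r-2j-1}(\theta)\sin^{2j+1}(\theta)$$
and divide both sides by $\cos^r(\theta)$. This lowers every exponent of $\cos(\theta)$ by $r$, converting $\cos^{r-2j-1}(\theta)$ into $\cos^{-2j-1}(\theta)$. Dividing the resulting identity once more by $\sin(r\theta)$ isolates $1/\cos^r(\theta)$ on the left and produces the first claimed equality. The second equality follows in exactly the same fashion from the cosine expansion: divide by $\cos^r(\theta)$, which sends $\cos^{r-2j}(\theta)$ to $\cos^{-2j}(\theta)$, and then divide by $\cos(r\theta)$.

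Since both right-hand sides equal $1/\cos^r(\theta)$, they coincide, yielding the displayed chain. The only points needing explicit mention are that $\sin(r\theta)$ and $\cos(r\theta)$ must be nonzero for the two divisions — this is already implicit in writing $1/\sin(r\theta)$ and $1/\cos(r\theta)$ — together with the termwise-division remark above. There is no genuine obstacle here: the statement is a one-line rearrangement of the previous corollary, and the only thing to keep straight is the bookkeeping of the exponent shifts $\cos^{r-2j-1}\mapsto\cos^{-2j-1}$ and $\cos^{r-2j}\mapsto\cos^{-2j}$.
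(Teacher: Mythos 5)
Your proposal is correct and matches the paper's intent exactly: the paper states this corollary without proof, treating it as the immediate consequence of dividing the preceding series expansions of $\sin(r\theta)$ and $\cos(r\theta)$ by $\cos^r(\theta)$ and then by $\sin(r\theta)$ (respectively $\cos(r\theta)$), which is precisely your argument. Your added remarks on $\cos(\theta)\neq 0$ and termwise division by a nonzero constant are sound bookkeeping.
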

\begin{corollary}
Assume  $|\sin(\theta)|>|\cos(\theta)|$, then
\begin{align}\notag
\frac{1}{\sin^r(\theta)}&=\frac{1}{\sin(r(\frac{\pi}{2}- \theta))}\sum_{j=0}^{\infty}(-1)^j\binom{r}{2j+1}\sin^{-2j-1}(\theta)\cos^{2j+1}(\theta)\\\notag
&=\frac{1}{\cos(r(\frac{\pi}{2}- \theta))}\sum_{j=0}^{\infty}(-1)^j\binom{r}{2j}\sin^{-2j}(\theta)\cos^{2j}(\theta)\\\notag
\end{align}

\end{corollary}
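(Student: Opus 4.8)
The plan is to obtain the sine identity directly from the immediately preceding corollary (the expansion of $1/\cos^r(\theta)$) by the substitution $\theta \mapsto \frac{\pi}{2} - \theta$, exploiting the cofunction identities $\cos(\frac{\pi}{2}-\theta) = \sin(\theta)$ and $\sin(\frac{\pi}{2}-\theta) = \cos(\theta)$. No fresh summation manipulation or differential-equation argument is required; the whole statement reduces to a change of variable in a result already proved.

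First I would verify that the hypothesis transforms correctly. The previous corollary requires $|\cos(\phi)| > |\sin(\phi)|$ for its argument $\phi$. Setting $\phi = \frac{\pi}{2} - \theta$ and applying the cofunction identities gives $|\cos(\phi)| = |\sin(\theta)|$ and $|\sin(\phi)| = |\cos(\theta)|$, so the condition $|\cos(\phi)| > |\sin(\phi)|$ is exactly the present hypothesis $|\sin(\theta)| > |\cos(\theta)|$. Hence the expansion (and its convergence) is valid in precisely the regime claimed here.

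Next I would substitute $\phi = \frac{\pi}{2} - \theta$ into each of the two lines of the previous corollary and simplify term by term. On the left, $\cos^{-r}(\phi) = \sin^{-r}(\theta)$. Inside the series, $\cos^{-2j-1}(\phi) = \sin^{-2j-1}(\theta)$ and $\sin^{2j+1}(\phi) = \cos^{2j+1}(\theta)$ for the first line, with the analogous even-exponent replacements for the second, while the sign $(-1)^j$ and the binomial coefficients $\binom{r}{2j+1}$, $\binom{r}{2j}$ are untouched. The only factors whose arguments are not collapsed by a cofunction identity are the prefactors $\frac{1}{\sin(r\phi)}$ and $\frac{1}{\cos(r\phi)}$; these become $\frac{1}{\sin(r(\frac{\pi}{2}-\theta))}$ and $\frac{1}{\cos(r(\frac{\pi}{2}-\theta))}$, matching the statement.

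Since every step is a substitution combined with elementary cofunction identities, there is no genuine obstacle. The single point demanding care is to resist "simplifying" the outer arguments $r(\frac{\pi}{2}-\theta)$: the cofunction identities apply to $\frac{\pi}{2}-\theta$ but not to its $r$-fold multiple, so those arguments must be carried through intact. Collecting the two simplified lines then yields the claimed pair of identities.
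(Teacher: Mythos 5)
Your proof is correct and is evidently the paper's intended route: the paper states this corollary without proof, but the unsimplified prefactors $\sin\left(r\left(\frac{\pi}{2}-\theta\right)\right)$ and $\cos\left(r\left(\frac{\pi}{2}-\theta\right)\right)$ in the statement show it was obtained exactly by the substitution $\theta\mapsto\frac{\pi}{2}-\theta$ in the preceding corollary, as you do. Your check that the hypothesis $|\cos|>|\sin|$ transposes to $|\sin|>|\cos|$ under this substitution is the one point worth making explicit, and you make it.
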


\begin{proposition}\label{sineexpansion}
For  $r\in \mathbb{C}$, $0\le\theta\le\frac{\pi}{2}$
\begin{align}\notag
\frac{1}{\sin^r(\theta)}&=2^{r/2}\sum_{j=0}^\infty (-1)^j \binom{-r/2}{j}\cos^j(2\theta)\\\notag
\end{align}
\end{proposition}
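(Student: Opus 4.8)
The plan is to reduce the whole statement to a single application of Newton's generalized binomial theorem after rewriting $\sin^r(\theta)$ through a power-reduction identity. First I would invoke the half-angle identity $\sin^2(\theta)=\tfrac12\bigl(1-\cos(2\theta)\bigr)$, which holds for every real $\theta$. On the range in question $\sin(\theta)\ge 0$, and the quantity $\tfrac12(1-\cos(2\theta))$ is a nonnegative real number, so I may raise it to the complex exponent $-r/2$ using the principal branch with no ambiguity, giving
$$\frac{1}{\sin^r(\theta)}=\bigl(\sin^2(\theta)\bigr)^{-r/2}=2^{r/2}\bigl(1-\cos(2\theta)\bigr)^{-r/2}.$$

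Next I would expand $(1-x)^{-r/2}$ as a binomial series in $x=\cos(2\theta)$. Newton's theorem gives $(1-x)^{\alpha}=\sum_{j=0}^{\infty}\binom{\alpha}{j}(-x)^{j}$ whenever $|x|<1$; setting $\alpha=-r/2$ produces
$$\bigl(1-\cos(2\theta)\bigr)^{-r/2}=\sum_{j=0}^{\infty}(-1)^{j}\binom{-r/2}{j}\cos^{j}(2\theta),$$
and multiplying by $2^{r/2}$ is exactly the asserted identity. The algebraic content is therefore minimal once the binomial series is in hand.

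The genuine obstacle is convergence, and this is where the constraint on $\theta$ must be used with care. For $0<\theta<\pi/2$ one has $-1<\cos(2\theta)<1$, so $|x|<1$ and the binomial series converges absolutely; on this open interval every step above is rigorous and the proof is complete. The endpoints are more delicate. At $\theta=0$ we have $\cos(2\theta)=1$ and, for $\operatorname{Re}(r)>0$, both sides diverge, so the identity there is to be read as a limiting (or formal) statement. At $\theta=\pi/2$ we have $\cos(2\theta)=-1$, placing the series on the boundary of the disk of convergence; I would settle this case by Abel's theorem, checking that $\sum_{j}(-1)^{j}\binom{-r/2}{j}(-1)^{j}=\sum_{j}\binom{-r/2}{j}$ sums to $2^{-r/2}$ (so that the right-hand side equals $1=1/\sin^{r}(\pi/2)$), using the known decay of the coefficients $\binom{-r/2}{j}$ to control convergence on the boundary circle. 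Thus I would present the identity as valid on the open interval, with the endpoints handled by continuity and boundary summation.
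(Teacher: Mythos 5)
Your proposal is correct and follows essentially the same route as the paper: the half-angle identity $\sin^2(\theta)=\frac{1}{2}(1-\cos(2\theta))$ followed by Newton's generalized binomial theorem applied to $(1-\cos(2\theta))^{-r/2}$. Your additional attention to convergence at the endpoints $\theta=0$ and $\theta=\pi/2$ goes beyond the paper's proof, which omits any discussion of the boundary of the disk of convergence.
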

\begin{proof}
\begin{align}\notag
\sin(\theta)&=\sqrt{\frac{1-\cos(2\theta)}{2}}\\\notag
\implies \\\notag
\frac{1}{\sin^r}(\theta)&=\left(\frac{1-\cos(2\theta)}{2}\right)^{-r/2}\\\notag
&=2^{r/2}\left(1-\cos(2\theta)\right)^{-r/2}\\\notag
\end{align}
Hence we can apply the generalized binomial theorem to the expression in the preceeding line.
\end{proof}

\section{An Interesting Formula For The Riemann Zeta Function}
\begin{proposition}\label{basic}
Assume $s\in\mathbb{C}$, $\rm{Re}(s)>1$ then
$$\zeta(s)=\lim_{n\to\infty}\left(\frac{2^s\pi^s}{2^{s}-1}\right) \sum _{i=1}^{2^{n-2}}\frac{1}{ \left(2^n\sin\left(\frac{\left(2 i-1\right)\pi}{2^n}\right)\right)^s} $$\notag\\
\end{proposition}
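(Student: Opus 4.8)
The plan is to read the finite sum as a termwise approximation to the ``odd part'' of the zeta series and to promote the pointwise convergence of its individual terms to convergence of the whole sum by a dominated--convergence (Tannery) argument. Throughout I fix $s$ with $\operatorname{Re}(s)>1$ and write $a_{n,i}=\bigl(2^{n}\sin(\tfrac{(2i-1)\pi}{2^{n}})\bigr)^{-s}$ for $1\le i\le 2^{n-2}$, extending $a_{n,i}=0$ for $i>2^{n-2}$ so that each $a_{n,i}$ is defined for all $i\ge 1$ and the finite sum becomes a single series in $i$.

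First I would establish the pointwise limit. For a fixed index $i$ the argument $x_{n}=\tfrac{(2i-1)\pi}{2^{n}}$ tends to $0$ as $n\to\infty$, and since $\sin(x)/x\to 1$ we get $2^{n}\sin(x_{n})=\tfrac{\sin(x_{n})}{x_{n}}\,(2i-1)\pi\to(2i-1)\pi$. As $z\mapsto z^{-s}$ is continuous on the positive reals, this gives $a_{n,i}\to\bigl((2i-1)\pi\bigr)^{-s}$ for every fixed $i$.

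Next I would produce a summable dominating sequence. For $1\le i\le 2^{n-2}$ the argument $x_{n}$ lies in $(0,\pi/2)$, so the lower bound $\sin(x)>x-\tfrac{x^{3}}{6}$ from the trigonometric lemma above yields $\sin(x_{n})>x_{n}(1-x_{n}^{2}/6)>\tfrac12 x_{n}$ (using $x_{n}^{2}/6<\pi^{2}/24<\tfrac12$); equivalently one may invoke Jordan's inequality $\sin(x)\ge\tfrac{2x}{\pi}$. Either bound gives $2^{n}\sin(x_{n})\ge c\,(2i-1)$ with an absolute constant $c>0$, and because $2^{n}\sin(x_{n})$ is a positive real we obtain $|a_{n,i}|=\bigl(2^{n}\sin(x_{n})\bigr)^{-\operatorname{Re}(s)}\le c^{-\operatorname{Re}(s)}(2i-1)^{-\operatorname{Re}(s)}$. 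This bound is independent of $n$, and since $\operatorname{Re}(s)>1$ the series $\sum_{i}(2i-1)^{-\operatorname{Re}(s)}$ converges, so Tannery's theorem lets me exchange limit and sum:
\[
\lim_{n\to\infty}\sum_{i=1}^{2^{n-2}}a_{n,i}=\sum_{i=1}^{\infty}\bigl((2i-1)\pi\bigr)^{-s}=\frac{1}{\pi^{s}}\sum_{m\ \mathrm{odd}}m^{-s}=\frac{1-2^{-s}}{\pi^{s}}\,\zeta(s),
\]
the last equality using $\sum_{m\ \mathrm{odd}}m^{-s}=\zeta(s)-2^{-s}\zeta(s)$.

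Finally I would multiply by the prefactor and simplify, using $1-2^{-s}=(2^{s}-1)2^{-s}$:
\[
\frac{2^{s}\pi^{s}}{2^{s}-1}\cdot\frac{1-2^{-s}}{\pi^{s}}\,\zeta(s)=\frac{2^{s}(1-2^{-s})}{2^{s}-1}\,\zeta(s)=\zeta(s),
\]
which is exactly the asserted identity. The main obstacle is the interchange of the limit with the growing sum: both the number of terms and the terms themselves vary with $n$, so the argument genuinely requires the uniform, $n$-independent, summable majorant coming from the sine lower bound. That estimate, rather than the concluding algebra, is the real crux of the proof.
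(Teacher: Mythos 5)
Your proposal is correct and complete. Note that the paper itself offers no argument for this proposition --- its ``proof'' is simply a citation to reference 20 --- so your write-up is a genuine addition rather than a rederivation. The structure is exactly what one would want: the pointwise limit $2^{n}\sin\bigl(\tfrac{(2i-1)\pi}{2^{n}}\bigr)\to(2i-1)\pi$ is immediate, and you correctly identify the interchange of limit and growing sum as the only real issue, resolving it with an $n$-independent summable majorant. Your domination step is sound: for $1\le i\le 2^{n-2}$ the argument lies in $(0,\pi/2)$, Jordan's inequality (or the paper's own Lemma 4.1 with the observation $1-x^{2}/6>1-\pi^{2}/24>1/2$) gives $2^{n}\sin(x_{n})\ge c(2i-1)$, and for a positive real base $t$ one has $|t^{-s}|=t^{-\operatorname{Re}(s)}$, so the tail is controlled by $\sum_{i}(2i-1)^{-\operatorname{Re}(s)}<\infty$. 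The concluding algebra $\sum_{m\ \mathrm{odd}}m^{-s}=(1-2^{-s})\zeta(s)$ and the cancellation of the prefactor are both right. The one cosmetic remark is that you could state explicitly that Tannery's theorem is being applied to the doubly-indexed array with the convention $a_{n,i}=0$ for $i>2^{n-2}$ (which you do set up), since the vanishing of those extended terms is what makes the finite sums into a single series; but as written the argument is already airtight.
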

\begin{proof}
See ref. 20.
\end{proof}

\begin{proposition}
For $n,p\in \mathbb{Z}$, $n\ge 2$, $p>0$
\begin{align}\notag
\sum_{i=1}^{2^{n-2}}\Biggl(\Biggl(2\cos\left(\frac{(2i-1)\pi}{2^n}\right)\Biggr)^{2p}\Biggr)/2^{n-2}&=\sum_{k=0}^{\floor{p/2^{n-1}}}(-1)^k\left(\binom{2p}{p-k2^{n-1}}-\binom{2p}{p-(k+1)2^{n-1}}\right)\\\notag
\end{align}
\end{proposition}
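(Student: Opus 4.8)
The plan is to take $r=2p$ and expand the even power directly by the standard linearisation (De Moivre), then sum the resulting cosines over $i$ using the arithmetic-progression summation lemma of Section 4. Writing $\theta_i=\frac{(2i-1)\pi}{2^n}$, linearising gives
$$\cos^{2p}(\theta_i)=\frac{1}{2^{2p}}\binom{2p}{p}+\frac{1}{2^{2p-1}}\sum_{t=1}^{p}\binom{2p}{p-t}\cos\left(\frac{t(2i-1)\pi}{2^{n-1}}\right),$$
where I used $2t\theta_i=\frac{t(2i-1)\pi}{2^{n-1}}$. Multiplying by $2^{2p}=(2)^{2p}$ and averaging over $i=1,\dots,2^{n-2}$ then reduces the whole claim to evaluating the inner sums $C(t):=\sum_{i=1}^{2^{n-2}}\cos\!\big(t(2i-1)\pi/2^{n-1}\big)$, since the constant term contributes exactly $\binom{2p}{p}$.

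Next I would apply the cosine analogue of the stated summation lemma, namely $\sum_{i=0}^{N-1}\cos(a+id)=\frac{\sin(Nd/2)}{\sin(d/2)}\cos\!\big(a+\tfrac{(N-1)d}{2}\big)$, with $N=2^{n-2}$, $a=\frac{t\pi}{2^{n-1}}$ and $d=\frac{t\pi}{2^{n-2}}$. A short computation gives $\tfrac{Nd}{2}=\tfrac{t\pi}{2}$ and $a+\tfrac{(N-1)d}{2}=\tfrac{t\pi}{2}$, so that whenever $\sin(d/2)=\sin(t\pi/2^{n-1})\neq0$ one gets
$$C(t)=\frac{\sin(t\pi/2)\cos(t\pi/2)}{\sin(t\pi/2^{n-1})}=\frac{\tfrac12\sin(t\pi)}{\sin(t\pi/2^{n-1})}=0 .$$
This is the conceptual heart of the argument: every non-resonant cosine sum vanishes because $\sin(t\pi)=0$ for integer $t$. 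The only surviving contributions come from the resonant indices $t=k2^{n-1}$ (where the closed form is singular), and for these I would instead evaluate $C(t)$ directly: $\cos\!\big(k(2i-1)\pi\big)=(-1)^{k(2i-1)}=(-1)^k$, whence $C(k2^{n-1})=2^{n-2}(-1)^k$. These resonant $t$ lie in $1\le t\le p$, i.e. $1\le k\le\floor{p/2^{n-1}}$.

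Collecting terms then yields
$$\frac{1}{2^{n-2}}\sum_{i=1}^{2^{n-2}}\Big(2\cos(\theta_i)\Big)^{2p}=\binom{2p}{p}+2\sum_{k=1}^{\floor{p/2^{n-1}}}(-1)^k\binom{2p}{p-k2^{n-1}},$$
and the final step is to recognise the right-hand side of the proposition as a telescoped rewriting of this. Expanding $\sum_{k=0}^{K}(-1)^k\big(\binom{2p}{p-k2^{n-1}}-\binom{2p}{p-(k+1)2^{n-1}}\big)$ with $K=\floor{p/2^{n-1}}$, shifting the index in the second piece to $k'=k+1$, and discarding the boundary term $\binom{2p}{p-(K+1)2^{n-1}}=0$ (its lower index is negative) produces exactly $\binom{2p}{p}+2\sum_{k=1}^{K}(-1)^k\binom{2p}{p-k2^{n-1}}$. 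I expect the only real obstacle to be the careful separation of the resonant indices from the vanishing ones and the index bookkeeping in this telescoping; the trigonometric content itself collapses cleanly through the identity $\sin(t\pi)=0$. As an alternative I note that the same result follows from Proposition~\ref{evenpowers}, since its constant column $M_n[i,1]$ is independent of $i$ while the remaining columns sum to zero over $i$, but the direct route above is self-contained.
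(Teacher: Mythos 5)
Your proof is correct, and it takes a genuinely different route from the paper's. The paper proves this proposition by field theory: it starts from the expansion of $\cos^{2p}\left(\frac{\pi}{2^n}\right)$ in the basis $\left\{1,\cos\left(\frac{j\pi}{2^{n-1}}\right)\right\}$ already established in the even-power propositions, applies the automorphism $\cos\left(\frac{\pi}{2^n}\right)\mapsto\cos\left(\frac{(2i-1)\pi}{2^n}\right)$ to transport that expansion to each $i$, and then kills every non-constant basis term in the sum over $i$ by a pairing argument resting on the fact that the odd residues form a group modulo $2^{n-1}$; the surviving constant term is exactly the alternating $k$-sum, with the $(-1)^k$ signs inherited from the folding in Proposition~\ref{evenpowers}. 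You instead bypass all of that: you linearise $\cos^{2p}$ by the raw binomial expansion, evaluate each inner sum $C(t)$ in closed form via the arithmetic-progression identity (where the vanishing is immediate from $\sin(t\pi)=0$), identify the resonant indices $t=k2^{n-1}$ by direct evaluation $\cos(k(2i-1)\pi)=(-1)^k$, and recover the stated form by telescoping, correctly discarding the boundary term $\binom{2p}{p-(K+1)2^{n-1}}=0$. Your argument is self-contained and elementary --- it needs neither the even-power expansion nor any Galois-theoretic input --- and it explains the $(-1)^k$ signs transparently as values of $\cos(k(2i-1)\pi)$; the paper's approach buys consistency with its broader framework, since the same automorphism-plus-cancellation mechanism is reused for the odd-$j$ and even-$j$ basis terms and elsewhere in the paper. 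Your closing remark that the result also follows from Proposition~\ref{evenpowers} (constant column independent of $i$, remaining columns summing to zero) is in fact essentially the paper's own proof.
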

\begin{proof}
The field extension over $\mathbb{C}$ that is the splitting field for $\cos\left(\frac{(2i-1)\pi}{2^n}\right)$ has basis $\cos\left(\frac{j \pi}{2^n}\right)$, $ 0\le j\le 2^{n-1}-1$. The automorphism that sends $\cos\left(\frac{j \pi}{2^n}\right)$ to $\cos\left(\frac{(2i-1)\pi}{2^n}\right)$ is given by $\cos\left(\frac{(2i-1)\pi}{2^n}\right)=(-1)^i p_i\left(\cos\left(\frac{j \pi}{2^n}\right)\right))$. Hence if, as in proposition \ref{evenpowers}, $$\cos^r\left(\frac{j \pi}{2^n}\right)=\sum_{j=0}^{2^{n-2}-1}c_j \cos\left(\frac{j \pi}{2^{n-1}}\right)$$ then 
\begin{align}\notag
\cos^r\left(\frac{(2i-1) \pi}{2^n}\right)&=\left((-1)^i p_i\left(\cos\left(\frac{j \pi}{2^n}\right)\right)\right)^r\\\notag
&=\sum_{j=0}^{2^{n-2}-1}c_j(-1)^i p_i\left( \cos\left(\frac{j \pi}{2^{n-1}}\right)\right)\\\notag
&=\sum_{j=0}^{2^{n-2}-1}c_j\left( \cos\left(\frac{(2i-1)j \pi}{2^{n-1}}\right)\right)\\\notag
\end{align}
Assume $j$ is odd. The odd numbers form a group modulo $2^{n-1}$. Hence the set of numbers $\{(2i-1)j \pmod*{2^{n-1}},1\le i\le 2^{n-2}\}$ equals $\{(2i-1),1\le i\le 2^{n-2}\}$. Hence for each $(2i-1)j$ there is a $(2k-1)j$ such that $$(2k-1)j=2^{n-1}-(2i-1)j \pmod*{2^{n-1}}$$ This means that when we sum over $i$, for every $\cos\left(\frac{(2i-1)j\pi}{2^n}\right)$ there is a $\cos\left(\frac{(2k-1)j\pi}{2^n}\right)=-\cos\left(\frac{(2i-1)j\pi}{2^n}\right)$. This implies the cosine terms cancel out. 
Now assume $j$ is even, $j=2^m h$ for some $1\le m<n-2$ and $h$ odd, $h>0$.
\begin{align}\notag
\cos\left(\frac{(2i-1)j \pi}{2^{n-1}}\right)&=\cos\left(\frac{(2i-1)2^m h \pi}{2^{n-1}}\right)\\\notag
&=\cos\left(\frac{(2i-1)h \pi}{2^{n-m-1}}\right)\\\notag
\end{align}
As before, we see that for each $(2i-1)h$ there is a $(2k-1)h$ such that $(2k-1)h=2^{n-m-1}-(2i-1)j\pmod*{2^{n-m-1}}$ and in the sum the cosine terms cancel out.
\end{proof}
Note the equality implies the left hand side is an integer.
\begin{proposition}
\begin{align}\notag
\zeta(s)&=\lim_{n\to\infty}\lim_{m\to\infty}\frac{2^{3s/2-n s+n-3}\pi^s}{2^s-1}\sum_{p=0}^m \frac{1}{2^{2p-1}}\binom{-\frac{s}{2}}{2p}\sum_{k=0}^{\floor{(p)/2^{n-2}}}(-1)^k \left(\binom{2p}{p-k2^{n-2}}-\binom{2p}{p-(k+1)2^{n-2}}\right)\\\notag
\end{align}
\end{proposition}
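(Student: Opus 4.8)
The plan is to start from the limit representation of $\zeta(s)$ in Proposition~\ref{basic} and expand each reciprocal power of sine using the nested-cosine series of Proposition~\ref{sineexpansion}, then collapse the resulting sums of even cosine powers with the preceding proposition (the one expressing $2^{-(n-2)}\sum_{i=1}^{2^{n-2}}(2\cos\tfrac{(2i-1)\pi}{2^n})^{2p}$ as an alternating sum of binomial coefficients).

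First I would rewrite Proposition~\ref{basic} by pulling the factor $2^n$ out of the sine, so that
$$\zeta(s)=\lim_{n\to\infty}\frac{2^s\pi^s}{2^s-1}\,\frac{1}{2^{ns}}\sum_{i=1}^{2^{n-2}}\frac{1}{\sin^s\!\left(\frac{(2i-1)\pi}{2^n}\right)}.$$
Writing $\theta_i=\frac{(2i-1)\pi}{2^n}$, every $\theta_i$ lies in $(0,\pi/2)$ since $2i-1\le 2^{n-1}-1$, so Proposition~\ref{sineexpansion} applies with $r=s$, giving $\sin^{-s}(\theta_i)=2^{s/2}\sum_{j\ge 0}(-1)^j\binom{-s/2}{j}\cos^j(2\theta_i)$ with $2\theta_i=\frac{(2i-1)\pi}{2^{n-1}}$. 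Because the sum over $i$ is finite, I can interchange it with the convergent series over $j$ and study $\sum_{i=1}^{2^{n-2}}\cos^j(2\theta_i)$ term by term.

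The key simplification is a symmetry argument: as $i$ runs from $1$ to $2^{n-2}$ the angles $\frac{(2i-1)\pi}{2^{n-1}}$ are symmetric about $\pi/2$, with the reflection $i\mapsto 2^{n-2}+1-i$ sending $2\theta_i$ to $\pi-2\theta_i$. Hence $\cos^j(2\theta_i)$ cancels in pairs for odd $j$ and doubles for even $j=2p$ (for $n\ge 3$ there is no fixed point at $\pi/2$), so only even powers survive and $\sum_{i=1}^{2^{n-2}}\cos^{2p}(2\theta_i)=2\sum_{i=1}^{2^{n-3}}\cos^{2p}(2\theta_i)$. Applying the preceding even-power proposition with $n$ replaced by $n-1$ to the right-hand sum evaluates it as $\frac{2^{n-2}}{2^{2p}}\sum_{k=0}^{\floor{p/2^{n-2}}}(-1)^k\bigl(\binom{2p}{p-k2^{n-2}}-\binom{2p}{p-(k+1)2^{n-2}}\bigr)$, which reproduces the inner double sum and the floor bound in the claim, and $(-1)^{2p}=1$ matches the surviving coefficient $\binom{-s/2}{2p}$.

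It then remains to collect the powers of $2$: the prefactor $\frac{2^s\pi^s}{2^s-1}2^{-ns}2^{s/2}2^{n-2}$, together with the extra $\tfrac12$ absorbed from $2^{-2p}=\tfrac12\cdot 2^{-(2p-1)}$, produces exactly $\frac{2^{3s/2-ns+n-3}\pi^s}{2^s-1}$ and the weight $\frac{1}{2^{2p-1}}\binom{-s/2}{2p}$, giving the stated formula (with $\sum_{p=0}^\infty$ written as $\lim_{m\to\infty}\sum_{p=0}^m$). The main obstacle is the justification of the double limit: for fixed $n$ the interchange of the finite $i$-sum with the binomial series is elementary, but the convergence of the $p$-series degrades as $n\to\infty$ because the smallest angle $\theta_1=\pi/2^n$ forces $\cos(2\theta_1)\to 1$, so the inner $m$-limit and the outer $n$-limit must be taken in the stated order, and one should verify that each inner $p$-series converges (equivalently that each $\theta_i\in(0,\pi/2)$ keeps $|\cos 2\theta_i|<1$) before letting $n\to\infty$.
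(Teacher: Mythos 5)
Your proposal is correct and follows essentially the same route as the paper: Proposition~\ref{basic} plus the binomial expansion of Proposition~\ref{sineexpansion}, cancellation of odd cosine powers by the reflection $2\theta\mapsto\pi-2\theta$, and evaluation of the even powers via the preceding proposition (with $n$ shifted by one), with the powers of $2$ collected exactly as you describe. Your explicit tracking of the doubling factor from the half-range sum and your remark on the order of the two limits make precise steps the paper leaves implicit.
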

\begin{proof}
In Proposition \ref{basic} we can use Proposition \ref{sineexpansion} to represent each power of sine as a series in powers of cosine. We can then use Propositions \ref{evenpowers} and \ref{oddpowers} to convert powers of cosine into values of cosine. In Proposition \ref{basic} we are summing $$\sum _{i=1}^{2^{n-2}}\frac{1}{ \left(2^n\sin\left(\frac{\left(2 i-1\right)\pi}{2^n}\right)\right)^s}$$ Note that in Proposition \ref{sineexpansion}, the power of $\sin(\theta)$ gets represented in terms of powers of $\cos(2\theta)$. We then convert those powers of cosine into sums of cosine.  The values of the arguments $\theta$ to the sine function in Proposition 1.1 satisfy $0<\theta<\pi/2$. Our final sum will involve terms $\cos(2\theta)$, with arguments $0<2\theta<\pi$. The result is that the set of values $\cos(2\theta)$, $2\theta>\pi/2$ are the negative of the values $\cos(2\theta)$, $2\theta<\pi/2$. Thus, the sum of these values raised to an odd power equals zero. The case of an even power is resolved in the previous proposition.
\begin{align}\notag
\zeta(s)&=\lim_{n\to\infty}\left(\frac{2^{s(1-n)}\pi^s}{2^{s}-1}\right) \sum _{i=1}^{2^{n-2}}\frac{1}{ \left(\sin\left(\frac{\left(2 i-1\right)\pi}{2^n}\right)\right)^s} \\\notag
&=\lim_{n\to\infty}\lim_{q\to\infty}\left(\frac{2^{s(1-n)}\pi^s}{2^{s}-1}\right) 2^{s/2}\sum _{i=1}^{2^{n-2}}\sum_{p=0}^q (-1)^p \binom{-s/2}{p}\cos^p\left(\frac{\left(2 i-1\right)\pi}{2^{n-1}}\right) \\\notag
&=\lim_{n\to\infty}\lim_{q\to\infty}\left(\frac{2^{s(1-n)}\pi^s}{2^{s}-1}\right) 2^{s/2}\sum_{p=0}^q \Biggl(- \binom{-s/2}{2p+1}\sum _{i=1}^{2^{n-2}}\cos^{2p+1}\left(\frac{\left(2 i-1\right)\pi}{2^{n-1}}\right)\\\notag
&+\binom{-s/2}{2p}\sum _{i=1}^{2^{n-2}}\cos^{2p}\left(\frac{\left(2 i-1\right)\pi}{2^{n-1}}\right)\Biggr) \\\notag
&=\lim_{n\to\infty}\lim_{q\to\infty}\left(\frac{2^{3s/2-ns+n-3}\pi^s}{2^{s}-1}\right)\sum_{p=0}^q
\binom{-s/2}{2p}\frac{1}{2^{2p-1}}\sum_{k=0}^{\floor{p/2^{n-2}}}(-1)^k\left(\binom{2p}{p-k2^{n-2}}-\binom{2p}{p-(k+1)2^{n-2}}\right)\\\notag
\end{align}
\end{proof}
\begin{corollary}
\begin{align}\notag
\zeta(s)&=\lim_{n\to\infty}\lim_{m\to\infty}\frac{2^{3s/2-n s+n-3}\pi^s}{2^s-1}\sum_{p=0}^m \frac{1}{2^{2p-1}}\binom{-\frac{s}{2}}{2p}\left(\binom{2p}{p}+2\sum_{k=1}^{\floor{p/2^{n-2}}}(-1)^k\binom{2p}{p-k 2^{n-2}}\right)\\\notag
&=\lim_{n\to\infty}\lim_{m\to\infty}\frac{2^{3s/2-n s+n-3}\pi^s}{2^s-1}\sum_{p=0}^m \frac{1}{2^{2p-1}}\left(\frac{s}{2}\right)_{2p}\left(\frac{1}{(p!)^2}+2\sum_{k=1}^{\floor{p/2^{n-2}}}(-1)^k\frac{1}{(p-k2^{n-2})!(p+k2^{n-2})!}\right)\\\notag
&=\Gamma\left(1-\frac{s}{2}\right)\lim_{n\to\infty}\lim_{m\to\infty}\frac{2^{3s/2-n s+n-3}\pi^s}{2^s-1}\sum_{p=0}^m \frac{1}{2^{2p-1}}\frac{1}{\Gamma(1-s/2-2p)}\biggl(\frac{1}{(p!)^2}\\\notag
&+2\sum_{k=1}^{\floor{p/2^{n-2}}}(-1)^k\frac{1}{(p-k2^{n-2})!(p+k2^{n-2})!}\biggr)\\\notag
\end{align}
where $\left(\frac{s}{2}\right)_{2p}$ refers to the Pochhammer function or ascending factorial.
\end{corollary}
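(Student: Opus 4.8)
The plan is to obtain all three displayed expressions from the formula established in the previous proposition by purely algebraic rewriting; no new analytic input is required, since the double limit and the prefactor $\frac{2^{3s/2-ns+n-3}\pi^s}{2^s-1}$, together with the factor $\frac{1}{2^{2p-1}}$, are carried over unchanged. Writing $N=2^{n-2}$ and $K=\floor{p/N}$, the first equality amounts to simplifying the inner sum $\sum_{k=0}^{K}(-1)^k\bigl(\binom{2p}{p-kN}-\binom{2p}{p-(k+1)N}\bigr)$. First I would split this into two sums and reindex the second by $k\mapsto k+1$, so that it becomes $\sum_{k=0}^{K}(-1)^k\binom{2p}{p-kN}+\sum_{k=1}^{K+1}(-1)^k\binom{2p}{p-kN}$. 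The term $k=0$ of the first sum contributes $\binom{2p}{p}$, the term $k=K+1$ of the second sum is $\binom{2p}{p-(K+1)N}=0$ because $(K+1)N>p$ forces a negative lower index, and every interior index $1\le k\le K$ appears in both sums, producing the factor $2(-1)^k$. This yields exactly $\binom{2p}{p}+2\sum_{k=1}^{\floor{p/2^{n-2}}}(-1)^k\binom{2p}{p-k2^{n-2}}$.

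For the second equality I would convert the generalized binomial coefficient into a Pochhammer symbol. Since $(-1)^{2p}=1$, one has $\binom{-s/2}{2p}=\frac{(-s/2)(-s/2-1)\cdots(-s/2-2p+1)}{(2p)!}=\frac{(s/2)_{2p}}{(2p)!}$. Combining this with $\binom{2p}{p}=\frac{(2p)!}{(p!)^2}$ and $\binom{2p}{p-k2^{n-2}}=\frac{(2p)!}{(p-k2^{n-2})!\,(p+k2^{n-2})!}$ cancels the factor $(2p)!$, turning the bracketed expression into $(s/2)_{2p}\bigl(\frac{1}{(p!)^2}+2\sum_{k\ge1}(-1)^k\frac{1}{(p-k2^{n-2})!\,(p+k2^{n-2})!}\bigr)$, which is the second displayed form. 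Here I would note that whenever $p-k2^{n-2}<0$ the corresponding reciprocal factorial is taken to be zero, consistent with the vanishing of the binomial coefficient it replaced.

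The third equality is just a gamma-function rewriting of the Pochhammer symbol. Using the shift identity $\frac{\Gamma(w)}{\Gamma(w-2p)}=\prod_{j=1}^{2p}(w-j)$ with $w=1-s/2$ gives $\frac{\Gamma(1-s/2)}{\Gamma(1-s/2-2p)}=\prod_{j=1}^{2p}(1-s/2-j)=(-1)^{2p}\prod_{i=0}^{2p-1}(s/2+i)=(s/2)_{2p}$, so $(s/2)_{2p}=\Gamma(1-s/2)/\Gamma(1-s/2-2p)$. Substituting this and pulling the $p$-independent factor $\Gamma(1-s/2)$ outside the sum produces the final form.

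The computations are elementary, so I do not anticipate a genuine obstacle; the only point needing care is the bookkeeping in the telescoping step, specifically confirming that the single boundary term $\binom{2p}{p-(K+1)2^{n-2}}$ vanishes and that the doubling factor of $2$ arises uniformly on the interior indices. One should also stay attentive to the convention that reciprocal factorials of negative integers vanish, which is what keeps the factorial and binomial forms in agreement across the full range of $k$.
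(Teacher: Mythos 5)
Your proposal is correct and follows the same route as the paper, whose proof of this corollary is simply ``Follows from previous proposition'': your reindexing of the telescoping inner sum (with the boundary term $\binom{2p}{p-(K+1)2^{n-2}}$ vanishing because its lower index is negative), the conversion $\binom{-s/2}{2p}=(s/2)_{2p}/(2p)!$, and the identity $(s/2)_{2p}=\Gamma(1-s/2)/\Gamma(1-s/2-2p)$ are exactly the algebraic steps the paper leaves implicit. All three verifications check out; the only cosmetic remark is that for $k\le\lfloor p/2^{n-2}\rfloor$ the quantity $p-k2^{n-2}$ is automatically nonnegative, so the convention about reciprocal factorials of negative integers is never actually invoked.
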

\begin{proof}
Follows from previous proposition.
\end{proof}

\begin{corollary}
Assume $j\in\mathbb{Z}^+$.
\begin{align}\notag
\lim_{n\to\infty}\lim_{m\to\infty}&\sum_{p=0}^m \frac{1}{2^{2p-1+n(2j-1)}}\left(2p+j-1\right)!\left(\frac{1}{(p!)^2}+2\sum_{k=1}^{\floor{p/2^{n-2}}}(-1)^k\frac{1}{(p-k2^{n-2})!(p+k2^{n-2})!}\right)\\\notag
&=(-1)^{j+1}\frac{2^{2j}-1}{2^{j-2}}\frac{(j-1)!}{(2j)!}\rm{B}_{2j}\\\notag
\end{align}
where $\rm{B}_{2j}$ is the 2j-th Bernoulli number.
\end{corollary}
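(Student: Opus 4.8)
The plan is to read the identity off directly from the Pochhammer form of the preceding corollary by specializing $s=2j$ and then inserting Euler's closed evaluation of $\zeta(2j)$. Because $j\ge 1$ forces $\mathrm{Re}(2j)=2j>1$, the value $s=2j$ lies inside the half-plane where Proposition \ref{basic}, and hence the preceding corollary, is valid; so I may substitute $s=2j$ as a plain equality of numbers without any analytic continuation. The middle line of that corollary is
\begin{align}\notag
\zeta(s)&=\lim_{n\to\infty}\lim_{m\to\infty}\frac{2^{3s/2-n s+n-3}\pi^s}{2^s-1}\sum_{p=0}^m \frac{1}{2^{2p-1}}\left(\frac{s}{2}\right)_{2p}\left(\frac{1}{(p!)^2}+2\sum_{k=1}^{\floor{p/2^{n-2}}}(-1)^k\frac{1}{(p-k2^{n-2})!(p+k2^{n-2})!}\right)
\end{align}
and at $s=2j$ the ascending factorial collapses to $\left(\frac{s}{2}\right)_{2p}=(j)_{2p}=\frac{(2p+j-1)!}{(j-1)!}$, which is exactly the factorial weight $(2p+j-1)!$ appearing in the target sum.

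Next I would track the powers of two. Putting $s=2j$ in the prefactor gives $2^{3s/2-ns+n-3}=2^{3j-3}\,2^{n(1-2j)}$, and the $n$-dependent piece $2^{n(1-2j)}=2^{-n(2j-1)}$ merges with the internal $1/2^{2p-1}$ to produce the combined denominator $2^{2p-1+n(2j-1)}$ written in the statement. Factoring the constant $\frac{2^{3j-3}\pi^{2j}}{(2^{2j}-1)(j-1)!}$ out of the double limit then isolates exactly the left-hand side of the claim, namely
\begin{align}\notag
\lim_{n\to\infty}\lim_{m\to\infty}\sum_{p=0}^m \frac{(2p+j-1)!}{2^{2p-1+n(2j-1)}}\left(\frac{1}{(p!)^2}+2\sum_{k=1}^{\floor{p/2^{n-2}}}(-1)^k\frac{1}{(p-k2^{n-2})!(p+k2^{n-2})!}\right)&=\frac{(2^{2j}-1)(j-1)!}{2^{3j-3}\pi^{2j}}\,\zeta(2j)
\end{align}

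Finally I would insert Euler's evaluation $\zeta(2j)=(-1)^{j+1}\frac{(2\pi)^{2j}B_{2j}}{2(2j)!}$ and simplify. The $\pi^{2j}$ in $\zeta(2j)$ cancels the $\pi^{2j}$ in the constant, while the remaining powers of two combine as $\frac{2^{2j}}{2^{3j-3}\cdot 2}=2^{2-j}=\frac{1}{2^{j-2}}$, giving
\begin{align}\notag
\frac{(2^{2j}-1)(j-1)!}{2^{3j-3}\pi^{2j}}\,(-1)^{j+1}\frac{(2\pi)^{2j}B_{2j}}{2(2j)!}&=(-1)^{j+1}\frac{2^{2j}-1}{2^{j-2}}\frac{(j-1)!}{(2j)!}B_{2j}
\end{align}
which is precisely the asserted identity.

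The only genuinely delicate point is the legitimacy of evaluating a double-limit-plus-series identity at the single value $s=2j$, and this is the step I expect to be the main obstacle to state cleanly. It is handled by observing that $s=2j$ already sits in the region $\mathrm{Re}(s)>1$ of absolute convergence underlying Proposition \ref{basic}, so the specialization is an equality of fixed numbers and does not require interchanging $s\to 2j$ with the limits $n,m\to\infty$. Once that is granted, the remainder is pure bookkeeping: recognizing the Pochhammer factor $(s/2)_{2p}$ at $s=2j$ as $(2p+j-1)!/(j-1)!$, matching the powers of two, and applying Euler's formula, with no hard analytic estimate to carry out.
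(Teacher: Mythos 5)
Your proposal is correct and is essentially the paper's own argument, which simply cites the preceding corollary applied to $\zeta(2j)$; you have merely written out the bookkeeping (the Pochhammer collapse $(j)_{2p}=(2p+j-1)!/(j-1)!$, the regrouping of $2^{-n(2j-1)}$ into the summand, and Euler's evaluation of $\zeta(2j)$) that the paper leaves implicit. Your remark that no interchange of limits is needed because $s=2j$ already lies in $\mathrm{Re}(s)>1$ is also the right observation and disposes of the only potential subtlety.
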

\begin{proof}
Follows applying previous proposition to $\zeta(2j)$, where $j\in\mathbb{Z}^+$.
\end{proof}
\begin{proposition}
\begin{align}
\zeta(3)&=\lim_{n\to\infty}\frac{\pi^3}{7*2^{3n-4}}\sum_{j=1}^{2^{n-2}}\left(-j^2+(2^{n-1}+1)j-2^{n-2}\right)\frac{1}{\sin[\frac{(2j-1)\pi}{2^n}]}\\\notag
\end{align}
\end{proposition}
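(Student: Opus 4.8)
The plan is to specialize Proposition~\ref{basic} to $s=3$ and then substitute the known closed form for the sum of inverse cubes of sine from the corollary tabulating the values $S(s,n)$.

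First I would set $s=3$ in Proposition~\ref{basic}. The prefactor becomes $\frac{2^3\pi^3}{2^3-1}=\frac{8\pi^3}{7}$, and the denominator inside the sum becomes $\left(2^n\sin(\cdots)\right)^3=2^{3n}\sin^3(\cdots)$. Abbreviating $S(3,n)=\sum_{i=1}^{2^{n-2}}\sin^{-3}\!\left(\frac{(2i-1)\pi}{2^n}\right)$ and using $8/2^{3n}=2^{-(3n-3)}$, this gives
\begin{align}\notag
\zeta(3)&=\lim_{n\to\infty}\frac{8\pi^3}{7\cdot2^{3n}}\,S(3,n)=\lim_{n\to\infty}\frac{\pi^3}{7\cdot2^{3n-3}}\,S(3,n).
\end{align}

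Next I would invoke the closed form for $S(3,n)$ established earlier, namely
\begin{align}\notag
S(3,n)&=\sum_{j=1}^{2^{n-2}}\left(-2j^2+2(2^{n-1}+1)j-2^{n-1}\right)\frac{1}{\sin\!\left[\frac{(2j-1)\pi}{2^n}\right]}=2\sum_{j=1}^{2^{n-2}}\left(-j^2+(2^{n-1}+1)j-2^{n-2}\right)\frac{1}{\sin\!\left[\frac{(2j-1)\pi}{2^n}\right]}.
\end{align}
The coefficient polynomial appearing here is exactly twice the one in the claim, so substituting and absorbing the factor $2$ into $2^{3n-3}$ converts it to $2^{3n-4}$, producing precisely
\begin{align}\notag
\zeta(3)&=\lim_{n\to\infty}\frac{\pi^3}{7\cdot2^{3n-4}}\sum_{j=1}^{2^{n-2}}\left(-j^2+(2^{n-1}+1)j-2^{n-2}\right)\frac{1}{\sin\!\left[\frac{(2j-1)\pi}{2^n}\right]},
\end{align}
which is the assertion.

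There is essentially no analytic obstacle here: the existence and value of the limit are furnished directly by Proposition~\ref{basic}, since $s=3$ lies in the region $\mathrm{Re}(s)>1$, so no interchange of limits or convergence argument is needed beyond what is already in place. The entire content is the bookkeeping of the powers of $2$ and the factor-of-two rewriting of the coefficient polynomial. The one point I would verify with care is that the quoted closed form for $S(3,n)$ holds for all $n\ge2$, so the substitution inside the limit is legitimate; this is exactly the content of the cited corollary.
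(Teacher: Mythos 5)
Your proposal is correct and follows essentially the same route as the paper: specialize Proposition \ref{basic} to $s=3$ and substitute the closed form for $\sum_j \sin^{-3}$ obtained from Proposition \ref{commute} (equivalently the tabulated $S(3,n)$), with the factor of $2$ absorbed into the power of $2$ in the prefactor. The bookkeeping of constants in your write-up checks out.
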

\begin{proof}
See proposition \ref{commute}.
\end{proof}
\begin{corollary}
\begin{align}\notag
\lim_{m\to\infty}&\sum_{p=0}^m 2^{n-5/2}\frac{1}{2^{2p-1}}\left(\frac{3}{2}\right)_{2p}\left(\frac{1}{(p!)^2}+2\sum_{k=1}^{\floor{p/2^{n-2}}}(-1)^k\frac{1}{(p-k2^{n-2})!(p+k2^{n-2})!}\right)\\\notag
&=\sum_{j=1}^{2^{n-2}}\left(-j^2+(2^{n-1}+1)j-2^{n-2}\right)\frac{1}{\sin[\frac{(2j-1)\pi}{2^n}]}\\\notag
\end{align}
\end{corollary}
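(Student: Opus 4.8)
The plan is to establish the identity for each fixed $n$, before any limit in $n$ is taken, by exhibiting both sides as two different encodings of the single finite quantity
$$\tfrac12\sum_{i=1}^{2^{n-2}}\frac{1}{\sin^{3}\left(\frac{(2i-1)\pi}{2^{n}}\right)}.$$
The right-hand side is the easy half: by the table of values for $S(s,n)$ proved earlier, $\sum_{i=1}^{2^{n-2}}\sin^{-3}\!\left(\frac{(2i-1)\pi}{2^{n}}\right)=S(3,n)=\sum_{j=1}^{2^{n-2}}\bigl(-2j^{2}+2(2^{n-1}+1)j-2^{n-1}\bigr)\sin^{-1}\!\left(\frac{(2j-1)\pi}{2^{n}}\right)$, which is exactly twice the right-hand side of the corollary. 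So everything reduces to identifying the left-hand side with $\tfrac12 S(3,n)$.

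For the left-hand side I would rerun the computation that produced the general $\zeta(s)$ corollary, but stop before the limit in $n$ and specialize to $s=3$. First apply Proposition \ref{sineexpansion} with $r=3$ to each term, writing $\sin^{-3}\!\left(\frac{(2i-1)\pi}{2^{n}}\right)=2^{3/2}\sum_{p\ge0}(-1)^{p}\binom{-3/2}{p}\cos^{p}\!\left(\frac{(2i-1)\pi}{2^{n-1}}\right)$, the series converging because the arguments lie strictly inside $(0,\pi/2)$. Summing over the finite range $1\le i\le 2^{n-2}$ and interchanging the finite $i$-sum with the series, the odd-$p$ contributions vanish by the pairing $\cos^{p}(\pi-x)=-\cos^{p}(x)$ used in the even-powers argument. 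For the surviving even powers $p=2p'$ I would invoke the proposition evaluating $\sum_{i}(2\cos(\cdot))^{2p'}$, applied at level $n-1$ and doubled (the range $1\le i\le2^{n-2}$ sweeps $(0,\pi)$, twice the natural range $(0,\pi/2)$, with equal contributions for even powers), obtaining
$$\sum_{i=1}^{2^{n-2}}\cos^{2p'}\!\left(\tfrac{(2i-1)\pi}{2^{n-1}}\right)=\frac{2^{n-2}}{2^{2p'}}\left(\binom{2p'}{p'}+2\sum_{k\ge1}(-1)^{k}\binom{2p'}{p'-k2^{n-2}}\right).$$
Converting via $\binom{-3/2}{2p'}(2p')!=(3/2)_{2p'}$, $\binom{2p'}{p'}/(2p')!=1/(p'!)^{2}$ and $\binom{2p'}{p'-k2^{n-2}}/(2p')!=1/\bigl((p'-k2^{n-2})!(p'+k2^{n-2})!\bigr)$, and collecting the powers of $2$, I find $\sum_{i}\sin^{-3}(\cdot)=2^{\,n-1/2}\lim_{m}\sum_{p=0}^{m}(3/2)_{2p}2^{-2p}\bigl(\tfrac{1}{(p!)^{2}}+2\sum_{k}(-1)^{k}\tfrac{1}{(p-k2^{n-2})!(p+k2^{n-2})!}\bigr)$. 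The left-hand side of the corollary carries the prefactor $2^{\,n-5/2}\cdot2^{-(2p-1)}=2^{\,n-3/2}2^{-2p}$, hence is precisely half of this. Combining with the $S(3,n)$ identity gives left-hand side $=\tfrac12\sum_i\sin^{-3}(\cdot)=\tfrac12 S(3,n)=$ right-hand side.

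As a consistency check, this is exactly what one obtains by dividing the two $\zeta(3)$ representations — the one from the preceding proposition, $\zeta(3)=\lim_{n}\frac{\pi^{3}}{7\cdot2^{3n-4}}\sum_{j}(\cdots)\sin^{-1}(\cdot)$, against the $s=3$ case of the general $\zeta(s)$ corollary (all descending from Proposition \ref{basic}) — after cancelling the common $\pi^{3}/7$ and the outer limit in $n$; matching the $n$-indexed summands reproduces the factor $2^{\,n-5/2}$. The point worth stressing is that equating two limits does not by itself force the summands to agree, which is why the argument must be carried out at fixed $n$. The main obstacle I expect is purely the bookkeeping of powers of two through the chain (the $2^{3/2}$ of Proposition \ref{sineexpansion}, the $2^{n-2}$ and $2^{-2p'}$ of the even-power sum, and the $2^{2p-1}$ normalization in the statement), together with cleanly justifying the two structural moves: the termwise interchange of the finite $i$-sum with the infinite $p$-series, legitimate since each of the finitely many series converges absolutely, and the index shift $n\mapsto n-1$ with doubling when the even-powers proposition is applied to arguments $\frac{(2i-1)\pi}{2^{n-1}}$ rather than $\frac{(2i-1)\pi}{2^{n}}$. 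None of this is deep, but getting every exponent right is where an error would most easily slip in.
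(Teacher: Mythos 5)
Your proposal is correct and follows essentially the same route as the paper, which disposes of the corollary in one line by observing that both sides are re-encodings of $\tfrac12\sum_{i}\sin^{-3}\bigl(\tfrac{(2i-1)\pi}{2^n}\bigr)$ at fixed $n$ --- the right side via the $S(3,n)$ identity (ref.\ 20 / Proposition \ref{commute}) and the left side via the sine expansion of Proposition \ref{sineexpansion} together with the even-power cosine sum. Your writeup is in fact more careful than the paper's, since it carries out the power-of-two bookkeeping explicitly and correctly flags that equating the two $\zeta(3)$ limits alone would not suffice.
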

\begin{proof}
Theorem 3.8 in ref. 1 derives the formula for $\zeta(3)$ in the previous proposition from our Proposition 1.1, making the above formula clear.
\end{proof}
\begin{proposition}
\begin{align}\notag
\zeta(5)&=\lim_{n\to\infty}\frac{\pi^5}{93*2^{5n-6}}\\\notag
&\sum_{j=1}^{2^{n-2}}\left(j^4-2(2^{n-1}+1)j^3+(3* 2^{n-1}-1)j^2+2(2^{n-2}+2^{3n-4}+1)j-2^{n-1}(2^{2n-3}+1)\right)\frac{1}{\sin[\frac{(2j-1)\pi}{2^n}]}\\\notag
\end{align}
\end{proposition}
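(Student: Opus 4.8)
The plan is to specialize Proposition~\ref{basic} to $s=5$ and then substitute the closed form for the power sum $S(5,n)=\sum_{j=1}^{2^{n-2}}\frac{1}{\sin^5\left(\frac{(2j-1)\pi}{2^n}\right)}$ recorded in the corollary tabulating $S(2,n)$ through $S(8,n)$. Since $2^5=32$ and $2^5-1=31$, Proposition~\ref{basic} gives
\begin{align}\notag
\zeta(5)&=\lim_{n\to\infty}\frac{32\,\pi^5}{31\cdot 2^{5n}}\sum_{i=1}^{2^{n-2}}\frac{1}{\sin^5\left(\frac{(2i-1)\pi}{2^n}\right)}=\lim_{n\to\infty}\frac{32\,\pi^5}{31\cdot 2^{5n}}\,S(5,n).
\end{align}

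Next I would insert the explicit expression
\begin{align}\notag
S(5,n)&=\frac{1}{3}\sum_{j=1}^{2^{n-2}}\Bigl(2j^4-4(2^{n-1}+1)j^3+2(3\cdot 2^{n-1}-1)j^2+2(2^{3n-3}+2^{n-1}+2)j-(2^{3n-3}+2\cdot 2^{n-1})\Bigr)\frac{1}{\sin[\frac{(2j-1)\pi}{2^n}]}
\end{align}
and factor a $2$ out of every coefficient of the quartic, which converts the prefactor $\frac{1}{3}$ into $\frac{2}{3}$ and leaves a monic polynomial inside the sum.

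The verification step is to confirm that this reduced polynomial is exactly the polynomial $P(j)=j^4-2(2^{n-1}+1)j^3+(3\cdot 2^{n-1}-1)j^2+2(2^{n-2}+2^{3n-4}+1)j-2^{n-1}(2^{2n-3}+1)$ appearing in the statement. The $j^4$, $j^3$ and $j^2$ coefficients agree term by term once the factor $2$ is removed; for the linear term one checks $2^{3n-3}+2^{n-1}+2=2(2^{n-2}+2^{3n-4}+1)$, and for the constant term one checks $2^{3n-4}+2^{n-1}=2^{n-1}(2^{2n-3}+1)$. This coefficient matching is the only point requiring care, and it reduces to elementary arithmetic with powers of $2$.

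Finally I would collect the constants. With $S(5,n)=\frac{2}{3}\sum_{j=1}^{2^{n-2}}P(j)\frac{1}{\sin[\frac{(2j-1)\pi}{2^n}]}$ substituted into the first display, the numerical prefactor becomes $\frac{32}{31}\cdot\frac{2}{3}=\frac{64}{93}$, where $93=3\cdot 31$. Since $64=2^6$, we have $\frac{64}{2^{5n}}=\frac{1}{2^{5n-6}}$, so the constant collapses to $\frac{\pi^5}{93\cdot 2^{5n-6}}$, which matches the statement and would complete the proof.
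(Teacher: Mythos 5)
Your proposal is correct and follows essentially the same route as the paper, which simply cites reference 20 together with Proposition~\ref{commute} to combine the $\zeta(s)$ limit formula of Proposition~\ref{basic} at $s=5$ with the closed form for $S(5,n)$. Your version is just a more explicit write-out of that combination: the coefficient matching (factoring the $2$ out of the quartic, checking $2^{3n-3}+2^{n-1}+2=2(2^{n-2}+2^{3n-4}+1)$ and $2^{3n-4}+2^{n-1}=2^{n-1}(2^{2n-3}+1)$) and the constant collapse $\frac{32}{31}\cdot\frac{2}{3}\cdot 2^{-5n}=\frac{1}{93\cdot 2^{5n-6}}$ all check out.
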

\begin{proof}
See reference 20, combined with proposition \ref{commute}.
\end{proof}
\begin{corollary}
\begin{align}\notag
\lim_{m\to\infty}&\sum_{p=0}^m 2^{n-3/2}3\frac{1}{2^{2p-1}}\left(\frac{5}{2}\right)_{2p}\left(\frac{1}{(p!)^2}+2\sum_{k=1}^{\floor{p/2^{n-2}}}(-1)^k\frac{1}{(p-k2^{n-2})!(p+k2^{n-2})!}\right)\\\notag
&=\sum_{j=1}^{2^{n-2}}\left(j^4-2(2^{n-1}+1)j^3+(3* 2^{n-1}-1)j^2+2(2^{n-2}+2^{3n-4}+1)j-2^{n-1}(2^{2n-3}+1)\right)\frac{1}{\sin[\frac{(2j-1)\pi}{2^n}]}\\\notag
\end{align}
\end{corollary}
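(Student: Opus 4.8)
The plan is to prove the identity at each fixed $n\ge 2$ by showing that the two sides reduce to the same multiple of the single quantity $\Sigma_n:=\sum_{i=1}^{2^{n-2}}\sin^{-5}\!\left(\frac{(2i-1)\pi}{2^n}\right)$. Both the preceding Proposition and the general Corollary for $\zeta(s)$ (specialised to $s=5$) express $\zeta(5)$ as $\lim_{n\to\infty}$ of a fixed-$n$ rearrangement of $\Sigma_n$ coming from Proposition \ref{basic}; the present Corollary is exactly the assertion that these two rearrangements already agree \emph{before} the outer limit is taken. Accordingly I would drop the limit in $n$, fix $n$, and evaluate each side in terms of $\Sigma_n$ independently.

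For the right-hand side I would invoke the explicit evaluation of $S(5,n)=\Sigma_n$ recorded in the $S(s,n)$ corollary of Section 3, namely $\Sigma_n=\tfrac13\sum_{j=1}^{2^{n-2}}\bigl(2j^4-4(2^{n-1}+1)j^3+\cdots\bigr)\sin^{-1}\!\left(\frac{(2j-1)\pi}{2^n}\right)$. A direct comparison of coefficients shows that the quartic in $j$ in the statement of the present Corollary is exactly one half of the quartic in the $S(5,n)$ formula: the linear coefficients match because $2(2^{3n-3}+2^{n-1}+2)=4(2^{n-2}+2^{3n-4}+1)$, and the constant terms match because $2^{3n-3}+2^n=2^n(2^{2n-3}+1)$. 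Hence the right-hand side equals $\tfrac12\cdot 3\,\Sigma_n=\tfrac32\Sigma_n$.

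For the left-hand side I would reproduce the derivation already used in the proof of the $\zeta(s)$ proposition, but at fixed $n$. First expand each $\sin^{-5}$ by Proposition \ref{sineexpansion} as $2^{5/2}\sum_{p}(-1)^p\binom{-5/2}{p}\cos^p\!\left(\frac{(2i-1)\pi}{2^{n-1}}\right)$, which converges for each fixed $n$ since $\bigl|\cos\frac{(2i-1)\pi}{2^{n-1}}\bigr|<1$ strictly for $1\le i\le 2^{n-2}$. After interchanging the finite $i$-sum with the $p$-sum, the odd powers of cosine cancel by the symmetry $\cos(\pi-x)=-\cos(x)$, while the even powers are summed in closed form by the proposition evaluating $\sum_{i}\bigl(2\cos\frac{(2i-1)\pi}{2^{n}}\bigr)^{2p}$ (applied one level down, with supplementary-angle pairing supplying the overall factor $2^{n-2}$), producing $\binom{2p}{p}+2\sum_{k\ge1}(-1)^k\binom{2p}{p-k2^{n-2}}$. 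Using $\binom{-5/2}{2p}=(5/2)_{2p}/(2p)!$ and $\binom{2p}{p-k2^{n-2}}/(2p)!=1/\bigl((p-k2^{n-2})!\,(p+k2^{n-2})!\bigr)$ turns this into the bracketed factor in the statement, and collecting powers of $2$ gives $\Sigma_n=2^{n+1/2}\sum_p 2^{-2p}(5/2)_{2p}[\cdots]$. Since $\tfrac{1}{2^{2p-1}}=2\cdot 2^{-2p}$, the left-hand side is $2^{n-3/2}\cdot 3\cdot 2\sum_p 2^{-2p}(5/2)_{2p}[\cdots]=2^{n-1/2}\cdot 3\cdot\bigl(2^{-(n+1/2)}\Sigma_n\bigr)=\tfrac32\Sigma_n$, matching the right-hand side.

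The routine content is the constant-chasing, confirming that the prefactor $2^{n-3/2}\cdot 3\cdot\tfrac{1}{2^{2p-1}}$ against the scaling $\Sigma_n=2^{n+1/2}\sum_p 2^{-2p}(5/2)_{2p}[\cdots]$ produces the common factor $\tfrac32$ on both sides. The only genuine obstacle is justifying the interchange of the two summations at fixed $n$, i.e. the termwise validity of Proposition \ref{sineexpansion} summed over $i$; this is where I would be careful, bounding the tail of the $p$-series uniformly over the finitely many $i$ using $\max_i\bigl|\cos\frac{(2i-1)\pi}{2^{n-1}}\bigr|=\cos\frac{\pi}{2^{n-1}}<1$. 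With that interchange secured, the two evaluations of $\Sigma_n$ coincide and the Corollary follows.
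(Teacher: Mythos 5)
Your proposal is correct, and it is substantially more of a proof than the paper supplies: the paper's entire argument is the citation ``See reference 20, combined with proposition \ref{commute}'', leaving the reader to infer the identity from the fact that both sides appear (up to the factor $\frac{\pi^5}{93\cdot 2^{5n-6}}$) in two different limit formulas for $\zeta(5)$ --- which, as stated, would only give equality of limits, not the fixed-$n$ identity actually asserted. You close that gap by evaluating both sides at fixed $n$ against the single quantity $\Sigma_n=\sum_i\sin^{-5}\left(\frac{(2i-1)\pi}{2^n}\right)$: the right side via the $S(5,n)$ entry of the Section 3 corollary (your coefficient check that the quartic is exactly half of the one in $S(5,n)$, hence the value $\frac32\Sigma_n$, is correct), and the left side by rerunning the $\zeta(s)$ proposition's derivation at fixed $n$ --- Proposition \ref{sineexpansion}, cancellation of odd cosine powers by the supplementary-angle pairing, the even-power sum applied with $n-1$ in place of $n$, and the telescoping rewrite of $\sum_k(-1)^k(a_k-a_{k+1})$ as $a_0+2\sum_{k\ge1}(-1)^k a_k$. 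The constant bookkeeping ($2^{5/2}\cdot 2^{n-2}=2^{n+1/2}$ against $3\cdot 2^{n-3/2}\cdot 2$, giving $\frac32$ on both sides) checks out, and your justification of the sum interchange via $\max_i\left|\cos\frac{(2i-1)\pi}{2^{n-1}}\right|=\cos\frac{\pi}{2^{n-1}}<1$ is the right (and needed) point. Two small caveats: the ``one level down'' application of the even-power proposition needs $n\ge3$ (the case $n=2$ is a trivial separate check), and your right-hand-side evaluation still inherits the $S(5,n)$ formula from ref.\ 20, so the argument is self-contained only relative to that input --- but that is the same dependency the paper has.
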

\begin{proof}
See reference 20, combined with proposition \ref{commute}.
\end{proof}
\section{Appendix}
Regarding the $2^{n-2}x2^{n-2}$ matrices, $M_n$ which satisfy $$\cos^r(\frac{(2j-1)\pi}{2^n})=\frac{1}{2^{r-1}} \sum_{k=1}^{2^{n-2}}M_n(j,k)\cos(\frac{(2k-1)\pi}{2^n})$$ the Mathematica code for producing the matrix for terms of $M$, for $r>1$, $r$ odd is given below. Note, the $n$ in the code correponds to $2^{n-2}$ in the above discussion.
\begin{lstlisting}
M = IdentityMatrix[n]; For[j = 0, j <= n - 1, j++, 
 M[[1, j + 1]] = 
  Sum[(-1)^m (Binomial[r, (r - 1)/2 - (2 m n + j)] - 
      Binomial[r, (r - 1)/2 - (2 (m + 1) n - j - 1)]), {m, 0, 
    Floor[(r + 1)/(4 n)]}]]; For[i = 2, i <= n, i++, 
 For[j = 1, j <= n, j++, p = 2 i j - i - j + 1;
  k = Floor[(p - 1)/n];
  ind = (-1)^k (p - k*n); 
  M[[i, ind]] = (-1)^Floor[(n + 2 i j - i - j)/(2 n)] M[[1, j]];]];
\end{lstlisting}
The Mathematica code for producing $\cos^r(\frac{(2i-1)\pi}{2^n})$, $r$ odd, $r> 1$, is:
\begin{lstlisting}
crn[i_, n_, r_] := 
 1/2^(r - 1)
   Sum[(-1)^
    Floor[(2^(n - 2) + 2 i j - i - j)/2^(
      n - 1)] Sum[(-1)^
       k (Binomial[r, (r - 1)/2 - ( k 2^(n - 1) + j - 1)] - 
        Binomial[r, (r - 1)/2 - ((k + 1) 2^(n - 1) - j)]), {k, 0, 
      Floor[(r + 1)/2^
        n]}] Cos[( (2 Mod[(-1)^
           Floor[(2 i j - i - j)/2^(
             n - 2)] (Mod[2 i j - i - j, 2^(n - 2)] + 1), 
          2^(n - 2) + 1] - 1) \[Pi])/2^n], {j, 1, 2^(n - 2)}]
\end{lstlisting}
The Mathematica code for producing $S_n$ is:
\begin{lstlisting}
S_n[n_,r_]:=1/2^(r - 1)
  Sum[Sum[(-1)^
    Floor[(2^(n - 2) + 2 i j - i - j)/2^(
      n - 1)] Sum[(-1)^
       k (Binomial[r, (r - 1)/2 - ( k 2^(n - 1) + j - 1)] - 
        Binomial[r, (r - 1)/2 - ((k + 1) 2^(n - 1) - j)]), {k, 0, 
      Floor[(r + 1)/2^
        n]}] Cos[( (2 Mod[(-1)^
           Floor[(2 i j - i - j)/2^(
             n - 2)] (Mod[2 i j - i - j, 2^(n - 2)] + 1), 
          2^(n - 2) + 1] - 1) \[Pi])/2^n], {j, 1, 2^(n - 2)}], {i, 1, 
   2^(n - 2)}]
\end{lstlisting}
Regarding the $2^{n-2}x2^{n-2}$ matrices, $M_n$ which satisfy $$\frac{1}{(\sin(\frac{(2j-1)\pi}{2^n}))^3}=8 \sum_{k=1}^{2^{n-2}}M_n(j,k)\sin(\frac{(2k-1)\pi}{2^n})$$
the Mathematica code for producing the matrix for terms of $M_m$ is given below. 
\begin{lstlisting}
n = 2^(m-2); M = IdentityMatrix[n];
For[i = 1, i <= n, i++, 
 For[j = 1, j <= n, j++, r = Mod[(i + j - 1) (2 i - 1)^(n - 1), 2 n]; 
  M[[i, j]] = ((-1)^((r (2 i - 1) - (i + j - 1))/(2 n))) (1/
      2) (2 n r - r^2 + r - n);
  ]]
\end{lstlisting}
The code for producing $(2^{-3})S_m$ is
\begin{lstlisting}
n=2^(m-2);S=0;
For[i=1,i<=n,i++, 
 For[j=1,j<=n,j++, 
  r=Mod[(i+j-1)(2i-1)^(n-1),2n]; 
  S=S+((-1)^((r(2i-1)-(i+j-1))/(2n)))* 
        (1/4)(2nr-r^2+r-n)Sin[(2j-1)Pi/(4n)];
 ]
]
\end{lstlisting}
Another version:
\begin{lstlisting}
n=2^(m-2);S=0;
For[i=1,i<=n,i++, 
 For[j=1,j<=n,j++, 
  p=i+(j-1)(2(i-1)+1);
  k=Mod[p-1,2n]+1;
  S=S+(-1)^((p-k)/(2n))*
       (1/4)(2nj-j^2+j-n)Sin[(2k-1)Pi/(4n)];
 ]
]
\end{lstlisting}

Regarding the $2^{n-2}x2^{n-2}$ matrices, $M_n$ which satisfy $$\frac{1}{(\sin(\frac{(2j-1)\pi}{2^n}))^5}=32 \sum_{k=1}^{2^{n-2}}M_n(j,k)\sin(\frac{(2k-1)\pi}{2^n})$$
the Mathematica code for producing the matrix for terms of $M_{m+2}$ is given below.
\begin{lstlisting}
 s = 11; For[j = 3, j <= m, j++, s = 2^(3 (j - 1)) + 2 s; ]
n = 2^m; M = IdentityMatrix[n]; M[[1, 1]] = s;
M[[1, n]] = 2^(2*m - 1);
For[i = 2, i < n + 1, i++, 
 M[[1, i]] = 
  M[[1, 1]] + 
   1/24 (-1 + i) (i - 2 n) (i^2 - i (1 + 2 n) + 
      2 (-1 - 6 M[[1, n]] + n^2))]; 
For[i = 2, i <= n, i++, 
  For[j = 1, j <= n, j++, p = i + (j - 1) (2 (i - 1) + 1);
   k = Floor[(p - 1)/n];
   ind = (-1)^k (p - k*n);
   M[[i, ind]] = (-1)^Floor[(p - 1)/(2 n)] M[[1, j]];]] ;
 \end{lstlisting} 
The code for producing $(2^{-5})S_m$ is
\begin{lstlisting}
 s = 11; For[j = 3, j <= m, j++, s = 2^(3 (j - 1)) + 2 s; 
n = 2^m; sum = 0;
For[i = 1, i <= n, i++, 
  For[j = 1, j <= n, j++, p = i + (j - 1) (2 (i - 1) + 1);
   k = Floor[(p - 1)/n];
   ind = (-1)^k (p - k*n); 
   If[ind < 0, ind = ind + 2^m + 1, ind = ind]; 
   sum = sum + (-1)^
       Floor[(p - 1)/2^(m + 1)] (s + 
        1/24 (j - 1) (j - 2^(m + 1)) (j^2 - j (1 + 2^(m + 1)) + 
           2 (-1 - 2^(2 m + 1)))) Sin[(2 ind - 1) \[Pi]/(2^(
          m + 2))];]]; 
 \end{lstlisting} 
Another version:
\begin{lstlisting}
 s = 11; For[j = 3, j <= m - 2, j++, s = 2^(3 (j - 1)) + 2 s; 
sum = 0;
sum= Simplify[
   32 Pi^5/(31*2^(
       m - 1)) Sum[(1/
        24 (j/2^(
         m - 1)) (j/2^(m - 1) - 1) ((j/2^(m - 1))^2 - j/2^(m - 1) + 
          2 (-(1/2)))) Sum[(-1)^
         Floor[(2 i j - i - j)/2^(
           m - 1)] Sin[(2 Mod[(2 i j - i - j), 2^(m - 1)] + 
            1) \[Pi]/(2^m)], {i, 1, 2^(m - 2)}], {j, 1, 2^(m - 2)}]];
 \end{lstlisting}     
 
 Code for producing matricies $M_n$ for even powers of cosine:
 \begin{lstlisting}
 n = 5; r = 16; M = ConstantArray[0, {2^(n - 2), 2^(n - 2)}]; For[
 i = 1, i <= 2^(n - 2), i++, 
 For[k = 0, k <= Floor[(r + 1)/2^n], k++, 
  M[[i, 1]] = 
   M[[i, 1]] + 
    1/2 (-1)^
      k (Binomial[r, r/2 - k 2^(n - 1) ] - 
       Binomial[r, r/2 - ((k + 1) 2^(n - 1) )])]]; For[i = 1, 
 i <= 2^(n - 2), i++, 
 For[m = 2, m <= n - 1, m++, 
  For[j = 1, j <= 2^(n - m - 1), j++, 
   For[k = 0, k <= Floor[(r + 1)/2^n], k++, 
    M[[i, tind[i, j, m, n]]] = 
     M[[i, tind[i, j, m, n]]] + (-1)^
        k (Binomial[r, r/2 - (k 2^(n - 1) + 2^(m - 2) (2 j - 1 ))] - 
         Binomial[r, 
          r/2 - ((k + 1) 2^(n - 1) - 2^(m - 2) (2 j - 1 ))])]; 
   M[[i, tind[i, j, m, n]]] = (-1)^
      Floor[(2^(n - m - 1) + 2 i j - i - j)/2^(n - m)] M[[i, 
       tind[i, j, m, n]]]]]]
 \end{lstlisting}
\begin{section}{{\textbf{Bibliography}}}
\frenchspacing
\begin{itemize}
\item{[1]} Johan W{\"a}stlund,``Summing inverse squares by euclidean geometry'', \\(PDF) http://www.math.chalmers.se/~wastlund/Cosmic.pdf
\item{[2]} 3BlueBrown (YouTube channel video), \"Why is pi here? And why is it squared? A geometric answer to the Basel problem".
\item{[3]} Wolfram Mathworld,``Apery's Constant", https://mathworld.wolfram.com/AperysConstant.html
\item{[4]} C. Nash and D. O'Connor, ``Determinants of Laplacians, the Ray-Singer torsion on lens
spaces and the Riemann zeta function", J. Math. Phys. 36(1995), 1462-1505.
\item{[5]} Maarten J. Kronenburg,``The Binomial Coefficient for Negative Arguments", arXiv:1105.3689 [math.CO],18 May 2011
\item{[6]} Michael Penn, ``Integral Of $(\ln(\cos(x))^3$'', YouTube video.
\item{[7]} Arkadiusz Wesolowski,``OEIS", Oct.16, 2013
\item{[8]} Peter Paule and Markus Schorn, `` A Mathematica Version of Zeilberger's Algorithm for Proving Binomial Coefficient Identities", J. Symbolic Computation (1994) 11, 1-000
\item{[9]} Gosper, Jr., Ralph William "Bill" (January 1978) [1977-09-26]. "Decision procedure for indefinite hypergeometric summation" (PDF). Proceedings of the National Academy of Sciences of the United States of America.
\item{[10]} Petkovšek, Marko; Wilf, Herbert; Zeilberger, Doron (1996). A = B. Home Page for the Book "A=B". A K Peters Ltd. ISBN 1-56881-063-6. Archived 
\item{[11]} Morse, P. M. and Feshbach, H. Methods of Theoretical Physics, Part I. New York: McGraw-Hill, pp. 411-413, 1953.
\item{[12]} "Finite-difference calculus", Encyclopedia of Mathematics, EMS Press, 2001 [1994]
\item{[13]} Charles Jordan “On Stirling’s numbers”, Tohoku Math. Journal, Vol. 37, 1933
\item{[14]} Henry W. Gould,  "Tables of Combinatorial Identities", edited by Jocelyn Quaintance, Vol. 8
\item{[15]} Philippe Flajolet and Robert Sedgewick, "Mellin transforms and asymptotics: Finite differences and Rice's integrals", Theoretical Computer Science 144 (1995) pp 101–124.
\item{[16]} Murray R. Spiegel, ``Schaum's Outline Of Theorey And Problems Of Calculus Of Finite Differences And Difference Equations'',McGraw-Hill, 1971
\item{[17]} R. Kl\'en, M. Visuri and M. Vuorinen, ``On Jordan type inequalities for hyperbolic functions'',J. Inequal. Appl., Vol 2010, Article no. 362548, 2010
\item{[18]} Samuel Greitzer “Many cheerful Facts” Arbelos 4 (1986), no. 5, 14-17
\item{[19]} M. Merca, ''A note on cosine power sums'',J. Integer Seq., 15 (5) (2012), Article 12.5.3
\item{[20]} L. Fairbanks, ``Notes On An Approach to Apery's Constant',  arXiv:2206.11256 [math.NT]
\end{itemize}
\end{section}
\end{flushleft}
\end{document}